\newcommand\alp{\alpha}
\newcommand\lam{\lambda}		\newcommand\Lam{\Lambda}
\newcommand\calK{{\mathcal{K}}}
\newcommand\calO{{\mathcal{O}}}
		\newcommand\bfA{{\mathbf A}}
		\newcommand\bfB{{\mathbf B}}
		\newcommand\bfG{{\mathbf G}}
		\newcommand\bfU{{\mathbf U}}
		\newcommand\bfX{{\mathbf X}}
\newcommand\ZZ{\mathbb{Z}}
\newcommand\CC{\mathbb{C}}
	\newcommand\grg{{\mathfrak{g}}}
\newcommand\sdp{\times \hskip -0.3em {\raise 0.3ex
\hbox{$\scriptscriptstyle |$}}} % semidirect product
\newcommand\Aut{\operatorname{Aut}}
\newcommand\Hom{\operatorname {Hom}}
\newcommand\im{\operatorname {im}}
\newcommand\ord{\operatorname{ord}}
\newcommand\Supp{\operatorname{Supp}}
\renewcommand{\>}{\rangle}
\newcommand\aff{\operatorname{aff}}
\newcommand\kk{\mathbf{k}}
\renewcommand\O{\calO}
\renewcommand\ord{\operatorname{ord}}
\DeclareMathOperator{\Rep}{Rep}
\newcommand{\ve}{\mathbf{v}}
\newtheorem*{de}{Definition}
\newtheorem*{nthm}{Theorem}
\newtheorem*{nlem}{Lemma}
\newtheorem*{nprop}{Proposition}
\newtheorem*{ncor}{Corollary}
\newtheorem*{nclaim}{Claim}
\theoremstyle{remark}
\newcommand{\be}[1]{\begin{eqnarray} \label{#1}}
\newcommand{\ee}{\end{eqnarray}}
\newcommand{\tpoint}[1]{\subsubsection{#1}}
\newcommand{\spoint}{\subsubsection{}}
\numberwithin{equation}{section}
\newcommand{\ch}{H(G_+, I)}
\newcommand{\mps}{\mathbb{M}}
\newcommand{\dhec}{\mathbb{H}}
\newcommand{\chs}{H_{\leq}(G_+, K)}
\newcommand{\mhs}{M_{\leq}(G, K)}
\newcommand{\xv}{\xi^{\vee}}
\newcommand{\ac}{a^{\vee}}
\newcommand{\tv}{\tau^{\vee}}
\newcommand{\av}{a^{\vee}}
\newcommand{\aw}{\mathcal{W}}
\newcommand{\ar}{\mathcal{R}}
\newcommand{\dd}{\mathbf{d}}
\newcommand{\cc}{\mathbf{c}}
\newcommand{\I}{I}
\renewcommand{\im}{{I^{-}}}
\newcommand{\ifib}{F^{I^-}_{w, \lv}(\mv)}
\newcommand{\kfib}{F^{K}_{\lv}(\mv)}
\newcommand{\tpb}{\tp^{\bullet}}
\newcommand{\valu}{\mathsf{val}}
\renewcommand{\v}{\mathbf{v}}
\newcommand{\la}{\langle}
\newcommand{\ra}{\rangle}
\newcommand{\lv}{\lambda^{\vee}}
\newcommand{\mv}{\mu^{\vee}}
\newcommand{\res}{\mathbf{k}}
\newcommand{\mf}[1]{\mathfrak{#1}}
\newcommand{\f}[1]{\mathfrak{#1}}
\newcommand{\rr}{\rightarrow}
\newcommand{\mc}[1]{\mathcal{#1}}
\newcommand{\wt}[1]{\widetilde{#1}}
\newcommand{\tp}{\theta}
\newcommand{\lambdav}{\lambda^\vee}
\newcommand{\muv}{\mu^{\vee}}
\newcommand{\Lv}{\Lambda^{\vee}}
\newcommand{\w}{\wt{w}}
\newcommand{\K}{\mathcal{K}}
\newcommand{\zee} {\mathbb{Z}}
\newcommand{\C} {\mathbb{C}}
\renewcommand{\O}{\mathcal{O}}
\newcommand{\qn}{{\mathcal Q}_{v}}
\newcommand{\iw}{\mathfrak{T}_w}
\begin{document}

\title{Iwahori-Hecke Algebras for $p$-adic Loop Groups}
\author{A. Braverman, D. Kazhdan, M. Patnaik}
%\date{\today}

\maketitle

\begin{abstract} This paper is a continuation of \cite{bk} in which the first two authors have introduced the {\em spherical Hecke algebra} and the Satake isomorphism for an untwisted affine Kac-Moody group over a non-archimedian local field. In this paper we develop the theory of the {\em Iwahori-Hecke algebra} associated to these same groups. The resulting algebra is shown to be closely related to Cherednik's double affine Hecke algebra. Furthermore, using these results, we give an explicit description of the affine Satake isomorphism, generalizing Macdonald's formula for the spherical function in the finite-dimensional case. The results of this paper have been previously announced in \cite{bk-ecm}.

\end{abstract}
%

%: table of contents

\setcounter{tocdepth}{1}
\begin{small}
\tableofcontents
\end{small}

%\red{ Replace in many places $\QQ _0^\vee $ by $\CC[\Lv_o].$}

\section{Introduction}

Let $\calK$ denote a local non-archimedian field with ring of integers $\calO$ and pick $
\pi \in \O$ a uniformizing element. We denote by $\kk$ the residue field $\kk:= \O/ \pi \O,$ which is a finite field of size $q.$

Usually we shall denote algebraic varieties over $\calK$ (or a subring of $\calK$) by boldface letters $\bfX, \bfG$ etc.; their
sets of $\calK$-points will then be denoted $X,G$ etc.

\subsection{Finite Dimensional Case}
We shall first describe the finite-dimensional case of which this paper is an affine generalization.

\subsubsection{Notations on Groups}

Let $\bfG_o$ be a split, simple, and simply connected algebraic group (defined over $\ZZ$) and let $\grg_o$ be its Lie
algebra. As agreed above, we set $G_o=\bfG_o(\calK)$. Let $\bfA_o\subset \bfG_o$ be a maximal split torus, which we assume is of rank $\ell$; we denote its character lattice by $\Lam_o$ and its
cocharacter lattice by $\Lam_o^{\vee}$; note that since we have assumed that
$\bfG_o$ is simply connected, $\Lam_o^{\vee}$ is also the coroot lattice of $\bfG_o$.  For any $x\in \calK^*,\lam^{\vee}\in \Lam_o^{\vee}$
we set $x^{\lam^{\vee}}=\lam^{\vee}(x)\in A_o$.

Let us choose a pair $\bfB_o,\bfB_o^-$ of opposite Borel subgroups such that
$\bfB_o\cap\bfB_o^-=\bfA_o$. We denote by $R_o$ the set of roots of $\bfG_o$ and by
$R_o^{\vee}$ the set of coroots. Similarly $R_{o,+}$ (resp. $R^{\vee}_{o,+}$) will denote the set
of positive roots (resp. of positive coroots), and $\Pi_o$ (resp. $\Pi_o^{\vee})$ the set of simple roots (resp. simple coroots). We shall also denote by $2\rho_o$ (resp. by
$2\rho_o^{\vee}$) the sum of all positive roots (resp. of all positive coroots). We denote by $\Lv_o$ (resp. $\Lv_{o, +}$)  the set of dominant coweights. Let $W_o$ be the Weyl group of $G,$ which is a finite Coxeter group with generators simple reflections $w_1, \ldots, w_{\ell}$ corresponding to the simple roots $\Pi_o.$

For a reductive group $\bf{H}$ over $\mc{K}$ we denote by $H^{\vee}$ the Langlands dual group defined over $\C$ defined by exchanging the root and coroot data of $\bf{H}.$

%----------------------------------------------------------------------------------------------------
\subsubsection{Hecke algebras}Let $J_o \subset G_o$ be an open compact subgroup of $G_o$. Then one can consider the Hecke
algebra $H(G_o,J_o)$ of $J_o$-bi-invariant compactly supported functions with respect
to convolution. Studying the representation theory of $G_o$ is essentially equivalent to studying the representation theory of the algebras $H(G_o,J_o)$ for various $J_o.$

There are two
choices of open compact subgroups of $G_o$ that will be of interest to us. The first is that of $K_o=\bfG_o(\calO);$  the corresponding Hecke algebra $H(G_o,K_o)$ is called the
{\em spherical Hecke algebra}. The second is that of the Iwahori subgroup $I_o \subset K_o,$ which is
by definition the subgroup of $K_o$ equal to the preimage of $\bfB_o(\kk) \subset \bfG(\kk)$ under the natural projection map $K_o \to \bfG_o(\kk)$. The corresponding Hecke algebra $H(G_o, I_o)$is called the {\em Iwahori-Hecke algebra.}  Let us recall the description of the corresponding algebras in these two cases.
%---------------------------------------------------------------------------------------------------------------------

\subsubsection{Spherical Hecke Algebra and the Satake isomorphism}  \label{fin-sph}

The Cartan decomposition asserts that $G$ is the disjoint union of double cosets $K_o \; \pi^{\lam^{\vee}} K_o, \, \lam^{\vee} \in \Lam_{o, +}^{\vee};$ hence, $H(G_o, K_o)$  has a vector space basis corresponding to the characteristic functions of these double cosets  $h_{\lam^{\vee}}, \, \lv \in \Lv_{o, +}.$ As an algebra, $H(G_o, K_o)$ is commutative, associative, and unital, with unit $\mathbf{1}_{K_o}$ equal to the characteristic function $h_{\lv}$ with $\lv=0$ (i.e. the characteristic function of $K_o$).

Let $\CC[\Lv_o]$ denote the group algebra of $\Lv_o:$ it consists of finite $\C$-linear combinations in the symbols $e^{\lv}$ with $\lv \in \Lv_o$, where $e^{\lv} e^{\mv} = e^{\lv + \mv},$ for $\lv, \mv \in \Lv_o.$ The natural $W_o$-action on $\Lv_o$ lifts to $\CC[\Lv_o];$ for $f \in \CC[\Lv_o]$ and $w \in W$ we denote by $f^w$ the application of $w$ to $f.$ The Satake isomorphism $S_o$ makes clear the algebra structure of $H(G_o, K_o)$: it provides a canonical isomorphism (see \cite{sat}) \be{sat:intro} S_o: H(G_o, K_o) \stackrel{\cong}{\rr} \CC[\Lv_o]^{W_o}, \ee where $\CC[\Lv_o]^{W_o}$ is the ring of $W_o$-invariant elements in $\CC[\Lv_o].$  The algebra $\CC[\Lv_o]^W$ admits other interpretations: it is isomorphic to the complexified Grothendieck ring $K_0(\Rep(G_o^{\vee}))$
of finite-dimensional representations of $G_o^{\vee},$ the dual group of $G_o$; it is also isomorphic to the algebra $\CC(A_o^{\vee})^{W_o}$ of polynomial functions on the maximal torus
$A_o^{\vee}\subset G_o^{\vee}$ which invariant under $W_o$.

For many purposes, it is desirable to have an explicit formula for the elements $S_o(h_{\lv}).$ Such a formula was given by Macdonald (and independently by Langlands \cite[Chapter 3]{lan:ep} in a slightly weaker form), and we shall present the answer below. The arguments which we know for this formula (implicitly or explicitly) go through the Iwahori-Hecke algebra.

%----------------------------------------------------------------------------------------------------------
\subsubsection{The Iwahori-Hecke algebra}\label{fin-iwahori}
As follows from the work of Iwahori and Matsumoto \cite{im}, the group $G_o$ is the disjoint union of cosets $I_o$-double cosets indexed by $\mc{W}_o:= W_o \rtimes \Lam_o^{\vee},$ the {\em affine Weyl group associated to $W_o$}\footnote{recall that we have assumed that $\bfG_o$ is simply-connected, so that $\mc{W}_o:= W_o \rtimes Q_o^{\vee}$ where $Q_o^{\vee}$ is the coroot lattice}.
It is well-known (\cite{mac:aff}) that $\mc{W}_o$ is itself an infinite Coxeter group which has simple reflection generators $w_1, \ldots, w_{\ell+1}$ where $w_1, \ldots, w_{\ell}$ correspond to the previously introduced generators of $W_o.$ Denote by $\ell: \mc{W}_o \rr \zee$ the length function on $\mc{W}_o$ corresponding to this set of generators, and also let $T_x$ be the characteristic function of the double coset corresponding to $x \in \mc{W}_o.$  Then in \emph{loc. cit}, it was shown that the algebra $H(G_o, I_o)$ has the following simple presentation: it is generated by
 $\{ T_x \}_{x \in \mc{W}_o}$ and has relations \begin{enumerate} \label{IM-relation} \item[\textbf{IM 1}] $T_x T_y = T_{x y}$ for $x, y \in \mc{W}_o$ with $\ell(x y) = \ell(x) + \ell(y)$ \item[\textbf{IM 2}] $T_{w_i}^2 = q T_{1} + (q-1)T_{w_i} =0$ for $i=1, \ldots, \ell+1.$  \end{enumerate}

The algebra $H(G_o,I_o)$ has an important alternative  description, the \emph{Bernstein presentation}  : it is generated by elements $\Theta_{\lv}$ for $\lv\in \Lv_{o}$ and $T_w$ for $w\in W_o$, subject to the following
relations:
\begin{enumerate} \label{bern-relation}

\item[ \textbf{B 1}] $T_wT_{w'}=T_{ww'}$ for $w, w' \in W_o$ with $\ell(ww')=\ell(w)+\ell(w')$;

\item[\textbf{B 2}] $\Theta_{\lv} \Theta_{\mv}=\Theta_{\lv+\mv}$; in other words, the $\Theta_{\lv}$'s generate a (commutative) subalgebra $\CC[\Lv_o]$ inside $H(G_o,I_o)$;

\item[\textbf{B 3}] For any $f\in \CC[\Lv_o]$ and any simple reflection $w_i$ for $i=1, \ldots, \ell$ we have
\be{bernrel-intro}
f T_{w_i} -T_{w_i} f^{w_i} =(1-q)\frac{f- f^{w_i}}{1-\Theta_{-a_i^{\vee}}} \ee where $a_i^{\vee}$ are the simple coroots of $G_o$.  Note that the right hand side of the above equation is an element of $\CC[\Lv_o]$. \end{enumerate}

\subsubsection{Explicit description of the Satake isomorphism}\label{fin-mac}
For any subset $\Sigma_o \subset W_o$ we define \be{poin-poly} \Sigma_o(q^{-1}) = \sum_{w \in \Sigma_o} q^{ - \ell(w)}.\ee
For any $\lv \in \Lv_o$, let $W_{o, \lv}$ denote the stabilizer of $\lv$ in $W_o.$  The following result is due to  Macdonald.

\begin{nthm}\label{macdonald:finite} \cite{mac:mad}
For any $\lv \in \Lv_{o,+}$ we have
\begin{equation}\label{fin:mac}
S_o(h_{\lv})=\frac{q^{\la \rho_o, \lv  \ra}}{W_{o, \lv}(q^{-1})}\sum\limits_{w\in W_o}
w\Bigg(e^{\lv}\frac{\prod\limits_{\alp\in R_{o,+}} 1-q^{-1}e^{-\alp^{\vee}}}{\prod\limits_{\alp\in R_{o,+} } 1-e^{-\alp^{\vee}}}\Bigg),
\end{equation}
where recall that $\rho_o$ was defined as the half-sum of the positive roots of $G_o$.
\end{nthm}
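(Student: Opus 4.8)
The strategy is to reduce the Satake formula to a computation entirely inside the Iwahori-Hecke algebra $H(G_o, I_o)$, exploiting the Bernstein presentation. The starting point is that the spherical Hecke algebra sits inside the Iwahori-Hecke algebra: if $e_o = \frac{1}{K_o(q^{-1})}\sum_{w\in W_o} q^{\ell(w)}\, \mathbf{1}_{I_o w I_o}$ (suitably normalized idempotent associated to $K_o/I_o$), then $e_o H(G_o, I_o) e_o \cong H(G_o, K_o)$, and the characteristic function $h_{\lv}$ corresponds to $e_o \Theta_{\lv} e_o$ up to an explicit power of $q$ (coming from the volume factor $\mathrm{vol}(K_o\pi^{\lv}K_o)$, which produces the $q^{\langle \rho_o,\lv\rangle}$). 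So the first step is to pin down this dictionary precisely, normalizations included.

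Second, I would compute $e_o \Theta_{\lv} e_o$ in the Bernstein presentation. Writing $e_o$ in terms of the $T_w$, $w\in W_o$, and using relation \textbf{B 3} repeatedly to move all the $T_w$'s past $\Theta_{\lv}$, one collects a sum over $W_o$ of terms of the shape $w(e^{\lv} \cdot (\text{rational function}))$. The key identity is the standard one: for a suitable normalization, $e_o \Theta_{\lv} = \Big(\sum_{w\in W_o} \frac{1}{K_o(q^{-1})} c_w\Big) e_o$ where $c_w$ is built from the $c$-function $c(\alp^{\vee}) = \frac{1 - q^{-1}e^{-\alp^{\vee}}}{1 - e^{-\alp^{\vee}}}$. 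This is essentially the Macdonald–Matsumoto formula for the image of a basis element under the symmetrizer; it is proved by induction on $\ell(w)$ using only \textbf{B 1}–\textbf{B 3}, the base case being $w = 1$. The appearance of $W_{o,\lv}(q^{-1})$ in the denominator comes from the fact that $\Theta_{\lv}$ is $W_{o,\lv}$-invariant, so the sum over $W_o$ overcounts the orbit by $|W_{o,\lv}|$-worth of terms, with the $q$-weighting turning $|W_{o,\lv}|$ into $W_{o,\lv}(q^{-1})$ after the rational functions are combined.

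Third, I would match the outcome of that computation with the right-hand side of \eqref{fin:mac}. After pushing all $T_w$ through, applying $e_o$ on the left collapses everything to a scalar multiple of $e_o$ (since $e_o$ is idempotent and the $T_w$, $w\in W_o$, act through it), and that scalar, read off in $\CC[\Lv_o]^{W_o}$ via the isomorphism $e_o H e_o \cong H(G_o,K_o) \xrightarrow{S_o} \CC[\Lv_o]^{W_o}$, is exactly $\frac{q^{\langle \rho_o,\lv\rangle}}{W_{o,\lv}(q^{-1})}\sum_{w\in W_o} w\big(e^{\lv}\prod_{\alp\in R_{o,+}} c(\alp^{\vee})\big)$. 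One subtlety is that the naive sum has apparent poles along the walls $1 - e^{-\alp^{\vee}} = 0$; these cancel in the full $W_o$-symmetrization, so one must check the result genuinely lands in $\CC[\Lv_o]^{W_o}$ (not merely its fraction field) — this is standard but worth a remark.

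**Main obstacle.** The real work is the second step: establishing the $c$-function expansion of $e_o\Theta_{\lv}$ with all normalizations correct. The inductive move-past-$T_{w_i}$ computation using \textbf{B 3} is where signs, powers of $q$, and the precise form of the denominators $1 - \Theta_{-a_i^{\vee}}$ versus $1 - e^{-\alp^{\vee}}$ must be tracked with care; an off-by-$q$ error here propagates into the $q^{\langle\rho_o,\lv\rangle}$ prefactor. Everything else — the idempotent bookkeeping, the identification $e_o H e_o \cong H(G_o,K_o)$, the pole cancellation — is routine given the Iwahori-Matsumoto and Bernstein presentations recalled above.
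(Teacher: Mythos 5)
The paper does not actually prove this theorem — it cites Macdonald \cite{mac:mad} and merely remarks that the known arguments pass through the Iwahori–Hecke algebra. So there is no internal proof to compare against directly, but the paper does prove the \emph{affine} analogue (Theorem~\ref{sph:mac}), and comparing your strategy to that affine proof is instructive.

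Your route is the classical idempotent/Bernstein argument: realize $H(G_o,K_o)$ as $e_o H(G_o,I_o) e_o$, identify $h_{\lv}$ with $e_o\Theta_{\lv}e_o$ up to volume normalization, and compute $e_o\Theta_{\lv}e_o$ by repeatedly pushing $T_w$'s through $\Theta_{\lv}$ via \textbf{B3}, producing the $c$-function expansion. This is correct and standard (it is essentially the argument in \cite{hkp}), modulo one slip worth flagging: your formula $e_o=\frac{1}{K_o(q^{-1})}\sum_w q^{\ell(w)}\mathbf{1}_{I_o w I_o}$ is not the idempotent — since $\mathbf{1}_{K_o}=\sum_{w\in W_o}\mathbf{1}_{I_o w I_o}$ and $\operatorname{vol}(K_o)=\sum_w q^{\ell(w)}$ with $\operatorname{vol}(I_o)=1$, the correct normalization is $e_o=\frac{1}{W_o(q)}\sum_w T_w$ with no extra $q^{\ell(w)}$ weights inside the sum. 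You hedge with ``suitably normalized,'' but since you yourself identify the normalization as the delicate step, this should be nailed down.

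The paper's affine proof proceeds differently, and the difference is forced. Since $W$ is infinite in the affine setting, the idempotent $e_o$ does not exist in $H(G_+,I)$: the symmetrizer $\sum_{w\in W}q^{\ell(w)}T_w$ is an infinite sum and must be interpreted inside a completion. The paper therefore replaces the idempotent bookkeeping by (i) a ``disassembly'' $S(h_{\lv})=\sum_{w\in W^{\lv}}J_w(\lv)$ obtained group-theoretically through the opposite Iwahori $I^-$, (ii) a recursion for $J_w(\lv)$ via intertwining operators $\mf{T}_a^-$ (Proposition~\ref{recursion}), and (iii) Cherednik's identity $\mc{P}_v=\mf{m}\sum_w\Delta^w[w]$ for the affine symmetrizer $\mc{P}_v\in\qn[W]^\vee$ (Proposition~\ref{proportional}), where $\qn[W]^\vee$ is the completed group algebra needed to make sense of the infinite sum. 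Your approach buys a short, self-contained proof in the finite case, exploiting that $e_o$ lives honestly in the Hecke algebra; the paper's approach is longer but is engineered so that every step survives the passage to the affine (Kac–Moody) setting, where it also accounts for the extra factor $H_0\neq 1$ that has no finite-dimensional counterpart.
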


Note that it is not immediately clear that the right hand side of (\ref{fin:mac}) belongs to  $\CC[\Lv_o]$.  Of course this follows from the theorem as the left hand side of (\ref{fin:mac}) is in $\CC[\Lv_o].$

Since $S_o$ is an algebra map, it sends the identity in the Hecke algebra $\mathbf{1}_{K_o}$ (i.e., the characteristic function $h_{\lv}$ with $\lv=0$) to the identity $1 \in \CC[\Lv_o]^{W_o}.$ Specializing (\ref{fin:mac}) to $\lv=0$ we obtain a non-trivial combinatorial identity \be{fin:mac:iden} S_o(\mathbf{1}_{K_o}) = 1 = \frac{1}{W_o(q^{-1})} \sum\limits_{w\in W_o}
w\Bigg(\frac{\prod\limits_{\alp\in R_{o,+}} 1-q^{-1}e^{-\alp^{\vee}}}{\prod\limits_{\alp\in R_{o,+} } 1-e^{-\alp^{\vee}}}\Bigg),\ee  (see \cite{mac:poin}). We emphasize this point, as the naive analogue of the above identity \emph{fails} in the affine setting.

\subsection{The Affine Case}

The main purpose of this paper is to extend the results described in $\S \S$ \ref{fin-iwahori}-- \ref{fin-mac} to the case of (untwisted) affine Kac-Moody groups, since the results in $\S$ \ref{fin-sph} having been generalized already to the affine setting by the first two authors in \cite{bk}.

\subsubsection{Notations on Loop Groups}

As before, we start with a split, simple, simply connected group  $\bfG_o$. Fix a symmetric, bilinear form $(\cdot, \cdot)$ on the coroot (or coweight) lattice of $\bfG_o$ (which is specified in \S \ref{finrtsys}). For this fixed choice of $(\cdot, \cdot)$ the  polynomial loop group $\bfG_o[t, t^{-1}]$ (i.e. the functor whose points over a ring $R$ are given by $\bfG_o(R[t, t^{-1}])$) admits a non-trivial central extension by $\mathbb{G}_m$ which we denote by $\wt{\bfG}.$ The full affine Kac-Moody group is then $ \bfG :=\mathbb{G}_m \ltimes \wt{\bfG}$ \footnote{The reader should be warned that in the main body of this paper a slightly different (but equivalent) construction is adopted coming from the general theory of Kac-Moody groups}, where $\mathbb{G}_m$ acts by rescaling the loop parameter $t.$ Denote by \be{eta-intro} \eta: \bfG \rr \mathbb{G}_m \ee the projection onto the rescaling parameter. We  choose a pair $\bfB ,\bfB_-$ of opposite Borel subgroups of $\bfG$ (see \S \ref{section-loopgroups} for precise definition of this notion) whose intersection $\bfA = \bfB \cap \bfB_-$ is  equal to the group $\bfA = \mathbb{G}_m \times \bfA_o \times \mathbb{G}_m$ where the first $\mathbb{G}_m$ corresponds to the central direction and the second to the rescaling parameter. Let $R$ (resp. $R^{\vee}$) denote the set of roots (resp. coroots) and $\Pi$ (resp. $\Pi^{\vee}$) the set of simple roots (resp. coroots) of $\bfA$. Recall that the roots (resp. coroots) come in two flavours: the real roots (resp. coroots) shall be denoted by $R_{re}$ (resp. $R_{re}^{\vee}$) and the imaginary ones by $R_{im}$ (resp. $R_{im}^{\vee}$). The minimal imaginary coroot shall be denoted by $\cc,$ and corresponds to the central extension.  Also let $\Lv$ denote the cocharacter lattice of $ \bfA$ instead of $A$ and $\Lv_+$ the set of dominant cocharacters. We have that \be{cowts-intro} \Lv \cong \zee \oplus \Lv_o \oplus \zee, \ee where again the first $\zee$-component corresponds to the center and the last to the loop rescaling. As before, let $\CC[\Lv]$ be the group algebra of $\Lv.$ We denote by $W$ the Weyl group of $G.$ The group $W$ is an infinite Coxeter group,  \footnote{The group $W$ also admits a description as an affine Weyl group associated to $W_o,$ namely $W \cong W_o \rtimes Q_o^{\vee},$ though we shall not use this description in what follows.} which leaves invariant the imaginary roots and coroots. We define the \emph{Tits cone} $X$ as the union $ \cup_{w \in W} w (\Lv_+).$ Unlike  the finite-dimensional case, $X \neq \Lv;$ in terms of the decomposition (\ref{cowts-intro}), the Tits cone is characterized as the subset of all elements $(a,\lv_o,k)\in \zee \oplus \Lv_o \oplus \zee$ such that either $k>0$ or $k=0$ and $\lv_o=0$.

For the groups $\bf{G}$ or $\bf{A}$ we denote by $G^{\vee}$ and $A^{\vee}$ the Langlands dual group (defined over $\C$) {[see \cite{bk} for more details]. If  $\bf{G}_o$ is simply-laced, then} the dual group to its affinization $\bf{G}$ is again an untwisted affine Kac-Moody group. But in  general  the dual group $G^{\vee}$ is a twisted affine Kac-Moody group. To avoid  technical complications that we assume that  $\bfG_o$ simply-laced.

\subsubsection{$p$-adic Loop Groups} \label{intro-Gplus}

The main object  of  interest is the group  $G=\bfG(\calK).$ It was observed in \cite{bk} that one should work with a certain semigroup $G_+ \subset G.$ The importance of  this semigroup was also recognized earlier by Garland \cite{gar:car}
who showed that $G_+ \subset G.$ is the subset of $G$ on which an analogue of the Cartan decomposition holds.

To describe $G_+,$ recall the map $\eta: \bfG \rr \mathbb{G}_m$ from (\ref{eta-intro}) which induces the map \be{eta:Kpts} | \eta | : G \rr \mc{K}^* \stackrel{\valu}{\rr} \mathbb{Z} \ee where the last map is the valuation map $\valu: \mc{K}^* \rr \zee.$ We define  $G_+ \subset G$ as the sub-semigroup of $G$ generated by the following three types of elements:

\begin{enumerate}
\item the central $\calK^*\subset T \subset G$;

\item the subgroup $\bfG(\calO) \subset G$;

\item All elements $g \in G$ such that $ |\eta(g)| >0$.
\end{enumerate}

We define an affine analogue of $K_0$ and $I_0$ as $K:= \bf{G}(\O) \subset G_+$ and $I:= \pi_{\kk}^{-1}(\bfB(\kk)) \subset K$ where $\pi_{\kk}: \bf{G}(\O) \rr \bf{G}(\kk)$ is the natural projection. The following result, proven in \S3, generalizes the Cartan and Iwahori-Matsumoto decompositions of the theory of $p$-adic groups (see e.g., \cite{mac:mad}),

\begin{nprop} \label{intro-doublecosets}There are bijective correspondences between the following sets
\begin{enumerate}
\item[(a)] $\Lv_+$ and the set of double cosets $K \setminus G_+ / K$
\item[(b)] $\mc{W}_X:= W \ltimes X$ and the set of double cosets $I \setminus G_+ / I$ (recall that $X$ was defined to be the Tits cone). \end{enumerate} \end{nprop}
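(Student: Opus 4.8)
The plan is to reduce both statements to the analogous facts for the finite-dimensional group $\bfG_o$ together with a careful analysis of the rescaling and central $\mathbb{G}_m$-directions, using the definition of $G_+$ as the subsemigroup generated by the center $\calK^*$, by $K = \bfG(\calO)$, and by the elements $g$ with $|\eta(g)| > 0$. For part (a), I would first recall (from \cite{gar:car} and \cite{bk}) that $G_+$ admits an affine Cartan decomposition: every $g \in G_+$ lies in some double coset $K \pi^{\lv} K$ with $\lv \in \Lv$. The content to establish is then (i) that one may take $\lv \in \Lv_+$, and (ii) that distinct dominant cocharacters give distinct double cosets. For (i), one uses that the $W$-action on $\Lv$ is realized inside $K$ (the simple reflections $w_1,\dots,w_\ell$ lift to $\bfG_o(\calO) \subset K$, and the affine simple reflection lifts as well), so each $K\pi^\lv K$ with $\lv$ in the Tits cone can be moved to a $K\pi^{\lv'} K$ with $\lv'$ dominant; since $G_+$ is precisely the part of $G$ where the relevant coweights land in the Tits cone (via the characterization of $X$ in terms of the decomposition \eqref{cowts-intro}: $k>0$, or $k=0$ and $\lv_o=0$), this exhausts $G_+$. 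For (ii), separation of double cosets: the rescaling coordinate $k$ is read off by $|\eta|$, which is $K$-bi-invariant, so it is a well-defined invariant of the double coset; on the fibers where $k$ is fixed and positive one can rescale the loop parameter and conjugate to reduce to comparing coweights for the finite group $\bfG_o$ over a larger field, where the classical Cartan decomposition gives injectivity; the central coordinate is detected by the central $\calK^*$, again $K$-bi-invariant.

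For part (b) I would run the same strategy one level finer, replacing $K$ by $I$ and $\Lv_+$ by $\mc{W}_X = W \ltimes X$. The starting point is the affine Bruhat/Iwahori–Matsumoto decomposition for $\bfG$: the Iwahori subgroup $I$ is the preimage of the Borel $\bfB(\kk)$, and one has $\bfG(\calO) = \bigsqcup_{w \in W} I \dot w I$ by the (infinite) Bruhat decomposition in $\bfG(\kk)$, together with $G = \bigsqcup I \pi^{\lv} \dot w I$ in the "positive" range. Combining with part (a) — writing $G_+ = \bigsqcup_{\lv \in \Lv_+} K\pi^\lv K$ and then refining each $K\pi^\lv K$ into $I$-double cosets — gives a surjection from $\{(w,\mu) : w \in W,\ \mu \in X\}$ onto $I\backslash G_+ / I$. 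The indexing by $\mc{W}_X = W \ltimes X$ rather than by $W \times X$ comes out of the usual bookkeeping: $I\pi^\mu \dot w I$ depends on $(w,\mu)$ only through the element $w \ltimes \mu$ of the semidirect product, because translating $\pi^\mu$ past $\dot w$ changes $\mu$ to $w^{-1}\mu$ (or $w\mu$, depending on conventions) — exactly the semidirect-product multiplication. For injectivity one again uses $K$-bi-invariant (hence $I$-bi-invariant) coordinates to pin down the $X$-part up to $W$, then within a fixed $K$-coset reduces to the classical Iwahori–Matsumoto statement for $\bfG_o$ over $\calK((t))$-type fields; alternatively one can invoke the general Bruhat decomposition for Kac–Moody groups over local fields (as developed in the body of the paper, \S3) and simply intersect with the semigroup $G_+$.

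The main obstacle — and the reason this is a genuine proposition rather than a triviality — is controlling the semigroup $G_+$ rather than the full group $G$: the naive Bruhat/Cartan decompositions for a Kac–Moody group over $\calK$ involve the whole lattice $\Lv$ and the whole affine Weyl group, but only the part lying over the Tits cone $X$ actually meets $G_+$, and one must show the decomposition restricts cleanly, i.e. that $K\pi^\lv K \cap G_+ = K\pi^\lv K$ when $\lv \in X$ and is empty otherwise, and likewise for $I$-cosets. This is where the three generating types of $G_+$ must be used: one shows $G_+$ is "closed downward and sideways" in the appropriate sense, using that $|\eta|$ is additive and $K$-bi-invariant, that the center is central, and that the $W$-orbit of any dominant $\lv$ stays in $X$. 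I would handle this by first proving the $K$-bi-invariance of the pair of invariants $(|\eta|,\ \text{central valuation})$, then showing these invariants plus the finite-type Cartan/Iwahori–Matsumoto data form a complete invariant, and finally checking the image of this complete invariant on $G_+$ is exactly $\Lv_+$ (resp. $\mc{W}_X$). The rest is routine once the affine Bruhat decomposition of \S3 is in hand.
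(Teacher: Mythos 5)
Your central strategy — strip off the central and loop-rescaling $\mathbb{G}_m$-coordinates via $K$-bi-invariant invariants and then reduce to the classical Cartan/Iwahori--Matsumoto theory for $\bfG_o$ over a larger field — does not go through, and this is not a matter of missing detail but of the method itself failing. Two concrete obstructions. First, the decomposition $\Lv = \ZZ\cc \oplus \Lv_o \oplus \ZZ\dd$ is not respected by the $W$-action: by formula (\ref{ff}), a translation $t_H \in W$ sends $m\cc + \lv_o + r\dd$ to $(m + (\lv_o,H) - r(H,H)/2)\cc + (\lv_o - rH) + r\dd$, so for $r>0$ the central coordinate and the ``finite'' coordinate $\lv_o$ mix in a level-dependent way. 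There is no $K$-bi-invariant ``central valuation'' or ``finite-type Cartan datum'' that splits the problem into a central piece, a rescaling piece, and a $\bfG_o$-piece. Second, the unipotent subgroups of $\bfG$ (and $K$ itself) are genuinely infinite products of root groups; ``rescaling $t$ and conjugating'' does not compress $K\pi^{\lv}K$ or $I\pi^{\lv}wI$ into double cosets of any finite-dimensional group, so there is no ``classical Cartan decomposition'' to invoke on the fibers of $|\eta|$.

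You also misstate what needs proving. For (a) you cite \cite{gar:car,bk} for the Cartan decomposition but then set out to prove dominance and disjointness anew; those references already give both, and your proposed method for disjointness is the non-working reduction above. The paper's Appendix A in fact reproves the whole of (a) by an intrinsic argument: it introduces the semigroup $G_b$ of elements bounded in every highest-weight $V^\omega$ (using the norms $\|\cdot\|$ of \S\ref{sec-repth}), shows $\pi^{\lv} \in G_b \Leftrightarrow \lv \in -X$ by weight estimates, and proves $G_b = (G'_+)^{-1}$ via a norm-maximization argument (Lemma \ref{unip:cartan}) that feeds into the Iwasawa decomposition together with the finiteness input $U^- \cap KU \subset U^-\cap K$ from \cite{bgkp}. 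None of that reduces to $\bfG_o$.

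For (b) the two actual pieces of content are missing from your proposal. Surjectivity of $\aw_X \to I\backslash G_+/I$ comes from $K = \sqcup_{w\in W} IwI$ plus Lemma \ref{W:I}, whose proof is an explicit rank-one computation (the $\mathrm{SL}_2$-relation (\ref{sl2}) and the dichotomy $w_a I\pi^{\lv}I \subset I\pi^{\lv}I$ or $\subset Iw_a\pi^{\lv}I$ according to the sign of $\la \lv,a\ra$) — not a bookkeeping of the semidirect product. Disjointness of the $I$-double cosets (Lemma \ref{!:ixi}) is proved by first pinning down the $\Lv$-part via $K\pi^{\lv}K \cap K\pi^{\mv}U^- \neq \emptyset \Rightarrow \mv\le\lv$ from \cite{bgkp}, then using $U^-\cap KU \subset K$ and a Bruhat-order argument in $W$; there is no reduction to an Iwahori--Matsumoto statement for $\bfG_o$. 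Finally, your framing ``the naive Bruhat/Cartan decompositions for $G$ involve the whole lattice $\Lv$ and the whole affine Weyl group, and one must show they restrict cleanly to $G_+$'' is backwards: the Cartan and Iwahori--Matsumoto decompositions simply do not exist on all of $G$ (this is the raison d'\^etre of $G_+$), so there is nothing to restrict. Only the Iwasawa decomposition (Theorem \ref{iw:unique}) holds on all of $G$, and it is indexed by $\Lv$, not $\Lv_+$ or $\aw_X$.
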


Part (a) of this Proposition follows form the results in \S \ref{sec-cartan} and Part (b) from those of \S \ref{sec-im}.
\subsubsection{Spherical Hecke Algebras and the Satake Isomorphism}
%\blue{Is it okay now?}
Due to the infinite-dimensionality of $K$ and $I,$ one cannot resort to the usual techniques to define a convolution structure on the space of $I$ or $K$-double cosets on the group. However, it was shown in \cite{bk} an associative algebra structure can still be defined on a certain \emph{ completion} of the space of finite linear combinations of $K$-double cosets of $G_+$. We would like to emphasize that this claim is by no means trivial. The main point is that in order to define an algebra structure, one has to show that  fibers of a certain convolution diagrams are finite. In \emph{loc. cit} this is proved using an algebro-geometric interpretation; alternatively, a more elementary (though not particularly short) proof of this finiteness can be found in \cite{bgkp}; more recently, a generalization of this result to arbitrary symmetrizable Kac-Moody groups was achieved in \cite{gau:rou} by developing the notion of buildings (the so called \emph{hovels}) for these groups.

Denote by $H_{\leq}(G_+, K)$ the {\it spherical Hecke algebra} of $K$-double cosets of $G_+,$ where the subscript $\leq$ denotes that a certain completion (depending on the dominance order $\leq$ on $\Lv$) of the space of finitely supported functions is necessary. The precise definition is reviewed in $\S5.$ Note that the commutative algebra  $H_{\leq}(G_+,K)$ is  unital with $\mathbf{1}_K.$ (the characteristic function of $K.$) as the unit. The algebra $H_{\leq}(G_+, K)$ comes with two additional structures: a grading by non-negative integers which comes from the map $|\eta|$ of (\ref{eta:Kpts}) and a structure of an algebra over the field $\CC((\mathsf{t}))$ of Laurent power series in a variable $\mathsf{t}$ (not to be confused with the loop variable 't'), which comes from the center of $G$.

%----------------------------------------------------------------------------------------------------------------------------
%

The statement of the Satake isomorphism for $G$ is very similar to that for $G_o$.
First, in \cite{bk} the natural analogue of the algebra $\CC[\Lv_o]^{W_o}$ was defined (it had also made its appearance in the literature earlier by Looijenga \cite{loo}). The definition again involves a certain completion $\C_{\leq}[\Lv]$ of group algebra of $\Lv.$ We shall denote the corresponding space of $W$-invariants here by $\CC_{\leq}[\Lv]^{W}.$

This algebra also has natural interpretations  in terms of dual groups: either as the subring  $\CC[\Lv]^W$ of $W$-invariants in a certain completion of the space of polynomial functions on $T^{\vee}$
or as a suitable category $\Rep(G^{\vee})$ of representations of $G^{\vee}$ (stable under tensor product) so that $K_o(\Rep(G^{\vee}) \cong \CC_{\leq}[\Lv]^W$ (see \S \ref{loo}).

The algebra $\CC_{\leq}[\Lv]^W \cong \C_{\leq}(T^{\vee})^W$ is aalso  finitely generated $\ZZ_{\geq 0}$-graded commutative algebra over the field $\CC((\mathsf{t}))$ of Laurent formal power series in the variable $\mathsf{t}$.
The affine Satake isomorphism asserts that
\begin{nthm} \label{aff-satake} \cite{bk}
There is a natural isomorphism of graded  $\CC((\mathsf{t}))$-algebras \be{aff-sat} S: H_{\leq}(G_+,K) \rr \CC_{\leq}[\Lv]^W \cong \CC_{\leq}(T^{\vee})^W. \ee
\end{nthm}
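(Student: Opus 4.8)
This theorem is established in \cite{bk}; my proposal is to recall the structure of that argument, which adapts the classical proof of the Satake isomorphism to the loop-group setting, the one genuinely new feature being that the infinite-dimensionality of $K$ and of the horospherical subgroups forces one to work with the completions $H_{\leq}(G_+,K)$ and $\CC_{\leq}[\Lv]^W$ throughout. \textbf{Step 1 (definition of $S$).} By Proposition \ref{intro-doublecosets}(a) the characteristic functions $h_{\lv}$, $\lv\in\Lv_+$, form a topological basis of $H_{\leq}(G_+,K)$, so it suffices to define $S$ on them and extend by continuous linearity. Writing $U=\bfU(\calK)$ for the $\calK$-points of the unipotent radical $\bfU$ of the Borel $\bfB$, one sets
\[
S(f)=\sum_{\lv\in\Lv} c_{\lv}\Big(\int_{U}f(\pi^{\lv}u)\,du\Big)\,e^{\lv},
\]
where the $c_{\lv}$ are the suitable normalizing powers of $q$ (the affine analogue of $q^{\langle\rho_o,\lv\rangle}$) and the integral over the infinite-dimensional group $U$ is made sense of through a limiting procedure over finite-dimensional approximations. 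Two points must be verified: that this limit stabilizes, and that the resulting coefficient family is supported on a subset of $\Lv$ of the shape permitted in the completion $\CC_{\leq}[\Lv]$. Both use in an essential way the semigroup constraint defining $G_+$ (the condition $|\eta(g)|>0$ together with the central $\calK^{*}$ and $\bfG(\O)$), equivalently the fact that the cocharacters that occur stay within the Tits cone $X$.

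\textbf{Step 2 ($S$ is a graded $\CC((\mathsf{t}))$-algebra map into the invariants).} Multiplicativity is the affine counterpart of the classical computation that the horospherical transform turns convolution into pointwise product; it rests on an Iwasawa-type decomposition of $G_+$ into cells $U\pi^{\lv}K$ and, crucially, on the finiteness of the fibers of the convolution diagrams for $H_{\leq}(G_+,K)$ -- the nontrivial input recalled just before the theorem and established in \cite{bk} (alternatively \cite{bgkp}, \cite{gau:rou}). That the image lands in the $W$-invariants $\CC_{\leq}[\Lv]^W$ is the affine Gindikin--Karpelevich statement of \cite{bk}. Finally, compatibility with the $\ZZ_{\geq0}$-grading coming from $|\eta|$ and with the $\CC((\mathsf{t}))$-structure coming from the central $\calK^{*}\subset T$ is read off directly from the construction, since both structures are transported unchanged by the transform.

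\textbf{Step 3 (bijectivity by triangularity).} One proves the support estimate that the coefficient $c_{\mv}\int_{U}h_{\lv}(\pi^{\mv}u)\,du$ of $e^{\mv}$ in $S(h_{\lv})$ is nonzero only when $\mv\le\lv$ in the dominance order on $\Lv$, with the extreme coefficient ($\mv=\lv$) a nonzero scalar; hence
\[
S(h_{\lv})=c_{\lv}\,m_{\lv}+(\text{lower-order terms}),\qquad m_{\lv}=\sum_{\mu\in W\lv}e^{\mu},
\]
where ``lower-order'' denotes a convergent combination of orbit sums $m_{\mv}$ with $\mv<\lv$. Since the $m_{\lv}$, $\lv\in\Lv_{+}$, form a topological basis of $\CC_{\leq}[\Lv]^W$ (using the description of $X$ via \eqref{cowts-intro}), this unitriangular-up-to-scalars shape shows simultaneously that $S$ is injective and surjective onto $\CC_{\leq}[\Lv]^W$. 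The remaining identification $\CC_{\leq}[\Lv]^W\cong\CC_{\leq}(T^{\vee})^W$ is immediate from the definition of $\CC_{\leq}(T^{\vee})$.

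The main obstacle lies in Steps 1 and 2: there is no Haar measure on the infinite-dimensional groups $U$ or $K$ a priori, and the real content of \cite{bk} is precisely that passing to the semigroup $G_{+}$ -- equivalently, confining cocharacters to the Tits cone -- is exactly what makes both the horospherical transform and the convolution product well-defined once the correct completions are in place. I would also note that the present paper furnishes a second, more algebraic route to the same statement: once the Iwahori--Hecke algebra $H(G_{+},I)$ and its Bernstein-type presentation are available, $H_{\leq}(G_+,K)$ is identified with the spherical direct summand $e * H(G_{+},I) * e$ cut out by the idempotent $e$ attached to $K$ (taken in a suitable completion), and the Bernstein presentation identifies that summand with $\CC_{\leq}[\Lv]^W$, recovering the theorem.
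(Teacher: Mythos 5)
Your reconstruction is correct and follows essentially the same route as the paper's own proof in \S7 (Theorem \ref{sat:isom}): $S$ is defined via the rank-one $\C_{\leq}[\Lv]$-module structure on $M_{\leq}(G,K)$ (the counting formula (\ref{sat:defn:3}) is the discrete avatar of your integral over $U$), $W$-invariance is obtained from the normalized intertwiners $\mf{K}_w$ fixing $\mathbf{1}_K$ (your Gindikin--Karpelevich invocation is the same ingredient), and injectivity comes from the triangular expansion (\ref{sat:inj:1}). The only step you compress is surjectivity, which you present as immediate from unitriangularity against the orbit-sum basis; the paper instead follows Looijenga and explicitly constructs the preimage $h_{\infty}=\sum_{n}a_{n}$ as a series converging in $H_{\leq}(G_+,K)$, since in the completed setting one must check that the formal inverse of the triangular change of basis has admissible (semi-infinite) support. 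That is the same idea you have in mind, but in a topological algebra the invertibility of an infinite unitriangular system requires this verification and deserves to be made explicit.
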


We shall present below a generalization of (\ref{fin-mac}) giving an explicit formula for $S$ on certain basis elements of $H_{\leq}(G_+, K).$

%------------------------------------------------------------------------------------------------------------------
\subsubsection{The Iwahori-Hecke algebra for $G$} \label{aff-iwahori}

As we observed in Proposition \ref{intro-doublecosets}(b), $G_+$ can be written as a disjoint union of $I$-double cosets paramterized by the semigroup $\mc{W}_X:= W \rtimes X$ where $X:= \cup_{w \in W} w(\Lv_+) \subsetneq \Lv$ is the Tits cone of $\Lv.$ The semigroup $\mc{W}_X$ plays a role similar to $\mc{W}_o$ in the usual theory of $p$-adic groups; however it is not a Coxeter group. Although we largely circumvent a systematic study of the combinatorics of this group, but we do introduce here certain orders on this group and show how they arise naturally from a group theoretic point of view.

Denote by $H(G_+,I)$ the \emph{Iwahori-Hecke algebra} associated to $G,$ which is the space of $I$-bi-invariant functions on $G_+$ supported on a union of finitely many double cosets. It has a vector space basis $T_x$ for $x \in \mc{W}_X$ where $T_x$ is the characteristic function of the double coset $I x I, \, x \in \mc{W}_X.$ In this paper we show the following result, which follows from the finiteness theorems in \cite{bk} or \cite{bgkp}.

\begin{nthm} The space $H(G_+,I)$ can be naturally equipped with a convolution structure. \end{nthm}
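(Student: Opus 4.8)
The plan is to follow the construction of the spherical Hecke algebra in \cite{bk}, reducing the whole statement to finiteness properties of fibres of a convolution morphism and then invoking the finiteness theorems of \cite{bk}, \cite{bgkp} or \cite{gau:rou}. By bilinearity it suffices to define the product $T_x * T_y$ for $x,y\in\mc{W}_X$. Since $I\subset K\subset G_+$ and $G_+$ is a sub-semigroup of $G$, the set $IxI\cdot IyI$ lies in $G_+$, and by Proposition~\ref{intro-doublecosets}(b) it is a union of double cosets $IzI$ with $z\in\mc{W}_X$. Imitating the classical formula, for $g\in G_+$ set
\[
(T_x * T_y)(g)\;:=\;\#\bigl\{\,aI \;:\; a\in IxI,\ \ a^{-1}g\in IyI\,\bigr\}\;=\;\#\bigl((IxI/I)\cap(g\,Iy^{-1}I/I)\bigr),
\]
the cardinality of a subset of $G/I$. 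Substituting $a\mapsto i_1a$ and using that $IyI$ is stable under left and right multiplication by $I$, one checks the right-hand side is unchanged under $g\mapsto i_1 g i_2$ ($i_1,i_2\in I$); thus $T_x * T_y$ is $I$-bi-invariant, and once its values are known to be finite we may write $T_x * T_y=\sum_{z\in\mc{W}_X}c_{x,y}^{z}\,T_z$ with $c_{x,y}^{z}=(T_x * T_y)(g_z)$ for any $g_z\in IzI$. The theorem thus reduces to three assertions: \textbf{(A)} every $c_{x,y}^{z}$ is finite; \textbf{(B)} for fixed $x,y$ one has $c_{x,y}^{z}\neq0$ for only finitely many $z$; \textbf{(C)} the resulting bilinear product is associative.

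Statements (A) and (B) are exactly where the finiteness theorems enter. Equivalently, (A) asserts that for $g\in IzI$ the fibre over $g$ of the multiplication map $m\colon IxI\times^{I}IyI\to G_+$ is finite, and (B) that $m$ has non-empty fibres over only finitely many $I$-double cosets. The former is a refinement, at the level of the affine flag variety $G/I$, of the finiteness of fibres of convolution morphisms for the affine Grassmannian $G/K$ proved in \cite{bk}; such Iwahori-level estimates are carried out directly in \cite{bgkp}, and also follow from the theory of hovels of \cite{gau:rou}. For (B) one bounds the possible $z$: additivity of the homomorphism $\eta$ forces $|\eta|(z)=|\eta|(x)+|\eta|(y)$, confining $z$ to a single graded piece; within that piece the translation part of $z$ is bounded above in the dominance order by $\lv+\mv$ (where $\lv,\mv$ are dominant representatives of the translation parts of $x,y$), and the set of dominant coweights $\preceq\lv+\mv$ occurring there is finite — this is precisely the well-foundedness underlying the completion $\CC_{\leq}[\Lv]^W$ of \cite{bk}; finally, for each such translation class the Weyl component of $z$ has bounded length by the Iwahori--Matsumoto relations ($IwI\cdot Iw'I\subseteq\bigsqcup_{\ell(v)\le\ell(w)+\ell(w')}IvI$), and there are finitely many such $v$.

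Granting (A) and (B), associativity (C) is formal. Evaluating $\bigl((T_x * T_y)* T_w\bigr)(g)$ and $\bigl(T_x*(T_y * T_w)\bigr)(g)$ for $g\in G_+$, one sees — the sums being finite by (A),(B) — that both count the set $\{(aI,bI):a\in IxI,\ a^{-1}b\in IyI,\ b^{-1}g\in IwI\}$, the two presentations being matched by the substitution $b=ac$; the $I$-bi-invariance verified above makes this well defined. Extending $*$ bilinearly to all of $H(G_+,I)$ then produces an associative $\CC$-algebra, with the characteristic function $T_1$ of $I$ as unit (indeed $(T_1 * f)(g)=f(g)=(f * T_1)(g)$ directly from the definition).

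The main obstacle, I expect, is making (A) and especially (B) precise: the finiteness theorems of \cite{bk} are naturally formulated for $K$-double cosets and in terms of the geometry of the affine Grassmannian, whereas here one needs them for $I$-double cosets, and $K/I$ — the affine flag variety — is infinite-dimensional, so the passage from a single $K$-orbit to its constituent $I$-orbits is not a formality. One must therefore either re-run the convolution-finiteness arguments at the Iwahori level (as in \cite{bgkp}) or extract the $I$-level statements from the hovel-theoretic framework of \cite{gau:rou}; once the finiteness input is available in that form, the remainder of the argument above is routine.
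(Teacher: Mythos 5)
Your reduction of the theorem to bi-invariance, the finiteness statements (A) and (B), and formal associativity (C) matches the paper's axiomatic framework (Definition \ref{hec-dat} and Proposition \ref{fin:hec:alg}), and your treatment of (C) is fine. However, your argument for (B) has a genuine gap, and in fact the cited justification is backwards.

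You claim that once $|\eta|$ pins down the graded piece, the translation part of $z$ is bounded above by $\lv+\mv$ in the dominance order, and that ``the set of dominant coweights $\preceq\lv+\mv$ occurring there is finite --- this is precisely the well-foundedness underlying the completion $\CC_{\leq}[\Lv]^W$.'' This is false: for any dominant $\xi^{\vee}$ the coweights $\xi^{\vee}-n\cc$ ($n\geq 0$) are all dominant, all $\leq\xi^{\vee}$, and all of the same level, so the set of dominant coweights below $\lv+\mv$ of a fixed positive level is infinite. The completion $\CC_{\leq}[\Lv]$ is introduced \emph{because} this set fails to be finite, not because it is finite --- it permits semi-infinite support. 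If your dominance-order argument for (B) were correct, the same reasoning applied to $K$-double cosets would show that $h_{\lv}\star h_{\mv}$ is finitely supported, and the spherical Hecke algebra would close on finitely supported functions, contradicting the need for $H_{\leq}(G_+,K)$ in \cite{bk}. The genuine content of the Iwahori theorem is precisely that no completion is needed here, and that cannot be extracted from a dominance bound alone.

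What actually proves (B) in the paper is Proposition \ref{prop:iw}(1) via Lemma \ref{half}: writing $I=U_{\O}U^-_{\pi}A_{\O}$ so that $IxIyI=IxU_{\O}U^-_{\pi}yI$, one shows $I\setminus IxU_{\O}$ and $U^-_{\pi}yI/I$ are finite. The key mechanism is the decomposition $U_{\O}=U^{\sigma}_{\O}U_{\sigma,\O}$ (with $\mv=\sigma^{-1}\lv$, $\lv$ dominant), where $\pi^{\mv}$ conjugate-normalizes $U^{\sigma}_{\O}$ into $U^{\sigma}_{\O}$, so all the ``infinite-dimensionality'' is absorbed back into $I$, and the leftover group $U_{\sigma,\O}$ is a \emph{finite} product of root subgroups. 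It is this finite-dimensionality of the deficiency of $\pi^{\mv}$-conjugation, not a bound in the dominance partial order, that produces the finite union $\bigcup_i Iz_iI$. Your sketch would need to be replaced by this (or an equivalent) argument; the bound $z\preceq\lv+\mv$ is true but insufficient.

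As a smaller point, for (A) you defer to \cite{bgkp} and \cite{gau:rou}; the paper in fact obtains fiber-finiteness \emph{indirectly}, as a formal consequence of (B) together with the module finiteness axioms (M1), (M2) via the associativity trick of Proposition \ref{hecke:mod:simple} / \ref{si:hecke:mod:simple}, so (A) is the easy part once (B) and the module statements are in place.
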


%\green{ The next paragraph is too vague}
%\blue{it is okay now?}

Note the difference from the spherical case, where the convolution was only defined on a completion of some space of finitely-supported functions.

In the finite-dimensional case, one has two presentations for the algebra $H(G_o, I_o)$ as was described in \S \ref{fin-iwahori}, one in terms of the basis $\{ T_x \}_{x \in \mc{W}_o}$ and the relations $\textbf{IM 1, IM 2}$ , and the other in terms of a basis $\{ \Theta_{\lv}, T_w \}_{\lv \in \Lv_o, w \in W_o}$ subject to the relations \textbf{B1, B2, B3}. We do not know how to generalize the former, but $H(G_+, I)$ does admit a description similar to the latter. To describe this it, recall that we have from (\ref{cowts-intro}) that  $\Lv=\ZZ \oplus\Lv_o \oplus \ZZ.$ Let $\mathbb{H}$ denote the algebra generated by elements $\Theta_{\lv}$, $\lv \in \Lv$ and $T_w$ for $w\in W \subset \mc{W}_X$ with relations \textbf{B 1}, \textbf{B 2}, \textbf{B 3} as in \S \ref{fin-iwahori}. The algebra $\mathbb{H}$ is $\ZZ$-graded; this grading is defined by setting
\be{degs}
\deg T_w=0;\quad \deg \Theta_{(a,\lv,k)}=k \text{ where } (a, \lv, k) \in \Lv = \zee \oplus \Lv_o \oplus \zee.
\ee
We denote by $\mathbb{H}_{k}$ the space of all elements of degree $k$ in $\mathbb{H}$. Note that $\mathbb{H}_{0}$ is a subalgebra
of $\mathbb{H}$, which is isomorphic to Cherednik's double affine Hecke algebra.

On the other hand, set
$$
\mathbb{H}_+:=\CC\<T_w\>_{w\in W}\oplus\Bigg(\bigoplus\limits_{k>0}\mathbb{H}_{\aff,k}\Bigg).
$$
It is easy to see that $\mathbb{H}_+$ is just generated by elements $\Theta_{\lv}$, $\lv \in X$ and $T_w$, $w\in W$. The following is one of the two main results of this paper:
\begin{nthm}\label {aff-iw-theorem}
The algebra $H(G_+,I)$ is isomorphic to the algebra $\mathbb{H}_+.$
\end{nthm}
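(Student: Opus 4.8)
The plan is to construct explicit maps in both directions between $H(G_+,I)$ and $\mathbb{H}_+$ and check they are mutually inverse algebra homomorphisms. First I would fix, for each $\lv\in\Lv_+$, a representative $\pi^{\lv}\in G_+$ lifting $\lv$, and more generally use Proposition \ref{intro-doublecosets}(b) so that the characteristic functions $T_x$, $x\in\mc{W}_X$, form a $\CC$-basis of $H(G_+,I)$. The definition of the candidate map $\Phi\colon \mathbb{H}_+\to H(G_+,I)$ proceeds in two parts: for $w\in W$ send $T_w\mapsto T_w$ (characteristic function of $IwI$), and for $\lv\in X$ set $\Theta_{\lv}\mapsto$ the appropriately normalized characteristic function of the double coset $I\pi^{\lv}I$, with a power-of-$q$ normalization chosen so that $\Theta_{\lv}\Theta_{\mv}\mapsto\Theta_{\lv+\mv}$ holds on the nose (this is forced by relation \textbf{B 2} and is the usual Bernstein normalization $\Theta_{\lv}=q^{-\la\rho,\lv\ra}T_{t_{\lv}}$ for $\lv$ dominant, extended multiplicatively). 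One then must verify that relations \textbf{B 1}, \textbf{B 2}, \textbf{B 3} are satisfied by the images; \textbf{B 1} is immediate from the braid/quadratic relations for the finite Weyl group part $W$ inside the Iwahori-Hecke algebra, \textbf{B 2} is the normalization just discussed, and \textbf{B 3} — the Bernstein–Lusztig relation — is the substantive identity to be checked.

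The key technical input is that the relation \textbf{B 3}, which is a rank-one computation in the finite-dimensional theory, reduces to the same rank-one computation here. Concretely, I would argue that for a fixed simple reflection $w_i$ (with $i$ ranging over the simple reflections of the \emph{affine} Weyl group $W$, i.e. $i=1,\dots,\ell+1$) and a cocharacter $\lv$, the product $\Theta_{\lv}T_{w_i}$ inside $H(G_+,I)$ can be computed by passing to the rank-one parabolic subgroup (or rank-one Levi) attached to the coroot $a_i^{\vee}$; all the relevant double cosets and their structure constants live inside a copy of $\SL_2$ or $\PGL_2$ over $\calK$, so the computation is literally the classical one and yields exactly \eqref{bernrel-intro}. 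This uses the Iwahori factorization of $I$ with respect to that rank-one subgroup, and the fact that the convolution structure on $H(G_+,I)$ — whose mere existence is the content of the preceding theorem, relying on the finiteness results of \cite{bk}, \cite{bgkp} — restricts compatibly to these subgroups. Having checked \textbf{B 1}--\textbf{B 3}, $\Phi$ is a well-defined algebra homomorphism $\mathbb{H}_+\to H(G_+,I)$.

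For surjectivity and injectivity I would show $\Phi$ carries a spanning set to a basis. The standard Bernstein-presentation argument gives that $\mathbb{H}$ (hence $\mathbb{H}_+$) is free as a module over its commutative subalgebra $\CC[\Lv]$ (resp. $\CC[X]$) with basis $\{T_w\}_{w\in W}$; thus $\{\Theta_{\lv}T_w\}_{\lv\in X,\,w\in W}$ spans $\mathbb{H}_+$. I would then prove that $\Phi(\Theta_{\lv}T_w)$, suitably re-expanded, is upper-triangular with invertible leading coefficient with respect to the basis $\{T_x\}_{x\in\mc{W}_X}$ of $H(G_+,I)$, for an appropriate order on $\mc{W}_X$ — precisely the order alluded to in \S\ref{aff-iwahori} as arising group-theoretically. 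Since every $x\in\mc{W}_X$ is uniquely $t_{\lv}w$ with $\lv\in X$, $w\in W$, and the double coset $I t_{\lv} w I$ has a Bruhat-type decomposition in terms of shorter cosets, this triangularity follows from the length/positivity bookkeeping in $G_+$ (Garland's Cartan decomposition and the Iwahori-Matsumoto structure). Triangularity simultaneously gives injectivity (the images are linearly independent) and surjectivity (they span), so $\Phi$ is an isomorphism.

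The main obstacle I anticipate is the verification of \textbf{B 3}: in the affine Kac-Moody setting one must be careful that the "rank-one reduction" is legitimate — the subgroup generated by the relevant real-root subgroups $U_{\pm a_i^{\vee}}$ together with the torus genuinely behaves like $\SL_2(\calK)$ or $\PGL_2(\calK)$, that the Iwahori $I$ induces an Iwahori there, and that the convolution product computed inside $G_+$ agrees with the one computed inside this subgroup (no "extra" double cosets appear because of the semigroup constraint $|\eta|\ge 0$). Handling the affine simple reflection $w_{\ell+1}$, whose associated coroot involves the loop rescaling direction $\cc$, requires checking the normalization and the grading \eqref{degs} are consistent — this is where the failure of $X=\Lv$ and the passage from $\mathbb{H}$ to $\mathbb{H}_+$ genuinely matters, and is the point demanding the most care.
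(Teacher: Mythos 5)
The central gap is in your definition of $\Phi(\Theta_{\lv})$ for non-dominant $\lv\in X$. You propose $\Theta_{\lv}\mapsto$ ``the appropriately normalized characteristic function of $I\pi^{\lv}I$'', with the normalization ``$\Theta_{\lv}=q^{-\la\rho,\lv\ra}T_{t_{\lv}}$ for $\lv$ dominant, extended multiplicatively.'' Neither half of this works in the affine Kac--Moody setting. First, $\Phi(\Theta_{\lv})$ for non-dominant $\lv\in X$ cannot be a scalar multiple of a single characteristic function: already in the finite-dimensional theory the Bernstein element $\Theta_{\lv}$ with $\lv$ non-dominant is a genuine linear combination of $T_x$'s, and the same is true here. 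Second, and more fatally, the ``multiplicative extension'' in the finite-dimensional theory proceeds by writing $\lv=\lv_1-\lv_2$ with $\lv_1,\lv_2$ dominant and setting $\Theta_{\lv}=\Theta_{\lv_1}\Theta_{\lv_2}^{-1}$, which requires $T_{\pi^{\lv_2}}$ to be invertible. But $H(G_+,I)$ is built from the {\em semigroup} $G_+$, and $\pi^{-\lv_2}\notin G_+$ for $\lv_2$ dominant nonzero; accordingly $T_{\pi^{\lv_2}}$ has no inverse in $H(G_+,I)$. (This is exactly why the target is $\mathbb{H}_+$ and not all of $\mathbb{H}$.) So there is no ``multiplicative extension'' available, and you have not actually defined the map on the generators $\Theta_{\lv}$, $\lv\in X$ non-dominant.

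What the paper does instead (\S\ref{is-daha}--\S\ref{sec-ind}) is to {\em construct} the elements $\theta_{\mv}\in H(G_+,I)$ for $\mv\in X$ by a finite recursion that stays entirely inside the semigroup: starting from $\theta_{\lv}=q^{-\la\rho,\lv\ra}T_{\pi^{\lv}}$ for dominant $\lv$, one passes chamber by chamber across simple walls using the Bernstein--Lusztig recursion \eqref{theta:ind}, which expresses $\theta_{\mv}$ for $\mv$ in a length-one-deeper chamber in terms of previously constructed $\theta$'s and $T_a^{\pm 1}$ (and $T_a$ {\em is} invertible, since $w_a\in W\subset\mc{W}_X$ and $w_a^{-1}=w_a$). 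The well-definedness of this recursion — independence of the order of wall-crossings — is then proved not algebraically but by showing (Propositions \ref{tp-const}, \ref{indep}, using the intertwiners of \S\ref{is-intw} and the faithfulness Proposition \ref{faith}) that every $\theta^{\bullet}_{\mv}$ so constructed acts on the generic principal series $M(G,I)$ by $\ve_1\star\theta^{\bullet}_{\mv}=e^{\mv}\circ\ve_1$, pinning it down uniquely. Your ``rank-one reduction'' intuition for \textbf{B 3} is pointing at the same $\SL_2$ computation (Lemma \ref{I:rk1}), but in the paper that computation is an input to the {\em construction} of $\theta_{\mv}$, not a verification performed after the fact. Without the inductive construction, you have no candidate for $\Phi(\Theta_{\lv})$ to check \textbf{B 3} against, and this is precisely the step ``demanding the most care'' that you flag but do not resolve.

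Your argument for bijectivity via triangularity with respect to the order $\preceq$ on $\mc{W}_X$ is plausible and close in spirit to the faithfulness argument in Proposition \ref{faith}, but the paper handles surjectivity differently: it shows $\ve_1\star H(G_+,I)\subset M(G_+,I)\subset\ve_1\star H'$ directly, using the equality of the Iwahori--Matsumoto and Iwasawa semigroups, and deduces $H'=H(G_+,I)$ from faithfulness; injectivity is via the linear-independence argument of Lusztig. These details are repairable in your framework, but only after the construction of the $\theta_{\mv}$ is supplied.
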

In particular, the algebra $H(G_+,I)$ is closely related to Cherednik's double affine Hecke algebra. We would like to note that another relation between the double affine Hecke algebra and the group $G$ was studied by Kapranov \cite{kap}. In \emph{op. cit}, the double affine Hecke algebra was constructed as an algebra of intertwining operators on some space built from $G.$ As such, it naturally comes equipped with a Bernstein type presentation. Our algebra $H(G_+, I)$  is first constructed as a convolution algebra of double cosets, and then shown to admit a Bernstein-type description by studying certain intertwining operators. Hence, by definition the algebra $H(G_+,I)$ is endowed with a natural basis corresponding to characteristic
functions of double cosets of $I$ on $G_+$. It is natural to conjecture that this basis has a purely algebraic or combinatorial description as well.

\subsubsection{Affine Macdonald formula}

As observed in Proposition \ref{intro-doublecosets} (see also Theorem \label{cartan-thmdefn}), the semi-group $G_+$ is equal to the disjoint union of $K$-double cosets $K \pi^{\lv} K, \, \lv \in \Lv_+.$ Denote again by $h_{\lv} \in H(G, K)$ the characteristic function of the corresponding double coset. Our second main result is an explicit formula for $S(h_{\lv})$ which generalizes (\ref{fin-mac}) above.

To state it, let $\CC_{v}:= \CC[[v]]$ denote the ring of Taylor series in the formal variable $v.$ Define $\CC_{v, \leq}[\Lv]:= \CC_{v} \otimes_{\CC} \CC_{\leq}[\Lv].$ The element \be{delta-intro} \Delta := \prod_{a \in R_+} \left( \frac{ 1 - v^{2} e^{-a^{\vee}} }{1 - e^{-a^{\vee} }} \right) ^{m(a^{\vee})} \ee where $m(a^{\vee})$ is the multiplicity of the coroot $a^{\vee}$ may be regarded as an element in $\CC_{v, \leq}[\Lv]$ by expanding each rational function in negative powers of the dimensions of the corresponding  coroot space. For $ w \in W$ we define $\Delta^w$ to be the expansion of the product $ \prod_{a \in R_+} \left( \frac{ 1 - v^{2} e^{-wa^{\vee}} }{1 - e^{-wa^{\vee} }} \right) ^{m(a^{\vee})}$ in negative powers of the coroots.  One can then show that the following element
\be{hlam}
H_{\lam^{\vee}}=\frac{v^{ -2\langle \rho, \lambda^{\vee} \rangle}}{W_{\lam^{\vee}}(v^{2}) }\sum_{w \in W} \Delta^w e^{w \lambda^{\vee}}
\ee
lies in $\CC_{v, \leq}[\Lv],$ where $\rho$ is the element of $\Lv$ such that $\la \rho,a _i^{\vee}\ra =1$
for any simple coroot $a_i^{\vee} \in \Pi^{\vee}$. The following is the other main new result of this paper,
%--------------------------------------------------------------------------------------------
\begin{nthm} \label{aff-mac} The element $\frac{H_{\lv}}{H_0}$ lies in the ring $\CC[v^{2}] \otimes_\C \C_{\leq}[\Lv],$ and its specialization at ${v^2=q^{-1}}$ is equal to $S(h_{\lv}).$
%{should we state explicitly that the specialization is $q$-finite as in \S 6.2.2}
\end{nthm}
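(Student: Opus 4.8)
The plan is to follow the classical route by which Macdonald's formula is derived in the finite-dimensional case, namely: (i) prove the affine Iwahori-Matsumoto/Bernstein presentation of $H(G_+,I)$ (Theorem \ref{aff-iw-theorem}), (ii) construct an explicit algebra homomorphism from the Iwahori-Hecke algebra $H(G_+,I) \cong \mathbb{H}_+$ to the spherical Hecke algebra $H_\leq(G_+,K)$ compatible with the Satake map, realized concretely by the averaging/projection operator $\mathbf{1}_K \ast (-) \ast \mathbf{1}_K$, and (iii) compute the image of $h_{\lv}$ under Satake by expressing it in terms of the $\Theta_{\mv}$ and $T_w$ and then applying the Bernstein relation \textbf{B 3} to perform the symmetrization. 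Concretely, one writes $\mathbf{1}_K = \big(\sum_{w\in W} v^{\ell(w)} T_w\big)/W(v^2)$-type expression — more precisely, in the completed setting one uses that $\mathbf{1}_K$ corresponds to a suitable idempotent built from the $T_w$, $w \in W$ — and then $h_{\lv} = \mathbf{1}_K \ast \Theta_{\lv} \ast \mathbf{1}_K$ up to normalization. Pushing $\Theta_{\lv}$ past the symmetrizer using \textbf{B 3} iteratively produces exactly the sum $\sum_{w\in W} \Delta^w e^{w\lv}$ with the correction factors $\frac{1-v^2 e^{-a^\vee}}{1-e^{-a^\vee}}$ appearing one simple coroot at a time, and the stabilizer $W_{\lv}$ accounts for the overcounting, yielding $H_{\lv}$ as in (\ref{hlam}).

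The first technical point to settle is convergence: unlike the finite case, $W$ is infinite and $X \subsetneq \Lv$, so all manipulations must take place in the completions $\CC_\leq[\Lv]$ and $H_\leq(G_+,K)$, and one must check that the infinite sum $\sum_{w\in W}\Delta^w e^{w\lv}$ actually lies in $\CC_{v,\leq}[\Lv]$. This is where the Tits cone $X$ and the dominance-order completion are essential: for $\lv \in \Lv_+$, the weights $w\lv$ together with the negative-coroot expansions of $\Delta^w$ all lie in a translate of the "negative cone," and one shows the multiplicity of any fixed weight is finite. The reference \cite{bk} already establishes that $\CC_\leq[\Lv]^W$ is well-defined and that $S$ is an isomorphism, so the burden is to verify that $H_{\lv}$ as written is a legitimate element and that the computation above takes place validly inside these completed rings; the grading by $|\eta|$ (the last $\zee$-factor in (\ref{cowts-intro})) is the key finiteness tool, as in each fixed degree $k$ the relevant sums are genuinely finite.

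The remaining steps are: (iv) identify the scalar normalization, showing $v^{-2\la\rho,\lv\ra}/W_{\lv}(v^2)$ is forced by matching the leading term $e^{\lv}$ (the double coset $K\pi^{\lv}K$ contains a unique $K$-orbit of "top" weight, with the power of $q$ read off from $\la\rho,\lv\ra$ exactly as in the finite case, using the element $\rho\in\Lv$ with $\la\rho,a_i^\vee\ra = 1$); (v) perform the specialization $v^2 = q^{-1}$, checking that $\frac{H_{\lv}}{H_0}$, a priori an element of $\CC_v \otimes \CC_\leq[\Lv]$, in fact has coefficients in $\CC[v^2]$ (i.e. is polynomial, not merely a power series, in $v^2$) so that the specialization is legitimate — this follows because the left-hand side $S(h_{\lv}) \in \CC_\leq[\Lv]$ is manifestly $v$-independent and equals the specialization, so the coefficients of $H_{\lv}/H_0$ in each weight must terminate. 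The identity $H_{\lv}/H_0|_{v^2=q^{-1}} = S(h_{\lv})$ then reduces, via the Satake isomorphism and the Bernstein presentation, to the computation that $\mathbf{1}_K \ast h_{\lv} \ast \mathbf{1}_K$-type element maps to the symmetrized expression — i.e., it is an algebraic identity in $\mathbb{H}_+$.

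The main obstacle I expect is \textbf{not} the formal "push $\Theta_{\lv}$ through the symmetrizer" calculation, which is a direct affine transcription of Macdonald's argument, but rather the \emph{convergence and well-definedness bookkeeping}: one must work throughout in completions where infinite sums over $W$ are being rearranged, the projector $\mathbf{1}_K$ is itself only defined via a limiting/completion procedure on the $I$-side (note the paper emphasizes that $H(G_+,I)$ itself is an honest convolution algebra on finitely-supported functions, whereas $H_\leq(G_+,K)$ requires completion, so the map between them is subtle), and the passage $H(G_+,I) \to H_\leq(G_+,K)$ must be shown to extend continuously and to intertwine with $S$. In particular, one must be careful that the "naive analogue" of the identity (\ref{fin:mac:iden}) fails — so $H_0$ is genuinely nontrivial and does not equal $1$ — which is precisely why the theorem is stated for the ratio $H_{\lv}/H_0$ rather than for $H_{\lv}$ directly; getting this normalization right, and proving $H_0 \neq 1$ with an explicit product formula, is the delicate new phenomenon relative to the finite-dimensional case.
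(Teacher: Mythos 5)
Your route is genuinely different from the paper's, and as sketched it has a real gap. The paper's proof (Theorem \ref{sph:mac}, \S\ref{sph:section-proof}) does \emph{not} pass through the Bernstein presentation of $H(G_+,I)$ or through an idempotent $\mathbf{1}_K$ inside a completion of the Iwahori--Hecke algebra. Instead it runs a four-step argument: (1) a purely group-theoretic \emph{disassembly} of $S(h_{\lv})$ into pieces $J_w(\lv)$ indexed by $W^{\lv}$, obtained from a bijection between $K$-fibers and $I^-$-fibers (using the \emph{opposite} Iwahori subgroup $I^-$, not $I$); (2) a \emph{recursion} for $J_w(\lv)$ in $w$, proved with the intertwining operators $\mf{T}_a^-$ acting on the module $M(G,I^-)$, not by manipulating \textbf{B 3} in $\mathbb{H}_+$; (3) a separate, purely formal identity in the completed group ring $\qn[W]^{\vee}$ --- Cherednik's proportionality $\mc{P}_v=\mf{m}\sum_w \Delta^w[w]$ --- which relies on the nontrivial fact (Lemma~\ref{P:defn}, cited from \cite{cher:ma}) that the affine symmetrizer $\mc{P}_v=\sum_{w\in W}v^{\ell(w)}\T_w$ is a well-defined $v$-finite element; (4) a \emph{reassembly} matching the recursion of step 2 to the DAHA combinatorics of step 3 via Proposition~\ref{alg:padic}, followed by evaluation at $\lv=0$ to pin down $\mf{m}=H_0^{-1}$.

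The gap in your plan is in exactly the place you call unproblematic: "the formal push $\Theta_{\lv}$ through the symmetrizer calculation, which is a direct affine transcription of Macdonald's argument." In the affine case that calculation is \emph{not} a transcription of the finite-dimensional one, because the symmetrizer $\sum_{w\in W}v^{\ell(w)}T_w$ is an infinite sum over an infinite Coxeter group and does not converge term-by-term to anything meaningful without Cherednik's $v$-finiteness result; the proportionality of that sum to $\sum_w \Delta^w[w]$ (with a $W$-invariant factor, which is ultimately $H_0$ up to normalization) is itself a theorem. You flag "convergence bookkeeping" and $H_0\neq 1$ as delicacies, but you do not identify the key lemma that makes the symmetrizer sensible, nor do you have a substitute for the group-theoretic recursion on $J_w(\lv)$ that the paper uses to anchor the formal computation. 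Two further issues: (a) $\mathbf{1}_K$ does not live in $H(G_+,I)$ as constructed --- the paper goes out of its way to keep $H(G_+,I)$ uncompleted --- so the idempotent you want to average against requires constructing and analyzing a new completion of the Iwahori--Hecke algebra, none of which is set up in the paper; (b) $\mathbf{1}_K\ast(-)\ast\mathbf{1}_K$ is not an algebra homomorphism (it is a linear projection to $\mathbf{1}_K H\mathbf{1}_K$), so step (ii) as stated is incorrect even modulo the completion issue. What the paper buys by using $I^-$ and intertwiners on $M(G,I^-)$ is precisely that all group-theoretic manipulations stay inside honest finitely-supported convolution algebras, and the infinite symmetrization is isolated into a self-contained, citable algebraic statement about $\qn[W]^{\vee}$.
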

%------------------------------------------------------------------------------------------

The proof is relies on  a study of the combinatorics of  the Iwahori-Hecke algebra (i.e., essentially combinatorics for the double affine Hecke algebra). In the finite-dimensional case the second equality of the identity (\ref{fin:mac:iden}) ensures that the analogue of $H_0$ is equal to $1.$ However, in the affine setting \be{fail} H_0 = \frac{1}{ W(v^2)} \sum_{w \in W} \Delta^w \neq 1, \ee although the analogue of the first equality in (\ref{fin:mac:iden}) must still hold. This explains the reason why we divide by $H_0$ in Theorem \ref{aff-mac}.  Observe that the function $H_0$ was studied by Macdonald in \cite{mac:formal} using the works of Cherednik. Macdonald has shown that $H_0$ has an infinite product decomposition. For example, when $G_o,$ the underlying finite-dimensional group of $G$ is of simply laced type, the Macdonald-Cherednik formula reads as follows:
\be{mac2003}
H_0=\prod\limits_{i=1}^{\ell}\prod\limits_{j=1}^{\infty} \frac{1-v^{2 m_i}e^{-j\cc}}{1-v^{2(m_i+1)}e^{-j\cc}},
\ee
where $\cc$ was the minimal positive imaginary coroot and $m_1,\cdots,m_{\ell}$ are the exponents of $G_o$ (defined  for example via (\ref{poin:fin}).  A similar product decomposition for $H_0$ exists for any $G_o,$ not necessarily simply-laced (see \cite{bfk}).

\subsection{Acknowledgements}A.B. was partially supported by the NSF grant DMS-1200807 and by Simons Foundation.
M.P. was supported by an NSF Postdoctoral Fellowship, DMS-0802940 and an University of Alberta startup grant while this work was being completed. D.K. was partially supported by the European Research Council grant 247049. We would like to thank I. Cherednik, P. Etingof, H. Garland, and E. Vasserot for useful discussions.

\section{Basic Notations on Groups and Algebras}

\subsection{Lie Algebras} \label{sec:loopalg}

\tpoint{Finite Dimensional Notations}  \label{finrtsys} Let $\mf{g}_o$ be a simple, simply-laced, split Lie algebra over $k$ of rank $\ell.$ In general the index $o$ will denote objects associated to a finite-dimensional root system. Choose a Cartan subalgebra $\mf{h}_o \subset \mf{g}_o$ and denote the set of roots with respect to $\mf{h}_o$ by $R_o.$ Choose a Borel subalgebra $\mf{b}_o$ and denote by  $\Pi_o = \{ \alpha_1, \ldots, \alpha_\ell \}$  the simple roots of $R_o.$ Let $\f{h}_o^*$ denote the algebraic dual of $\f{h}_o$ and denote the natural pairing $\f{h}_o^* \times \f{h}_o \rr k$ by $\la \cdot, \cdot \ra$. Let $\theta$ denote the highest root. Denote by $(\cdot, \cdot)$ the Killing form on $\f{h}_o,$ which induces an isomorphism $\psi: \f{h}_o \rr \f{h}_o^*.$ We continue to denote the induced form on $\f{h}_o^*$ by $(\cdot, \cdot),$  normalized so that $(\theta, \theta)=2.$ For each root $\alpha \in R_o$ denote the corresponding coroot by  $\alpha^{\vee}:= \frac{2}{(\alpha, \alpha)}\psi^{-1}(\alpha) \in \f{h}_o.$ For each $\alpha \in R_o$ we let $w_{\alpha}: \f{h}_o^* \rr \f{h}_o^*$ be the corresponding reflection, and denote by $W_o$ the Weyl group generated by the reflections $w_{\alpha_i}$ for $i=1, \ldots, \ell.$ Let $\Lambda_o \subset \f{h}_o^*$ denote the weight lattice of $\f{g}_o,$ defined as the set of $\lambda \in \f{h}_o^*$ such that $\la \lambda, \alpha_i^{\vee} \ra \in \zee$ for $i=1, \ldots, \ell.$  We define $\rho_o \in \Lambda$ by the condition that $\la \rho, \alpha_i \ra =1 $ for $i=1, \ldots, \ell.$ We let $Q_o \subset \mf{h}_o^*$ denote the root lattice, and observe that $Q_o \subset \Lambda_o.$ Dual to these notions, we denote by $\Lambda_o^{\vee}$ and $Q_o^{\vee}$ the coweight and coroot lattice of $\f{g}_o$ in the usual way.

\tpoint{Affine Lie Algebras} For a field $k,$ we  denote  by $\f{g}$ the affinization of the Lie algebra $\f{g}_o.$ As a vector space $\f{g}:= k \dd \oplus \mf{g}'$ where $\dd$ is the degree derivation and $\f{g}'$ is the one-dimensional central extension of the the loop algebra $ \f{g}_o \otimes_k k[t, t^{-1}]$ which is specified by the form  $(\cdot, \cdot)$ defined in \S  \ref{finrtsys}. Let $\mf{h} \subset \mf{g}$ denote a Cartan subalgebra containing the finite-dimensional Cartan $\mf{h}_o,$ the degree derivation $\dd,$ and the center of $\mf{g}.$  One has a direct sum decomposition \be{h:sum} \mf{h}:= \mf{h}_o \oplus \mf{h}_{cen} \oplus k \dd \ee where $\mf{h}_{cen}$ is the one dimensional $k$-vector space containing the center. One may equip $\mf{h}$ with a symmetric, non-degenerate bilinear form on $(\cdot | \cdot)$ as in \cite{kac}. Let $\f{h}^*$  be the algebraic dual of $\f{h}$. As before we denote by  \be{dualpair} \la \cdot, \cdot \ra \,: \f{h}^* \times \f{h} \rightarrow k\ee the natural pairing.

 Let $R$ be the set of roots of $\f{g},$ and $R^{\vee}$ the set of coroots. We denote the set of simple roots of $\mf{g}$  by  \be{simp:aff:roots} \Pi \ \ = \ \  \{ a_1, \ldots, a_{\ell+1} \}  \ \ \subset \ \  \f{h}^*\,. \ee Similarly, we write \be{simp:aff:coroots} \Pi^{\vee}  \ \ = \ \  \{ a_1^{\vee}, \ldots, a_{\ell+1}^{\vee} \}  \ \ \subset \ \ \f{h} \ee for the set of simple affine coroots.  Note that the simple roots $a_i: \f{h} \rr k$ satisfy the relations\begin{equation}\label{new2.16}
\aligned \la a_i , \dd \ra & \ \ = \ \   0  \ \ \text{ for } \ i \, =\, 1,\, \ldots, \,\ell \\ \text{and} \ \ \  \ \  \
\la a_{\ell+1} , \dd \ra &\ \ = \ \   1\,.
\endaligned
\end{equation}
Each root $\alpha \in R_o$ extends to an element of $\mf{h}^*$ which we denote by the same symbol $\alpha$ by requiring that $\la \alpha, X \ra =0,$ for $X \in \mf{h}_{cen} \oplus k \dd.$ Let $\delta \in \f{h}^{*}$ be the minimal positive imaginary root defined by the conditions, \be{delta:def}  \begin{array}{lcr} \la \delta, X \ra =0  \text{ for } X \in \f{h}_{cen} \oplus \f{h}_o & \text{ and } \la \delta ,\dd \ra =1. \end{array} \ee As is well known, we have $a_i =\alpha_i$ for $i=1, \ldots, l$ and $a_{\ell+1} = -\theta + \delta.$
%
%We fix our normalizations \green{which normalizations?} so that $a_{\ell+1}^{\vee} = -\theta^{\vee} + \cc$ where $\theta^{\vee}$ is the coroot corresponding to $\theta.$

We define the affine root lattice  as  \be{rootlat} Q  \ \ = \ \  \zee a_1  \ + \  \cdots  \ +  \ \zee a_{\ell+1} \ee and the affine coroot lattice as  \be{corootlat} Q^{\vee} \ \  = \ \  \zee a_1^{\vee}  \ + \  \cdots  \ + \  \zee a_{\ell+1}^{\vee} \,.\ee
We shall denote the subset of non-negative integral linear combinations of the affine simple roots  (respectively, affine simple coroots) as $Q_+$ (respectively,  $Q^{\vee}_+$).  The integral weight lattice is defined  by
\begin{equation}\label{integralweightlatticedef}
\Lambda  \ \  :=  \ \  \{ \lambda \in \f{h}^* \ | \   \langle \lambda, a_i^{\vee} \rangle \,  \in \,  \zee  \, \ \text{ for } \, i\,=\, 1, \ldots, \ell+1  \text{ and } \la \lambda, \dd \ra \in \zee \}\,.
\end{equation}
%\green{You have to fix $<\Lambda_i ,d>$} %\blue{fixed}

 The lattice $\Lambda$ is spanned by $\delta$ and the fundamental affine weights  $\Lambda_1, \ldots, \Lambda_{\ell+1},$ which are defined by the conditions that $\la \Lambda_i, \dd \ra =0 $ for $i=1, \ldots, \ell+1$ and  \be{aff:wts} \langle \Lambda_i, a_j^{\vee} \rangle \ \  = \ \ \left\{
                                                          \begin{array}{ll}
                                                            1\,, & i\,=\,j\, \\
                                                            0\,, & i\,\neq\,j\,
                                                          \end{array}
                                                        \right. \ \ \
 \text{ for ~} 1 \,\le \, i,j\, \le \, \ell+1 \,.\ee The dual space $\f{h}^*$ is spanned by $a_1, a_2, \cdots, a_{\ell+1}, \Lambda_{\ell+1}.$ We also let $\cc \in \mf{h}$ be the minimal imaginary coroot, characterized by the conditions, \be{c:char} \begin{array}{lcr} \la a , \cc \ra = 0, \,  a \in R &  \text{ and } &   \la \Lambda_{\ell+1}, \cc \ra = 1 \end{array}. \ee
%
%
%
% One may also check that \be{c} a_{\ell+1}^{\vee} = \theta^{\vee} + \cc \ee where $\cc \in \mf{h}$ is the minimal imaginary coroot characterized by the conditions that

We denote by  $\Lambda^{\vee}$  the coweight lattice  in  $\f{g}$ which is defined as \be{aff-cowts} \Lv:= \{ \lv \in \f{h}^* \mid \la a_i, \lv \ra \in \zee \text{ for } i=1, \ldots, \ell+1 \text{ and } \la \Lambda_{\ell+1}, \lv \ra \in \zee  \} .\ee In other words $\Lambda^{\vee} = \Hom_{\zee}(\Lambda, \zee).$

\spoint For each $i=1,\ldots, \ell+1$ we denote by $w_{a_i}: \f{h} \rr \f{h}$ the reflection through the hyperplane $H_i:= \{ h \in \f{h} | (h | a_i^{\vee} ) = 0\}$ and  denote by  $W \subset \Aut(\f{h})$  the group generated by the elements $w_i$ for $i=1, \ldots, \ell.$ It is a Coxeter group. We denote by $\leq_W$ the usual Bruhat order on the group $W.$ The group $W$ acts on $\mf{h}^*$ in the usual way, i.e. $w f (X) = f (w^{-1} X)$ for $f \in \mf{h}^*, X \in \mf{h}, w \in W.$
%
%The group $W$ also has a simple description as $W = W_o \rtimes Q_o^{\vee}.$ Elements in this group are sometimes denoted as $\wt{w} t_H$ where $\wt{w} \in W_o$ and $H \in Q^{\vee}.$
 A root $a \in R$ is called a \emph{real root} if there exists $w \in W$ such that $w a \in \Pi.$ The set of such roots is denoted as $R_{re}.$ Otherwise, $a$ is called an \emph{imaginary root}, and the set of all such imaginary roots is denoted $R_{im}.$ We have decompositions, \be{re:im} R = R_{re} \sqcup R_{im} \ee and we may define $R_{re, \pm}$ and $R_{im, \pm}$ accordingly.  The set of real roots admits the following description: \be{rr} R_{re} = \{ \alpha + m \delta \mid  \alpha \in R_o, m \in \zee \}.\ee The set of positive real roots is then \be{r+} R_{re, +}= \{ \alpha + m \delta \mid \alpha \in R_{o, +},\, m \geq 0 \} \cup \{ \alpha + m \delta \mid \alpha \in R_{o, -},\, m > 0 \}. \ee  The set of imaginary roots is equal to \be{rim} R_{im} = \{ m \delta \mid m \in \zee \setminus 0 \}.\ee  Let $a \in R_{re}$ and choose some $w \in W$ such that $w a = a_i \in \Pi.$ We define the corresponding coroot \be{cor} a^{\vee}:= w^{-1} a_i^{\vee}, \ee and note that this construction does not depend on the choice of $w$ (see \cite[\S 5.1]{kac}). Recalling that $\theta$ was the highest root of the underlying finite-dimensional root system, we now have \be{a:c} a_{\ell+1}^{\vee} = - \theta^{\vee} + \cc. \ee The element $\cc$ also spans the one-dimensional center of $\mf{g}$ and we have a decomposition \be{h:dec} \mf{h} = k \cc \oplus \mf{h}_o \oplus k \dd .\ee Parallel to the decomposition (\ref{re:im}) we have a decomposition of the coroots \be{coroots:reim} R^{\vee}= R^{\vee}_{re} \cup R^{\vee}_{im}\ee into real and imaginary coroots where \be{c:rr} \begin{array}{lcr} R^{\vee}_{re} = \{ \alpha^{\vee} + m \cc \mid  \alpha \in R_o, m \in \zee \} & \text{ and } & R^{\vee}_{im} = \{ m \cc \mid m \in \zee \setminus 0 \}. \end{array} \ee

\tpoint{The Tits Cone} \label{titscone} The group $W$ also has a simple description as \be{W:aff} W = W_o \rtimes Q_o^{\vee}. \ee Elements in this group are sometimes denoted as $\wt{w} t_H$ where $\wt{w} \in W_o$ and $H \in Q_o^{\vee}.$ The coweight lattice $\Lambda^{\vee}$ can be written (see (\ref{h:dec}))  as $\Lambda^{\vee}= \zee \cc \oplus \Lambda_o^{\vee} \oplus \zee \dd,$ where the elements in $\Lambda_o^{\vee}$ are regarded as elements of $\Lambda^{\vee}$ by defining their pairing with respect to $\delta$ and $\Lambda_{\ell+1}$ to be zero. One can check that for any  $H \in Q_o^{\vee}, m, r \in \zee$ and $\lambda^{\vee}_o \in \Lambda_o^{\vee},$ \be{ff} t_H (m \cc + \lv_o + r \dd) = ( m+ (\lv_o, H) - r\frac{(H, H)}{2} ) \cc + \lv_o - r H + r \dd. \ee Moreover, one can also see that if $w \in W_o, m, r \in \zee$ and $\lambda^{\vee}_o \in \Lambda_o^{\vee},$  then \be{fin-act} w( m \cc + \lv_o + r \dd) = m \cc + w (\lv_o) + r \dd. \ee Setting \be{Lv:r} \Lambda_r^{\vee}= \{ \lambda \in \Lambda^{\vee} | \langle \delta, \lambda^{\vee} \ra = r \}, \ee we see from the above two formulas that $\Lambda^{\vee}_r$ is  $W$ invariant and \be{Lv: sum} \Lambda^{\vee}= \oplus_{r \in \zee} \Lambda_r^{\vee}.\ee  The elements in $\Lambda_r^{\vee}$ are referred to as elements of level $r.$ Let $\Lambda^\vee_+$ be the set of dominant coweights, \be{lam:dom} \Lambda^{\vee}_+ := \{ \lv \in \Lambda^{\vee} \mid \la a_i , \lv \ra \geq 0 \text{ for } i=1, \ldots, \ell+1 \}. \ee
   The \emph{Tits cone} $X \subset \Lambda^{\vee}$ is defined as \be{tc} X:= \bigcup_{w \in W} w \Lambda^{\vee}_+. \ee One may then show that \cite[Proposition 1.3(b)]{kac-pet} \be{tc:2} X = \{ \lv \in \Lambda^{\vee} | \la a , \lv \ra < 0 \text{ for only finitely many } a \in R_{re, +} \}. \ee In terms of the description of $\Lambda^{\vee}$ given above, one has from \cite[Proposition 1.9(a)]{kac-pet},  \be{ti:exp} X = \{ \lv\in \Lambda^{\vee} | \la \delta , \lv \ra >0 \} \sqcup \zee \cc. \ee In other words, the Tits cone just consists of elements of positive level, and the multiples of the imaginary coroot $\cc$ (which are of level $0.$)

\tpoint{Ring of Affine Invariants} \label{sec-aff:inv} We next define a completion $\C_{\leq}[\Lv]$ of the group algebra of $\Lv$ which is used to describe the image of the Satake isomorphism.  This ring has also been introduced earlier by Looijenga \cite{loo} who calls it the  \emph{dual-weight algebra}.

To any  $\lv \in \Lambda^{\vee}$ we associate a  formal symbol $e^{\lv}$ and impose relations: $e^{\lv} e^{\mv} = e^{\lv + \mv}.$  We define  $\C_{\leq}[\Lv]$ as the set of (possibly infinite) linear combinations \be{f} f= \sum_{\lv \in X}  c_{\lv} e^{\lv} \ee such that there exists finitely many elements $\lv_1, \ldots, \lv_r \in \Lambda^{\vee}_+$ so that \be{supp} \Supp(f) := \{ \lambda^{\vee} | c_{\lv} \neq 0 \} \subset \cup_{i=1}^r \mf{c}(\lambda^{\vee}_i), \ee  where \be{c:lam} \mf{c} (\lv) = \{ \mu^{\vee} \in X |  \mv \leq \lv \}. \ee
One may easily verify that $\C_{\leq}[\Lv]$ is a unital, associative, commutative ring and that there is an action of $W$ on $\C_{\leq}[\Lv]$ such that  $e^{\lv}=e^{w \lv}.$

The ring $\C_{\leq}[\Lv]$ carries a natural grading, with graded pieces \be{Lr} \C_{\leq}[\Lv]_r:=  \{ f \in \C_{\leq}[\Lv] \mid \, \Supp(f) \subset \Lv_r \cap X \} .  \ee  as follows from  (\ref{ti:exp}) and (\ref{Lv:r}), we  have \be{L:grad} \C_{\leq}[\Lv] = \oplus_{r \geq 0} \C_{\leq}[\Lv]_r. \ee  It is easy to see that in fact $\C_{\leq}[\Lv]$ is a graded module over the commutative ring $\C_{\leq}[\Lv]_0.$ Moreover, the piece $\C_{\leq}[\Lv]_0$ is easy to describe explicitly.

\begin{nlem} \label{L:0}   Let  $\cc$ be the minimal imaginary positive coroot. Then $\C_{\leq}[\Lv]_0 = \C((e^{-\cc} ))$  \end{nlem}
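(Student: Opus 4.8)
\emph{Approach.} The strategy is to unwind the definitions of \S\ref{sec-aff:inv} and reduce the statement to two elementary facts about the affine coroot lattice. By~(\ref{Lr}) the degree-zero piece is $\C_{\leq}[\Lv]_0 = \{\, f \in \C_{\leq}[\Lv] : \Supp(f) \subset \Lambda_0^\vee \cap X \,\}$, where $\Lambda_0^\vee = \{\lv \in \Lv : \la\delta,\lv\ra = 0\}$. Using the decomposition $\Lv = \zee\cc \oplus \Lambda_o^\vee \oplus \zee\dd$ of \S\ref{titscone} together with the defining properties of $\delta$ (and the convention that $\la\delta,\lv_o\ra = 0$ for $\lv_o \in \Lambda_o^\vee$), one computes $\la\delta, m\cc + \lv_o + r\dd\ra = r$, so $\Lambda_0^\vee = \zee\cc \oplus \Lambda_o^\vee$; combining with the description $X = \{\la\delta,\cdot\ra>0\} \sqcup \zee\cc$ of~(\ref{ti:exp}) gives $\Lambda_0^\vee \cap X = \zee\cc$. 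Hence every $f \in \C_{\leq}[\Lv]_0$ has the form $f = \sum_{n \in \zee} c_n e^{n\cc}$, and the only remaining question is which coefficient sequences $(c_n)$ are permitted by the finiteness constraint~(\ref{supp}).

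\emph{The combinatorial core.} Here I would determine $\mf{c}(\mu^\vee) \cap \zee\cc$ for $\mu^\vee \in \Lv_+$, recalling that $\nu^\vee \le \mu^\vee$ means $\mu^\vee - \nu^\vee \in Q^\vee_+$. Two facts drive this. First, every element of $Q^\vee_+$ is annihilated by $\delta$, since each simple coroot is (directly from the definitions of $\delta$ and the $a_i^\vee$); therefore $n\cc \le \mu^\vee$ forces $\la\delta,\mu^\vee\ra = 0$, hence $\mu^\vee \in \Lambda_0^\vee \cap X = \zee\cc$ by the previous paragraph. So dominant coweights off the line $\zee\cc$ contribute nothing to $\mf{c}(\mu^\vee) \cap \zee\cc$. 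Second, from $a_{\ell+1}^\vee = -\theta^\vee + \cc$ of~(\ref{a:c}) we get $\cc = \theta^\vee + a_{\ell+1}^\vee$, and since $\theta^\vee$ is the highest coroot of the irreducible simply-laced finite root system it is a strictly positive integral combination of the finite simple coroots $a_1^\vee, \dots, a_\ell^\vee$; thus $\cc$ is a strictly positive integral combination of all the affine simple coroots $a_1^\vee,\dots,a_{\ell+1}^\vee$. As these are linearly independent, $m\cc \in Q^\vee_+$ if and only if $m \ge 0$, so $\mf{c}(N\cc) \cap \zee\cc = \{\, n\cc : n \le N\,\}$ for $N\cc \in \Lv_+$. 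Putting this together: if $\Supp(f) \subset \bigcup_i \mf{c}(\mu_i^\vee)$ with $\mu_i^\vee \in \Lv_+$, then after discarding those $\mu_i^\vee$ off the line $\zee\cc$ and writing the survivors as $N_i\cc$, one gets $\Supp(f) \subset \{\, n\cc : n \le N\,\}$ with $N = \max_i N_i$. Hence $f = \sum_{n \le N} c_n e^{n\cc}$, a formal Laurent series in $e^{-\cc}$.

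\emph{Converse and conclusion.} Conversely, any $f = \sum_{n \le N} c_n e^{n\cc}$ satisfies $\Supp(f) \subset \mf{c}(N\cc)$ with $N\cc \in \Lv_+$ (since $\la a_i, \cc\ra = 0$ for every simple root $a_i$, by~(\ref{c:char})), so $f \in \C_{\leq}[\Lv]_0$. Thus the assignment $e^{-\cc} \mapsto$ the Laurent variable is a bijection $\C_{\leq}[\Lv]_0 \to \C((e^{-\cc}))$, and it is a ring isomorphism because $e^{n\cc} e^{m\cc} = e^{(n+m)\cc}$ and the completed sums multiply exactly as formal Laurent series. I expect the only nonroutine point to be the second fact in the middle paragraph, i.e.\ the boundedness of $\mf{c}(\mu^\vee) \cap \zee\cc$: this genuinely uses that $\cc$ has strictly positive coordinates relative to the linearly independent set of affine simple coroots, together with $Q^\vee_+ \subset \Lambda_0^\vee$. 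Everything else is bookkeeping with the definitions of \S\ref{titscone}--\S\ref{sec-aff:inv}.
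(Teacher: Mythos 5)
Your proof is correct and follows the same essential route as the paper: reduce to the claim that a dominant coweight $\geq m\cc$ in the dominance order must be of the form $n\cc$ with $n \geq m$. The one point where you diverge is in justifying that level-zero elements of the Tits cone lie on the line $\zee\cc$: you appeal to the Kac--Peterson description $X = \{\la\delta,\cdot\ra>0\}\sqcup \zee\cc$ from~(\ref{ti:exp}), whereas the paper reproves the needed special case (dominant and level~$0$ implies $\zee\cc$) directly from the dominance inequalities and the identity $a_{\ell+1} = -\theta+\delta$. Your version is a bit more economical by reusing a stated fact; the paper's is more self-contained. You also spell out two things the paper leaves implicit, namely the converse inclusion (that every Laurent series in $e^{-\cc}$ does have semi-infinite support in the required sense, using $\la a_i,\cc\ra=0$) and the observation that $(n-m)\cc \in Q^\vee_+$ forces $n\geq m$ because $\cc$ has strictly positive coordinates in the basis of linearly independent simple coroots. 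Both additions are correct and appropriate.
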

\begin{proof} First note that if $f \in \C_{\leq}[\Lv]_0$ then its support  is contained in the set  of elements in the Tits cone of level $0$ and such elements are of the form  $\zee \cc..$ The Lemma will follow from the following claim, whose proof is given below.

\begin{nclaim} Let $m \in \zee$ and set $\mv = m \cc.$ If $\lv \in \Lambda_+^{\vee}$ is such that $\mv \leq \lv,$ then $\lv = n \cc,$ with $n \geq m.$ \end{nclaim}

Indeed, if $f \in \C_{\leq}[\Lv]_0$ it follows from the claim that then there exists finitely many integers $n_1, \ldots, n_r$ such that every $\mv \in \Supp(f)$ is of the form $\mu = m_i \cc$ with $m_i \leq n_i.$ The Lemma follows from this.  \end{proof}

\begin{proof}[Proof of Claim] First note that if $\mv$ has level $0$ and $\lv \geq \mv$ then necessarily $\lv$ also has level $0.$ But a dominant coweight of level $0$ must be of the form $n \cc:$ if not, then $\lv = n \cc + \lv_o$ where $\lv_o \in \Lambda^{\vee}_o.$ By the dominance property of $\lv$ we must have \be{lv:dom} \la a_i , \lv \ra = \la a_i, \lv_0 \ra \geq 0 \, \text{ for } i=1, \ldots, \ell+1. \ee Suppose there exists $i \in \{ 1, \ldots, \ell \}$ such that $\la a_i, \lv_0 \ra > 0.$ Then we must have \be{lv:dom:2} \la a_{\ell+1} , \lv_0 \ra = \la - \theta + \delta , \lv_0 \ra = - \la \theta, \lv_0 \ra < 0, \ee since $\theta$ is a positive sum of the elements $a_i$ for $i=1, \ldots, \ell.$ So $\la a_i , \lv_0 \ra =0$ for $i=1, \ldots, \ell$ and in fact $\lv = n \cc$ for some $n \in \zee.$ If in fact $\mv \leq \lv$ then $\lv - \mv = (n-m) \cc$ must be a positive sum of coroots, and so in fact $n \geq m.$
\end{proof}

%{done}

\tpoint{Dual Root Systems} The affine Kac-Moody algebra may also be defined as a Lie algebra associated to  a  generalized Cartan matrix. For the construction, we refer to \cite{kac}. Suppose that a generalized Cartan matrix $M$ corresponds to the untwisted, affine Lie algebra $\mf{g}.$ Then the transpose ${}^{t}M$ is also a generalized Cartan matrix corresponding to the dual Lie algebra $\mf{g}^{\vee}.$ In general $\mf{g}^{\vee}$ is again an affine Lie algebra, but could be of twisted type. On the other hand, if $M$ is of simply-laced type (i.e., the underlying finite-type root system attached to $M$ is simply laced), then ${}^tM$ will again correspond to an untwisted affine Lie algebra. In fact, in this case, $\mf{g}^{\vee}$ is the untwisted affine Lie algebra attached to $\mf{g}_o^{\vee},$ the dual of the underlying finite-dimensional root system.

To avoid complications of twisted affine types, \emph{we shall throughout restrict to the case that $\mf{g}_o$ is of simply-laced type.}

\tpoint{Modules for Affine Lie Algebras} \label{loo}  A $\f{g}$ module is called $\f{h}$-diagonalizable if $V = \bigoplus_{\lambda \in \f{h}^* } V({\lambda}) $ where $V({\lambda})$ are the weight spaces $V({\lambda}) := \{ v \in V | h . v = \langle \lambda, h \rangle v \}.$  We define the set  $P(V)$ of \emph{weights} of $V$ by \be{wt:V} P(V):= \{ \lambda \in \f{h}^* | V({\lambda}) \neq 0 \}. \ee Given $\lambda, \mu \in \f{h}^*$ we define the dominance partial order on $P(V)$ by \be{dom:wts} \lambda \geq \mu \iff \lambda - \mu \in Q_+. \ee

We define $\Rep(\f{g})$ as the category   of $\f{g}$-modules $V$such that \begin{enumerate} \item $V$ is $\f{h}$-diagonalizable  \item $V({\lambda})$ are finite dimensional for each non-zero $\lambda \in P(V)$ \item there exist finitely many $\lambda_1, \cdots, \lambda_r \in \f{h}^*$ such that $P(V) \subset \cup_{i=1}^{r} \mf{c}(\lambda_i),$ where $\mf{c}(\lambda_i)$ is defined analogously to (\ref{c:lam}). \end{enumerate} One checks that $\Rep(\f{g})$ is an abelian category stable under tensor product. Therefore we may form its  complexified Grothendieck ring  $K_0(\f{g}),$ and then easily check that  the map being the one which sends a representation to its character defines an isomorphism,\be{ko:loo} K_0(\mf{g}^{\vee}) \stackrel{\cong}{\rr} \C_{\leq}[\Lv]^W. \ee

\subsection{Loop Groups} \label{section-loopgroups}

\renewcommand{\b}[1]{\mathbf{{#1}}}

\tpoint{The Tits Group Functor} We review in this part the construction of affine Kac-Moody group $\b{G}$ due to Tits. \cite{tits}.  A set $\Psi \subset R_{re} $ is \emph{pre-nilpotent} if there exists $w, w' \in W$ such that $w \Psi \subset R_{re, +}$ and $w' R_{re, -} \subset R_{re, -}.$ If such a set $\Psi$ also satisfies the condition: \be{nilp} \text{ if } a, b \in \Psi, a+b \in R_{re}, \text{ then } a + b \in \Psi, \ee we say that $\Psi$ is a nilpotent set.  For any  $a \in R_{re}$ we denote by $\b{U}_a$ a corresponding one-dimensional additive group scheme, and fix an isomorphism $x_a: \mathbb{G}_a \rr \b{U}_a.$ For any nilpotent set $\Psi$ of roots,  Tits has constructed (see \cite[Proposition 1]{tits} a group schemes $\b{U}_{\Psi}$ equipped with inclusions $\b{U}_a \hookrightarrow \b{U}_{\Psi}$ such that for any choice of an order on $\Psi$ the map, $\prod_{a \in \Psi} \b{U}_a \rr \b{U}_{\Psi}$ is an isomorphism of schemes.
%\green{ Wrong. It is an an isomorphosm of varieties and not groups   over any field [or even any ring?.]}
%{is it okay now?}

Given any pre-nilpotent pair of roots $\{ a, b \}$ we set $\theta(a, b) = (\mathbb{N}a + \mathbb{N}b) \cap R_{re}.$
Tits has shown that for any  total order on $\theta(a, b) \setminus \{ a, b \},$ there exists a unique set  $k(a, b; c)$ of integers such that for any ring $S$ we have \be{stein:rel} ( x_a(u), x_b(u') ) = \prod_{c=ma+nb} x_c(k(a, b; c) u^m u'^n) \ee for all  $u, u' \in S$ and where $c=ma+nb$ varies over $\theta(a, b) \setminus \{ a, b\}$ and  $ ( x_a(u), x_b(u') )$ is the commutator.  We then define the Steinberg functor $\mathbf{St}$ to be the quotient of  the free product of the groups $\b{U}_a, a \in R_{re}$ by the  normal subgroup generated by the above relations (\ref{stein:rel}).

The Weyl group $W$ of $\f{g}$ acts on functors $\b{St}$ and $\b{A}.$ For $w \in W$ we denote by $w^*$ the corresponding action on either of these functors. For each  $i=1, \ldots, \ell+1$ we choose isomorphisms $x_{a_i}: \mathbb{G}_a \rr \b{U}_{a_i}$ and $x_{-a_i}: \mathbb{G}_a \rr \b{U}_{-a_i}$ and for  each invertible element $r \in S^*$ and $i=1, \ldots, \ell+1$  denote by $\wt{w_i}(r)$ the image of the product \be{w_i(r)}  x_{a_i}(r) x_{-a_i}(-r^{-1}) x_{a_i}(r)  \ee in $\b{St}(S).$ We set $\wt{w_i}:= \wt{w_i}(1).$

%\green{Are you sure that there is no condition relating $x_{a_i}$ and $x_{-a_i}$?}
%\blue{need to check this}

We let $\b{A}$ be the functor which sends a ring $S$ to \be{T:def} \b{A}(S)= \Hom_{\zee}(\Lambda, S). \ee For $u \in S^*$ and $\lv \in \Lambda^{\vee} = \Hom_{\zee}(\Lambda, \zee)  $ we write $s^{\lv}$ for the element of $\b{A}(S)$ map which sends each $\mu \in \Lambda$  to $s^{\langle \mu, \lv \rangle} \in S.$
% for each $\mu \in \Lambda.$
%\green{Also say that  $\mu \mapsto \b{A}(S)$}

The affine Kac-Moody group functor (Tits functor) is the functor $\b{G}$ which associates to a given ring $S$ the quotient of the free product $\b{St}(S) \star \b{A}(S)$ by the smallest normal subgroup containing the canonical images of the following relations, where $i=1, \ldots, \ell+1,$ $r \in S$, and $t \in \b{A}(S)$  \begin{eqnarray}
t \, x_{a_i}(r) \, t^{-1} &=& x_{a_i} ( t^{a_i^\vee} r ) \\
\wt{w_i} \, t \, \wt{w_i}^{-1} &=&  w_i^*(t) \\
\wt{w_i}(r) \wt{w_i}^{-1} &=& r^{a_i^{\vee}} \text{ for } r \in S^* \\
\wt{w_i} \, u \,  \wt{w_i}^{-1} &=& w_i^*(u) \text { for } u \in \b{U}_a(S), a \in R_{re}. \end{eqnarray}

Note the following important, but simple identity holds in the group $\b{G}(S),$ \be{sl2} x_{-a}(s^{-1}) = x_a(s) (-s)^{a^{\vee}} w_a(1) x_a(s), \ee where $s \in S^*$ and $a \in R_{re}.$

\tpoint{Bruhat Decompositions} \label{sec-bruhat} Now we  describe the structure of $G:= \b{G}(k)$ for any field $k$.  \footnote{In general, if $\b{X}$ is some functor, we shall denote by the roman letter $X$ the set of points $\b{X}(k)$ over some field $k.$ If there is some danger of confusion about which field we are working over, we shall write $X_k.$}
For each $a \in R_{re}$ we define $U_a = \mathbf{U}_a(k),$ and $A= \b{A}(k).$ Let $U$ denote the subgroup generated by $U_a$ for $a \in R_{re, +}$ and $U^-$ the subgroup generated by $U_a$ for $a \in R_{re, -}.$ Define now $B_a$ to be the subgroup of $G$ generated by $U_a$ and $T.$ Also, set $B$ and $B^-$ to be the subgroups generated by all the $B_a$ for $a \in R_{re, +}$ and $R_{re, -}$ respectively.  We have semi-direct products $B = A \rtimes U$ and $B^- = A \rtimes U^-.$ We let $N$ be the group generated by $T$ and the $\wt{w_i},$ where the elements $\wt{w_i}$ were defined above. There is a natural map \be{zeta:N} \zeta: N \rr W \ee which sends $\wt{w_i} \mapsto w_i$ and  which has kernel $A.$ This map is surjective, and induces an isomorphism $\zeta: N/A \rr W.$  For each $w \in W,$ we shall write $\dot{w}$ for any lift of $w$ by $\zeta.$ If $w \in W$ has a reduced decomposition $w = w_{a_{i_1}} \cdots w_{a_{i_r}},$ with the $a_k \in \Pi,$ we shall also sometimes write \be{tw} \wt{w}:= \wt{w}_{i_1} \cdots \wt{w}_{i_r} \ee for a specific lift of $w,$ where the $\wt{w}_i$ were defined after (\ref{w_i(r)}).
%
% \green{ Too vague. Do you want to lift the simple reflections and then use reduced decomposition?  What is $K$?}

\begin{nprop} [\cite{tits}] \label{bruhat} One has the following Bruhat-type decompositions \be{bruhat} G &=& \sqcup_{w \in W}  B \, \dot{w} \, B \ \ = \ \  \sqcup_{w \in W} \, B^- \dot{w} \,  B^- \\
&=& \sqcup_{w \in W} B^-\,  \dot{w} \, B \ \ = \ \ \sqcup_{w \in W} B \,  \dot{w} \, B^-,\ee where $\dot{w}$ is any lift of $w \in W$ to $N$ under the map (\ref{zeta:N})
\footnote{Usually, for $w \in W$ we shall often just write $B w B$ for the coset $B \wt{w} B,$ as the choice of representative does not change the subset of the group.}.  \end{nprop}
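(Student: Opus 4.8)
\emph{Proof proposal.} Since the statement is precisely Tits' Bruhat and Birkhoff decompositions for the group attached to the Tits functor \cite{tits}, I would recall the structure of the argument: realize $G$ as the group of a Tits system in two ways to obtain the first two (``Bruhat'') decompositions, and then use the twinning between $B$ and $B^-$ to obtain the two (``Birkhoff'') decompositions. The only nontrivial local input is a rank-one computation: for each $i=1,\dots,\ell+1$ the subgroup $G_i=\langle U_{a_i},U_{-a_i}\rangle$ is a quotient of $\mathrm{SL}_2$ (by the Steinberg relations, the relations defining $\wt w_i$, and the identity (\ref{sl2})), and hence has its own Bruhat decomposition $G_i=A_iU_{a_i}\sqcup A_iU_{a_i}\wt w_iU_{a_i}$, where $A_i=A\cap G_i$.

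Granting this, one checks the Tits-system axioms for $(G,B,N,\{w_1,\dots,w_{\ell+1}\})$: first, $G=\langle B,N\rangle$, because $G$ is generated by $A$ together with the root groups $U_a$, and each $U_a$ with $a\in R_{re}$ equals $\dot wU_{a_i}\dot w^{-1}$ for a suitable $w\in W$ and simple root $a_i$ (every real root is $W$-conjugate to a simple one), hence lies in $\langle B,N\rangle$; second, $B\cap N=A$ is normal in $N$ with $N/A\cong W$ by (\ref{zeta:N}); third, the exchange condition $\wt w_i\,B\,\dot w\subset B\dot wB\cup B\wt w_i\dot wB$ follows from the $w_i$-stable factorization $U=U_{a_i}\cdot(U\cap w_i^{-1}Uw_i)$ — using that $a_i$ is the unique positive real root sent to a negative one by $w_i$ — combined with the rank-one statement; fourth, $\wt w_iB\wt w_i^{-1}\ne B$ since it contains $U_{-a_i}$. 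The general theory of Tits systems then gives $G=\bigsqcup_{w\in W}B\dot wB$ with the union disjoint, the double coset being independent of the chosen lift since two lifts of $w$ differ by an element of $A\subset B$. The defining relations being symmetric under $R_{re,+}\leftrightarrow R_{re,-}$, the identical argument for $B^-$ yields $G=\bigsqcup_{w\in W}B^-\dot wB^-$.

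The two mixed decompositions do not follow from a single Tits system: they describe the double cosets $B^-\backslash G/B$, i.e. the relative position of the two opposite flag varieties, and this is governed by the full refined Tits system / twin $BN$-pair datum $(A,(U_a)_{a\in R_{re}})$ that Tits constructs. The extra facts needed, again reducible to a rank-one calculation together with the combinatorics of the $W$-action on $R_{re}$, are a co-exchange condition $B^-\wt w_i\,B^-\dot wB\subset B^-\dot wB\cup B^-\wt w_i\dot wB$ and the opposition statement $B^-\dot wB\cap B^-B=\emptyset$ unless $w=1$. Induction on $\ell(w)$ then gives $G=\bigcup_{w\in W}B^-\dot wB$; the opposition statement upgrades this to a disjoint union; and one final application of the $R_{re,+}\leftrightarrow R_{re,-}$ symmetry gives $G=\bigsqcup_{w\in W}B\dot wB^-$.

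The main obstacle is this last step. The two Bruhat decompositions are a formal consequence of axioms whose verification amounts to an $\mathrm{SL}_2$-computation, whereas the Birkhoff decompositions genuinely require controlling how $U$ and $U^-$ interact globally across $G$ — exactly the content of Tits' twin-building axioms — so for the purposes of this paper the cleanest route is to import them from \cite{tits}.
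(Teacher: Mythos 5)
Your proposal is correct and, as you yourself conclude, ultimately lands where the paper does: the paper gives no argument of its own for this proposition but simply imports it from Tits \cite{tits}, and your sketch accurately reconstructs the structure of Tits' proof (rank-one $\mathrm{SL}_2$ computation, verification of the BN-pair axioms for both $B$ and $B^-$ to get the two Bruhat decompositions, and the twin/refined Tits-system data to get the two Birkhoff decompositions).
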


Note that it is important here that we are working with the so-called minimal Kac-Moody group in order to have Bruhat decompositions with respect to $B$ and $B^-.$ We shall also need the following claim whose proof we suppress (see the remarks after \cite[Corollary 1.2]{deod}.)

\begin{nclaim} \label{bru-ord} Suppose that $w_1, w_2 \in W$ such that $B \w_1 B \cap B \w_2 B^- \neq \emptyset.$ Then we have $w_2 \leq w_1$ in the Bruhat order on $W.$ \end{nclaim}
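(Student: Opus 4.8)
If $w_1, w_2 \in W$ and $B\dot w_1 B \cap B\dot w_2 B^- \neq \emptyset$, then $w_2 \leq w_1$ in the Bruhat order on $W$.

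The plan is to reduce the statement to the mixed Bruhat decomposition $G = \sqcup_{w\in W} B\dot w B^-$ from Proposition \ref{bruhat} together with the standard combinatorics of the BN-pair (axiom involving $B s B \cdot B w B$ for a simple reflection $s$). First I would observe that $B\dot w_2 B^- = B\dot w_2 B^- $ intersects $B\dot w_1 B$; since $B^- = \dot{w_0}\cdots$ is not available in the Kac–Moody setting (there is no longest element), I instead argue directly via the identity $B^- = \bigcup_{w} B\dot w B^-$? No — rather, I would use that $B\dot w B^- $ is a single piece of the $(B,B^-)$-double coset decomposition, so the hypothesis says precisely that the $(B,B)$-double coset $B\dot w_1 B$ meets the $(B,B^-)$-double coset $B\dot w_2 B^-$, and these two decompositions are compatible in the sense that $B\dot w_1 B \subseteq \sqcup_{v \le w_1}\! \big(\text{something}\big)$. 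Concretely, the key input is: for any $v,w \in W$,
\[
B\dot w B^- \cap B\dot v B \neq \emptyset \ \Longrightarrow\ v \le w,
\]
which is what we want with $(v,w) = (w_2, w_1)$ — wait, the roles are swapped, so let me restate: we want $w_2 \le w_1$, and the hypothesis is $B\dot w_1 B \cap B\dot w_2 B^- \neq\emptyset$.

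The cleanest approach is induction on $\ell(w_1)$ using the rank-one computation in $SL_2$. If $w_1 = e$, then $B \cap B\dot w_2 B^- \neq\emptyset$ forces $B\dot w_2 B^- = B B^-$, and comparing with the $(B,B^-)$-decomposition $G = \sqcup_w B\dot w B^-$ and the fact that $BB^- = B\dot e B^-$ is the open-type cell containing $e$, together with disjointness, gives $w_2 = e$, hence $w_2 \le w_1$. For the inductive step, write $w_1 = s w_1'$ with $s$ a simple reflection and $\ell(w_1') = \ell(w_1) - 1$, so $B\dot w_1 B \subseteq B\dot s B \cdot B\dot w_1' B$. An element of $B\dot w_1 B \cap B\dot w_2 B^-$ then lies in $B\dot s B \cdot (B\dot w_1' B \cap \dot s^{-1} B \dot w_2 B^-)$; using the BN-pair relation $B\dot s B \cdot B\dot w_2 B^- \subseteq B\dot s\dot w_2 B^- \cup B\dot w_2 B^-$ applied after moving things around, one reduces to the statement for $w_1'$, and the two cases $w_2 \le w_1'$ or $s w_2 \le w_1'$ (equivalently $w_2 \le s w_1' = w_1$ by the lifting property / Z-property of Bruhat order) both yield $w_2 \le w_1$.

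I would carry this out in the following order: (1) set up the $SL_2$ / rank-one base case using the explicit relation \eqref{sl2} and the Bruhat decomposition of $SL_2(k)$, observing $x_{-a}(s^{-1}) \in B\dot w_a B$ for $s \neq 0$ while $x_{-a}(0)=e \in B$; (2) establish the multiplication rule $B\dot s B \cdot B\dot w B^- \subseteq B\dot{sw}B^- \cup B\dot w B^-$ from the corresponding rule for $(B,B)$-cosets and the fact that conjugating the relevant rank-one subgroup by $\dot w$ interacts correctly with $B^-$ via the reflection $w_a$; (3) run the induction on $\ell(w_1)$; (4) invoke the lifting property of the Bruhat order (which holds for arbitrary Coxeter groups, in particular the affine Weyl group $W$ here) to combine the two cases. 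The main obstacle I anticipate is step (2): in the Kac–Moody setting one must be careful that the usual BN-pair axioms genuinely hold for the minimal group $\mathbf G$ with respect to \emph{both} $B$ and $B^-$ — this is exactly the content of Proposition \ref{bruhat} being stated for all four mixed decompositions — and that the computation $\dot s \cdot B^- \cdot (\text{stuff})$ only produces the two advertised double cosets, with no imaginary-root subtleties; since $s$ and $w$ lie in $W$ and only finitely many real root subgroups are involved in any bounded-length computation, this reduces to the finite-rank (in fact $SL_2$) case, but verifying the reduction cleanly, rather than quoting \cite{deod} as the paper does, is where the real work sits.
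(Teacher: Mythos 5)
Your approach is sound, and it is a genuinely different route from the paper's: the paper simply defers to Deodhar's refined Tits system axiomatics (the remarks after \cite[Corollary~1.2]{deod}), whereas you give a self-contained induction. The two ingredients you isolate are the right ones, and both do in fact hold in the minimal Kac--Moody group. The mixed multiplication rule $B\dot{s}B\cdot B\dot{v}B^-\subseteq B\dot{sv}B^-\cup B\dot{v}B^-$ follows by writing $B\dot{s}B=B\dot{s}U_a$ (with $a$ the simple root of $s$), conjugating: $B\dot{s}U_a\dot{v}B^-=B\dot{s}\dot{v}\,U_{v^{-1}(a)}B^-$. If $v^{-1}(a)<0$ this is immediately $B\dot{sv}B^-$; if $v^{-1}(a)>0$, apply the rank-one identity~(\ref{sl2}) to the nontrivial elements of $U_a$ to land $\dot{s}x_a(t)$ (for $t\neq 0$) in $BU_{-a}$, and then use $U_{-a}\dot{v}\subset\dot{v}U_{-v^{-1}(a)}\subset\dot{v}B^-$ to conclude $B\dot{v}B^-$. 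Only the real root $a$ and its single $W$-translate $v^{-1}(a)$ appear, so there is indeed no imaginary-root issue. The lifting property you invoke at the end (if $u\le w$ and $sw<w$ then $su\le w$) is a general Coxeter-group fact and applies to $W$ here.

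One small correction in the inductive step: from $g\in B\dot{w}_1B=B\dot{s}B\cdot B\dot{w}_1'B$, write $g=ph$ with $p\in B\dot{s}B$ and $h\in B\dot{w}_1'B$; then $h=p^{-1}g\in B\dot{s}B\cdot B\dot{w}_2B^-$, not merely $\dot{s}^{-1}B\dot{w}_2B^-$ as in your draft. Feeding the multiplication rule into this gives $h\in B\dot{w}_1'B\cap\bigl(B\dot{sw_2}B^-\cup B\dot{w}_2B^-\bigr)$, and then induction plus disjointness of the $(B,B^-)$-cells produces exactly your two cases. The base case $w_1=e$ uses disjointness directly since $B\subset B\dot{e}B^-$. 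So the proposal is correct; the advantage of your route is that it makes the combinatorial input (rank-one computation plus lifting property) explicit rather than outsourced, at the cost of having to verify the mixed BN-pair axiom, which, as you note, is the only place requiring real care in the Kac--Moody setting.
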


\tpoint{Integrable Modules} \label{sec-repth} Fix a Chevalley form $\f{g}_{\zee} \subset \f{g}$ as in \cite{ga:la, tits}. For any $\omega \in \Lam_+$ we let $V^{\omega}$ denote the corresponding highest weight module for $\mf{g}.$ It is shown in \cite{ga:la} that $V^\omega$ can be equipped with a $\zee$-form $V^\omega_\zee$ which is stable under the elements of the Chevalley basis of $\mf{g}_{\zee}$ and their divided powers. Moreover, the action integrates to an action of $\b{G}(\zee).$ We shall write $V^{\omega}_R$ for $V_{\zee}^{\omega} \otimes R,$ which is naturally a $\b{G}(R)$-module.
%
%Then one many construct an integral form of the enveloping algebra $U_{\zee}(\f{g})$
%
% \green{ You did not define the Chevalley basis} for $\f{g}.$ One may then define as in \cite{ga:la, tits} an integral form $U_{\zee}(\f{g})$ of the universal enveloping algebra of $\f{g}$ such that $U_{\zee}(\f{g})$ is invariant by elements in the Chevalley basis \emph{and} their divided powers.
%
% \green{Lusztig says that Kostant's proof has a gap and gave once a proof using [I think] the canonical basis. Pleas give a reference}
%
% For any $\omega \in\Lam_+$ we let $V^\omega$ denote the corresponding integrable highest weight module. It can be equipped with a $\zee$-form $V^\omega_\zee$ which is
%
%  compatible with the action of $U_{\zee}(\f{g}).$ Moreover, the action integrates to an action of $\b{G}(\zee).$  We shall write $V^{\omega}_R$ for $V_{\zee}^{\omega} \otimes R,$ which is naturally a $\b{G}(R)$-module.
%
%\green{The first couple of statement are  true for any field} \blue{do we really need to work in this generality?}

Let now $\mc{K}$ be a local field, so that we then have a representation the group $G:= \b{G}(\K)$ on $V:= V^{\omega}_{\mc{K}}.$ Let $V_{\O}:= \O \otimes_{\zee} V_{\zee} \subset V,$ and denote by $v_{\omega}$ be a highest weight vector, i.e., $v_{\omega} \in V(\omega).$ For $v \in V$ we can set \be{ord} \ord_{\pi} (v) = \min \{ n \in \zee |  \pi^n v \in V_{\O} \} \ee and define \be{norm:v} || v || = q^{\ord_{\pi} (v) }, \ee where $q$ is the size of the residue field of $\res.$ If $v, w$ are in different weight spaces, then \be{triangle} || v + w || \geq ||v ||. \ee  Also, we can choose $v_{\omega}$ to be \emph{primitive}, i.e., $|| v_{\omega} || = 1.$  The elements of $K$ preserve the norm $|| \cdot ||.$ Note also that elements from $U$ stabilize the highest weight vector $v_{\omega}.$ Moreover, an element $s^{\lv} \in A$  with $s \in \K^*$ acts on an vector in the weight space $V(\mu)$ as the scalar $s^{\la \mu, \lv \ra}.$

\section{Basic Structure of $p$-adic Loop Groups} \label{section:groups}

Recall the conventions for local fields $\mc{K}$ from the start of this paper. The goal of this section is to study the basic properties of the group $G:= \b{G}(\K).$

\subsection{Subgroups of $\b{G}(\O)$}

In this part, we define various subgroups of the group of integral points $K:= \b{G}(\O) \subset G$ and establish some elementary properties of them.

\tpoint{The integral torus} \label{sec-A} Recall that we have set $A := \b{A}(\mc{K}) \cong \Hom_{\zee}(\Lambda, \mc{K}) \subset G.$ Let \be{A:o} A_{\O}:= \b{A}(\O), \ee and note that we have an identification $A / A_{\O} = \Lambda^{\vee}.$ Recall further that we have defined $N$ as the group generated by $A$ and the elements $\wt{w}_i$ in \S \ref{sec-bruhat}, and we have an isomorphism $\zeta: N/ A \rr W$  (\ref{zeta:N}). We define the "affine" Weyl group as \be{aw} \aw:= W \rtimes \Lambda^{\vee}, \ee and note that $\zeta$ can be naturally lifted to a homomorphism also denoted
\be{z:aw}
N \rr \aw.
\ee
From now on we shall denote {\it this} homomorphism by $\zeta$. The kernel of this map is $A_{\O},$ and we have
$N_{\O}:= N \cap K = \zeta^{-1}(W).$ If we write $\wt{w}$ for the representative of $w \in W$ as in (\ref{tw}), we have that
\be{N_O}
N_{\O} = \bigcup_{w \in W} A_{\O} \wt{w}.
\ee
Recall that for $\lv \in \Lv$, we have an element $\pi^{\lv} \in A.$ For each $x=(w, \lv) \in \aw,$ or $w \in W$ we shall abuse notation and denote by $w \pi^{\lv}$ the element $\wt{w} \pi^{\lv} \in N.$ Sometimes, we shall just write $w \pi^{\lv}$ for any element in the set $\zeta^{-1}(w \pi^{\lv})$, hoping no confusion will arise.

\tpoint{Iwahori Subgroups} \label{K:decomp} For each $a \in R_{re},$ the elements of the one-dimensional group $U_a:= \b{U}_a(\K)  \subset G$ will be written as $x_a(u)$ for $u \in \K.$ For $m \in \zee$ we set \be{U:m} U_{(a, m)}:= \{ x_a(u) | v (u) \geq m \} .\ee  As a shorthand, if $a \in R_{re}$ we write \be{Ua:defs} \begin{array}{lcr} U_{a, \O} := U_{(a, 0)} = \b{U}_a(\O) &
U_{a, \pi} := U_{(a, 1)} &
U_{a}[m] := U_{(a, m)} \setminus U_{(a, m-1)} \end{array} \ee Let us also set $U_{\pi}$ to be the group generated by $U_{a, \pi}$ with $a \in R_{re, +}$ and $U^-_{\O}$ the group generated by $U_{-a, \O}$ for $a \in R_{re, +}.$ Similarly, we may define the groups $U^+_\O$ and $U^-_\pi.$

The group $K = \b{G}(\O)$ is generated by the subgroups $U_{a, \O}$ as one can see by referring back to the definition of the functor $\b{G}.$ Let \be{kappa} \kappa: K \rr G_\res \ee  denote the map induced from the natural reduction $\O \rr \res$ We shall define the \emph{Iwahori subgroup} $I \subset K$ to as \be{iwa:+} I:= \{  x \in K | \kappa(x) \in B_\res \}. \ee Similarly we can define the \emph{opposite Iwahori subgroup} \be{iwa:-} I^-:= \{ x \in K \mid \kappa(x) \in B^-_{\res} \}. \ee  The proof of the following is entirely analogous to the classical situation (see \cite[\S 2]{im}), so we suppress the details.

\begin{nprop} \label{im:i} Keeping the notation above, we have the following decompositions \begin{enumerate} \item The groups $I$ and $I^-$ admit the following decompositions, \be{I+:im} I = U_{\O}U^-_{\pi}A_{\O}= U^-_{\pi} U_{\O} A_{\O} \ee and \be{I^-:im} I^- = U_{\pi}U^-_{\O}A_{\O}= U^-_{\O} U_{\pi} A_{\O} \ee

\item Choose representatives $\wt{w} \in K$ for $w \in W$ as in (\ref{tw}). Then there exist disjoint unions,
\be{bruhat:K} K &=& \sqcup_{w \in W}  I \, \wt{w} \, I \ \ = \ \  \sqcup_{w \in W} \, I^- \wt{w} \,  I^- \\
&=& \sqcup_{w \in W} I^-\,  \wt{w} \, I \ \ = \ \ \sqcup_{w \in W} I \,  \wt{w} \, I^-. \ee  \end{enumerate}

\end{nprop}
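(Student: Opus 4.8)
The plan is to reduce everything to the corresponding statements over the residue field $\res$ together with an affine analogue of the Iwahori factorization, exactly as in the classical $p$-adic case treated by Iwahori--Matsumoto \cite{im}; the only additional care is that the root system is now infinite, so one must work with finite subsets of $R_{re}$ at a time and pass to a limit. First I would establish part (1). The inclusion $U_{\O} U^-_{\pi} A_{\O} \subset I$ is clear from the definitions: $U_{\O}$ and $A_{\O}$ reduce into $B_\res$ and $U^-_{\pi}$ reduces to the identity since its generators $x_{-a}(u)$ have $v(u) \geq 1$. For the reverse inclusion I would use the congruence structure of $K$: an element $x \in K$ with $\kappa(x) \in B_\res$ can be written, using the $\res$-Bruhat decomposition and the fact that $K$ is generated by the $U_{a,\O}$, as a product of a lift of its image in $B_\res$ (which lies in $U_{\O} A_{\O}$) times an element of the principal congruence subgroup $K_1 := \ker \kappa$. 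So the crux is to show $K_1 \subset U_{\O} U^-_{\pi} A_{\O}$, and more precisely that $K_1$ factors as $U_\pi U^-_\pi (A_\O \cap K_1)$; this is an Iwahori-type factorization one coordinate at a time. The clean way to get it is via the pro-$p$ structure: order the positive (resp.\ negative) real roots, peel off one root subgroup at a time using the commutator relations (\ref{stein:rel}), and note that each commutator produces terms in root subgroups "further out", so the process converges. The equality of the two orderings $U_\O U^-_\pi A_\O = U^-_\pi U_\O A_\O$ then follows from the same commutator bookkeeping, since $[U_\O, U^-_\pi] \subset U_\pi U^-_\pi \subset U_\O U^-_\pi$ modulo $A_\O$. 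The statement for $I^-$ is obtained by applying the automorphism swapping $B \leftrightarrow B^-$ (equivalently, conjugating by a suitable element, or just repeating the argument with roles of positive and negative roots exchanged).

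For part (2) I would deduce the Bruhat decomposition of $K$ from the Bruhat decomposition of $G_\res = \b{G}(\res)$ (Proposition \ref{bruhat} applied over the residue field) via the projection $\kappa: K \to G_\res$. Concretely: $G_\res = \sqcup_{w \in W} B_\res \, \dot w \, B_\res$, and $I = \kappa^{-1}(B_\res)$, so $K = \kappa^{-1}(G_\res) = \sqcup_w \kappa^{-1}(B_\res \dot w B_\res)$. It remains to identify $\kappa^{-1}(B_\res \dot w B_\res)$ with $I \wt{w} I$. The inclusion $I \wt w I \subset \kappa^{-1}(B_\res \dot w B_\res)$ is immediate since $\kappa(\wt w) = \dot w$. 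For the reverse inclusion, given $x \in K$ with $\kappa(x) \in B_\res \dot w B_\res$, write $\kappa(x) = b_1 \dot w b_2$ with $b_i \in B_\res$, lift $b_1, b_2$ to elements of $I$ using part (1) (lifting $B_\res \cong U_\res A_\res$ back through $\kappa$ into $U_\O A_\O \subset I$), and conclude $x \in I \wt w I \cdot K_1$; then absorb $K_1 \subset I$ (which is part of the definition of $I$, as $K_1 = \ker\kappa \subset I$). Disjointness of the cosets follows from disjointness downstairs in $G_\res$ together with the fact that $\kappa(I \wt w I) \subset B_\res \dot w B_\res$. The mixed decompositions $K = \sqcup_w I^- \wt w I$ etc.\ are proved the same way, using the mixed Bruhat decompositions $G_\res = \sqcup_w B^-_\res \dot w B_\res$ from Proposition \ref{bruhat}.

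The main obstacle I anticipate is the convergence/factorization argument inside $K_1$ needed for part (1): because $R_{re}$ is infinite, one cannot literally write a finite product over all positive real roots, so one must argue that for any fixed element of $K_1$ only finitely many root-subgroup coordinates are nontrivial modulo any fixed power of $\pi$, and that the rearrangement using (\ref{stein:rel}) is well-defined in the appropriate completion. This is exactly the kind of finiteness that is handled in the loop-group literature (cf.\ the discussion of $G_+$ and the references \cite{gar:car}, \cite{bk}); in the write-up I would either cite the analogous statement from \cite{im} and \cite{gar:car} and remark that the proof goes through verbatim using the nilpotency of the relevant sets of real roots, or give the one-paragraph inductive argument over the filtration of $K$ by congruence subgroups. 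Everything else is bookkeeping with the commutator relations and reduction mod $\pi$, which is why the excerpt says "we suppress the details."
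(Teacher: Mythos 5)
Your proposal follows exactly the route the paper itself indicates: the paper gives no proof but says only that the argument is ``entirely analogous to the classical situation (see \cite[\S 2]{im})'', and your outline --- reduce part (2) to the Bruhat decomposition over $\res$ via $\kappa$, and reduce part (1) to an Iwahori-type factorization of the congruence kernel $K_1 = \ker\kappa$ --- is precisely what ``analogous to \cite{im}'' means here. Part (2) as you present it is complete and correct: $I\wt{w}I \subset \kappa^{-1}(B_\res\dot{w}B_\res)$ is clear, and for the reverse inclusion the observation that $\wt{w}^{-1}\tilde b_1^{-1}x\tilde b_2^{-1} \in K_1 \subset I$ closes the argument; disjointness descends from $G_\res$.

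One small remark on the ``main obstacle'' paragraph. The convergence worry you raise is largely a phantom in this setting: the paper works with the \emph{minimal} Kac--Moody group in the sense of Tits (\S\ref{section-loopgroups}), so elements of $U$ and $U^-$ are by construction \emph{finite} words in the $U_a$, and the rearrangement via (\ref{stein:rel}) terminates without any completion. What actually requires an argument in the Kac--Moody case --- and is the genuine content of part (1) --- is not convergence but the inclusion of $K_1$ in the big cell $U^-AU$ together with integrality of the resulting coordinates, i.e.\ $K_1 \subset U_\pi U^-_\pi A_1$. Your sketch (``peel off one root at a time; commutators land further out'') is the right inductive mechanism for this once you know you are in the big cell; in a full write-up you would want to make explicit that $g\in K_1$ lies in the Birkhoff cell $w=1$, which can be deduced from Proposition \ref{bruhat} together with the fact that $\kappa(g)=1$. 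With that one point nailed down, your proof is complete and is the same proof the paper intends.
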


\subsection{Iwasawa Decompositions} \label{sec-iwa}

Recall that we ave defined subgroups $K, A, U \subset G,$ together with a subgroup $A_{\O} \subset A.$ Setting \be{A'} A':= \{ \pi^{\lv} : \lv \in \Lambda^{\vee} \}, \ee  we have a direct product decomposition, $A = A' \times A_{\O}$ which gives an identification $A/ A_{\O} = \Lv$ The Iwasawa decomposition in this context states,

\begin{nthm} \label{iw:unique} Every $g \in G$ has a decomposition $g = k \pi^{\lv} u$ where $k \in K$, $\pi^{\lv} \in A'$, and $u \in U,$ and $\lv$ is uniquely determined by $g.$ Furthermore, we also have an opposite Iwasawa decomposition: every $g \in G$ may be written as  $g = k \pi^{\lv} u^-$ where $k \in K$, $\pi^{\lv} \in A'$, and $u \in U^-,$ and $\lv$ is uniquely determined by $g.$ \end{nthm}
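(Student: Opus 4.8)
The plan is to establish existence and uniqueness separately, with existence following from a standard "reduction along the group generators" argument and uniqueness following from the representation-theoretic norm estimates of \S\ref{sec-repth}. For existence, I would first observe that $G = \b{G}(\K)$ is generated by $K$, $A$, and the root subgroups $U_a$ for $a \in R_{re}$; in fact, by the relation (\ref{sl2}) every negative real root subgroup $U_{-a}$ (for $a \in R_{re,+}$) is generated by elements of $K$, elements of $A$, and elements of $U_a \subset U$. Thus it suffices to show that the subset $KA'U \subseteq G$ is stable under left multiplication by each generator. Stability under $K$ and under $A = A'A_\O$ is immediate (note $A_\O \subseteq K$ and $A'$ normalizes $U$). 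For stability under $U_a$ with $a \in R_{re,+}$ this is clear since $U_a \subseteq U$; the crux is stability under $U_{-a}$ for $a \in R_{re,+}$, and for this one uses the rank-one identity (\ref{sl2}): given $x_{-a}(s^{-1})$ one rewrites it as $x_a(s)(-s)^{a^\vee} w_a(1) x_a(s)$, and then handles the middle factors $(-s)^{a^\vee} \in A$ and $w_a(1) \in N_\O \subset K$ by the already-established stabilities, absorbing the outer $x_a(s) \in U$ factors into the $U$ part (after commuting them past $K$ and $A$ using the fact that conjugation of $U$ by $A'$ and by $N_\O$ again lands in a product of real root subgroups — here one must be a little careful, as in the affine setting conjugating $U$ by a Weyl element can move a positive real root to a negative one; however $w_a$ only moves the single root $a$ to $-a$, and that factor has already been split off). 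The cleanest route is probably to do this one real root at a time, i.e. argue that $K A' U_{(a_1,+)} U_{(a_2,+)} \cdots$ is stable, using the nilpotent-set structure of Tits's group scheme $\b{U}_\Psi$ to organize the bookkeeping.

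For uniqueness — which is really the content of the theorem, since existence is "soft" — I would fix a dominant integral weight $\omega \in \Lam_+$ which is regular (i.e. $\langle \omega, a_i^\vee\rangle > 0$ for all $i = 1,\dots,\ell+1$), take the integrable highest-weight module $V = V^\omega_\K$ with its $\O$-lattice $V_\O$, primitive highest weight vector $v_\omega$, and the norm $\|\cdot\|$ from (\ref{norm:v}). Suppose $g = k\pi^{\lv}u = k'\pi^{\mv}u'$. Then $\pi^{\lv}u (u')^{-1}\pi^{-\mv} = k^{-1}k' \in K$, so applying this to $v_\omega$: since $u' \in U$ fixes $v_\omega$ and $U$ is generated by positive real root subgroups whose action only adds terms in strictly higher weight spaces, $u(u')^{-1} v_\omega = v_\omega + (\text{higher weight terms})$; then $\pi^{\lv}$ scales the weight-$\omega$ component by $q^{-\langle\omega,\lv\rangle}$ (up to a unit — precisely $\|\pi^{\lv} v\| = q^{-\langle\omega,\lv\rangle}\|v\|$ on $V(\omega)$) and scales each higher-weight component of weight $\mu$ by $q^{-\langle\mu,\lv\rangle}$; and finally $\pi^{-\mv}$ on the left similarly rescales. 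The key point, using (\ref{triangle}), is that $\|\pi^{\lv} u(u')^{-1} \pi^{-\mv} v_\omega\| = q^{\langle\omega,\mv\rangle - \langle\omega,\lv\rangle} \cdot \|v_\omega\| \cdot (\text{contribution of higher weights, which does not decrease the norm})$, while the left side lies in $K \cdot v_\omega$ and hence has norm exactly $\|v_\omega\| = 1$. Comparing the weight-$\omega$ components forces $q^{\langle\omega, \mv - \lv\rangle} = 1$, hence $\langle\omega, \lv\rangle = \langle\omega, \mv\rangle$. Running this over enough regular dominant weights $\omega$ (it suffices to use the fundamental weights $\Lambda_1,\dots,\Lambda_{\ell+1}$ and $\delta$, which together separate points of $\Lv$) pins down $\lv = \mv$.

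The opposite Iwasawa decomposition $G = KA'U^-$ follows by an entirely parallel argument, replacing $U$ by $U^-$ and using that $U^-$ stabilizes a \emph{lowest} weight vector, or alternatively by applying a Chevalley-type involution of $\b{G}$ swapping $U \leftrightarrow U^-$ and $B \leftrightarrow B^-$ (which fixes $K$ and $A'$). I would include a sentence noting that one must check such an involution exists in the minimal Kac-Moody setting, or else just redo the norm estimate with a lowest-weight vector to be safe.

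The main obstacle, I expect, is the existence step — specifically the bookkeeping in showing $KA'U$ is stable under the negative root subgroups. In the finite-dimensional case this is a short $\mathrm{SL}_2$ computation, but in the affine/loop setting one must be careful that the various commutations do not produce infinitely many root subgroup factors and that conjugating $U$ by elements of $A'$ stays inside a group generated by \emph{positive} real root subgroups. Controlling this rests on the structure theory of Tits's functor (the nilpotent-set formalism around (\ref{stein:rel})) together with the fact that $A'$-conjugation preserves each $U_a$ (merely shifting the valuation filtration), so the potential danger is only in the $w_a$ factors, which can be handled one real root at a time as indicated. Uniqueness, by contrast, is routine once the right regular weight and the norm estimate (\ref{triangle}) are in hand.
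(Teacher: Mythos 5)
Your uniqueness argument for the $U$-decomposition is essentially the paper's: since both $u$ and $u'$ \emph{fix} $v_\omega$ exactly (there are no "higher weight terms" to worry about — $U$ stabilizes the highest weight vector, so $u(u')^{-1}v_\omega = v_\omega$ on the nose), one gets directly $\|gv_\omega\| = \|\pi^{\lambda^\vee}v_\omega\| = q^{-\langle\omega,\lambda^\vee\rangle}$, and the paper varies $\omega$ over fundamental weights. The triangle inequality (\ref{triangle}) plays no role. Your point that the fundamental weights all annihilate $\mathbf{d}$ and something else (the one-dimensional level module, equivalently $|\eta|$) is needed to pin down that component of $\lambda^\vee$ is a real subtlety; the paper handles it with a terse remark that it suffices to work modulo the central part of $A'$.

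The genuine gap is the opposite ($U^-$) uniqueness. Your primary suggestion — "using that $U^-$ stabilizes a lowest weight vector" — fails outright: the integrable highest-weight modules $V^\omega$ of \S\ref{sec-repth} are infinite-dimensional and have no lowest weight vector, so there is no vector fixed by $U^-$ available in this framework. Your fallback (a Chevalley-type involution of $\b{G}$ over $\ZZ$ swapping $U\leftrightarrow U^-$, preserving $K$, and inverting $A'$) is a defensible alternative, but you hedge on it and it is not what the paper does. The paper's argument is different in kind: if $K\pi^{\lv}U^- = K\pi^{\mv}U^-$, intersect this set with $K\pi^{\lv}U$ and with $K\pi^{\mv}U$; by \cite[Theorem 1.9(2)]{bgkp} each nonempty intersection $K\pi^{\nu^\vee}U \cap K\pi^{\xi^\vee}U^-$ forces $\nu^\vee \leq \xi^\vee$, so one gets $\lv \leq \mv$ and $\mv \leq \lv$, hence equality. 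This uses the asymmetric finiteness/ordering result of \cite{bgkp}, an external input you never invoke, and it avoids both the nonexistent lowest weight vector and the involution. Finally, on existence: your "$KA'U$ is stable under left multiplication by generators" sketch has a bug in the step where you propose to commute the outer $x_a(s)$ factors "past $K$ and $A'$" — conjugation by an arbitrary $k\in K$ is completely uncontrolled, so this does not obviously land you back in $KA'U$. The standard organization (which the paper cites from \cite[\S 16]{ga:ihes}) is through the Bruhat decomposition $G = \sqcup_w B^-wB$ of Proposition \ref{bruhat} and then a rank-one reduction of each $B^-wB$ into $KA'U$, not via generator-stability.
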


The existence of the Iwasawa decomposition can be deduced via a standard manner (see \cite[\S 16]{ga:ihes} and the references therein) from the Bruhat decomposition and a rank one computation. Note that in order to have both Iwasawa decompositions with respect to the groups $U$ and $U^-$ it is important that we are working in the minimal Kac-Moody group.

To address the uniqueness in Theorem \ref{iw:unique}, first note that it suffices to show uniqueness of $A' / A'_{cen}$ where $A'_{cen}$ is the central subgroup generated by $\pi^{n \cc}$ with $n \in \zee.$ Fix notations as in \S \ref{sec-repth}. If $g= k \pi^{\lambda^{\vee}} u$ then \be{g:rep} || g v_{\omega} || = || \pi^{\lambda^{\vee}} v_{\omega}|| = q^{\la \omega, \lv \ra}. \ee One then sees that $\lv$ is uniquely determined (modulo the center) from $g$ by varying $\omega$ over a set of fundamental weights of $\mf{g}.$

For the uniqueness statement with respect to $U^-,$ we argue slightly differently. Suppose that we have $K \pi^{\lv} U^- \cap  K \pi^{\mv} U^- \neq \emptyset$, i.e., $K \pi^{\lv} U^- = K \pi^{\mv} U^-.$ Then $K \pi^{\lv} U^- \cap K \pi^{\lv} U \neq \emptyset,$ and we also have $K \pi^{\mv} U^- \cap K \pi^{\lv} U \neq \emptyset,$ and hence from \cite[Theorem 1.9 (2)]{bgkp} , $\mv \geq \lv$ and $\lv \geq \mv,$ and so $\lv = \mv.$

\subsection{Cartan Decomposition and the semigroup $G_+$} \label{sec-cartan}

Unlike the finite-dimensional case, the Cartan decomposition does not hold for the group $G$, i.e., not every element $g \in G$ can be written as $g= k_1 \pi^{\lv} k_2$ with $k_1, k_2 \in K$ and $\lv \in \Lambda^{\vee}.$ On the other hand, this property does hold for the semi-group $G_+$ introduced in $\S$ \ref{intro-Gplus} (see \cite{bk, gar:car}). Recall that $G_+$ was defined to be the semigroup generated by $K$, the central $\mc{K}^*,$ and the elements $g \in G$ such that $| \eta(g) | > 0$ where the map $|\eta|$ was constructed in (\ref{eta:Kpts}).

\begin{nthm} [\cite{bk, gar:car}] \label{cartan-thmdefn} The semi-group $G_+$ can be written as a disjoint union, \be{semi:cartan} G_+ = \sqcup_{\lv \in \Lv_+} K \pi^{\lv} K \ee   \end{nthm}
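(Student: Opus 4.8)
The plan is to prove the two assertions separately: (1) $G_+ = \bigcup_{\lv \in \Lv_+} K\pi^{\lv}K$, and (2) the union is disjoint. For disjointness, I would proceed exactly as in the uniqueness part of the Iwasawa decomposition (Theorem \ref{iw:unique}). Namely, suppose $K\pi^{\lv}K = K\pi^{\mv}K$ with $\lv, \mv \in \Lv_+$. Since $K\pi^{\lv}K \supset K\pi^{\lv}U^-$ (as $U^-_{\O} \subset K$ and more to the point $K\pi^{\lv}K$ meets $K\pi^{\lv}U^-$), intersecting Iwasawa cells with Cartan cells and invoking \cite[Theorem 1.9]{bgkp} forces $\lv \geq \mv$ and $\mv \geq \lv$ in the dominance order, hence $\lv = \mv$. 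Alternatively, and perhaps more cleanly, I would use the integrable highest weight modules $V^{\omega}$ from \S\ref{sec-repth}: for $g \in K\pi^{\lv}K$ one computes $\max_{v \in K v_{\omega}} \|g v\|$ or more precisely the norm of the projection of $gv_{\omega}$ onto appropriate weight lines, and extracts $\la \omega, \lv\ra$ for each fundamental weight $\omega$; varying $\omega$ pins down $\lv$ modulo the center, and the central $\KK^*$-factor is detected by the valuation directly. This handles disjointness.

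For the decomposition itself, the containment $\bigcup_{\lv \in \Lv_+} K\pi^{\lv}K \subseteq G_+$ is essentially by definition: $K \subset G_+$, $\KK^* \subset G_+$ (central), and for $\lv \in \Lv_+$ with positive level the element $\pi^{\lv}$ satisfies $|\eta(\pi^{\lv})| > 0$ so lies in $G_+$; the level-zero dominant coweights are the multiples $n\cc$ of the imaginary coroot and $\pi^{n\cc}$ is central, hence in $G_+$. Since $G_+$ is a semigroup containing $K$ on both sides, every $K\pi^{\lv}K$ lies in $G_+$. The substantive direction is $G_+ \subseteq \bigcup_{\lv \in \Lv_+} K\pi^{\lv}K$. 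Here I would argue that the right-hand side $S := \bigcup_{\lv \in \Lv_+} K\pi^{\lv}K$ is itself closed under multiplication and contains each of the three types of generators of $G_+$; since $G_+$ is by definition the smallest sub-semigroup with these generators, this gives $G_+ \subseteq S$. That $S$ contains $K$, the central $\KK^*$, and (using the Iwasawa decomposition together with the fact that $|\eta(g)| > 0$ forces the $U$-part to be absorbable and the $A'$-part to have positive level, hence to be conjugate into the dominant chamber by $W$) each element $g$ with $|\eta(g)| > 0$, is the core of the matter.

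The main obstacle is showing $S$ is closed under multiplication, i.e., $K\pi^{\lv}K \cdot K\pi^{\mv}K \subseteq S$ for $\lv, \mv \in \Lv_+$. In the finite-dimensional setting this follows from the finite Cartan decomposition directly, but here one cannot simply invoke a Cartan decomposition for $G$ since it fails. Instead I would reduce to the rank-one (affine $SL_2$ or the Tits-functor $\widetilde{w}_i$) case: using the decomposition $K = \bigsqcup_{w} I\widetilde{w}I$ from Proposition \ref{im:i} together with the commutation relations in the Tits group, one writes $\pi^{\lv} k \pi^{\mv}$ and pushes the middle $K$ through, controlling the result by the Iwasawa decomposition. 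The key finiteness — that the product lands in only finitely many $K\pi^{\nu}K$ with $\nu \leq \lv + \mv$ and $\nu$ dominant — is exactly the content of the finiteness theorems of \cite{bk} or \cite{bgkp}, so I would cite those. Concretely: for $g \in K\pi^{\lv}K \cdot K\pi^{\mv}K$, apply the Iwasawa decomposition $g = k\pi^{\nu}u$; show $\nu$ has level equal to $\operatorname{level}(\lv) + \operatorname{level}(\mv) \geq 0$; if the level is positive then $|\eta(g)| > 0$ and a separate argument (Iwasawa plus the Bruhat decomposition of the $K$-factor, reducing each simple reflection to rank one as in Garland \cite{ga:ihes}) shows $g \in K\pi^{\nu'}K$ for the dominant $\nu' \in W\nu$; the level-zero case is central and immediate. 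I expect the bookkeeping in the rank-one reduction — keeping track of which $U_{a}$'s and which valuations appear when conjugating $\pi^{\lv}$ past $\widetilde{w}_i$ — to be the genuinely delicate step, and I would lean on the already-established Proposition \ref{im:i} and the Iwasawa decomposition (Theorem \ref{iw:unique}) to organize it, exactly paralleling the classical proof in \cite{mac:mad} but with the semigroup restriction replacing the (false) global Cartan decomposition.
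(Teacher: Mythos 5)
Your proposal diverges from the paper's proof in Appendix \ref{car-app} and contains a gap that would need a different idea to close. The paper does not try to verify $K\pi^{\lv}K\cdot K\pi^{\mv}K\subseteq\bigcup_{\nu}K\pi^{\nu}K$ directly. Instead it introduces the set $G_b$ of \emph{bounded} elements, namely those $g$ with $\max_{v\in V^{\omega}_{\O}}\|gv\|<\infty$ for every highest weight module $V^{\omega}$, for which the semigroup property is immediate (Lemma \ref{bounded:semigp}), and then proves the identification $G'_-:=\bigcup_{\lv\in\Lv_+}K\pi^{-\lv}K=G_b$ (Proposition \ref{cartan:reform}). Multiplicative closure of $\bigcup_{\lv}K\pi^{\lv}K$ is then inherited for free from $G_b$, and Proposition \ref{intro-semigp} matches generators to get $G'_+=G_+$. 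The one hard technical input is the Garland-style maximization argument (Lemma \ref{unip:cartan}): for bounded $g$, choose $k\in K$ maximizing $\|gkv_{\omega}\|$; the $U$-part of the Iwasawa decomposition of $gk$ is then forced to lie in $K$ (using \cite[Lemma 5.2.1, Proposition 3.3.1]{bgkp}), which is precisely a Cartan decomposition. This replaces the rank-one product bookkeeping you anticipated, and is what makes the argument clean.

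Two steps in your plan do not survive scrutiny. First, the finiteness results of \cite{bk} and \cite{bgkp} bound fibers and intersections such as $K\pi^{\lv}K\cap K\pi^{\mv}U$; they do \emph{not} assert that a product $K\pi^{\lv}K\cdot K\pi^{\mv}K$ is contained in a union of Cartan cells at all, so they cannot be cited to get closure of $S$ under multiplication --- that containment is exactly what has to be proved. Second, the claim that $g=k\pi^{\nu}u$ (Iwasawa) lies in $K\pi^{\nu'}K$ with $\nu'$ the dominant representative of $W\nu$ is false: Iwasawa and Cartan coordinates are not $W$-conjugate. Already for $SL_2(\calK)$, an upper unipotent with a pole has Iwasawa coordinate $0$ but a strictly positive Cartan coordinate. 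Finally, note that your plan is structurally redundant in a way that hides where the real content lies: if the ``separate argument'' showing that every $g$ with $|\eta(g)|>0$ admits a Cartan decomposition were in hand, then multiplicative closure of $S$ would follow immediately (since $|\eta|$ is additive and nonnegative on the generators, any product involving a type-(3) generator again has $|\eta|>0$, and products of types (1) and (2) alone reduce to the central-times-$K$ case), so the closure step would not need to be argued separately. That Cartan-decomposability is exactly what the norm-maximization lemma supplies, and adopting it collapses your plan to essentially the paper's route through $G_b$.
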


In lieu of the above result, we shall often refer to $G_+$ as the \emph{Cartan semigroup}. The above result implies in particular that the right hand side of (\ref{semi:cartan}) is a semi-group, a non-trivial fact. In appendix \ref{car-app}, we shall give another proof of this theorem based in part on the argument from (\cite{gar:car}).

Let $G_+^{\textsf{Iw}},$ the \emph{Iwasawa semi-group}, be the subset of elements of the form $K \pi^{\mv} U$ with $\mv \in X.$ In Appendix \ref{car-app}, we shall also relate the Iwasawa and Cartan semigroups as follows,

\begin{ncor} \label{semigp-iwa}  We have an equality of semigroups, $G_+^{\textsf{Iw}} = G_+.$ \end{ncor}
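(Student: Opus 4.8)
The plan is to establish the two inclusions $G_+\subseteq G_+^{\textsf{Iw}}$ and $G_+^{\textsf{Iw}}\subseteq G_+$ separately, the main tools being the Cartan decomposition (Theorem~\ref{cartan-thmdefn}), the Iwasawa decomposition (Theorem~\ref{iw:unique}), and the homomorphism $|\eta|\colon G\to\ZZ$ of (\ref{eta:Kpts}). The bookkeeping rests on three elementary facts: $|\eta|$ vanishes on $K$, on $U$, and on every central element, while $|\eta(\pi^{\lv})|=\langle\delta,\lv\rangle$ equals the level of $\lv$; and, crucially, every $g\in G$ with $|\eta(g)|>0$ is by definition one of the generators of the third type of $G_+$, hence already lies in $G_+$.

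For $G_+\subseteq G_+^{\textsf{Iw}}$: given $g\in G_+$, write $g=k_1\pi^{\lv}k_2$ with $\lv\in\Lv_+$ by Theorem~\ref{cartan-thmdefn}, and also $g=k\pi^{\mv}u$ with $k\in K$, $u\in U$ and $\mv\in\Lv$ uniquely determined, by Theorem~\ref{iw:unique}. Applying $|\eta|$ gives $\langle\delta,\mv\rangle=|\eta(g)|=\langle\delta,\lv\rangle\geq 0$ since $\lv$ is dominant. If this common level is positive then $\mv\in X$ by (\ref{ti:exp}); if it is zero then $\lv\in\ZZ\cc$ is central by the Claim in Lemma~\ref{L:0}, so $g\in\pi^{\lv}K$, and the uniqueness part of Theorem~\ref{iw:unique} forces $\mv=\lv\in\ZZ\cc\subseteq X$. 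In either case $g\in K\pi^{\mv}U\subseteq G_+^{\textsf{Iw}}$.

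For $G_+^{\textsf{Iw}}\subseteq G_+$: write $g=k\pi^{\mv}u$ with $\mv\in X$; since $K\subseteq G_+$ and $G_+$ is a semigroup, it suffices to prove $\pi^{\mv}u\in G_+$. When $\mv$ has positive level this is immediate, and is the main case: $|\eta(\pi^{\mv}u)|=\langle\delta,\mv\rangle>0$, so $\pi^{\mv}u$ is itself a generator of the third type. The only remaining case is $\mv\in\ZZ\cc$, where $\pi^{\mv}$ is central and $|\eta|$ carries no information; this case is treated by a separate, more delicate argument that uses the internal structure of $U$ — namely that every $u\in U$ is supported on a finite nilpotent set $S\subset R_{re,+}$ of real root subgroups — together with the conjugation rule $\pi^{\nu^{\vee}}x_a(s)\pi^{-\nu^{\vee}}=x_a(\pi^{\langle a,\nu^{\vee}\rangle}s)$, the point being that a single sufficiently dominant cocharacter of positive level simultaneously integralizes all of the factors $x_a(s_a)$, $a\in S$.

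I expect this last step — the central, level-zero case of the second inclusion — to be the main obstacle, precisely because the $\ZZ$-grading by $|\eta|$ is blind there and one is forced to use the structure of $U$ and to track the order of the root-subgroup factors carefully under conjugation. The remaining parts, namely the first inclusion and the positive-level case of the second, follow quickly from the decompositions already in hand. The same bookkeeping incidentally shows that $G_+^{\textsf{Iw}}$ is closed under multiplication, which justifies calling it the Iwasawa semigroup.
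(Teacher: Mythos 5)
Your approach is genuinely different from the paper's. The corollary is obtained there by combining Propositions~\ref{iwa-rel}, \ref{cartan:reform} and \ref{intro-semigp}, all of which route through Garland's semigroup $G_b$ of elements acting boundedly on the integral lattices $V^\omega_{\O}$ of highest-weight modules; the mechanism is the chain $G_b^{-1} = G'_+ = G_+$ together with an Iwasawa-type description of $G_b$. Your direct bookkeeping via $|\eta|$ dispatches $G_+ \subseteq G_+^{\textsf{Iw}}$ and the positive-level half of the reverse inclusion more elementarily than the route through $G_b$, which is a real simplification there.

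The level-zero case, however, cannot be completed along the lines you sketch, and the gap is not merely a missing computation. Conjugating $u \in U$ by a dominant positive-level $\pi^{\nu^{\vee}}$ does produce $\pi^{\nu^{\vee}} u \pi^{-\nu^{\vee}} \in K$, but recovering $u$ as $\pi^{-\nu^{\vee}}\bigl(\pi^{\nu^{\vee}} u \pi^{-\nu^{\vee}}\bigr)\pi^{\nu^{\vee}}$ involves the factor $\pi^{-\nu^{\vee}}$, which has $|\eta| < 0$ and so does \emph{not} lie in $G_+$: the factorization never leaves the group, but it leaves the semigroup. Worse, the inclusion $KU \subseteq G_+$ that the $\mv = 0$ case of your argument requires appears to fail outright. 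Since $|\eta|$ is additive and vanishes on $K$ and on the central $\mc{K}^*$, any $u \in U \cap G_+$ would have to be a product of central and $K$-elements, say $u = s^{\cc}k$ with $s \in \mc{K}^*$ and $k \in K$; applying this to a primitive level-one highest-weight vector $v_\omega$, which $u$ fixes and whose norm $K$ preserves, gives $1 = ||v_\omega|| = ||s^{\cc}k v_\omega|| = q^{-\valu(s)}$, hence $\valu(s)=0$ and $u \in K$. But, for instance, $x_{a_1}(\pi^{-1}) \in U \setminus K$ (equivalently, $x_{a_1}(\pi^{-1}) \in K\pi^{-\alpha_1^{\vee}}K$ and $-\alpha_1^{\vee}$ has no dominant $W$-conjugate, so this double coset is disjoint from the Cartan decomposition of $G_+$). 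So the level-zero boundary $\ZZ\cc$ of the Tits cone is a genuine sore spot in the statement as written; any direct argument of the kind you propose will land exactly on it, and you were right to flag it as the hard point, but it is an impossibility rather than an obstacle.
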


The proof follows by combining Propositions \ref{iwa-rel}, \ref{cartan:reform}, and \ref{intro-semigp}.

\subsection{Iwahori-Matsumoto Decomposition}
\label{sec-im}
\subsubsection{"Affine" Weyl (semi)-group} \label{sec-affw}

We would now like to study another descriptions of $G_+,$ which is the analogue of the Iwahori-Matsumoto decomposition for a classical $p$-adic group into cosets indexed by the the affine Weyl group. The set indexing Iwahori double cosets of $G_+$ will be called the "affine" Weyl semi-group $\mc{W}_X$ \footnote{We use the term "affine" Weyl group to refer to what some other others call the double affine Weyl group. Our present notation is meant to emphasize the analogy with the usual theory of groups over a local field.} which is defined as follows: recall that $X \subset \Lv$ was defined to the Tits cone, which carries a natural action of $W$ the Weyl group of $G.$ We have already defined the "affine" Wey group $\mc{W} = W \rtimes \Lv$ in (\ref{aw}), and we now set \be{aw-X} \aw_X := W \rtimes X. \ee For an element $x = (w, \lv) \in \aw,$ with $w \in W$ and $\lv \in \Lv$ we shall abuse notation and just write $w \pi^{\lv}$ for the corresponding lift $\wt{w} \pi^{\lv}$ in $N,$ where $\wt{w} \in K$ was defined in (\ref{tw}). Sometimes we shall also just write $x = w \pi^{\lv} \in \aw$ to mean the pair $x=(w, \lv) \in \aw$ hoping this will not cause any confusion in the sequel.
%
%Note that $\mc{W}$ is a group, and $\mc{W}_X$ is a semi-group. Neither have a Coxeter-type presentation, a source of some difficulty in the sequel.

\subsubsection{Iwahori-Matsumoto Semigroup}

Let us define, following Iwahori and Matsumoto \cite{im}, \be{iw:mat:G} G_+^{\textsf{IM}}:= \bigcup_{x \in \aw_X} I x I, \ee where for each $x \in \aw_X$ we denote by the same letter a corresponding lift to an element of $G_+.$

\begin{nprop} \label{im=car} We have an equality of semigroups $G_+^{\textsf{IM}} = G_+.$ \end{nprop}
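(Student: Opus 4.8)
The plan is to prove the two inclusions $G_+^{\textsf{IM}} \subseteq G_+$ and $G_+ \subseteq G_+^{\textsf{IM}}$ separately. For the first inclusion, recall that $G_+^{\textsf{IM}} = \bigcup_{x \in \aw_X} I x I$, where $x$ ranges over lifts $\wt w \pi^{\lv}$ with $w \in W$ and $\lv \in X$. Since $I \subseteq K \subseteq G_+$ and $G_+$ is a semigroup, it suffices to check that each such lift $\wt w \pi^{\lv}$ lies in $G_+$. The element $\wt w$ lies in $N_\O \subseteq K \subseteq G_+$, so we are reduced to showing $\pi^{\lv} \in G_+$ for all $\lv \in X$. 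Using the description $X = \{\lv \in \Lv \mid \la \delta, \lv \ra > 0\} \sqcup \zee \cc$ from (\ref{ti:exp}): if $\lv = n\cc$ is central, then $\pi^{n\cc}$ lies in the central $\mc{K}^* \subset G$, hence in $G_+$ by construction; if $\la \delta, \lv \ra = r > 0$, then $|\eta(\pi^{\lv})| = r > 0$ (since $\eta$ reads off the loop-rescaling coordinate and $\pi$ has valuation $1$), so $\pi^{\lv} \in G_+$ again by definition of the generating set of $G_+$. This gives $G_+^{\textsf{IM}} \subseteq G_+$.

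For the reverse inclusion $G_+ \subseteq G_+^{\textsf{IM}}$, I would combine the Cartan decomposition of Theorem \ref{cartan-thmdefn} with the Bruhat decomposition of $K$ in Proposition \ref{im:i}(2). By Theorem \ref{cartan-thmdefn} every $g \in G_+$ lies in some $K \pi^{\lv} K$ with $\lv \in \Lv_+$. Writing $K = \bigsqcup_{w \in W} I \wt w I$ and using that $N_\O$ normalizes things appropriately, we get $g \in I \wt{w_1} I \pi^{\lv} I \wt{w_2} I$ for some $w_1, w_2 \in W$. The task is then to absorb the $I$'s and the $\wt{w_i}$'s to rewrite this as a single double coset $I x I$ with $x \in \aw_X$. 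The key point is that $\Lv_+ \subseteq X$, so the "translation part" already lands in the Tits cone; one then shows by the standard rank-one $SL_2$ reduction (as in \cite{im}) that $I \pi^{\lv} I \cdot \wt{w} I \subseteq \bigcup_{y} I y I$ with the $y$'s still of the form $\wt{v} \pi^{\mv}$, $\mv$ in the $W$-orbit of $\lv$, hence $\mv \in X$. Since there are only finitely many double cosets involved at each step (this is exactly the finiteness input from \cite{bk} or \cite{bgkp} that makes $H(G_+, I)$ well-defined), the product collapses into a finite union of $IxI$ with $x \in \aw_X$, so $g \in G_+^{\textsf{IM}}$.

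Finally one should note that $G_+^{\textsf{IM}}$ is genuinely a \emph{semigroup}: this is not automatic from the definition, but follows once the equality $G_+^{\textsf{IM}} = G_+$ is established, since $G_+$ is a semigroup by construction. (Alternatively, one proves the semigroup property directly from the multiplication rules $I \wt{w_i} I \cdot I x I \subseteq I \wt{w_i} x I \cup I x I$ together with closure of $\aw_X$ under the relevant operations; but the cleaner route is to deduce it from the identification with $G_+$.)

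The main obstacle I anticipate is the reverse inclusion, specifically controlling the product $I \wt{w_1} I \pi^{\lv} I \wt{w_2} I$ and showing it decomposes into double cosets indexed by $\aw_X$ rather than all of $\aw = W \rtimes \Lv$. The subtlety is that conjugating $\pi^{\lv}$ past Weyl elements and Iwahori factors could in principle produce translations outside the Tits cone; the resolution is that the $W$-orbit of a dominant coweight $\lv \in \Lv_+$ stays inside $X$ by definition of $X$ as $\bigcup_{w \in W} w\Lv_+$, and the interplay of $I$, $I^-$ and the affine root group filtration keeps everything consistent with the semigroup $G_+$. Making this bookkeeping precise — essentially checking that the Iwahori-Matsumoto multiplication rules for $IM$-generators extend to this affine setting on the semigroup level — is where the real work lies, and it relies crucially on the structural results of \S\ref{sec-bruhat} and \S\ref{K:decomp} together with the Cartan decomposition.
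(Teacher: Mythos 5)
Your proof takes essentially the same route as the paper: forward inclusion via $I w \pi^{\lv} I \subset K\pi^{\lv}K \subset G_+$, and reverse inclusion by decomposing $K = \bigcup_{w\in W} I\wt w I$ and then absorbing the $I$-factors past $\pi^{\lv}$ via a rank-one $SL_2$ computation, which is precisely the content of Lemma~\ref{W:I}. You correctly identify that this absorption lemma is where the real work lies; the paper isolates it as Lemma~\ref{W:I} and proves it case by case on $\langle \lv, a\rangle$. One remark should be corrected, though: your appeal to the finiteness of double cosets from \cite{bk}/\cite{bgkp} is both unnecessary and logically misplaced here. The inclusion $K\pi^{\lv}K \subset \bigcup_{w,w'} I\wt{w}I\pi^{\lv}I\wt{w'}I \subset IW\pi^{\lv}WI \subset I\aw_X I$ holds as a containment of sets regardless of whether the unions are finite, and the Hecke-algebra finiteness theorem is a separate (and much harder) result proved later in \S\ref{is-conv}; invoking it here suggests a dependency that does not exist. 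What is actually needed is only that $W\pi^{\lv}W \subset \aw_X$ for $\lv \in \Lv_+$, which is immediate since $X = W\Lv_+$ is $W$-stable, plus Lemma~\ref{W:I} to keep the translation part pinned at $\lv$ throughout the absorption.
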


\begin{proof} If $x = w \pi^{\lv} \in \mc{W}_X,$ with $w \in W$ and $\lv \in X$ then clearly $I w  \pi^{\lv} I \subset K \pi^{\lv} K \subset G_+.$ Conversely, since we also have a decomposition $K = \bigcup_{w \in W} I w I$ from (\ref{bruhat:K}), we obtain \begin{eqnarray} \label{im:sg2} K \pi^{\lv} K \subset \bigcup_{w, w' \in W} I w I \pi^{\lv} I w' I \subset  I W \pi^{\lv} W I \subset I \mc{W}_X I, \end{eqnarray}  where the second inclusion is a consequence of Lemma \ref{W:I} below.  So the two sets in question are equal, and by Theorem \ref{cartan-thmdefn} the semigroup property of $G_+^{\mathsf{IM}}$ follows. \end{proof}

In the proof of the previous Proposition, we used the following simple result,

\begin{nlem} \label{W:I} Let $\lv \in \Lv.$ Then, \begin{enumerate}
\item For each $w \in W$ we have $w I \pi^{\lv} I \subset \cup_{w' \in W} I w' \pi^{\lv} I.$ Symbolically, we write $W I \pi^{\lv} I \subset I W \pi^{\lv} I$
\item Similarly we have that $I \pi^{\lv} I  W \subset I \pi^{\lv} W I.$  \end{enumerate}  \end{nlem}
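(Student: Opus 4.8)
The plan is to reduce both statements to the case of a single simple reflection $w = w_i$ and then to carry out an $SL_2$-type rank-one computation, exactly as in the classical Iwahori--Matsumoto setup \cite{im}. First I would observe that since $W$ is generated by the simple reflections $w_1, \ldots, w_{\ell+1}$, and since for a product $w = w_{i_1} \cdots w_{i_r}$ one has $w I \pi^{\lv} I = w_{i_1}(\cdots(w_{i_r} I \pi^{\lv} I)\cdots)$, it suffices to prove statement (1) when $w = w_i$ is a simple reflection; statement (2) follows symmetrically (or by applying the anti-involution $g \mapsto g^{-1}$, which swaps $I\pi^{\lv}I W$-type expressions with $W I \pi^{-\lv} I$-type ones after adjusting $\lv$). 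So the core claim to establish is: for each simple reflection $w_i$ and each $\lv \in \Lv$,
\begin{equation*}
\wt{w_i}\, I\, \pi^{\lv}\, I \ \subset\ \bigcup_{w' \in W} I\, w'\, \pi^{\lv}\, I.
\end{equation*}

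The key step is a decomposition of $I$ adapted to the simple root $a_i$. Using the Iwahori factorization from Proposition \ref{im:i}(1), write $I = U_{\O} U^-_{\pi} A_{\O}$, and further split off the rank-one factor in the $a_i$-direction: $I = U_{a_i, \O} \cdot I_i'$, where $I_i'$ is generated by $A_{\O}$, by $U_{a, \O}$ for $a \in R_{re,+} \setminus \{a_i\}$, and by $U_{-a,\pi}$ for $a \in R_{re,+}$; the point is that $\wt{w_i}$ normalizes $I_i'$ (since $w_i$ permutes $R_{re,+} \setminus \{a_i\}$ and sends $a_i \mapsto -a_i$, the pieces of $I_i'$ are permuted among themselves up to the $U_{\pm a_i}$-correction, which is absorbed). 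Hence $\wt{w_i} I \pi^{\lv} I = \wt{w_i} U_{a_i,\O} \wt{w_i}^{-1} \cdot \wt{w_i} I_i' \wt{w_i}^{-1} \cdot \wt{w_i} \pi^{\lv} I \subset U_{-a_i,\O} \cdot I_i' \cdot \wt{w_i}\pi^{\lv} I$, so everything reduces to analyzing $U_{-a_i,\O}\, \wt{w_i}\, \pi^{\lv} I$, i.e.\ to understanding the single coset generator $x_{-a_i}(u)\wt{w_i}$ for $u \in \O$.

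Now I would run the rank-one $SL_2$ computation. For $u \in \O$ there are two cases. If $u \in \pi\O$, then $x_{-a_i}(u) \in U_{-a_i,\pi} \subset I$, so $x_{-a_i}(u)\wt{w_i}\pi^{\lv} I \subset I \wt{w_i}\pi^{\lv} I$, which is of the desired form with $w' = w_i$. If $u \in \O^*$ is a unit, I would apply the basic identity \eqref{sl2}, namely $x_{-a_i}(u) = x_{a_i}(u^{-1})(-u^{-1})^{a_i^{\vee}} \wt{w_i}(1) x_{a_i}(u^{-1})$ (appropriately normalized so that the middle term lands in $A_{\O}$ since $u \in \O^*$), which after using $\wt{w_i}(1)\wt{w_i} \in A_{\O}$ (as $\wt{w_i}^2 \in A_{\O}$) rewrites $x_{-a_i}(u)\wt{w_i}$ as an element of $U_{a_i,\O} \cdot A_{\O} = I \cap (\text{something fixed by } \wt{w_i}\text{-conjugation})$, so that $x_{-a_i}(u)\wt{w_i}\pi^{\lv}I \subset I\pi^{\lv}I = I\, \mathrm{id}\, \pi^{\lv}I$, giving the desired form with $w' = \mathrm{id}$. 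Collecting the two cases and feeding back through the reduction gives $\wt{w_i} I \pi^{\lv} I \subset I \pi^{\lv} I \cup I w_i \pi^{\lv} I$, which proves (1) for simple reflections, and hence in general by induction on $\ell(w)$. Statement (2) is handled by the mirror-image argument using the opposite factorization $I = U^-_{\pi} U_{\O} A_{\O}$ and the identity \eqref{sl2} for $a_i$ in place of $-a_i$.

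The main obstacle I anticipate is bookkeeping in the rank-one reduction: making sure that the "complementary" subgroup $I_i'$ is genuinely normalized by $\wt{w_i}$ (this uses that $w_i$ permutes the positive real roots other than $a_i$, a standard fact for Coxeter groups that applies here since $W$ acts on $R_{re}$), and tracking that all the torus corrections $(-u^{-1})^{a_i^{\vee}}$, $\wt{w_i}^2$, etc.\ remain in $A_{\O}$ and can be absorbed into $I$ — none of which is deep, but all of which needs the infinite-dimensional $p$-adic loop group conventions of \S\ref{section:groups} to be handled uniformly in $a_i$ (including the affine simple root $a_{\ell+1}$), rather than root-by-root. Since the paper explicitly says "the proof of the following is entirely analogous to the classical situation, so we suppress the details" for Proposition \ref{im:i}, I would likewise present this lemma's proof at the level of the reduction plus the rank-one identity, citing \cite{im} for the verification that the pieces commute as claimed.
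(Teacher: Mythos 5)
Your rank-one step has a genuine gap. For $u\in\O^*$ you assert that applying \eqref{sl2} and absorbing $\wt{w_i}^2\in A_\O$ places $x_{-a_i}(u)\wt{w_i}$ in $U_{a_i,\O}A_\O\subset I$, so that $x_{-a_i}(u)\wt{w_i}\pi^{\lv}I\subset I\pi^{\lv}I$ regardless of $\lv$. This is false: carrying the algebra through (one must conjugate the inner $x_{a_i}(u^{-1})$ past the final $\wt{w_i}(1)$ to bring the two Weyl representatives together) leaves a trailing factor $x_{-a_i}(\pm u^{-1})\in U_{-a_i}[0]$, whose argument is a unit and hence does \emph{not} lie in $I$. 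Indeed $x_{-a_i}(u)\wt{w_i}\in Iw_iI$, not in $I$ --- its image under $\kappa$ is a big-cell element in the rank-one copy of $\bfG(\kk)$. What \eqref{sl2} actually accomplishes is to trade the problematic $U_{a_i}[0]$ factor for a $U_{-a_i}[0]$ factor, which must then be pushed past $\pi^{\lv}$; the conjugate $\pi^{-\lv}U_{-a_i}[0]\pi^{\lv}$ lies in $U_{-a_i,\pi}\subset I$ precisely when $\langle\lv,a_i\rangle>0$. This is why the paper's case split is on the sign of $\langle\lv,a_i\rangle$, not on the valuation of the unipotent parameter. If $\langle\lv,a_i\rangle\le 0$ (case~a of the paper's proof) no rank-one rewrite is needed: one absorbs $U_{a_i}[0]$ directly into the right-hand $I$ via $\pi^{-\lv}U_{a_i}[0]\pi^{\lv}\subset U_{a_i,\O}\subset I$, giving $w_aI\pi^{\lv}I\subset Iw_a\pi^{\lv}I$. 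Only when $\langle\lv,a_i\rangle>0$ (case~b) does one invoke \eqref{sl2}, and it is exactly then that the residual $U_{-a_i}[0]$ factor folds into $I$, giving $w_aI\pi^{\lv}I\subset I\pi^{\lv}I$. In your scheme both unit and non-unit $u$ occur for every $\lv$, and the unit case cannot close without the sign condition on $\langle\lv,a_i\rangle$ that your case analysis never invokes.

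There is a secondary issue in your reduction. The claim that $\wt{w_i}$ normalizes the $I_i'$ you define is not established: $w_i$ sends the generator $U_{-a_i,\pi}$ of $I_i'$ to $U_{a_i,\pi}$, which is not on your generating list. (The clean choice would be $I_i':=I\cap\wt{w_i}I\wt{w_i}^{-1}$, which is manifestly normalized since $\wt{w_i}^2\in A_\O$.) But even granting normalization, your chain $\wt{w_i}I\pi^{\lv}I\subset U_{-a_i,\O}\,I_i'\,\wt{w_i}\pi^{\lv}I$ leaves $I_i'$ sandwiched between $U_{-a_i,\O}$ and $\wt{w_i}\pi^{\lv}I$, and you silently discard it; it is not positioned to be absorbed into either side. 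Placing the troublesome root-group factor to the \emph{right} of $w_a$, as the paper does via the containment $wI\subset Iw\,U_w[0]$ with $U_w[0]=\prod_{a>0,\,wa<0}U_a[0]$, avoids this entirely, since that factor then sits directly against $\pi^{\lv}$ and conjugation through it is immediate.
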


\begin{proof} Let us establish the first statement, the proof of the second being similar.  Given $w \in W,$ it is easy to see (see e.g. \cite[Proposition 2.6]{im} for an essentially similar argument) that \be{lem:sg:3} w I \subset I w U_{w}[0] \; \text{ where }\; U_{w}[0] = \prod_{a \in R_{+}: w \cdot a < 0} U_{a}[0], \ee where the notation $U_{a}[0]$ was introduced in (\ref{Ua:defs}).  In particular, if $w=w_a$ for $a \in \Pi$ a simple root, we have, \be{lem:sg:4} w_a I \pi^{\lv} I \subset I w_a U_{a}[0] \pi^{\lv} I ,\ee and thus we have two possibilities,
\begin{enumerate}
\item[a.] If $\langle \lv, a \rangle \leq 0$ then $\pi^{- \lv} U_{a}[0] \pi^{\lv} \subset I$ and so \be{lem:sg:4} w_a I \pi^{\lv} I \subset I w_a \pi^{\lv} I. \ee

\item[b.] If $\langle \lv, a \rangle > 0$ then a rank one computation as in (\ref{sl2}) shows that we can write \be{lem:sg:5} w_aU_{a}[0] \pi^{\lv} I \subset w_a U_{-a}[0] w_a U_{-a}[0] \pi^{\lv} I. \ee By our assumption, we have $\pi^{-\lv} U_{-a}[0] \pi^{\lv} \subset I$ and so \be{lem:sg:6} w_a U_{-a}[0] w_a U_{-a}[0] \pi^{\lv} I &\subset& w_a U_{-a}[0] w_a \pi^{\lv} I \\
&\subset&  U_{a}[0] \pi^{\lv} I \subset I \pi^{\lv} I .\ee
\end{enumerate}

\noindent So in both cases, we see that $w_a I \pi^{\lv} I \subset I W \pi^{\lv} I,$ namely \be{lem:sg:7} w_a I \pi^{\lv} I \subset \begin{cases} I \pi^{\lv} I & \text{ if } \la \lv, a \ra \geq 0 \\ I w_a \pi^{\lv} I & \text{ if } \la \lv, a \ra < 0. \end{cases} \ee  An easy induction on the length of $w$ concludes the proof of (1).
\end{proof}

\subsubsection{Disjointness of $I$-double cosets} We would like to show that the decomposition (\ref{iw:mat:G}) above is disjoint. In other words, we show,
\begin{nlem} \label{!:ixi} Let $x, y \in \aw_X.$ Then $I x I \cap I y I \neq \emptyset$ implies that $x=y.$ \end{nlem}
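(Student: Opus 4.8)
The statement asserts that the $I$-double cosets $IxI$ for $x\in\aw_X$ are pairwise disjoint, so that the Iwahori-Matsumoto decomposition $G_+=\sqcup_{x\in\aw_X}IxI$ is genuinely a decomposition. The plan is to reduce this to a combination of three pieces of information already available: (i) the Cartan decomposition $G_+=\sqcup_{\lv\in\Lv_+}K\pi^{\lv}K$ from Theorem \ref{cartan-thmdefn}, which pins down the $K$-double coset of any element, hence the dominant coweight $\lv_+$ attached to $x$; (ii) the Iwasawa decomposition (Theorem \ref{iw:unique}) together with the disjointness of $K$-orbits on the flag variety encoded in Proposition \ref{im:i}(2), which will separate elements lying in the same $K$-double coset but different $I$-double cosets; and (iii) a reduction of the Weyl-group ambiguity to the classical finite-type statement.

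\textbf{Key steps.} First I would write $x=w\pi^{\lv}$, $y=w'\pi^{\mv}$ with $w,w'\in W$ and $\lv,\mv\in X$. Since $IxI\subset K\pi^{\lv_+}K$ where $\lv_+$ is the dominant representative of the $W$-orbit of $\lv$ — here one must be slightly careful because $X$ is the Tits cone, not all of $\Lv$, but every $W$-orbit in $X$ meets $\Lv_+$ exactly once by (\ref{tc}) — the hypothesis $IxI\cap IyI\neq\emptyset$ forces $K\pi^{\lv_+}K=K\pi^{\mv_+}K$, hence $\lv_+=\mv_+$ by Theorem \ref{cartan-thmdefn}, i.e. $\lv$ and $\mv$ are $W$-conjugate. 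Next, passing to the Iwasawa picture: applying the Iwasawa decomposition $G=K A' U$ (Theorem \ref{iw:unique}), the element $\pi^{\lv}$ (and anything in $I\pi^{\lv}I\subset K\pi^{\lv}U$, since $I\subset K$ and $I\cap B = A_\O U_\O\cdots$ gives $I\subset K U$ on the relevant side) determines its Iwasawa coweight uniquely; one shows $IxI$ and $IyI$ both lie in a single left $K$-coset intersected appropriately so that the uniqueness of the Iwasawa coweight forces $\lv=\mv$ exactly (not merely up to $W$). Concretely, write $x = \w \pi^\lv$; conjugating, $I\w\pi^\lv I = \w I' \pi^\lv I$ for a conjugate Iwahori $I'$, and unwinding via Proposition \ref{im:i} and a rank-one $SL_2$ computation as in (\ref{sl2}), one extracts that the left Iwasawa coweight of any element of $IxI$ equals $\lv$ itself. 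Combined with the same for $y$, this gives $\lv=\mv$. Finally, with $\lv=\mv$ fixed, the remaining ambiguity is purely in $W$: $I\w\pi^\lv I\cap I\w'\pi^\lv I\neq\emptyset$. Projecting modulo the congruence subgroup (the kernel of $\kappa$) and using the disjointness of the Bruhat cells $I\w I$ in $K$ from Proposition \ref{im:i}(2), one reduces to the statement that in the reductive quotient $G_\res$ the $B_\res$-double cosets are disjoint; but one must account for how $\pi^\lv$ permutes things, which is handled by the standard fact (as in \cite{im}) that $w\mapsto w\pi^\lv$ respects the relevant length/ordering so that distinct $w$ give distinct cosets. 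An induction on $\ell(w)$ along the lines of Lemma \ref{W:I}, tracking the $U_a[0]$ factors, closes this.

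\textbf{Main obstacle.} The delicate point is step two — showing $\lv=\mv$ on the nose rather than only up to $W$-conjugacy. The Cartan decomposition only gives the dominant representative, and the Weyl-group and translation parts of $\aw_X$ interact, so one genuinely needs the Iwasawa refinement together with the uniqueness clause of Theorem \ref{iw:unique} and the sharper statement from \cite[Theorem 1.9]{bgkp} about when $K\pi^{\lv}U^-$ and $K\pi^{\mv}U$ meet. Keeping track of which unipotent group ($U$ vs.\ $U^-$) appears after conjugating by $\w$, and ensuring the rank-one computations stay inside $I$, is where the real work lies; the finite-type Weyl-group disjointness in step three is essentially bookkeeping once $\lv=\mv$ is secured. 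I expect the author's proof to proceed by exactly this route, likely invoking the bijection in Proposition \ref{intro-doublecosets}(b) as the organizing principle and citing \cite{im} and \cite{bgkp} for the two quantitative inputs.
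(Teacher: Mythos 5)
Your proposal contains a genuine gap at the crucial second step. You claim that $I\pi^{\lv}I\subset K\pi^{\lv}U$, equivalently that "the left Iwasawa coweight of any element of $IxI$ equals $\lv$ itself." This is \emph{false} for non-dominant $\lv\in X$: the Iwahori $I$ contains $U^-_{\pi}$, and for $u^-\in U^-_{\pi}$ the conjugate $\pi^{\lv}u^-\pi^{-\lv}$ leaves $K$ whenever $\langle a,\lv\rangle>1$ for some positive real root $a$ occurring in $u^-$, so $\pi^{\lv}u^-$ lands in a different $K$-coset and a rank-one unwinding via (\ref{sl2}) produces a nonzero shift of the Iwasawa coweight. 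Thus you cannot conclude $\lv=\mv$ from the uniqueness clause of Theorem \ref{iw:unique} in the way you propose. The paper sidesteps this by \emph{not} claiming any control over the Iwasawa coweight of arbitrary elements of $IxI$. Instead, from $IxI=IyI$ it deduces $K\pi^{\lv}A_{\O}U^-_{\pi}U = K\pi^{\mv}A_{\O}U^-_{\pi}U$ using $I=A_{\O}U^-_{\pi}U_{\O}$, then observes the specific element $\pi^{\lv}$ (taking all other factors trivial) satisfies $\pi^{\lv}\in K\pi^{\mv}U^-_{\pi}U$, which places $K\pi^{\lv}U\cap K\pi^{\mv}U^-\neq\emptyset$. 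At that point \cite[Theorem 1.9]{bgkp} gives one inequality between $\lv$ and $\mv$, and symmetry gives the other, whence $\lv=\mv$. Notice that only the single element $\pi^{\lv}$ needs to be tracked, not the whole double coset.

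Your treatment of the Weyl-group ambiguity (step three) is also off the route the paper actually takes, and would run into trouble as stated: "projecting modulo the congruence subgroup" is not directly available because $\pi^{\lv}\notin K$, so $IxI$ and $IyI$ do not live inside $K$ and the reduction map $\kappa:K\to\bfG(\kk)$ does not apply to them. What the paper does is: after $\lv=\mv$ is secured, the nonempty intersection is massaged, using $I=U_{\O}U^-_{\pi}A_{\O}$ and the normalization of $U$, $U^-$ by $\pi^{\lv}$, into $IwU\cap IvU^-\neq\emptyset$; then \cite[Lemma 5.2.1]{bgkp} (the fact $U^-\cap KU\subset U^-\cap K$) forces the $U^{\pm}$ elements into $K$, reducing to $IwI\cap IvI^-\neq\emptyset$; and Claim \ref{bru-ord} (a Birkhoff-type statement about $B\w_1 B\cap B\w_2 B^-$) gives $v\leq w$, with symmetry yielding $v=w$. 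Your Cartan-decomposition first step, establishing $\lv_+=\mv_+$, is correct but superfluous: the $U$-versus-$U^-$ Iwasawa comparison via \cite{bgkp} already gives the exact equality $\lv=\mv$ without passing through the dominant representatives at all.
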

\begin{proof} Suppose that $x= w \pi^{\lv}$ and $y = v \pi^{\mu}$ with $w, v \in W$ and $\lv, \mv \in X.$ If the above intersection is non-empty, we have that in fact  $I w \pi^{\lv} I = I v \pi^{\mv} I.$ Hence we also have $K \pi^{\lv} I U = K  \pi^{\mu} I U. $ Now using that $I =A_{\O} U^-_{\pi} U_{\O} $ we obtain $K \pi^{\lv} A_{\O} U^-_{ \pi} U = K \pi^{\mu} A_{\O} U^-_{\pi} U.$ Thus $\pi^{\lv} \in K \pi^{\mv} U^-_{\pi} U$ and so $\pi^{\lv} U \in K \pi^{\mv} U^-.$ Using \cite[Theorem 1.9(1)]{bgkp}, we conclude that $\mv \leq \lv.$ A similar argument shows that $\lv \leq \mv.$ So $\mv = \lv.$

So we may assume now that $x= w \pi^{\lv}$ and $y = v \pi^{\lv}.$ If $I w \pi^{\lv} I  \cap I v \pi^{\lv} I \neq \emptyset,$ then again using the decomposition $I = U_{\O} U^-_{\pi} A_{\O}$ we obtain that \be{!:ixi:3} I w \pi^{\lv} U_{\O} \cap I v \pi^{\lv} U^-_{\pi} \neq \emptyset. \ee As $\pi^{\lv}$ normalizes both $U$ and $U^-$ we may conclude that  $I w U \cap I \sigma U^- \neq \emptyset.$ Suppose that we take some element in this intersection \be{!:ixi:4} i w U = i' v u^- \text{ where } i, i' \in I, \, u^{\pm} \in U^{\pm}.\ee  Then rearranging this last equation, we find that $u^- \in K U.$  By \cite[Lemma 5.2.1]{bgkp}, this implies that $u^- \in U^-_{\O}= U^- \cap K$ and so it follows form (\ref{!:ixi:4}) that we may assume that $u^{+} \in U_{\O}$ as well. Thus, we have produced an element which lies in the intersection of $I w I \cap I v I^-.$ From here, we may proceed by a simple induction to conclude that $w \geq v$ in the usual Bruhat order on $W$ (see Claim \ref{bru-ord}). Reversing the role of $v$ and $w$ in the above argument, we may also conclude that $v \geq w$ and so $v=w.$  \end{proof}

\subsubsection{An order on $\mc{W}$} \label{sec-affworder} An analysis of the argument above suggests the following order on $\aw.$ Note that we now choose to present elements $x \in \mc{W}$  as $x = \pi^{\lv} w$ with $\lv \in \Lv$ and $ w \in W.$

\begin{de} \label{aw:domord}  Let $x, y \in \aw,$ which we write as $x = \pi^{\lv} w$ and $y = \pi^{\mv} v $ with $w, v \in W$ and $\lv, \mv \in \Lambda^{\vee}.$ We say that $x \preceq y$ if either \begin{itemize} \item $\lv \leq \mv,$ where $\leq$ is the dominance order on $\Lambda^{\vee};$ \item $\lv=\mv$ and $w \leq v$ with respect to the Bruhat order on $W.$ \end{itemize} \end{de} It is easy to see that $\preceq$ is a partial order on $\aw.$ For us, this order will be important due to the following result, which translates into showing the the Iwahori-Hecke algebra $H(G_+, I)$  acts faithfully on its generic principal series $M(G, I).$

\begin{nprop} \label{domorder} Let $x \in \aw_X$ and $y \in \aw.$ If $I x I \cap U y I  \neq \emptyset,$ then $x \preceq y.$ \end{nprop}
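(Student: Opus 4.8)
The plan is to reduce the statement $I x I \cap U y I \neq \emptyset \implies x \preceq y$ to two successive comparisons, exactly mirroring the structure of the proof of Lemma~\ref{!:ixi}: first compare the translation parts in the dominance order on $\Lv$, and then, in the case of equal translation parts, compare the Weyl group parts in the Bruhat order on $W$.

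First I would write $x = \pi^{\lv} w$ and $y = \pi^{\mv} v$ with $\lv \in X$, $\mv \in \Lv$ and $w, v \in W$. Suppose $I x I \cap U y I \neq \emptyset$. Multiplying on the left by $K \supset I$ and on the right by $I$, the double coset statement $I \pi^{\lv} w I$ meets $U \pi^{\mv} v I$ gives in particular $K \pi^{\lv} w I \cap U \pi^{\mv} v I \neq \emptyset$, hence $K \pi^{\lv} w I U \cap U \pi^{\mv} v I U \neq \emptyset$. Using $I = A_{\O} U^-_{\pi} U_{\O}$ and absorbing $U_{\O}$, $A_{\O}$ into the ambient groups, and since $w, v$ normalize nothing useful but can be swallowed by the observation $w I \subset I w U_w[0] \subset K \cdot (\text{unipotent})$ from (\ref{lem:sg:3}), we can arrange that $K \pi^{\lv} U \cap U \pi^{\mv} U \neq \emptyset$ (the point being that $wIU = w U^-_\pi U$ and $w U^-_\pi \subset K U^-$, and similarly on the right, so all Weyl elements and the small unipotent radicals get absorbed into $K$ on the left and $U$ on the right). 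At this stage I invoke the Iwasawa decomposition comparison: $K \pi^{\lv} U \cap U \pi^{\mv} U \neq \emptyset$ combined with the uniqueness in Theorem~\ref{iw:unique} — or more precisely the dominance comparison from \cite[Theorem~1.9]{bgkp} relating the $U$- and $U^-$-Iwasawa decompositions — forces $\lv \leq \mv$ in the dominance order on $\Lv$. This handles the first bullet of Definition~\ref{aw:domord}.

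Next, assume $\lv = \mv$. Now $I \pi^{\lv} w I \cap U \pi^{\lv} v I \neq \emptyset$. Since $\pi^{\lv}$ normalizes $U$ and $U^-$, and using $I = U_{\O} U^-_\pi A_{\O}$, I would peel off $\pi^{\lv}$ from both sides (conjugating the small groups past it, which stays inside $I$ by the usual rank-one estimates since the relevant unipotent pieces are in $U^-_\pi$ or $U_\O$) to reduce to $I w J \cap U v J \neq \emptyset$ for an appropriate Iwahori-type group $J$ (either $I$ or $I^-$ depending on how the pieces land, exactly as in the endgame of Lemma~\ref{!:ixi}). Concretely one lands in a statement of the form $I w I \cap U v I^- \neq \emptyset$ or $Bw B \cap U v B^- \neq \emptyset$ after reducing mod $\pi$, and then Claim~\ref{bru-ord} — the statement that $Bw_1 B \cap B w_2 B^- \neq \emptyset \implies w_2 \leq w_1$, here with $U \subset B$ — yields $v \leq w$; hmm, I need to be careful about the direction, since the claim as stated would give me the comparison with the sign I want precisely because $U y I$ plays the role of the ``opposite'' side. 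So the conclusion is $w \geq v$ in the Bruhat order, which together with $\lv = \mv$ gives $x \succeq y$ — wait, this is backwards from what is claimed. Let me reconsider: the asymmetry $x \in \aw_X$ versus $y \in \aw$ and the asymmetry $I$ versus $U$ must be what pins down the direction, and I would track signs carefully so that the reduction delivers $v \leq w$ translating to $x \preceq y$ as stated (the role of $U$ being ``smaller'' than $I$ at the level of the finite reductive quotient is what makes the inequality go the right way).

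The main obstacle, and the step I expect to require the most care, is the bookkeeping in the reduction $I x I \cap U y I \neq \emptyset \;\rightsquigarrow\; K \pi^{\lv} U \cap U \pi^{\mv} U \neq \emptyset$: one must verify that absorbing the Weyl elements $w, v$ and the ``half'' unipotent radicals into the larger groups is legitimate in the infinite-dimensional setting, i.e. that the identities $wI \subset K\cdot U$, $w U^-_\pi \subset K U^-$, and the normalization properties of $\pi^{\lv}$ all hold verbatim — these rely only on the rank-one $\SL_2$-computation (\ref{sl2}), the decompositions in Proposition~\ref{im:i}, and the structure of $\b G(\O)$, so there is no genuinely new difficulty, but the sign conventions and the precise choice of which Iwahori ($I$ vs $I^-$) appears at each stage need to be threaded consistently, and the appeal to \cite[Theorem~1.9]{bgkp} for the dominance comparison must be set up with the correct orientation of the two Iwasawa decompositions. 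A secondary point to check is that $\lv \in X$ (rather than merely $\lv \in \Lv$) is used — or not needed — at the right place; I expect it enters only to guarantee that the relevant double cosets actually lie in $G_+$ so that the earlier structural results apply.
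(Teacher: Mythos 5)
Your plan — first compare translation parts in the dominance order, then Weyl parts in the Bruhat order — does match the paper's, but both stages of your execution go wrong in ways that matter.

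In the first stage you try to reduce to $K \pi^{\lv} U \cap U \pi^{\mv} U \neq \emptyset$, which is both unachievable and unhelpful. Representatives of $w, v \in W$ lie in $K$, not in $U$, and conjugating $U$ by $w$ produces $U^-$-pieces, so there is no way to ``swallow'' $w$ into a right-hand $U$-factor as you describe; $(\ref{lem:sg:3})$ moves $w$ past $I$ but leaves a $w$ behind. Moreover, even if you could get there, an intersection of two $U$-Iwasawa-type sets does not yield a dominance inequality. The paper does the opposite absorption: enlarge $I$ to $K$ on both sides and use $w, v, I \subset K$ to get $U \pi^{\mv} K \cap K \pi^{\lv} K \neq \emptyset$. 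This is a comparison between an Iwasawa coordinate and a Cartan coordinate, which is exactly what \cite[Theorem~1.9(1)]{bgkp} controls, giving $\mv \geq \lv_+ \geq \lv$ where $\lv_+$ is the dominant $W$-conjugate of $\lv$.

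In the second stage you flag the inequality direction as needing care, but then assert that ``$v \leq w$ translates to $x \preceq y$''. It does not: Definition~\ref{aw:domord} with equal translation parts requires $w \leq v$, and $w \leq v$ is what the paper establishes. More importantly you leave unnamed the step that actually makes the endgame work. After reducing to $U^- \cap U v I w^{-1} \neq \emptyset$, the paper invokes the integrality lemma $U^- \cap UK \subset K$ (see $(\ref{b:1})$, i.e.\ \cite[Lemma~5.2.1]{bgkp}) to upgrade this to $U^-_\O \cap U_\O v I w^{-1} \neq \emptyset$, hence $I^- w I \cap I v I \neq \emptyset$ inside $K$, so that one may pass to the residue field and apply Claim~\ref{bru-ord} (after taking inverses, giving $w^{-1} \leq v^{-1}$ hence $w \leq v$). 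Without that integrality input you have no license to reduce mod $\pi$, and the sign bookkeeping you worry about has no way to be settled. Identifying $U^- \cap UK \subset K$ as the key fact and tracking the dominance comparison through $K\pi^{\lv}K$ rather than $K\pi^{\lv}U$ are the two ideas your proposal is missing.
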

\begin{proof} Write $x = \pi^{\lv} w$ and $y = \pi^{\mv} v $ with $w, v \in W$ and $\lv \in X,$ $\mv \in \Lv.$ Then if $U \pi^{\mv} v I \cap I \pi^{\lv} w I \neq \emptyset,$ we also have that \be{domorder:1} U \pi^{\mv} K \cap K \pi^{\lv} K \neq \emptyset. \ee It follows from this (see \cite[Theorem 1.9(1)]{bgkp}) that $\mv \geq \lv_+$ where $\lv_+$ is the dominant element in the $W$-orbit of $\lv.$ Hence also $\mv \geq \lv$ since $\lv_+ \geq \lv.$

Thus, we shall assume $\mv=\lv,$ and so $I \pi^{\lv} w I \cap U \pi^{\lv} v I \neq \emptyset.$ Using the decomposition $I= U_\O U^-_\pi A_\O$ we obtain that  $U^-_{\pi} \pi^{\lv} w I \cap U \pi^{\lv} v I \neq \emptyset,$ or in other words, $U^-_{\pi} \pi^{\lv} \cap U \pi^{\lv} v I w^{-1} \neq \emptyset.$ From here it follows that $U^- \cap U v I w^{-1} \neq \emptyset.$  From \cite{bgkp} (see also (\ref{b:1}) ), we may conclude that $U^- \cap U K \subset U^- \cap K,$ and so we may conclude that $U^-_{\O}  \cap U \sigma I w^{-1} \neq \emptyset.$ From this last statement, we conclude that $U^-_{\O} \cap U_{\O} v I w^{-1} \neq \emptyset.$ Thus we have produced an element in the intersection of $I^- w I \cap I v I,$ which implies that $w \leq v.$

\end{proof}

\subsubsection{A double coset decomposition}

In the sequel, we shall also need to understand the double cosets of $G$ under the left action of the group $A_{\O} U$ and under the right action of the groups $I$ or $I^-.$ The following result follows immediately from the Iwasawa decomposition and (\ref{bruhat:K}) and we suppress the proof.

\begin{nlem} \label{W:AO} The maps $\aw \rr A_\O U \setminus G / I$ and $\aw \rr A_\O U \setminus G / I^-$ which send $x \in \aw$ to $A_\O U x I$ and $A_\O U x I^-$ are bijections.  \end{nlem}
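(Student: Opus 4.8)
The plan is to show both maps are well-defined, surjective, and injective, handling the two cases ($I$ and $I^-$) in parallel since the arguments are identical up to swapping the roles of $U$ and $U^-$; I will write out the $I$-case and indicate the modification for $I^-$. First I would recall the ingredients: the Iwasawa decomposition (Theorem \ref{iw:unique}), which gives $G = \bigsqcup_{\lv \in \Lv} K \pi^{\lv} U$ with $\lv$ uniquely determined, and the decomposition $K = \bigsqcup_{w \in W} I \wt{w} I$ from (\ref{bruhat:K}), together with $I = A_\O U^-_\pi U_\O$ from (\ref{I+:im}). For well-definedness, note that an element $x = \wt{w}\pi^{\lv} \in N$ lifting $(w,\lv) \in \aw$ is determined up to $A_\O$ on the left (different choices of $\wt{w}$ differ by $A_\O$, and $\pi^{\lv}$ differs by $A_\O$); since $A_\O \subset I$ and also $A_\O$ normalizes $U$ and can be absorbed into $A_\O U$ on the left, the double coset $A_\O U x I$ depends only on $(w,\lv)$.

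For surjectivity, take any $g \in G$. By the Iwasawa decomposition write $g = k \pi^{\lv} u$ with $k \in K$, $u \in U$; since $u \in U$ can be absorbed on the right into $I$ only after moving it past $\pi^{\lv}$ — but actually we want $g$ in some $A_\O U x I$, so instead write $g \in K\pi^{\lv}U = K \pi^{\lv} U I$ (as $U \subset$ nothing containing $I$ directly, but $U$ on the right of $\pi^\lv$): more cleanly, $A_\O U \backslash G / I$, so I want to push $k$ to the left. Using $K = \bigcup_w I \wt{w} I$ and $I = U_\O U^-_\pi A_\O$, and the fact that $U_\O \subset U$ while $A_\O U^-_\pi \subset I$ conjugated appropriately — here I would invoke Lemma \ref{W:I}(2) to move Weyl elements past $\pi^{\lv}$ on the right, landing $g$ in some $A_\O U \wt{v} \pi^{\mv} I$. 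So every double coset $A_\O U \backslash G / I$ meets the image, giving surjectivity.

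For injectivity, suppose $A_\O U x I = A_\O U y I$ with $x = \wt{w}\pi^{\lv}$, $y = \wt{v}\pi^{\mv}$. Then $U x I \cap U y I \neq \emptyset$, and since $x \in \aw_X$ when restricted appropriately — wait, here $x, y \in \aw$ in general — I would first separate the lattice part: applying the Iwasawa decomposition and the estimates from \cite[Theorem 1.9]{bgkp} exactly as in the proof of Proposition \ref{domorder}, the intersection $U\pi^{\mv}K \cap K\pi^{\lv}K \neq \emptyset$ forces $\mv = \lv$ (one gets $\mv \geq \lv_+$ and symmetrically, but here since we have equality of double cosets both inequalities hold, pinning down $\lv = \mv$). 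Then with $\lv = \mv$, the relation $U\wt{w}\pi^\lv I = U \wt{v}\pi^\lv I$ and conjugating $\pi^\lv$ through (it normalizes $U$, $U^-$) reduces to $U \wt{w} I = U\wt{v} I$ inside $K$ — but this is really the statement that the map $W \to A_\O U_\O \backslash K / I$ is a bijection, which follows from (\ref{bruhat:K}) and Claim \ref{bru-ord}: from $U\wt{w}I \cap U\wt{v}I \neq \emptyset$ one produces an element of $I^- \wt{w} I \cap I \wt{v} I$ after absorbing integral unipotents, forcing $w \leq v$, and by symmetry $w = v$. Hence $x = y$.

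The main obstacle is the bookkeeping in the injectivity step: cleanly separating the "lattice" invariant $\lv$ from the "Weyl" invariant $w$, and in particular justifying that one may reduce $U\wt{w}\pi^\lv I = U\wt{v}\pi^\lv I$ to a statement purely inside $K$ where the Bruhat-order argument (Claim \ref{bru-ord}) applies. This is precisely the kind of manipulation carried out in the proofs of Lemma \ref{!:ixi} and Proposition \ref{domorder}, so I expect it to go through by the same techniques — absorbing $U_\O, U^-_\pi, A_\O$ factors of $I$ appropriately and invoking the disjointness results from \cite{bgkp} — which is why the authors say they "suppress the proof."
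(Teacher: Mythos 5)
Your outline (Iwasawa decomposition to split off a $\Lv$-invariant, then Bruhat decomposition of $K$ for the $W$-invariant) is exactly the pair of ingredients the paper gestures at, but the injectivity step as written does not work. You invoke \cite[Theorem 1.9]{bgkp} to conclude $\lv=\mv$, and you even notice the problem yourself (``wait, here $x,y\in\aw$ in general'') before proceeding anyway: those estimates concern $K\pi^{\lv}K\cap K\pi^{\mv}U$ for $\lv$ dominant (equivalently, elements of $G_+$), whereas in Lemma \ref{W:AO} the cocharacter $\lv$ ranges over all of $\Lv$, so $K\pi^{\lv}K$ need not lie in $G_+$ and the Cartan/bgkp machinery is simply unavailable. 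There is also a sign slip: with your convention $x=\wt w\pi^{\lv}$, the Iwasawa ($UA'K$) parameter of anything in $A_\O U\,x\,I$ is $w\lv$, not $\lv$, because $A_\O U\,\wt w\pi^{\lv}\,I\subset U\pi^{w\lv}K$; so the lattice invariant you actually extract is $w\lv=v\mv$, not $\lv=\mv$. None of this is needed: the uniqueness in Theorem \ref{iw:unique} already holds for all of $G$ with no positivity hypothesis, and it gives $w\lv=v\mv$ in one line. This is what the paper means by ``follows from the Iwasawa decomposition.''

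Once the Iwasawa invariants agree, the rest should go through the conjugation $g\mapsto\pi^{-w\lv}g$, which carries the double coset problem $A_\O U\backslash U\pi^{w\lv}K/I$ to $A_\O U\backslash UK/I\cong A_\O U_\O\backslash K/I$; here one does need $U\cap K=U_\O$ (this is where \cite[Lemma 5.2.1]{bgkp} legitimately enters). Identifying $A_\O U_\O\backslash K/I$ with $W$ then uses (\ref{bruhat:K}) together with the observation that $\wt w^{-1}U^-_{\pi}\wt w$ lies in the congruence kernel $\ker\kappa\subset I$, so that $I\wt w I=A_\O U_\O\wt w I$; equivalently, one reduces mod $\pi$ and applies the Bruhat decomposition of $\b G(\kk)$. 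Your route via Claim \ref{bru-ord} and $I^-$ can be made to work for this last comparison, but it is an unnecessary detour once you are inside $K$ (where $U\cap K=U_\O\subset I$ immediately places a common element into $I\wt w I\cap I\wt v I$, forcing $w=v$ by disjointness). The surjectivity paragraph is also tangled: starting from $g=k\pi^{\lv}u$ leaves $u\in U$ stranded on the wrong side, and Lemma \ref{W:I}(2) is about moving $W$ past $\pi^{\lv}I$, not about absorbing $k\in K$ into $A_\O U$; you should instead use the opposite Iwasawa form $g=u\pi^{\lv}k$, write $k\in A_\O U_\O\wt w I$, and conjugate $A_\O U_\O$ past $\pi^{\lv}$ into $A_\O U$.
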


%: part 3: convolutions gens

\section{Generalities on Convolution Algebras}

In this section, we describe some axiomatic patterns which our construction of Hecke algebras and their modules in the next two chapters will follow.  The notation is independent of the previous sections.  Throughout this section we shall fix the following notation: $\Gamma$ will be a group and $\Gamma_+ \subset \Gamma$ will be a sub-semigroup. All constructions will take this pair $(\Gamma, \Gamma_+)$ as data, though we shall often omit $\Gamma$ from our notation.

\subsection{Convolution of Finitely Supported Functions}

\tpoint{Basic Notations on Spaces of Double Cosets}  \label{conv:note} Let $L, R  \subset \Gamma_+$ be sub\emph{groups} of $\Gamma,$ and further assume that there exists a set $\Lam_{L, R}$ equipped with a bijection to the set of $(L, R)$ double cosets of $\Gamma_+,$ \be{X:lam} \Lam_{L, R}: X_{L, R} \rr L \setminus \Gamma_+ / R, \,\ \   \lambda \mapsto X^{\lam}  \ee When there is no danger of confusion, we shall often omit the subscripts $L$ and $R$ from our notation and just write $X: \Lam \rr L \setminus \Gamma_+ / R.$

Given a function $f: \Gamma_+ \rr \C$ which is left $L$-invariant and right $R$-invariant, there exists a subset $\Lambda_f \subset \Lambda$ defined as \be{supp:f} \Lambda_f:= \{ \lambda \in \Lam \mid f(x) \neq 0 \text{ for any } x \in X^{\lambda} \} \ee
We shall often write $f = \sum_{\mu \in \Lambda_f} c_{\mu} X^{\mu}$ where $c_{\mu} = f(x)$ for any $x \in X^{\mu}.$ We shall say that $f$ has \emph{finite support} if $\Lambda_f \subset \Lambda$ is finite. Denote by $\mc{F}(L \setminus \Gamma_+ / R) $ the set of finitely supported left $L$ and right $R$-invariant functions. It is clear that $\mc{F}(L \setminus \Gamma_+ / R) $ is a $\C$-vector space with basis indexed by the characteristic functions $X^{\lambda}$ for $\lambda \in X.$

\tpoint{Fiber Products} \label{fiberprod} Let $R \subset \Gamma_+$ be a subgroup. Given a right $R$-set (i.e., a set with a right $R$-action) $A,$ a left $R$-set $B,$ and any set $S,$ we say that a map $m: A \times B \rr S$ is \emph{$R$-linear} if $$m(ai, i^{-1} b) = m(a, b) \text{ for } a \in A, b \in B, i \in R.$$ Equivalently, if we endow the set $\Hom(B, S)$  with a left $R$-action via \be{lef:act} i \varphi(b) = \varphi(i^{-1}b) \text{ for } i \in R, \varphi \in \Hom(B, S), \ee then an $R$-linear map from $A \times B \rr S$ is a map $\psi: A \rr \Hom(B, S)$ such that \be{psi} \psi(a i^{-1}) = i.\psi(a) \text{ for } i \in R, a \in A. \ee Indeed, for each $m: A \times B \rr S$ the map $\psi_m: A \rr \Hom(B, S)$ given by $\psi_m(a) = m(a, \cdot)$ satisfies (\ref{psi}); and it is easy to see that all such maps come from $R$-linear maps $m.$ Using either of these two descriptions, it is easy to verify that the functor \be{F:A,B} F_{A, B} (S) = \{ R \text{-linear maps } A \times B \rr S \} \ee is corepresented by the quotient set \be{A:r:B} A \times_R B := A \times B / \equiv \ee where $(a, b) \equiv (ai, i^{-1}b)$ for $a\in A, b \in B, i \in R:$ i.e., $\Hom(A \times_R B, S) = F_{A, B}(S).$

A variant of the above construction is as follows: let $A_1$ be a right $R$-set, $A_2, \ldots, A_{r-1}$ be left-right $R$-sets, and $A_r$ a left $R$ set.  Then for any set $S$, an $R^{r-1}$-linear map $m: A_1 \times \cdots \times A_r \rr S$ is one such that \[ m(a_1 i_1, i_1^{-1} a_2 i_2, i_2^{-1}, \ldots, i_{r-1}^{-1} a_r) = m(a_1, \ldots, a_r) \text{ for } a_{k} \in A_{k}, i_j \in I. \] One can then check that the functor \[ F_{A_1, \ldots, A_r} (S) = \{ R^{r-1}- \text{linear maps } m: A_1 \times \cdots \times A_r \rr S \} \] is represented by \[ A_1 \times_R A_2 \times \cdots \times_R A_r:= A_1 \times A_2 \cdots \times A_r / \equiv \] where $(a_1, \ldots a_r) \equiv (a_1 i_1, i_1^{-1} a_2 i_2, i_2^{-1}, \ldots, i_{r-1}^{-1} a_r)$ for $(i_1, \ldots, i_{r-1}) \in I^{r-1}$ generates the equivalence relation. Suppose in addition that $A_1, A_2, A_3$ carry both a left and right $R$-action. Then $A_1 \times_R A_2$ inherits a left and right $R$-set structure, and similarly for $A_2 \times_R A_3.$ Hence we may form the sets $(A_1 \times_R A_2) \times_R A_3$ and $A_1 \times_R (A_2 \times_R A_3).$ It is then easy to verify the following associativity of functors, \be{F:ass} F_{A_1, A_2, A_3} = F_{A_1 \times_R A_2, A_3} = F_{A_1, A_2 \times_R A_3}, \ee which will be used below in the proof of Proposition \ref{fin:hec:alg} below.

\tpoint{Explicit Description of Fibers} \label{fiber:exp}  Let $R \subset \Gamma_+$ be a subgroup, and let $A$ be a right $R$-coset and $B$ a left $R$-coset of $\Gamma.$ Define the multiplication $m_{A, B}: A \times_R B \rr \Gamma_+$ which sends $(a, b) \mapsto ab.$ It is clearly $R$-linear. For any $x \in \Gamma_+$, we can easily verify that there is a bijection of sets  \be{fiber:int} m_{A, B}^{-1}(x)= R \setminus A^{-1}x \cap B.\ee

\emph{Variant 1:} Suppose that $A, B$ are $R$-double cosets of $\Gamma_+.$ Then we can describe the fibers $m^{-1}(x)$ in terms of left or right $R$-cosets: \be{fiber:int:2} m^{-1}(x) = R \setminus A^{-1} x \cap B = A \cap x B^{-1} / R. \ee In this case, we see that $m^{-1}(x)$ only depends on the coset of $x$ in $R \setminus \Gamma_+ / R.$

\emph{Variant 2:} Let $L, R, H$ are three subgroups of $\Gamma_+,$ and $A$ be a $(L, R)$ double coset and $B$ a $(R,H)$ double coset. Then we have \be{fiber:int:3} m^{-1}(x) = R \setminus A^{-1} x \cap B \ee and we see that $m^{-1}(x)$ only depends on the class of $x$ in $L \setminus G  /H.$

\tpoint{Finite Hecke Datum} \label{sec-finhecdat} Let $R \subset \Gamma_+$ be as above. Let us write $\Lam_+:=\Lam_{R, R},$ the set indexing $R$-double cosets of $\Gamma_+,$ and $X: \Lam \rr \Gamma_+.$ For $\lambda, \mu \in \Lam$ let $m_{\lambda, \mu}: X^{\lambda} \times_R X^{\mu} \rr \Gamma_+$ denote the multiplication map. Note that by (\ref{fiber:int:2}) the fibers $m^{-1}(x)$ for $x \in \Gamma_+$ only depend on the class of $x$ in $R \setminus \Gamma_+ / R = \Lambda.$ We thus write $m^{-1}(\mu)$ to denote $m^{-1}(x)$ for any $x \in X^{\mu}.$

\begin{de} \label{hec-dat}  We shall say that $(R, \Lam)$ (we drop the map $X$ from our notation) is a \emph{finite Hecke datum} for the semi-group $\Gamma_+ \subset \Gamma$ if it satisfies the following two conditions,

\begin{enumerate} \label{hm}
\item \textbf{(H1)}  For any $\lambda, \mu, \nu \in \Lam$ we have $| m^{-1}_{\lambda, \mu}(\nu) |$ is a finite set.

\item \textbf{(H2)} Given any $\lambda, \mu \in \Lam,$ the set of $\nu \in \Lam$ such that  $| m^{-1}_{\lambda, \mu}(\nu) | \neq 0$ is finite.
\end{enumerate}

\end{de}

\noindent Let $(R, \Lam_+)$ be a finite Hecke datum. Recall that we have the space of finitely supported $R$-binvariant functions which we shall just denote by $H(\Gamma_+, R):= \mc{F}(R \setminus \Gamma_+/ R).$ Given any $\lambda, \mu \in X$, the assumptions (H1) and (H2) above allow us to make sense of the sum \be{pre:conv} \sum_{\nu \in X} | m_{\lambda, \mu}^{-1}(\nu)| \; X^{\nu} \ee as an element of $H(\Gamma_+, R)$ and so we can define a map $\star:H(\Gamma_+, R) \times H(\Gamma_+, R) \rr H(\Gamma_+, R)$ by linearly extending the above map, \be{conv} X^{\lambda} \star X^{\mu} = \sum_{\nu \in X} |m_{\lambda, \mu}^{-1}(\nu)| \; X^{\nu}. \ee

\tpoint{Finite Convolution Hecke Algebras} \label{sec-finhecalg} The importance of the notion of finite Hecke datum is provided by the following result, which is certainly well-known. We sketch a few points of the proof as we shall need several variants of it in the sequel.

\begin{nprop} \label{fin:hec:alg} Let $(R, \Lambda_+)$ be a finite Hecke datum.  Then the map $\star$ in (\ref{conv}) equips $H(\Gamma_+, R)$ with the structure of an associative $\C$-algebra with a unit. The unit is given by convolution with the characteristic function of $R.$ \end{nprop}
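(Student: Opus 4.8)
The plan is to verify the three assertions — well-definedness of $\star$, associativity, and the unit property — by translating the convolution product into a statement about counting points in fibers of iterated multiplication maps $X^{\lambda_1} \times_R \cdots \times_R X^{\lambda_r} \rr \Gamma_+$, and then invoking the corepresentability and associativity of the fiber-product functors $F_{A_1,\ldots,A_r}$ established in \S\ref{fiberprod}. The axioms \textbf{(H1)} and \textbf{(H2)} are precisely what is needed to make the infinite sums finite, so most of the work is bookkeeping.

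First I would note that $\star$ is well-defined: for fixed $\lambda, \mu$, axiom \textbf{(H1)} guarantees each coefficient $|m_{\lambda,\mu}^{-1}(\nu)|$ in \eqref{conv} is a finite natural number, and axiom \textbf{(H2)} guarantees that only finitely many $\nu$ give a nonzero coefficient, so the right-hand side of \eqref{conv} really is an element of $H(\Gamma_+, R)$; extending bilinearly, $\star$ is a well-defined bilinear map. Next, for the unit, observe that the double coset of $R$ itself, say $X^{\mathbf 0}$ with $\mathbf 0 \in \Lambda$ the class of $1 \in \Gamma_+$, satisfies $m_{\mathbf 0, \mu}^{-1}(\nu) = R \setminus (R \cdot x \cap X^{\mu})$ for $x \in X^{\nu}$; since $R \cdot X^{\mu} = X^{\mu}$, this fiber is a single point when $\nu = \mu$ and empty otherwise, so $X^{\mathbf 0} \star X^{\mu} = X^{\mu}$, and symmetrically on the other side. (Here I would cite \eqref{fiber:int} or \eqref{fiber:int:2} for the description of the fiber.)

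For associativity, the key step is to compute both $(X^{\lambda} \star X^{\mu}) \star X^{\nu}$ and $X^{\lambda} \star (X^{\mu} \star X^{\nu})$ and identify each coefficient of $X^{\kappa}$ with the cardinality of the triple fiber product, i.e. the number of $R^2$-equivalence classes of triples $(a,b,c) \in X^{\lambda} \times X^{\mu} \times X^{\nu}$ with $abc \in X^{\kappa}$. Concretely, the coefficient of $X^{\kappa}$ in $(X^{\lambda} \star X^{\mu}) \star X^{\nu}$ is $\sum_{\rho} |m_{\lambda,\mu}^{-1}(\rho)| \cdot |m_{\rho,\nu}^{-1}(\kappa)|$; using \eqref{F:ass}, that $F_{X^\lambda, X^\mu, X^\nu} = F_{X^\lambda \times_R X^\mu, X^\nu}$, together with the fact that $X^\lambda \times_R X^\mu$ decomposes as a disjoint union of $R$-double cosets indexed (with multiplicities) by the $\rho$ with $m_{\lambda,\mu}^{-1}(\rho) \neq \emptyset$, one sees this sum equals $|m_{\lambda,\mu,\nu}^{-1}(\kappa)|$, the fiber of the triple multiplication map. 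The symmetric computation using $F_{X^\lambda, X^\mu \times_R X^\nu}$ gives the same number, so the two products agree. I would also remark that all sums appearing here are finite by repeated application of \textbf{(H1)} and \textbf{(H2)} (one checks \textbf{(H2)} propagates: only finitely many $\rho$ and then only finitely many $\kappa$ occur), so the manipulations are legitimate.

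The main obstacle I expect is the careful justification that $X^{\lambda} \times_R X^{\mu}$, as an $R$-biset, decomposes as a disjoint union of honest $R$-double cosets whose multiplicities match the counts $|m_{\lambda,\mu}^{-1}(\rho)|$ — i.e. correctly matching the "internal" $R$-action used to form the fiber product with the "external" $R$-action used to group the image into double cosets — and then tracking these multiplicities cleanly through the associativity isomorphism \eqref{F:ass}. This is the one place where a sloppy identification would produce a wrong combinatorial factor, so I would set up the bijection at the level of sets-with-$R^2$-action explicitly before counting. Everything else is formal.
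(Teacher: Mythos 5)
Your argument is correct and essentially reproduces the paper's own (sketched) proof, which likewise reduces associativity to identifying both iterated fibers with the triple fiber $m_{\lambda,\mu,\nu}^{-1}(x)$ via the functorial associativity (\ref{F:ass}); your well-definedness and unit computations are also exactly what the paper suppresses. The ``main obstacle'' you flag at the end is milder than you make it sound: you do not actually need to exhibit $X^{\lambda}\times_R X^{\mu}$ as a disjoint union of honest $R$-double cosets (as an abstract $R$-biset it need not decompose that way); all you need is that $|m_{\lambda,\mu}^{-1}(y)|$ depends only on the double coset of $y$, which is exactly (\ref{fiber:int:2}), after which the map $[(a,b,c)]\mapsto[(ab,c)]$, partitioned by the double coset $X^{\rho}$ of $ab$, directly exhibits $|m_{\lambda,\mu,\nu}^{-1}(x)|=\sum_{\rho}|m_{\lambda,\mu}^{-1}(\rho)|\,|m_{\rho,\nu}^{-1}(\kappa)|$ with no biset decomposition required.
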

\begin{proof} Let us show the associativity of $\star:$  let $\lam, \mu, \nu \in \Lam_+.$  Given $\xi \in \Lam_+,$ and any $x \in X^{\xi}$ we would like to see that \begin{eqnarray} \label{star:ass} (X^{\lambda} \star X^{\mu}) \star X^{\nu} (x) = X^{\lambda} \star (X^{\mu} \star X^{\nu}) (x). \end{eqnarray} Let $m_{\lam, \mu, \nu}: X^{\lam} \times_R X^{\mu} \times_R X^{\nu} \rr \Gamma_+$ be the map induced by multiplication. Consider the $R$-bilinear sets $X^\lam \times_R X^{\mu}$ and $X^{\mu} \times_R X^{\nu}.$ We then have from (\ref{F:ass}) that \be{m:ass} F_{(X^{\lam} \times_R X^{\mu}) \times_R X^{\nu} }(\Gamma_+) = F_{X^{\lam} \times_R X^{\mu} \times_R X^{\nu}}(\Gamma_+) = F_{ X^{\lam} \times_R (X^{\mu} \times_R X^{\nu})}(\Gamma_+). \ee The map $m_{\lambda, \mu, \nu} \in F_{X^{\lam} \times_R X^{\mu} \times_R X^{\nu}}(\Gamma_+)$ and hence defines corresponding maps which we denote by $m_{(\lambda, \mu), \nu} \in F_{(X^{\lam} \times_R X^{\mu}) \times_R X^{\nu} }(\Gamma_+)$ and $m_{\lambda, (\mu, \nu)} \in F_{X^{\lam} \times_R (X^{\mu} \times_R X^{\nu}) }(\Gamma_+).$ Then one can show that the left hand side of (\ref{star:ass}) is equal to $m_{(\lambda, \mu), \nu}^{-1}(x)$ and the right hand side is equal to $m_{\lambda, (\mu, \nu)}^{-1}(x);$ both of which are in turn equal to $m_{\lambda, \mu, \nu}^{-1}(x)$ which proves the associativity. We supress the proof of this fact, as well as that of the rest of the Proposition. \end{proof}

\tpoint{Finite Hecke Modules} \label{action:section} Let $L, R \subset \Gamma_+$ be as in \S  \ref{conv:note}. Let us write $\Lam_+$ for $\Lambda_{R, R},$ and continue to write $H(\Gamma_+, R)$ for $\mc{F}(R \setminus \Gamma_+ / R).$ Now suppose that $\Omega:= \Lam_{L, R}$ is a set of representatives for the $(L, R)$-double cosets of $\Gamma$ (\emph{note:} we want to really consider  $\Gamma$ here instead of $\Gamma_+,$ as this is the case which comes up in the sequel). For each $\mu \in \Omega$ we denote the corresponding double coset by $Y^{\mu}.$ Let us write $M(L, \Gamma, R)$ for the set $\mc{F}(L \setminus \Gamma / R).$ Given $\lambda \in \Omega, \mu \in \Lam_+$ we have a map \[ a_{\lambda, \mu}: Y^{\lambda} \times_R X^{\mu} \rr \Gamma \] defined by multiplication.

\begin{de} \label{hec-mod}  We say that the collection $(R, \Lambda_+; L, \Omega)$ is a \emph{finite Hecke module datum} if

\begin{enumerate}
\item  \textbf{(M0)} $(R, \Lam_+, X)$ is a finite Hecke datum

\item  \textbf{(M1)} For any $\lambda \in \Omega$ and $\mu \in \Lam_+$ the map $a_{\lambda, \mu}$ has finite fibers.

\item  \textbf{(M2)} For any $\lambda \in \Omega$ and $\mu \in \Lam_+,$ the there are only finitely many $\nu \in \Omega$ such that $a_{\lambda, \mu}^{-1}(\nu)$ is non-empty.

\end{enumerate}
\end{de}

Using the properties (M1) and (M2) we can make sense, for each $\lambda \in Y, \mu \in X$ of the sum, \be{fin:conv} \sum_{\nu \in \Omega} | a_{\lambda, \mu}^{-1}(\nu) | Y^{\nu} \ee as an element of $M(L, \Gamma, R).$ Thus, we can define a map $\star_a: M(L, \Gamma, R) \times H(\Gamma_+, R) \rr M(L, \Gamma, R)$ by linearly extending the following formula, \be{fin:act} Y^{\lambda} \star_a X^{\mu}  := \sum_{\nu \in \Omega} | a_{\lambda, \mu}^{-1}(\nu) | Y^{\nu}. \ee We omit the proof of the following since it is very similar to that of Proposition \ref{fin:hec:mod}.

\begin{nprop} \label{fin:hec:mod} Let $(R, \Lam_+; L, \Omega)$ be a finite Hecke module datum.  Then $\star_a$ defines an associative, unital right $H(\Gamma_+, R)$-module structure on $M(L, \Gamma, R),$ where the unit element of $H(\Gamma_+, R)$ acts as the identity map on $M(L, \Gamma, R).$   \end{nprop}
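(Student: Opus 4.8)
The plan is to mimic the proof of Proposition~\ref{fin:hec:alg} essentially verbatim, the only change being that on the ``module side'' the semigroup $\Gamma_+$ is replaced by the full group $\Gamma$ and one keeps track of which factor is left $L$-invariant rather than left $R$-invariant. Throughout, the relevant multiplication maps are the $a_{\lambda,\mu}\colon Y^\lambda\times_R X^\mu\rr\Gamma$ together with the Hecke maps $m_{\mu,\nu}\colon X^\mu\times_R X^\nu\rr\Gamma_+$ already used in Proposition~\ref{fin:hec:alg}.

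First I would verify that $\star_a$ is well defined, i.e.\ that for $\lambda\in\Omega$ and $\mu\in\Lam_+$ the sum (\ref{fin:conv}) is a genuine element of $M(L,\Gamma,R)$. Since $Y^\lambda$ is left $L$-invariant and $X^\mu\subset\Gamma_+$ is right $R$-invariant, the product $Y^\lambda\cdot X^\mu\subset\Gamma$ is a union of $(L,R)$-double cosets of $\Gamma$, hence indexed by a subset of $\Omega$; by Variant~2 in \S\ref{fiber:exp} the fiber $a_{\lambda,\mu}^{-1}(x)$ depends only on the class of $x$ in $L\setminus\Gamma/R$, so writing $a_{\lambda,\mu}^{-1}(\nu)$ for $\nu\in\Omega$ is unambiguous. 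Then \textbf{(M1)} says each coefficient $|a_{\lambda,\mu}^{-1}(\nu)|$ is finite and \textbf{(M2)} says only finitely many of them are nonzero, so the sum lies in $M(L,\Gamma,R)$; linearity in both variables is immediate.

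The main step is associativity: for $\lambda\in\Omega$, $\mu,\nu\in\Lam_+$ and any $\xi\in\Omega$, $x\in Y^\xi$ one must show
\[
\bigl((Y^\lambda\star_a X^\mu)\star_a X^\nu\bigr)(x)=\bigl(Y^\lambda\star_a(X^\mu\star X^\nu)\bigr)(x).
\]
Expanding the left-hand side gives $\sum_{\kappa\in\Omega}|a_{\lambda,\mu}^{-1}(\kappa)|\,|a_{\kappa,\nu}^{-1}(\xi)|$ and the right-hand side gives $\sum_{\rho\in\Lam_+}|m_{\mu,\nu}^{-1}(\rho)|\,|a_{\lambda,\rho}^{-1}(\xi)|$, both sums being finite by \textbf{(H1)}--\textbf{(H2)} and \textbf{(M1)}--\textbf{(M2)}. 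Exactly as in \S\ref{fiberprod} I would form the threefold balanced product $Y^\lambda\times_R X^\mu\times_R X^\nu$ with its multiplication map $a_{\lambda,\mu,\nu}$ to $\Gamma$; the functor identity (\ref{F:ass}) produces the two induced maps $a_{(\lambda,\mu),\nu}$ on $(Y^\lambda\times_R X^\mu)\times_R X^\nu$ and $a_{\lambda,(\mu,\nu)}$ on $Y^\lambda\times_R(X^\mu\times_R X^\nu)$, and a direct unwinding identifies the left-hand side of the display with $|a_{(\lambda,\mu),\nu}^{-1}(x)|$, the right-hand side with $|a_{\lambda,(\mu,\nu)}^{-1}(x)|$, and both of these with $|a_{\lambda,\mu,\nu}^{-1}(x)|$. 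For unitality, let $e\in\Lam_+$ be the class of the double coset $R$, so that $X^e$ is the unit of $H(\Gamma_+,R)$ by Proposition~\ref{fin:hec:alg}; the multiplication map $Y^\lambda\times_R X^e=Y^\lambda\times_R R\rr\Gamma$ is a bijection onto $Y^\lambda$, so $a_{\lambda,e}^{-1}(\nu)$ is a single point if $\nu=\lambda$ and empty otherwise, giving $Y^\lambda\star_a X^e=Y^\lambda$, as required.

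As in Proposition~\ref{fin:hec:alg}, the only delicate point is the bookkeeping in the associativity step: one has to check that the iterated balanced products are literally identified as sets under the canonical maps of \S\ref{fiberprod}, and that every intermediate sum that occurs (such as the one over $\kappa\in\Omega$ above) is finite --- but this is exactly what the axioms \textbf{(H1)}--\textbf{(H2)}, \textbf{(M1)}--\textbf{(M2)} are tailored to guarantee, so no genuinely new argument is needed beyond what appears in the proof of Proposition~\ref{fin:hec:alg}.
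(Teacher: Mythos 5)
Your proof is correct and fills in exactly the argument the paper intends: the paper explicitly omits this proof as being ``very similar'' to that of Proposition~\ref{fin:hec:alg}, and your adaptation --- replacing $\Gamma_+$ by $\Gamma$ on the module side, invoking Variant~2 of \S\ref{fiber:exp} for well-definedness of the fibers, and running the same threefold-balanced-product associativity argument with (M1)--(M2) in place of (H1)--(H2) --- is precisely the right transcription. The unitality computation via $Y^\lambda\times_R R\cong Y^\lambda$ is also correct.
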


\tpoint{A Simplified Critrerion} \label{sec-finsimple} In what follows, we shall use the following variant of the above results, which reduces the number of conditions one needs to verify for $(R, \Lam_+; L, \Omega)$ to be a finite Hecke module datum. We leave the proof to the reader (see Proposition \ref{si:hecke:mod:simple} for more details in a slightly more complicated context).

\begin{nprop} \label{hecke:mod:simple} The quadruple $(R, \Lam_+; L, \Omega)$ is a Hecke module datum if $(R, \Lam_+)$ satisfy (H1) and $(R, \Lam_+; L, \Omega)$ satisfy (M1) and (M2). \end{nprop}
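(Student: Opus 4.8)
The plan is first to observe that there is really only one condition to verify. By Definition~\ref{hec-mod}, a finite Hecke module datum consists of \textbf{(M0)}, \textbf{(M1)}, \textbf{(M2)}, where \textbf{(M0)} is the conjunction of \textbf{(H1)} and \textbf{(H2)}; since \textbf{(H1)}, \textbf{(M1)}, \textbf{(M2)} are given by hypothesis, the entire content of the proposition is the implication ``\textbf{(H1)} $+$ \textbf{(M1)} $+$ \textbf{(M2)} $\Rightarrow$ \textbf{(H2)}''.

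Next I would unwind \textbf{(H2)} into something concrete. Fix $\lambda,\mu\in\Lam_+$ and choose representatives $x_\lambda\in X^\lambda$, $x_\mu\in X^\mu$, so that $X^\lambda=Rx_\lambda R$ and $X^\mu=Rx_\mu R$. Absorbing the inner copies of $R$, the $(R,R)$-double cosets occurring in the product $X^\lambda X^\mu$ are precisely those of the form $Rx_\lambda r x_\mu R$ with $r\in R$; the structure constant attached to such a coset is $|m_{\lambda,\mu}^{-1}(\,\cdot\,)|$, which is finite by \textbf{(H1)} (Variant~1 of \S\ref{fiber:exp}). Thus \textbf{(H2)} is exactly the assertion that the set $\{\,Rx_\lambda r x_\mu R : r\in R\,\}$ of $(R,R)$-double cosets is finite.

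Then I would import a coarse finiteness statement from the module side and refine it. Since $x_\lambda\in\Gamma_+\subseteq\Gamma$, its $(L,R)$-double coset $Lx_\lambda R$ equals $Y^{\lambda'}$ for a unique $\lambda'\in\Omega$, and every element $x_\lambda r x_\mu$ ($r\in R$) lies in $Y^{\lambda'}X^\mu=Lx_\lambda R x_\mu R$, a union of $(L,R)$-double cosets. By \textbf{(M2)} only finitely many $(L,R)$-double cosets $Y^{\nu_1'},\ldots,Y^{\nu_k'}$ meet $Y^{\lambda'}X^\mu$, so each coset $Rx_\lambda r x_\mu R$ is contained in one of these finitely many $Y^{\nu_i'}$. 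It then remains to bound, for each $i$, the number of $(R,R)$-double cosets of the form $Rx_\lambda r x_\mu R$ lying inside $Y^{\nu_i'}$, and here I would invoke the finite-fiber hypothesis \textbf{(M1)}: fixing a representative $y_i=x_\lambda r_i x_\mu$ of the relevant elements inside $Y^{\nu_i'}$ and writing $x_\lambda r x_\mu=\ell y_i\rho$ with $\ell\in L$, $\rho\in R$ whenever $Rx_\lambda r x_\mu R\subseteq Y^{\nu_i'}$, the pair $(\ell^{-1}x_\lambda r,\ x_\mu\rho^{-1})$ lies in $Y^{\lambda'}\times_R X^\mu$ and multiplies to $y_i$, hence represents an element of the finite set $a_{\lambda',\mu}^{-1}(y_i)$. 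One then checks that the resulting map from these cosets to $a_{\lambda',\mu}^{-1}(y_i)$ has finite fibers, the residual ambiguity being controlled by the finiteness of $m_{\lambda,\mu}^{-1}$ (that is, by \textbf{(H1)}); combining this with the previous sentence yields \textbf{(H2)}, and the remaining axioms \textbf{(M1)}, \textbf{(M2)} are already assumed.

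I expect the real work to be concentrated in the last step: the bookkeeping that compares the stratification of $\Gamma$ by $(R,R)$-double cosets with the one by $(L,R)$-double cosets, and the verification that the comparison map into $a_{\lambda',\mu}^{-1}(y_i)$ has finite fibers. In the situations where this criterion is applied one typically has $R\subseteq L$, in which case the $(R,R)$-decomposition literally refines the $(L,R)$-decomposition and the comparison map is manifestly well defined, so the bookkeeping becomes routine; the general fiber-counting argument is carried out, in a mildly more elaborate setting, in Proposition~\ref{si:hecke:mod:simple}, of which the present proof is a simplification.
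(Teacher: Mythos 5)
You have read ``\textbf{(H1)}'' according to Definition~\ref{hec-dat} (fiber finiteness) and set out to derive \textbf{(H2)} (support finiteness). But look at how the proposition is actually invoked in the proof of Theorem~\ref{iw:finite}: parts (1), (3), (4) of Proposition~\ref{prop:iw} are proved there, and part~(2) is said to ``follow automatically'' from the present proposition. Part~(1) asserts that $IxIyI$ lies in a finite union of double cosets, i.e.\ the support condition, while part~(2) asserts finiteness of $I\setminus Ix^{-1}Iy\cap IzI$, i.e.\ the fiber condition. So the proposition is being used to derive the \emph{fiber} condition from the \emph{support} condition together with \textbf{(M1)}, \textbf{(M2)} --- the reverse of what you prove. (The proof of Theorem~\ref{iw:finite}, and so also this statement, swaps the labels \textbf{(H1)}/\textbf{(H2)} relative to Definition~\ref{hec-dat}.) The semi-infinite analogue Proposition~\ref{si:hecke:mod:simple}, which you cite as a model, confirms the intended direction: its conclusion is \textbf{(SH1)}, the fiber-finiteness condition, not the support condition.

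Independently of the directional issue, your argument hinges on the containment $R\subseteq L$. You need it both to place each $(R,R)$-double coset $X^\tau$ inside a single $(L,R)$-double coset $Y^{\nu_i'}$ and to conclude $\ell^{-1}x_\lambda r\in Y^{\lambda'}$. You assert that ``one typically has $R\subseteq L$'', but in the application at hand (Theorem~\ref{iw:finite}) one takes $R=I$ and $L=A_\O U$, and $I$ contains $U^-_\pi$, which does not lie in $A_\O U$. Thus $R\not\subseteq L$, the $(R,R)$-decomposition of $\Gamma_+$ does not refine the $(L,R)$-decomposition of $\Gamma$, and the comparison map into $a_{\lambda',\mu}^{-1}(y_i)$ is not even defined --- already before the fiber count you defer.

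The intended argument is the finite shadow of the proof of Proposition~\ref{si:hecke:mod:simple}. Fix any $\nu\in\Omega$ and consider $a_{\nu,\lambda,\mu}:Y^\nu\times_R X^\lambda\times_R X^\mu\to\Gamma$. By \textbf{(M2)} the image of $a_{\nu,\lambda}$ meets only finitely many $(L,R)$-double cosets $Y^{\eta_1},\ldots,Y^{\eta_k}$; by associativity~(\ref{F:ass}) and \textbf{(M1)} one bounds $|a_{\nu,\lambda,\mu}^{-1}(z)|\le\sum_{i}|a_{\nu,\lambda}^{-1}(\eta_i)|\,|a_{\eta_i,\mu}^{-1}(z)|<\infty$ for every $z\in\Gamma$. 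If now some $m_{\lambda,\mu}^{-1}(\xi)$ were infinite, choose $y_0\in Y^\nu$ and $g\in X^\xi$ and set $z=y_0g$: the assignment $[(x_1,x_2)]\mapsto[(y_0,x_1,x_2)]$ injects $m_{\lambda,\mu}^{-1}(g)$ into $a_{\nu,\lambda,\mu}^{-1}(z)$ (any $r\in R$ with $y_0r=y_0$ is the identity since $y_0\in\Gamma$), contradicting the finiteness just shown. This establishes the fiber condition; together with the assumed support condition and \textbf{(M1)}, \textbf{(M2)} it shows $(R,\Lam_+;L,\Omega)$ is a finite Hecke module datum. Note that no containment between $R$ and $L$ is required.
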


\subsection{Completions of Convolution Algebras} \label{completions}

\tpoint{Semi-infinite Support} Let $\Lambda$ be an abelian group whose underlying set is
equipped with a partial order $\leq$ which is compatible with the
group structure: i.e, if $\lambda \leq \mu$ then $\lambda + \nu \leq
\mu + \nu$ for $\lambda, \mu, \nu \in \Lambda.$  For elements $\lambda
\geq \mu$ we shall write \be{lam:mu:int}  [ \lambda, \mu ] := \{ \nu \in \Lambda |
  \lambda \leq \nu \leq \mu \} .\ee  For an element $\lambda \in \Lambda$ we shall set $$\mf{c}(\lambda) = \{ \mu \in \Lambda: \mu \leq \lambda \}.$$ A subset $\Xi \subset \Lambda$ is said to be \emph{semi-infinite} if there exists a finite set of elements $\lambda_1, \ldots,\lambda_r$ such that \be{xi:2} \Xi \subset \bigcup_{i=1}^r  \mf{c}(\lambda_i). \ee

If $\Lambda^+ \subset \Lambda$ is a sub-semigroup, it inherits an order which we shall continue to denote by $\leq$ and all of the constructions above can be repeated with $\Lambda^+$ in place of $\Lambda.$

\tpoint{Semi-infinite Convolution Hecke Algebras} \label{si:dat} Let $\Gamma_+ \subset \Gamma, R, X$ be as in \S \ref{conv:note}. As before set $\Lambda_+:= \Lambda_{R, R}.$ We assume that $\Lambda_+$ is an ordered abelian semigroup with order denoted by $\leq.$ Let $f: \Gamma_+ \rr \C$ be a function which is $R$-binvariant. We may define its support $\Lam_f \subset \Lam_+$ as in (\ref{supp:f}). We shall say that $f$ has semi-infinite support if $\Lambda_f \subset \Lambda_+$ is a semi-infinite subset. Denote the set of $R$-binvariant semi-infinitely supported functions on $\Gamma_+$ by $H_{\leq}(\Gamma_+, R).$  For $\lambda, \mu \in \Lam_+$ denote by $m_{\lambda, \mu}: X^{\lambda} \times_R X^{\mu} \rr \Gamma_+$ the natural multiplication map.

\begin{de} \label{de:sim} A collection $(R, \Lam_+, \leq)$ will be said to be a \emph{semi-infinite Hecke algebra datum} \label{si:hecke:alg} if the following conditions are met,
\begin{enumerate}

\item \textbf{(F)} For every pair $\lambda, \mu \in \Lam_+$ the set $[ \lambda, \mu ]$ is finite.

\item \textbf{(SH1)} For any $\lambda, \mu \in \Lam_+$ the fibers of the map $m_{\lambda, \mu}$ are finite.

\item \textbf{(SH2)} For any $\lambda, \mu \in \Lam_+,$ the composition $m_{\lambda, \mu}^{-1}(\nu) \neq 0$ implies that $\nu \leq \lambda + \mu.$
\end{enumerate}

\end{de}

Given any $\lambda, \mu \in X$, the assumptions (SH1) and (SH2) allow us to make sense of the sum \[ \sum_{\nu \in \Lam_+} | m_{\lambda, \mu}^{-1}(\nu)| \; X^{\nu}  \] as an element of $H_{\leq}(\Gamma_+, R).$  Setting \[ X^{\lambda} \star {X^{\mu}} = \sum_{\nu \in \Lam_+} | m_{X^{\lambda}, X^{\mu}}^{-1}(\nu)| \; {X^{\nu}}, \] the Proposition below tells us in particular that we may extend this multiplication to a map \be{star:si} \star:H_{\leq}(\Gamma_+, R) \times H_{\leq}(\Gamma_+, R) \rr H_{\leq}(\Gamma_+, R)\ee using the assumptions (Fin), (SH1), and (SH2).

\begin{nprop} \label{si:hec:alg} Let$(R, \Lam_+, \leq)$ be a semi-infinite Hecke datum.  Then the product $\star$ endows $H_{\leq}(\Gamma_+, R)$ with the structure of an associative, unital algebra with unit being given by the trivial $R$-double coset $R$. \end{nprop}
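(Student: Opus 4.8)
The plan is to verify that $\star$ is well-defined (i.e.\ produces an element of $H_{\leq}(\Gamma_+,R)$) and then that it is associative and unital, reducing everything to the fiber-product formalism of \S\ref{fiberprod}--\S\ref{fiber:exp} exactly as in the proof of Proposition \ref{fin:hec:alg}. First I would check that for fixed $\lam,\mu \in \Lam_+$ the function $X^{\lam}\star X^{\mu}=\sum_{\nu}|m_{\lam,\mu}^{-1}(\nu)|\,X^{\nu}$ has semi-infinite support: by (SH2) every $\nu$ contributing satisfies $\nu \leq \lam+\mu$, so $\Lam_{X^{\lam}\star X^{\mu}} \subseteq \mf{c}(\lam+\mu)$, which is semi-infinite by definition; and by (SH1) each coefficient $|m_{\lam,\mu}^{-1}(\nu)|$ is finite, so the sum is a legitimate element of $H_{\leq}(\Gamma_+,R)$. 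To extend $\star$ to all of $H_{\leq}(\Gamma_+,R)$ bilinearly, I would take $f=\sum_{\lam}c_{\lam}X^{\lam}$ and $g=\sum_{\mu}d_{\mu}X^{\mu}$ with $\Lam_f\subseteq\bigcup_i\mf{c}(\sig_i)$, $\Lam_g\subseteq\bigcup_j\mf{c}(\tau_j)$, and observe that the formal sum $\sum_{\lam,\mu}c_{\lam}d_{\mu}(X^{\lam}\star X^{\mu})$, when collected by $\nu$, is supported in $\bigcup_{i,j}\mf{c}(\sig_i+\tau_j)$ (still semi-infinite), and that the coefficient of any fixed $X^{\nu}$ is a \emph{finite} sum: the pairs $(\lam,\mu)$ that can contribute must satisfy $\nu\leq\lam+\mu$ together with $\lam\leq\sig_i$, $\mu\leq\tau_j$ for some $i,j$, hence $\lam\in[\,\nu-\tau_j,\sig_i\,]$ and $\mu\in[\,\nu-\sig_i,\tau_j\,]$ (using compatibility of $\leq$ with the group law), and these intervals are finite by (F). This is the one genuinely new point compared to the finite-support case and is where I expect the main care to be needed — the finiteness condition (F) is doing exactly the work that makes the completed convolution converge coefficient-wise.

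Next, associativity. Having established that $\star$ lands in $H_{\leq}(\Gamma_+,R)$, I would argue exactly as in Proposition \ref{fin:hec:alg}: for $\lam,\mu,\nu\in\Lam_+$ and any $x\in X^{\xi}$, interpret $(X^{\lam}\star X^{\mu})\star X^{\nu}$ evaluated at $x$ as $|m_{(\lam,\mu),\nu}^{-1}(x)|$ and $X^{\lam}\star(X^{\mu}\star X^{\nu})$ at $x$ as $|m_{\lam,(\mu,\nu)}^{-1}(x)|$, where these fibers are defined via the corepresentability isomorphisms $A\times_R B$ of \S\ref{fiberprod} and the identity (\ref{F:ass}); both fibers are identified with $m_{\lam,\mu,\nu}^{-1}(x)$ for the triple-multiplication map $X^{\lam}\times_R X^{\mu}\times_R X^{\nu}\rr\Gamma_+$. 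One must first know each of these fibers is finite, which follows by iterating (SH1), and one must know the relevant sums (now sums over intermediate double cosets $\rho$ of products $|m_{\lam,\mu}^{-1}(\rho)|\cdot|m_{\rho,\nu}^{-1}(\xi)|$) are finite: by (SH2) the intermediate $\rho$ satisfies $\rho\leq\lam+\mu$ and also $\xi\leq\rho+\nu$, i.e.\ $\rho\in[\,\xi-\nu,\lam+\mu\,]$, a finite set by (F), so only finitely many $\rho$ contribute and the associativity identity reduces to the set-theoretic bijection of fibers, which is precisely (\ref{F:ass}). I would present this briefly and refer to the proof of Proposition \ref{fin:hec:alg} for the routine verification that the two partitions of $m_{\lam,\mu,\nu}^{-1}(x)$ agree.

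Finally, unitality: I would check directly that $X^{\lam_0}$, where $\lam_0\in\Lam_+$ indexes the double coset $R$ itself, is a two-sided identity. Since $R\cdot X^{\mu}=X^{\mu}=X^{\mu}\cdot R$ as subsets of $\Gamma_+$, the multiplication map $m_{\lam_0,\mu}\colon R\times_R X^{\mu}\rr X^{\mu}$ is a bijection, so $|m_{\lam_0,\mu}^{-1}(\nu)|=\del_{\mu\nu}$ and $X^{\lam_0}\star X^{\mu}=X^{\mu}$; symmetrically on the other side. Extending by the bilinearity just justified gives $X^{\lam_0}\star f=f=f\star X^{\lam_0}$ for all $f\in H_{\leq}(\Gamma_+,R)$. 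The hardest part of the whole argument is the coefficient-wise finiteness in the definition of $\star$ on infinitely-supported functions; everything downstream of that is formally identical to the finite case.
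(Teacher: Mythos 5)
Your proof is correct and takes essentially the same route as the paper: the key point is the coefficient-wise convergence of $\star$ on semi-infinitely supported functions, and you establish it exactly as the paper does, by combining (SH2) (to bound the support of $X^{\lambda}\star X^{\mu}$ by $\mathfrak{c}(\lambda+\mu)$) with (F) (to cut the contributing pairs $(\lambda,\mu)$ for a fixed $\nu$ down to a finite set of intervals). The paper's own proof treats only this well-definedness step explicitly and delegates associativity and unitality to the finite case by reference; you spell those out using the fiber-product associativity (\ref{F:ass}) and the interval bound $\rho\in[\xi-\nu,\lambda+\mu]$, which is a harmless elaboration and not a different method.
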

\begin{proof} The proof is very similar to that of Proposition \ref{fin:hec:alg}, and so let us just verify that the above formula extends to give a well-defined map as in (\ref{star:si}). Then let $f_1, f_2 \in
  H_{\leq}(\Gamma_+, R)$ which are of the form \[ \begin{array}{lcr} f_1 = \sum_{\mu
  \leq \lambda_1} a_{\mu} {X^{\mu}} & \text{ and } & f_2 =
  \sum_{\nu \leq \lambda_2} b_{\nu} {X^{\nu}} \end{array} \]
  Suppose that \[ {X^{\mu}} \star {X^{\nu}} = \sum_{\eta \leq \mu
  + \nu} c_{\eta}^{\mu, \nu} X^\eta .\] Then we write
\begin{eqnarray}
f_1  \star f_2 &=& \sum_{\mu \leq \lambda_1, \nu \leq \lambda_2} a_{\mu}
  b_{\nu} {X^{\mu}} \star {X^{\nu}} \\
&=& \sum_{\mu \leq \lambda_1, \nu \leq \lambda_2} a_{\mu} b_{\nu} (
  \sum_{\eta \leq \mu + \nu} c_{\eta}^{\mu, \nu} {X^{\nu}}) \\
&=& \sum_{\mu \leq \lambda_1, \nu \leq \lambda_2, \eta \leq \mu + \nu}
  a_{\mu} b_{\nu} c_{\eta}^{\mu, \nu} X^{\eta}  \end{eqnarray} For
  fixed $\eta$ we have that $\eta \leq \mu + \nu \leq \lambda_1 +
  \lambda_2$ and by property (F) there are only finitely many values
  which $\mu + \nu$ can take.  Again for fixed $\eta$, since $\mu \leq \lambda_1$ and $\nu
  \leq \lambda_2$ there are only finitely many terms in the sum, \[
  \sum_{\mu \leq \lambda_1, \nu \leq \lambda_2, \eta \leq \mu + \nu}
  a_{\mu} b_{\nu} c_{\eta}^{\mu, \nu} \] and hence the formula for
  $f_1 \star f_2$ is well-defined.  To show that $f_1 \star f_2$ has
  semi-infinite support, we note that \[ \Lambda_{f_1 \star f_2} \subset
  \bigcup_{\mu \leq \lambda_1, \nu \leq \lambda_2} \Lambda_{X^{\mu}
  \star X^{\nu}} \subset \bigcup_{\mu \leq \lambda_1, \nu \leq
  \lambda_2, \eta \leq \mu + \nu} \mf{c}({\eta}) \subset \bigcup_{\eta \leq
  \lambda_1 + \lambda_2} \mf{c}({\eta}) \] \end{proof}

\tpoint{Semi-infinite Hecke Modules} \label{sec-si:mod} Let $L \subset \Gamma_+$ be a subgroup of $\Gamma.$ Denote by $\Omega:= \Lambda_{L, R}$ the set parametrizing $(L, R)$-double cosets of $\Gamma.$ Assume that $\Omega$ is an ordered abelian group with order $\leq.$ Furthermore, we assume this construction is compatible with the constructions from the previous paragraph in the following sense: we are given a fixed embedding $\Lambda_+ \subset \Omega$ such that the order on $\Lambda_+$ is the restriction of the order on $\Omega.$ For a function $f: \Gamma \rr \C$ which is $(L, R)$-binvariant, we can define its support $\Omega_f \subset \Omega$ as in (\ref{supp:f}), and we say that $f$ is semi-infinitely supported if $\Omega_f \subset \Omega$ is a semi-infinite subset. Denote as in \S \ref{action:section} the map $Y: \Omega \rr \Gamma$ which sends $\nu \in \Omega$ to the corresponding coset $(L,R)$-double coset $Y^{\nu}.$  Given $\lambda \in \Omega, \mu \in \Lambda_+$ we have a map $a_{\lambda, \mu}: Y^{\lambda} \times_R X^{\mu} \rr \Gamma,$ defined via multiplication.

\begin{de} We say that the collection $(R, \Lambda_+; L, \Omega, \leq)$ as above is a \emph{semi-infinite Hecke module datum} \label{si:hecke:mod} if

\begin{enumerate}

\item \textbf{(M0)} $(R, \Lambda_+, \leq )$ is a semi-infinite Hecke module datum

\item \textbf{(MFin)} For $\lambda, \mu \in \Omega$ with $\lambda \leq \mu,$ the set $[\lambda, \mu]$ is finite.

\item \textbf{(SM1)} For any $\lambda \in \Omega$ and $\mu \in \Lam_+$ the map $a_{\lambda, \mu}$ has finite fibers.

\item \textbf{(SM2)} For any $\lambda \in \Omega$ and $\mu \in \Lam_+,$ the map if $a_{\lambda, \mu}^{-1}(\nu) \neq \emptyset$ then $\nu \leq \lambda + \mu.$

\end{enumerate}

\end{de}

Using the properties (SM1) and (SM2) we can make sense, for each $\lambda \in \Omega, \mu \in X$ of the sum, \[ \sum_{\nu \in \Omega} | a_{\lambda, \mu}^{-1}(\nu) | Y^{\nu} \] as an element of $M_{\leq}(\Gamma; L, R).$ Thus, we can define a map $\star$ via the formula \be{semiinf:act} Y^{\lambda} \star X^{\mu}  = \sum_{\nu \in \Omega} | a_{\lambda, \mu}^{-1}(\nu) | Y^{\nu}. \ee The following proposition, whose proof we omit, then shows that $\star_a$ extends to a map \be{semiinf:act:2}
M_{\leq}(\Gamma; L, R) \times H_{\leq}(\Gamma_+, R) \rr M_{\leq}(\Gamma; L, R). \ee

\begin{nprop} Let $(R, \Lambda_+; L, \Omega, \leq)$ be a semi-infinite Hecke module datum.  Then $\star_a$ defines a right $(H_{\leq}(\Gamma_+, R), \star)$-module structure on $M_{\leq}(\Gamma; L, R).$ \end{nprop}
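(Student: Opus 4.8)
The proof runs in exact parallel to that of Proposition~\ref{si:hec:alg}, with the associativity step there replaced by the module axiom in the style of Proposition~\ref{fin:hec:mod}; I indicate the two points that need checking and where the hypotheses enter.

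\emph{Step 1: $\star_a$ is well defined into $M_{\leq}(\Gamma; L, R)$.} Take $m\in M_{\leq}(\Gamma;L,R)$ with $\Omega_m\subset\bigcup_{i}\mf{c}(\lambda_i)$ and $h\in H_{\leq}(\Gamma_+,R)$ with $\Lambda_h\subset\bigcup_{j}\mf{c}(\mu_j)$, for finitely many $\lambda_i\in\Omega$ and $\mu_j\in\Lambda_+$. Writing $m=\sum a_\lambda Y^\lambda$ and $h=\sum b_\mu X^\mu$, the $Y^\nu$-coefficient of $m\star_a h$ is formally $\sum_{\lambda,\mu}a_\lambda b_\mu\,|a_{\lambda,\mu}^{-1}(\nu)|$. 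By \textbf{(SM2)} only pairs with $\nu\le\lambda+\mu$ contribute, hence $\lambda+\mu\in[\nu,\lambda_i+\mu_j]$ for some $i,j$; by \textbf{(MFin)} (together with \textbf{(F)}) there are only finitely many possible values of $\lambda+\mu$, and for each such value finitely many admissible pairs $(\lambda,\mu)$ with $\lambda\le\lambda_i$ and $\mu\le\mu_j$; since each $|a_{\lambda,\mu}^{-1}(\nu)|$ is finite by \textbf{(SM1)}, the coefficient is a finite sum. Semi-infiniteness of the support follows from $\Omega_{m\star_a h}\subset\bigcup_{\lambda\le\lambda_i,\ \mu\le\mu_j}\mf{c}(\lambda+\mu)\subset\bigcup_{i,j}\mf{c}(\lambda_i+\mu_j)$, using \textbf{(SM2)} once more. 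Thus (\ref{semiinf:act}) extends bilinearly to a map (\ref{semiinf:act:2}).

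\emph{Step 2: the module axiom and the unit.} It remains to prove $(Y^\lambda\star_a X^\mu)\star_a X^\nu=Y^\lambda\star_a(X^\mu\star X^\nu)$ for $\lambda\in\Omega$ and $\mu,\nu\in\Lambda_+$ (both sides are legitimate elements of $M_{\leq}(\Gamma;L,R)$ by Step~1 and Proposition~\ref{si:hec:alg}), and then to extend bilinearly. As in the proof of Propositions~\ref{fin:hec:alg} and~\ref{fin:hec:mod}, I would introduce the triple multiplication $a_{\lambda,\mu,\nu}\colon Y^\lambda\times_R X^\mu\times_R X^\nu\to\Gamma$ and apply the functorial associativity (\ref{F:ass}) of fibre products from \S\ref{fiberprod}, in the mixed form
\[
F_{(Y^\lambda\times_R X^\mu)\times_R X^\nu}=F_{Y^\lambda\times_R X^\mu\times_R X^\nu}=F_{Y^\lambda\times_R(X^\mu\times_R X^\nu)},
\]
which identifies $a_{\lambda,\mu,\nu}$ with maps $a_{(\lambda,\mu),\nu}$ and $a_{\lambda,(\mu,\nu)}$ having literally the same fibres. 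A coefficient-by-coefficient comparison then shows that, for $x\in Y^\xi$, the $\xi$-coefficient of either side equals $|a_{\lambda,\mu,\nu}^{-1}(x)|$ — a quantity depending only on the double coset $Y^\xi$ by (the module analogue of) (\ref{fiber:int:3}), and finite by the same reasoning as in Step~1 — which gives the module axiom. For the unit: the identity of $H_{\leq}(\Gamma_+,R)$ is the characteristic function of the double coset $R$, and since $Y^\lambda\times_R R\cong Y^\lambda$ with multiplication the identity map, its fibres over $Y^\lambda$ are singletons and empty elsewhere, so $Y^\lambda\star_a \mathbf{1}_R=Y^\lambda$; hence $\mathbf{1}_R$ acts as the identity on $M_{\leq}(\Gamma;L,R)$.

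\emph{Main obstacle.} The only substantive point is the bookkeeping in Step~1: one must ensure that passing to the completed spaces does not produce genuinely infinite sums, i.e. that every index set above is finite. This is precisely where the four conditions interlock — \textbf{(SM1)} bounds individual fibres, \textbf{(SM2)} confines products to lie below $\lambda+\mu$, and \textbf{(F)}/\textbf{(MFin)} make the relevant intervals in $\Lambda_+$ and in $\Omega$ finite — so that all the infinite sums in play are locally finite. Granting this, Step~2 is purely formal, being the same fibre-product argument as in the finite case (Propositions~\ref{fin:hec:alg}, \ref{fin:hec:mod}).
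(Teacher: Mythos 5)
Your proposal is correct and is exactly the argument the paper intends: the authors explicitly omit the proof of this proposition but indicate it mirrors Proposition~\ref{si:hec:alg} (well-definedness from (F)/(MFin), (SM1), (SM2)) and the fibre-product associativity machinery of \S\ref{fiberprod} used for Propositions~\ref{fin:hec:alg} and~\ref{fin:hec:mod}, which is precisely how you structure Steps~1 and~2.
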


\tpoint{A Simplified Criterion} \label{sec-si:simple} In practice, one has the following slight but useful strengthening of the above result.

\begin{nprop} \label{si:hecke:mod:simple} Let $(R, \Lambda_+; L, \Omega, \leq)$ satisfy all of the conditions of being a semi-infinite Hecke module datum except (SH1).  Then (SH1) follows and $(R, \Lambda_+, L, \Omega, \leq)$ is in fact a semi-infinite Hecke module datum. \end{nprop}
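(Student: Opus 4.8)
The plan is to deduce the missing fiber-finiteness condition (SH1) --- that the convolution maps $m_{\lambda,\mu}: X^{\lambda}\times_{R}X^{\mu}\to\Gamma_+$ have finite fibers --- from the module-side condition (SM1), by realizing each fiber of $m_{\lambda,\mu}$ inside a fiber of a suitable module-action map $a_{\bar\lambda,\mu}$. This is exactly the semi-infinite analogue of the mechanism behind the simplified criterion of \S\ref{sec-finsimple}, so the shape of the argument is already on hand; only the bookkeeping needs adjusting.

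Concretely, I would fix $\lambda,\mu\in\Lambda_+$ and $x$ in the image of $m_{\lambda,\mu}$ (so that $x\in\Gamma_+\subseteq\Gamma$), and write $\bar\lambda\in\Omega$ for the image of $\lambda$ under the fixed embedding $\Lambda_+\subset\Omega$. By the explicit description of fibers from \S\ref{fiber:exp} one has
\[
m_{\lambda,\mu}^{-1}(x)=R\backslash\bigl((X^{\lambda})^{-1}x\cap X^{\mu}\bigr),\qquad a_{\bar\lambda,\mu}^{-1}(x)=R\backslash\bigl((Y^{\bar\lambda})^{-1}x\cap X^{\mu}\bigr),
\]
the left $R$-actions being legitimate because $(X^{\lambda})^{-1}$ and $(Y^{\bar\lambda})^{-1}$ are left $R$-invariant (since $X^{\lambda}$ and $Y^{\bar\lambda}$ are right $R$-invariant) while $X^{\mu}$ is left $R$-invariant. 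The embedding $\Lambda_+\subset\Omega$ is, by its very construction, compatible with the double coset structures: the $R$-double coset $X^{\lambda}$ of the sub-semigroup $\Gamma_+$ lies inside the $(L,R)$-double coset $Y^{\bar\lambda}$ of $\Gamma$. Granting this, $(X^{\lambda})^{-1}x\cap X^{\mu}$ is an $R$-stable subset of $(Y^{\bar\lambda})^{-1}x\cap X^{\mu}$, so passing to $R$-orbits produces an injection $m_{\lambda,\mu}^{-1}(x)\hookrightarrow a_{\bar\lambda,\mu}^{-1}(x)$. Since $\bar\lambda\in\Omega$ and $\mu\in\Lambda_+$, condition (SM1) forces $a_{\bar\lambda,\mu}^{-1}(x)$ to be finite, whence $m_{\lambda,\mu}^{-1}(x)$ is finite. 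As $\lambda,\mu$ and $x$ were arbitrary, this is precisely (SH1), and together with the standing hypotheses (F), (SH2), (MFin), (SM1), (SM2) it shows that $(R,\Lambda_+;L,\Omega,\leq)$ satisfies all the requirements in the definition of a semi-infinite Hecke module datum (\S\ref{sec-si:mod}).

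The one genuinely delicate point is the compatibility invoked above: controlling how an $R$-double coset of $\Gamma_+$ is situated among the $(L,R)$-double cosets of the ambient group $\Gamma$. If one prefers not to assume outright that $X^{\lambda}\subseteq Y^{\bar\lambda}$, the fallback is to partition $(X^{\lambda})^{-1}x\cap X^{\mu}$ according to which $(L,R)$-double cosets of $\Gamma$ the relevant factors lie in, to bound each block by a fiber of some $a_{\nu,\mu}$ via (SM1), and then to argue --- using (SM2) together with the finiteness of intervals (MFin) --- that only finitely many $\nu\in\Omega$ occur, so that $m_{\lambda,\mu}^{-1}(x)$ is a finite union of finite sets. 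Beyond this, everything is the routine fiber-product manipulation of \S\ref{fiberprod}--\S\ref{fiber:exp}, entirely parallel to the proofs of Proposition~\ref{fin:hec:alg} and of the finite-case simplified criterion; I expect this positioning (and the consequent finiteness of the set of relevant $(L,R)$-cosets) to be the main obstacle, with everything else purely formal.
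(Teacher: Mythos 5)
Your main line rests on the claim that the embedding $\Lambda_+\subset\Omega$ places the $R$-double coset $X^{\lambda}$ inside a single $(L,R)$-double coset $Y^{\bar\lambda}$. That is false in the paper's principal application: taking $R=K$, $L=A_{\O}U$ and $\Gamma_+=G_+$, the Cartan cell $K\pi^{\lambda^\vee}K$ contains (up to $A_{\O}$) the element $\pi^{w\lambda^\vee}$ for every $w\in W$, hence meets the Iwasawa coset $A_{\O}U\pi^{w\lambda^\vee}K$ for each such $w$; since $W$ is infinite and $\lambda^\vee$ may be chosen regular, $K\pi^{\lambda^\vee}K$ is a union of infinitely many $(L,K)$-double cosets. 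There is therefore no injection $m_{\lambda,\mu}^{-1}(x)\hookrightarrow a_{\bar\lambda,\mu}^{-1}(x)$, and the first argument collapses.

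Your fallback is the right idea and is genuinely close to the paper's argument, but as sketched it has a gap: (SM2) constrains the \emph{output} cosets of the maps $a_{\nu,\mu}$ and gives you only the \emph{lower} bound $\nu\geq\xi-\mu$ (where $x\in Y^{\xi}$) on the classes $\nu$ of the first factor $y=xg^{-1}$. The needed \emph{upper} bound, that only $\nu\leq\lambda$ can occur, is not among the axioms; one must observe separately that $X^{\lambda}$ lies in the image of $a_{\bar 0,\lambda}\colon Y^{\bar 0}\times_R X^{\lambda}\to\Gamma$ (with $\bar 0\in\Omega$ the class of the identity), so that (SM2) applied there forces $\nu\leq\bar 0+\lambda$, and only with both bounds does (MFin) close the argument. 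The paper sidesteps having to make this observation by working from the start with the triple map $a_{\nu,\lambda,\mu}\colon Y^{\nu}\times_R X^{\lambda}\times_R X^{\mu}\to\Gamma$: a fiber of $m_{\lambda,\mu}$ over a point of $X^{\xi}$ injects into a fiber of $a_{\nu,\lambda,\mu}$ (one may take $\nu=\bar 0$ and $z\in X^{\xi}$), and that fiber is finite because the intermediate classes $\eta$ met by $a_{\nu,\lambda}(Y^{\nu}\times_R X^{\lambda})$ are pinned by two applications of (SM2) to the interval $[\xi-\mu,\lambda+\nu]$, finite by (MFin), with (SM1) supplying the finiteness at each of the two stages.
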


\begin{proof} Let $\lambda, \mu \in \Lam_+$ and $\nu \in \Omega$ and consider the map, \be{a:lmv} a_{\nu, \lambda, \mu}: Y^{\nu} \times_R X^{\lambda} \times_R X^{\mu} \rr \Gamma \ee induced by multiplication, i.e.,
  which sends $(x, y, z) \mapsto xy(z).$  It suffices to show that the
  fibers of this map are finite. Indeed, if (SH1) were not satisfied, then there would exist some $\lambda, \mu \in \Lam_+$ as above and $\xi \in \Lam_+$ such that $m_{\lambda, \mu}^{-1}(\xi)$ is infinite. Choose any element $z$ which lies in the image of the multiplication map $a_{\nu, \xi}: Y^{\nu} \times_R X^{\xi} \rr \Gamma.$ Then $a_{\nu, \lambda, \mu}^{-1}(z)$ will be infinite.

  By condition (SM2) we have that the image of $a_{\nu, \lambda}$ lies in the union $\bigcup_{\eta \leq \lambda + \nu} Y^{\eta}.$ For any such $\eta,$ we know that if  $Y^{\xi}$ lies in the image of $a_{\eta, \mu},$ we must have $\xi \leq \nu + \mu.$ Hence, for each such $\xi$ we have inequalities, \[ \xi \leq \eta + \mu \leq \lambda + \mu + \nu. \] From this it follows that there can be only finitely many $\eta$ for fixed $\xi, \lambda, \mu, \nu.$ The finiteness of the fibers of $a_{\nu, \lambda, \mu}$ follows from associativity using (SM1): for any element $z$ in the image of this map, we must have $z \in Y^{\xi}$ as above for fixed $\xi, \lambda, \mu, \nu$ as above.
\end{proof}

%: part 4 Iwahori theory
\section{Iwahori Theory I: "Affine" Hecke Algebras and Convolution Hecke algebras}

Fix the notations of $\S3$ in this section. We shall now apply the axiomatics developed in the previous section to construct a convolution algebra $H(G_+, I)$ on the space of $I$-double cosets of $G_+.$ This algebra may be identified with an "affine" Hecke algebra (a slight variant of Cherednik's DAHA) as we show in Theorem \ref{main:iwahori}. The proof of this theorem rests on the construction of a certain family of commuting elements $H(G_+, I)$ (see Proposition \ref{tp-const}) whose proof is deferred until \S \ref{iwahori-2}.

\subsection{"Affine" Hecke Algebras } \label{is-ahec}

We begin by defining the precise variant of Cherednik's DAHA which will arise from our group theoretic convolution algebra.

\spoint Fix the notations as in  \S \ref{sec:loopalg}. The Weyl group $W$ of our Kac-Moody Lie algebra $\f{g}$ is a Coxeter group with length function, $\ell: W \rr \zee$.  As such, we can associate a Hecke algebra to it as follows:  first, define the \emph{Braid group} $B_W$ as the group with generators $\mathbb{T}_w$ for $w \in W$ subject to the relations, \be{braid:rel} \mathbb{T}_{w_1} \mathbb{T}_{w_2} = \mathbb{T}_{w_1 w_2} \text{ if } \ell(w_1) + \ell(w_2) = \ell(w_1w_2). \ee Let $v$ be an indeterminate, and consider $F= \C(v),$ the field of rational functions in the intermediate $v.$ \footnote{ One could also consider the ring $\C[v, v^{-1}]$ in what follows}  The \emph{Hecke algebra} $\dhec_W$ associated to $W$ is then the quotient of the group algebra $F[B_W]$ by the ideal generated by the relations \be{hecke:rel} (\mathbb{T}_a + 1)(\mathbb{T}_a - v^{-2})=1 \; \text{ for } a \in \Pi^{\vee}. \ee

\renewcommand{\v}{\vee}

Recall that $\Lambda^{\vee}$ was the weight lattice of $\mf{g}.$ Now consider the group algebra $R = \C[ \Lambda^{\vee}].$ For $\lv \in \Lv$, denote by $\Theta_{\lv}$ the corresponding element of $R$ subject to the relations $\Theta_{\lv} \Theta_{\mv} = \Theta_{\lv + \mv}$ if $\lv, \mv \in \Lv.$ Following Garland and Grojnowski \cite{gg} we can then define $\dhec,$ the \emph{"affine" Hecke algebra associated to $W$} as the algebra generated by $\dhec_W$ and $\C[\Lambda^{\vee}]$ subject to the relations \be{bern:rel} \mathbb{T}_a \Theta_{\lambda^{\vee}} - \Theta_{w_a(\lambda^{\vee})} \mathbb{T}_a = (v^{-2} - 1)\frac{ \Theta_{\lambda^{\vee}} - \Theta_{w_a \lambda^{\vee}}}{1 - \Theta_{-a^{\vee}}} \ee Expanding the right hand side of (\ref{bern:rel}) as a series in $\Theta_{-a^{\vee}},$ the right hand side is seen to be an element in $F[\Lv],$ i.e.,
\be{bern:rel-2} (\ref{bern:rel} ) = \begin{cases} 0 &  \text{ if } \la a, \lv \ra = 0 \\
(v^{-2} - 1) ( \Theta_{\lv} + \Theta_{\lv - a^{\vee}} + \cdots + \Theta_{w_a \lv + a^{\vee} })  & \text{ if } \la a, \lv \ra > 0 \\  (1 - v^{-2}) ( \Theta_{w_a \lv} + \Theta_{w_a \lv - a^{\vee}} + \cdots + \Theta_{w_a \lv - ( \la a, w_a \lv \ra -1 ) a^{\vee})  } & \text{ if } \la a, \lv \ra <  0 \end{cases} \ee

%Note that the right hand side is actually a polynomial in $F[\Lambda^{\vee}],$ and we can equivalently write
%
%\fbox{bernstein relations}

 Recall from \S \ref{titscone} that $\Lam^\vee$ is equipped with a grading $\Lam^{\vee} = \oplus_{r \in \zee} \Lam_r^\vee.$ Hence, $F[\Lambda^{\vee}]$ is also equipped with a grading \be{grading:R} F[\Lambda^{\vee}] = \bigoplus_{r \in \zee} F[\Lambda_r^{\vee}] \ee where $F[ \Lambda_r^{\vee} ]$ is the $F$ span of $\Theta_{\lambda^{\vee}}$ for $\langle \delta, \lambda^{\vee} \rangle = r.$ The algebra $\dhec$ also inherits a $\zee$-grading in this way, and we denote by $\dhec_r$ the typical graded piece for $r \in \zee.$
As each graded piece $\Lam^\vee$ is $W$-invariant, so too are the pieces $F[\Lambda_r^\vee].$ Using (\ref{bern:rel}) we can easily deduce that for each $r \in \zee,$ the subspace $F[\Lambda_r^\vee]$ is a $\dhec_W$-module. In general $\dhec_r$ is not a subalgebra since $\Lam_r^\vee$ is not closed under addition. However, $\dhec_0$ is a subalgebra and is essentially the Double Affine Hecke Algebra (DAHA) of Cherednik . Let $\dhec'_0 \subset \dhec$ be the subalgebra generated by $\dhec_W$ and $\Theta_{n \cc}$ for $n \in \zee$ and $\cc$ the minimal positive imaginary coroot.  As $W$ fixes $\cc$ and $\zee \cc$ is closed under addition, the subspace $\dhec'_0$ is also a subalgebra of $\dhec.$ What will be important for us is that the following subspace of $\dhec,$
\be{dhec:p} \dhec_+:= \dhec'_0 \oplus \bigoplus_{r > 0} \dhec_r \ee is also a subalgebra. Indeed, this follows from the following simple result whose proof we omit

\begin{nlem} \label{dhecp:tits} The algebra $\dhec_+$ is the subalgebra of $\dhec$ generated by $\dhec_W$ and $F[X],$ the group algebra of the Tits cone $X \subset \Lam^\vee.$ \end{nlem}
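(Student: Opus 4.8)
The plan is to prove the two inclusions $\mc{A}\subseteq\dhec_+$ and $\dhec_+\subseteq\mc{A}$, where $\mc{A}$ denotes the subalgebra of $\dhec$ generated by $\dhec_W$ and $F[X]$; the only inputs are the $\zee$-grading on $\dhec$ and the description of the Tits cone in (\ref{ti:exp}). First I would record the structural facts needed: $\dhec$ is a $\zee$-graded algebra with $\dhec_W\subseteq\dhec_0$ and $\deg\Theta_{\lv}=\la\delta,\lv\ra$; $\dhec$ is spanned by the monomials $\Theta_{\lv}\mathbb{T}_w$ (following \cite{gg}), so that $\dhec_r$ is spanned by those monomials with $\la\delta,\lv\ra=r$; and, from (\ref{ti:exp}), every $\lv$ of positive level lies in $X$, while the only level-zero elements of $X$ are the $n\cc$ with $n\in\zee$.

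For $\mc{A}\subseteq\dhec_+$: the subalgebra $\mc{A}$ is spanned by products of the pure generators $\mathbb{T}_w$ ($w\in W$) and $\Theta_{\lv}$ ($\lv\in X$), and each such product is homogeneous of degree $\sum_i\la\delta,\lv_i\ra$, which is $\geq 0$ because every $\lv_i\in X$ has non-negative level. If this degree is positive the product lies in $\bigoplus_{r>0}\dhec_r$; if it is zero then every $\lv_i$ has level zero, hence every $\lv_i\in\zee\cc$ by (\ref{ti:exp}), so the product is built only from $\dhec_W$ and the $\Theta_{n\cc}$ and therefore lies in $\dhec'_0$. In either case the product lies in $\dhec_+$.

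For $\dhec_+\subseteq\mc{A}$: by definition $\dhec'_0$ is generated by $\dhec_W$ and the $\Theta_{n\cc}$; since $\zee\cc\subseteq X$ we have $\Theta_{n\cc}\in F[X]\subseteq\mc{A}$, hence $\dhec'_0\subseteq\mc{A}$. For $r>0$, each spanning monomial $\Theta_{\lv}\mathbb{T}_w$ of $\dhec_r$ has $\la\delta,\lv\ra=r>0$, so $\lv\in X$, whence $\Theta_{\lv}\in F[X]\subseteq\mc{A}$ and $\mathbb{T}_w\in\dhec_W\subseteq\mc{A}$; thus $\dhec_r\subseteq\mc{A}$. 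Summing over $r\geq 0$ gives $\dhec_+\subseteq\mc{A}$, and together with the previous paragraph, $\dhec_+=\mc{A}$.

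The only delicate point — and the one I would single out as the crux — is in the first inclusion: one must check that a degree-zero product of generators involves only translations by $\cc$, and this is exactly where the sharp form of (\ref{ti:exp}) is used (a level-zero element of $X$ is a multiple of $\cc$), not merely the fact that $X$ contains all positive-level weights. The spanning statement that $\dhec_r$ is spanned by the $\Theta_{\lv}\mathbb{T}_w$ with $\la\delta,\lv\ra=r$ is routine from the defining relation (\ref{bern:rel}) — which is homogeneous of degree $0$, since both $W$ and the correction terms in (\ref{bern:rel-2}) preserve level — together with the fact that $\dhec$ is generated by $\dhec_W$ and $F[\Lv]$; I would simply cite \cite{gg} for the corresponding $F[\Lv]$-module basis.
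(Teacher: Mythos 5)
Your proof is correct; note that the paper explicitly omits its own proof of this lemma ("whose proof we omit"), so there is nothing to compare against. Your two inclusions via the grading — using that degree-zero products of generators from $F[X]$ must, by the sharp description of $X$ in (\ref{ti:exp}), involve only $\Theta_{n\cc}$'s and hence land in $\dhec'_0$, and using the PBW-type spanning set $\Theta_{\lv}\mathbb{T}_w$ from \cite{gg} for the reverse inclusion — is exactly the argument one would expect the authors to have had in mind, and you correctly identify the crux (level-zero elements of $X$ are precisely $\zee\cc$, not merely that positive-level elements lie in $X$).
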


\spoint  We would now like to construct a module for the algebra $\dhec.$ Let $\mps$ be the $F$-vector space with basis $\ve_x$ for $x \in \aw$ (denote by $\ve_1$ the basis element corresponding to the identity element of $\aw.$) Then we set \begin{eqnarray} \Theta_{\lv} \ve_1 &=& \ve_{\lv} \text{ for } \; \lv \in \Lambda^\vee \\ \mathbb{T}_w \ve_{\mu^\v} &=& \ve_{\pi^{\mu^\v} w} \text{ for } \; w \in W, \mu^\v \in \Lambda^\vee. \end{eqnarray}   It is easy to see that this defines a representation of $\dhec$ on $\mps.$ Let $\mps_+$ denote the $F$-subspace of $\mps$ which is generated by $\ve_x$ for $x \in \aw_X.$ Then one has
that $\mps_+ \subset \dhec_+ \cdot \ve_1.$ Indeed, if $x= \pi^{\mu^\v} w \in \aw_X$ then we have that \be{mps:1} \mathbb{T}_w \Theta_{\mu^\v} \ve_1 = \ve_{\pi^{\mu^\v} w}. \ee
We shall show below the reverse inclusion: $\dhec_+ \cdot \ve_1 \subset \mps_+.$

\subsection{Convolution Algebras of $I$-double cosets} \label{is-conv}

We would like to show that the set of finite linear combinations of $I$-double cosets on the semigroup $G_+$ can be equipped with the structure of a convolution algebra.  We shall moreover construct a natural action of this convolution algebra on the space of functions on $G$ with are right $I$-invariant and left invariant by the subgroup $A_{\O} U.$

\spoint From (\ref{iw:mat:G}) and Proposition \ref{im=car} , the space of $I$-double cosets of $G_+$ is parametrized by the semi-group $\aw_X$ (see (\ref{aw-X}). Denote by $T: \aw_X \rr G_+, \, x \mapsto T_x$ the map which assigns $x \in \aw_X$ to the corresponding $I$-double coset $T_x:= I x I$.  Consider the subgroup $A_\O U \subset G$ and note that from Lemma \ref{W:AO} the set of $(A_{\O} U, I)$-double cosets of $G$ are parametrized by the set $\aw.$ Denote by $\ve: \aw \rr G, \, y \mapsto \ve_y$ the map which assigns to each $x \in \aw$ the corresponding double coset in $\ve_y:= A_{\O} U x I \subset G.$ We then have the following,

\begin{nthm} \label{iw:finite} The data $(I, \aw_X)$ forms a finite Hecke datum for $G_+$ (in the sense of Definition \ref{hec-dat}), and the the triple $(I, \aw_X; A_{\O}U, \aw)$ forms a finite Hecke module datum for $G_+$ (in the sense of Definition \ref{hec-mod}).  \end{nthm}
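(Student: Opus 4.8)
The plan is to reduce everything to the simplified criterion of Proposition \ref{hecke:mod:simple}: it suffices to verify condition \textbf{(H1)} for the pair $(I,\aw_X)$ — finiteness of the fibers of each multiplication map $m_{x,y}\colon (IxI)\times_I(IyI)\to G_+$, $x,y\in\aw_X$ — together with \textbf{(M1)} and \textbf{(M2)} for the quadruple $(I,\aw_X;A_\O U,\aw)$; then \textbf{(H2)} follows formally, so that $(I,\aw_X)$ is automatically a finite Hecke datum. Throughout I use the explicit fiber descriptions of \S\ref{fiber:exp}: by (\ref{fiber:int}) the fiber $m_{x,y}^{-1}(g)$ is the set of left $I$-cosets in $(IxI)^{-1}g\cap IyI$, while $a_{x,y}^{-1}(g)=I\backslash\big((A_\O U x I)^{-1}g\cap IyI\big)$, and by (\ref{fiber:int:2}) and (\ref{fiber:int:3}) each depends only on the class of $g$ in $I\backslash G_+/I$, respectively $A_\O U\backslash G/I$.

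For \textbf{(H1)} and \textbf{(M1)} the strategy is to control each fiber by a fiber of the corresponding \emph{spherical} convolution. A single $I$-double coset $IyI$ with $y\in\aw_X$ lies in one $K$-double coset $K\pi^{\tau_+}K$, where $\tau_+\in\Lv_+$ is the dominant representative of the translation part of $y$ (Cartan decomposition, Theorem \ref{cartan-thmdefn}); hence $(IxI)^{-1}g\cap IyI$ embeds into an intersection of a $K$-coset with $K\pi^{\tau_+}K$, and, after applying the Iwasawa decomposition of Theorem \ref{iw:unique} to $(A_\O U x I)^{-1}g$, the fiber of $a_{x,y}$ embeds into an intersection of the shape $\pi^{\beta}UK\cap K\pi^{\tau_+}K$. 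Finiteness of the resulting spherical intersections is exactly the finiteness theorem of \cite{bk} (reproved elementarily in \cite{bgkp}; see \cite[Theorem 1.9]{bgkp}); this is the ``deep'' input, with no soft proof. I expect the main obstacle to be the bookkeeping needed to stay in a situation to which those theorems literally apply, since the intervening decomposition $K=\bigsqcup_{w\in W}IwI$ is a disjoint union over the infinite group $W$.

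For \textbf{(M2)} — that for fixed $x\in\aw$, $y\in\aw_X$ there are only finitely many $z\in\aw$ with $a_{x,y}^{-1}(z)\neq\emptyset$ — note first that $a_{x,y}^{-1}(z)\neq\emptyset$ precisely when $A_\O U z I\subseteq A_\O U x\,I y I$. Writing $x=\pi^\sigma\dot u$ and $z=\pi^\rho\dot w$ with $\sigma,\rho\in\Lv$, $u,w\in W$, and enlarging $IyI\subseteq K\pi^{\tau_+}K$ as above, one absorbs $\dot u,\dot w\in N_\O\subseteq K$ and uses that $\pi^{\pm\sigma}$ normalises $A_\O$ and $U$ to reduce the containment to $\pi^{\rho-\sigma}\in UK\pi^{\tau_+}K$, hence to $\pi^{\rho-\sigma}U\cap K\pi^{\tau_+}K\neq\emptyset$; by \cite[Theorem 1.9]{bgkp} this forces $\rho-\sigma\le\tau_+$ in the dominance order on $\Lv$. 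Since the central $\calK^{*}\subset Z(G)$ meets $A_\O$, $U$ and $I$ only in $\calO^{*}$, the ``central charge'' $\la\Lambda_{\ell+1},\rho\ra$ is determined by $x,y$ and equals $\la\Lambda_{\ell+1},\sigma+\tau_+\ra$; together with $(\sigma+\tau_+)-\rho\in Q^{\vee}_+$ this forces the $a_{\ell+1}^{\vee}$-coefficient of $(\sigma+\tau_+)-\rho$ to vanish, so $(\sigma+\tau_+)-\rho$ lies in the finite-type positive cone $Q^{\vee}_{o,+}$, whose down-sets are finite; thus $\rho$ ranges over a finite set. For each admissible $\rho$, a Bruhat-order argument in the spirit of Lemma \ref{!:ixi} and Claim \ref{bru-ord} then bounds $w$ Bruhat-below a fixed element of $W$, so only finitely many $z$ occur. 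This step — equivalently, that $A_\O U x\,I y I$ meets only finitely many double cosets — is the one with no counterpart in the spherical theory, where the analogues of \textbf{(M2)} and \textbf{(H2)} fail and force the passage to a completion; its delicate point is the final Weyl-component bound for the non-top values of $\rho$.

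Given \textbf{(H1)}, \textbf{(M1)}, \textbf{(M2)}, Proposition \ref{hecke:mod:simple} yields that $(I,\aw_X;A_\O U,\aw)$ is a finite Hecke module datum, and in particular that $(I,\aw_X)$ is a finite Hecke datum for $G_+$ — which is the assertion of the theorem.
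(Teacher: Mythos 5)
Your plan contains two substantive gaps, and one misreading of the simplified criterion that masks one of them.

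\textbf{The reduction to spherical finiteness does not control Iwahori fibers.} For \textbf{(H1)} and \textbf{(M1)} you propose to embed the fiber $(IxI)^{-1}g\cap IyI$ (resp. $(A_\O UxI)^{-1}g\cap IyI$) into an intersection of the form $\pi^\beta UK\cap K\pi^{\tau_+}K$ and invoke \cite[Theorem 1.9]{bgkp}. But those theorems produce finiteness of $K\backslash(\cdot)$, while the fiber is a set of \emph{left $I$-cosets}. Since $K=\bigsqcup_{w\in W}IwI$ with $W$ infinite, a single left $K$-coset contains infinitely many left $I$-cosets, so $|K\backslash S|<\infty$ says nothing about $|I\backslash S|$. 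You flag ``$K=\bigsqcup_W IwI$ over the infinite $W$'' as a bookkeeping nuisance, but it is the whole problem. The paper's proof of \textbf{(M1)} (Part (4) of Proposition \ref{prop:iw}) does use the BGKP finiteness at one point, but only after reducing to $I\backslash(IxU\cap IyU^-)$ and pairing it with a genuinely new step: Claim \ref{part4:lem:claim}, a residue-field argument showing that the set $K_{x,y}$ meets only finitely many $I$-double cosets. Without an analogue of that step your fibers could still be infinite.

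\textbf{The (M2) bound on $\rho$ does not close.} You derive $\rho-\sigma\le\tau_+$, then claim the $a_{\ell+1}^\vee$-coefficient of $(\sigma+\tau_+)-\rho$ vanishes, hence $(\sigma+\tau_+)-\rho\in Q^\vee_{o,+}$, ``whose down-sets are finite; thus $\rho$ ranges over a finite set.'' The set $\{\rho:(\sigma+\tau_+)-\rho\in Q^\vee_{o,+}\}$ is a translate of $-Q^\vee_{o,+}$, which is infinite; what Lemma \ref{Mfin} gives is finiteness of the \emph{interval} $[\lv,\mv]$, not of the full down-cone, and here you have no lower bound on $\rho$. (The vanishing of the $a_{\ell+1}^\vee$-coefficient is itself unjustified: the level constraint coming from $|\eta|$ controls the $\dd$-component, which is already automatic since $Q^\vee$ has no $\dd$-part, while the $\cc$-component of $\rho$ is not pinned down by the central-charge argument you sketch.) The paper instead proves \textbf{(M2)} via Lemma \ref{half}: using $I^-=U_\pi U^-_\O A_\O$ and the dominance structure of the Tits cone, one shows $U^-_\O xI/I$ is finite because the part of $U^-_\O$ not absorbed by $\pi^{\lv}$-conjugation is a \emph{finite-dimensional} unipotent group. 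That Lie-theoretic decomposition of the unipotent radical is what your approach is missing.

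Finally, a smaller point: you propose to verify \textbf{(H1)} and deduce \textbf{(H2)}, following the printed statement of Proposition \ref{hecke:mod:simple} literally. The paper's actual argument runs the other way — it proves parts (1), (3), (4) of Proposition \ref{prop:iw} (which are \textbf{(H2)}, \textbf{(M2)}, \textbf{(M1)} in the Definition \ref{hec-dat}/\ref{hec-mod} labels) and derives the fiber-finiteness (part (2), i.e. \textbf{(H1)}) from the module data, in parallel with the proof of Proposition \ref{si:hecke:mod:simple}. The implication ``\textbf{(H1)}+\textbf{(M1)}+\textbf{(M2)}$\Rightarrow$\textbf{(H2)}'' is not obviously true (a single $(L,R)$-double coset can contain many $(R,R)$-double cosets), so you would need to prove \textbf{(H2)} directly; the paper does this via Lemma \ref{half} together with the semigroup property of $G_+$.
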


We denote by $H(G_+, I)$ and $M(G, I):= M(A_{\O} U, G, I)$ the corresponding convolution algebra and module, noting that we have an action \be{iw:hm} \star: M(G, I) \times H(G_+, I) \rr M(G, I) \ee given by right-convolution as in (\ref{semiinf:act:2}) .  The remainder of \S \ref{is-conv} is devoted to the proof of Theorem \ref{iw:finite}.

\spoint The following result will be shown to imply Theorem \ref{iw:finite}.

\begin{nprop}  \label{prop:iw} We have the following,
\begin{enumerate}
\item Let $x, y \in \aw_X.$   Then there exists finitely many $z_i \in \aw_X, i=1, \ldots, n$ such that $$I x I y I \subset \bigcup_{i=1}^n I z_i I.$$

\item Let $x, y, z \in \aw_X.$ Then \be{iw:alg:fin} I \setminus I x^{-1} I y \cap I z I \ee is a finite set.

\item Let $x \in \aw_X$ and $y \in \aw.$ Then there exists finitely many $z_i \in \aw, i=1, \ldots, \ell$ such that \be{iw:mod:fin} A_{\O} U y I x I \subset \bigcup_{i=1}^{\ell} A_{\O} U z_i I \ee

\item Let $x \in \aw_X$ and $y, z \in \aw.$ Then \be{iw:mod:fin} I \setminus I y A_{\O} U z \cap I x I \ee is a finite set.
\end{enumerate}
\end{nprop}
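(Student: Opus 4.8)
The plan is to prove Proposition~\ref{prop:iw} by first observing that its four assertions are, respectively, exactly conditions \textbf{(H2)}, \textbf{(H1)}, \textbf{(M2)} and \textbf{(M1)} for the data $(I,\aw_X;A_{\O}U,\aw)$ — the products in \textbf{(H1)} and \textbf{(M1)} being rewritten as fibres of the relevant multiplication maps via the formulas of \S\ref{fiber:exp}. Granting the four assertions, Theorem~\ref{iw:finite} is then immediate: by Proposition~\ref{hecke:mod:simple} one needs only \textbf{(H1)}, \textbf{(M1)}, \textbf{(M2)}, i.e. parts (2),(3),(4), and in fact \textbf{(H1)} itself already follows from \textbf{(M1)} and \textbf{(M2)} by the triple-product argument used in the proof of Proposition~\ref{si:hecke:mod:simple}; part (1) is an extra statement, which one may alternatively recover afterwards from the associativity of $H(G_+,I)$ acting on $M(G,I)$ together with Proposition~\ref{domorder}. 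So the real content divides into the two finite-fibre statements (2),(4) and the two finitely-many-double-cosets statements (1),(3).

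For (2) and (4) the essential input is the finiteness of the convolution diagrams for $p$-adic loop groups proved in \cite{bk} (equivalently, the elementary argument of \cite{bgkp}): the fibres of $K\pi^{\lv}K\times_K K\pi^{\mv}K\to G_+$, and of its one-sided variant with $A_{\O}U$ in place of the left copy of $K$, are finite. To reduce (2) to this, write $x=\pi^{\lv}w$, $y=\pi^{\mv}v$ with $\lv,\mv\in X$, substitute $K=\bigsqcup_{w\in W}IwI$ from (\ref{bruhat:K}) and $I=U_{\O}U^-_{\pi}A_{\O}$ from (\ref{I+:im}), and exhibit the $I$-fibre as a finite union of pieces, each fibring over a $K$-fibre with fibre a product indexed by the root groups $U_a$ that the intervening conjugations move out of $I$; since $\lv\in X$, only finitely many $a\in R_{re,+}$ satisfy $\langle a,\lv\rangle<0$ by (\ref{tc:2}), and each such $U_a$ contributes a finite index $q^{\bullet}$, which is the mechanism making the count finite. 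For (4) one first rewrites, by sliding $A_{\O}$ and $\pi^{\nu^{\vee}}$ through $U$ (using that $\pi^{\nu^{\vee}}$ normalises $U$),
\[
I\,y\,A_{\O}U\,z \;=\; I\,\pi^{\mv+v\nu^{\vee}}\,(vUv^{-1})\,vu,
\]
so that the fibre is being cut out inside a single $I$-double coset $IxI$ with $x\in\aw_X$, and then argues as in (2). The step requiring care, as opposed to the classical situation, is precisely this descent from $K$ to $I$: $IxI$ is in general an infinite union of one-sided $I$-cosets, so the finiteness is not formal and genuinely uses the theorems of \cite{bk},\cite{bgkp}.

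For (1) and (3) I would first dispose of the Weyl-group parts: by Lemma~\ref{W:I}, $W I\pi^{\lv}I\subseteq IW\pi^{\lv}I$ and $I\pi^{\lv}IW\subseteq I\pi^{\lv}WI$, and iterating (\ref{lem:sg:7}) along a reduced word shows that only finitely many elements of $W$ actually occur. This reduces both statements to bounding the coweight part of the occurring $I$-double cosets. Now $|\eta|$ of (\ref{eta:Kpts}) is a homomorphism constant on $I$-double cosets, so every occurring double coset has level equal to the sum of the levels of the two factors; combined with the monotonicity theorems of \cite{bgkp} (already used in the proofs of Lemma~\ref{!:ixi} and Proposition~\ref{domorder}) this bounds the dominant Cartan parameter of any occurring double coset above by $\lv_+ + \mv_+$ at that fixed level. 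To exclude an infinite tail in the direction of the minimal imaginary coroot $\cc$ one tracks the $\cc$-component in $\Lv=\ZZ\cc\oplus\Lv_o\oplus\ZZ\dd$, and then only finitely many dominant coweights remain, by the following combinatorial fact, which I would establish first:
\begin{nclaim}
For $\lv\in\Lv_+$, the set $\{\,\nu^{\vee}\in\Lv_+ \mid \nu^{\vee}\le\lv\ \text{and}\ \langle\Lambda_{\ell+1},\nu^{\vee}\rangle=\langle\Lambda_{\ell+1},\lv\rangle\,\}$ is finite.
\end{nclaim}
\noindent Indeed, write $\lv-\nu^{\vee}=\sum_{i=1}^{\ell+1}k_i a_i^{\vee}\in Q^{\vee}_+$; equality of the $\Lambda_{\ell+1}$-coordinates forces $k_{\ell+1}=0$ by (\ref{aff:wts}), so $\lv-\nu^{\vee}\in Q^{\vee}_{o,+}$, and the finite-type dominance inequalities $\langle\alpha_i,\lv-\nu^{\vee}\rangle\le\langle\alpha_i,\lv\rangle$ for $i\le\ell$ bound $(k_i)_{i\le\ell}$ because the inverse of the finite Cartan matrix has non-negative entries. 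Combining the finitely many admissible Weyl-group parts with the finitely many admissible dominant coweights — and, for (3), using Proposition~\ref{domorder} to also bound the Weyl component of the occurring elements, and for (1) the disjointness Lemma~\ref{!:ixi} — then yields (1) and (3).

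I expect the main obstacle to be the joint control, in (1) and (3), of the Weyl-group component and the $\cc$-direction of the coweights of the occurring double cosets: since $\aw_X=W\ltimes X$ is only a semigroup and carries no length function, there is no Coxeter-type induction available, and one must combine Lemma~\ref{W:I}, the order $\preceq$ of Proposition~\ref{domorder}, the $|\eta|$-grading, and the combinatorial Claim above in the right order. The finiteness of the fibres in (2) and (4) is less a matter of new ideas than of carefully importing \cite{bk},\cite{bgkp} and checking, via the decomposition $I=U_{\O}U^-_{\pi}A_{\O}$ and rank-one computations of the type (\ref{sl2}), that the Tits-cone hypothesis keeps the relevant unipotent intersections finite.
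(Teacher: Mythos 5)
Your identification of parts (1)--(4) with conditions (H2), (H1), (M2), (M1), and the reduction to proving only parts (1), (3), (4) because (H1) follows by the triple-product argument of Proposition~\ref{si:hecke:mod:simple}, match the paper exactly. The genuine gap is in your plan for parts (1) and (3). You bound the dominant Cartan coordinate $\nu^{\vee}$ of an occurring double coset above by $\lv_+ + \mv_+$ using \cite{bgkp}, then invoke your Claim to kill the $\cc$-tail. But the Claim requires $\langle\Lambda_{\ell+1},\nu^{\vee}\rangle=\langle\Lambda_{\ell+1},\lv_+ +\mv_+\rangle$ --- a \emph{fixed} $\cc$-component --- and nothing you have written supplies it. The grading $|\eta|$ is the pairing with $\delta$, which vanishes on all of $Q^{\vee}$ and in particular on $\cc$; hence the $\delta$-level is automatically constant on every dominance-order interval and imposes no constraint at all, while the $\Lambda_{\ell+1}$-coordinate of $\nu^{\vee}$ is only bounded \emph{above} by dominance. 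Since $\cc\in Q^{\vee}_+$, the coweights $\lv_+ +\mv_+ - n\cc$ for $n\geq0$ are all dominant, all $\leq \lv_+ +\mv_+$, and all at the same $\delta$-level --- an infinite tail your bound cannot exclude. This is exactly why $K\pi^{\lv}K\star K\pi^{\mv}K$ is only \emph{semi-infinitely} supported and the spherical Hecke algebra needs a completion; the finite support of the $I$-convolution cannot be extracted from a one-sided Cartan bound, and the needed two-sided control (as in Lemma~\ref{Mfin}) is not available here.

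The paper's mechanism, which your outline lacks, is Lemma~\ref{half}: for $x,y\in\aw_X$ both $I\setminus IxU_{\O}$ and $U^-_{\pi}yI/I$ are finite, proved by writing the coweight part of $x$ as $\sigma^{-1}\lv$ with $\lv\in\Lv_+$ and $\sigma\in W$, decomposing $U_{\O}=U^{\sigma}_{\O}U_{\sigma,\O}$ with $U_{\sigma,\O}$ finite-dimensional, and using that $\pi$-conjugation contracts $U^{\sigma}_{\O}$ into itself. With this in hand, $IxIyI = IxU_{\O}U^-_{\pi}yI\subset\bigcup_{i,j}I(xu_iu^-_jy)I$ is a finite union simply because each $xu_iu^-_jy$ is a single element of the semigroup $G_+$; one never needs to bound Cartan coordinates. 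Part (3) follows from the same lemma after a reshuffling through $I^-$. For part (4) the paper's descent from the $K$-level finiteness of \cite{bgkp} to the $I$-level is not the finite root-group count you gesture at but a residue-field argument (Claim~\ref{part4:lem:claim}) bounding $I\setminus K_{x,y}$ for the subgroup $K_{x,y}\subset K$ stabilizing $IxU\cap IyU^-$; your sketch for (2) and (4) is not developed far enough to substitute for this. You will need to locate or reproduce the analogue of Lemma~\ref{half} before the plan for (1) and (3) can succeed.
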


\begin{proof}[Proof of Theorem \ref{iw:finite} from Proposition \ref{prop:iw}]  For $x, y \in \aw_X$ consider the multiplication map $m_{x, y}: T_x \times_I T_y \rr G_+.$ The image is clearly equal to $Ix I y I,$ and hence (H1) is equivalent to Part (1) of the above proposition. The fiber over $z \in \aw_X$ is equal to $I \setminus I y^{-1} I \cap I x I $ by (\ref{fiber:int}) and hence (H2) follows from part (2) of the above proposition. Similarly, one checks that Part (3) is equivalent to (M1) and Part (4) to condition (M2). \end{proof}

We are now reduced to showing Proposition \ref{prop:iw}, parts (1), (3), (4) , since as we have already seen from Proposition \ref{hecke:mod:simple} that part (2) will then follow automatically.

\tpoint{Proof of Proposition \ref{prop:iw}, part (1)} Recall that $I= U_{\O} U^-_{\pi} A_{\O}$ and so $$ I x I y I = I x U_{\O} U^-_{\pi} y I $$ since $\aw$ normalizes $A_{\O}.$ The following result is the group theoretic analogue of Proposition \ref{half-length}.

\begin{nlem} \label{half} For $x, y \in \aw_X$ the following spaces of cosets are finite, \be{half:1} I \setminus I x U_{\O} \text{ and } U^-_{\pi} y I / I. \ee \end{nlem}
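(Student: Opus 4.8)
The plan is to reduce both assertions to the statement that a suitable open subgroup has finite index, the point being that conjugation by $x$ (resp. $y$) carries all but finitely many root subgroups of $U_\O$ (resp. $U^-_\pi$) into $I$. I will give the argument for $I\setminus IxU_\O$ and then indicate the symmetric modifications for $U^-_\pi yI/I$.

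The first step is to write $x=\wt w\,\pi^{\lambda^\vee}$ with $w\in W$ and $\lambda^\vee\in X$, and to compute, for $a\in R_{re,+}$, using the torus relation in $\b G$ together with $\wt w\,U_{(a,m)}\,\wt w^{-1}=U_{(wa,m)}$, that $x\,U_{a,\O}\,x^{-1}=U_{(wa,\,\langle a,\lambda^\vee\rangle)}$. Since $I=U_\O U^-_\pi A_\O$ (Proposition \ref{im:i}) gives $I\cap U_b=U_{(b,0)}$ for $b\in R_{re,+}$ and $I\cap U_b=U_{(b,1)}$ for $b\in R_{re,-}$, it follows that $x\,U_{a,\O}\,x^{-1}\subseteq I$ unless $a$ lies in
\[
\Phi_x:=\{\,a\in R_{re,+}: x\,U_{a,\O}\,x^{-1}\not\subseteq I\,\}\ \subseteq\ \{a\in R_{re,+}:\langle a,\lambda^\vee\rangle<0\}\cup\{a\in R_{re,+}: wa\in R_{re,-}\}.
\]
The first set on the right is finite because $\lambda^\vee$ lies in the Tits cone, by the description (\ref{tc:2}); the second is the inversion set of $w^{-1}$, finite because $\ell(w)<\infty$. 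Thus $\Phi_x$ is finite, and a short case analysis on the sign of $\langle a,\lambda^\vee\rangle$ — using that a sum of two positive (resp. negative) real roots, when it is a real root, is again positive (resp. negative) — shows that $\Phi_x$ is bi-closed in $R_{re,+}$.

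Next I would invoke the structure theory of the minimal Kac--Moody group \cite{tits} (see also \cite{bgkp}): for the finite bi-closed set $\Phi_x$, with complement $\Psi:=R_{re,+}\setminus\Phi_x$, the subgroups $V:=\langle U_{a,\O}: a\in\Psi\rangle$ and $U_{\Phi_x,\O}:=\langle U_{a,\O}: a\in\Phi_x\rangle$ are directly spanned by their root subgroups, multiplication $\prod_{a\in\Phi_x}U_{a,\O}\to U_{\Phi_x,\O}$ is a bijection, and $U_\O=V\cdot U_{\Phi_x,\O}$. Since each generator of $V$ is conjugated into $I$ by $x$, we get $xVx^{-1}\subseteq I$, whence
\[
IxU_\O=Ix\,V\,U_{\Phi_x,\O}=I\,(xVx^{-1})\,x\,U_{\Phi_x,\O}=Ix\,U_{\Phi_x,\O}.
\]
Taking $N$ larger than $|\langle a,\lambda^\vee\rangle|$ for all $a\in\Phi_x$, the subgroup $\langle U_{(a,N)}: a\in\Phi_x\rangle$ has finite index in $U_{\Phi_x,\O}$ and is conjugated into $I$ by $x$; hence the map $u\mapsto Ixu$ factors through a finite quotient of $U_{\Phi_x,\O}$, and $|I\setminus IxU_\O|<\infty$.

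For $U^-_\pi yI/I$ I would argue symmetrically. Writing $y=\wt v\,\pi^{\mu^\vee}$ with $v\in W$ and $\mu^\vee\in X$, for all but finitely many $b\in R_{re,-}$ one has $y^{-1}U_{b,\pi}\,y\subseteq I$, the exceptions forming a finite bi-closed subset of $R_{re,-}$; the finiteness again follows from $\mu^\vee\in X$ via (\ref{tc:2}) (so $\langle a,\mu^\vee\rangle>0$ for only finitely many $a\in R_{re,-}$) together with finiteness of the inversion set of $v$. Running the decomposition argument of the previous paragraph with $U^-_\pi$ in place of $U_\O$ then gives $|U^-_\pi yI/I|<\infty$. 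The step I expect to be the main obstacle is the structural input $U_\O=V\cdot U_{\Phi_x,\O}$, i.e. isolating the finitely many ``bad'' positive real roots as a direct factor of $U_\O$; this rests on the theory of directly-spanned unipotent subgroups attached to bi-closed sets of real roots, and everything else is bookkeeping with the root-subgroup calculus and the decompositions of $I$ from Proposition \ref{im:i}.
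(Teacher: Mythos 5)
Your proof is correct but follows a genuinely different route from the paper's. You isolate the finite set $\Phi_x$ of positive real roots $a$ with $x\,U_{a,\O}\,x^{-1}\not\subseteq I$ (finiteness coming, as you say, from the Tits-cone description (\ref{tc:2}) together with finiteness of the inversion set of $w$), verify that $\Phi_x$ is biclosed, and then lean on the structural fact that a finite biclosed subset of $R_{re,+}$ is the inversion set of some $\tau\in W$ — that is exactly what delivers the decomposition $U_\O=V\cdot U_{\Phi_x,\O}$, after which the congruence-subgroup argument closes the case. The paper instead sidesteps the biclosed-implies-inversion-set input: it writes $\mv=\sigma^{-1}\lv$ with $\lv\in\Lv_+$ and $\sigma\in W$, so $U_\O=U_\O^\sigma U_{\sigma,\O}$ is the \emph{standard} decomposition attached to the inversion set of $\sigma$; it then absorbs $\pi^{\mv}U^\sigma_\O\pi^{-\mv}\subset U^\sigma_\O\subset U_\O$, uses that $I\setminus IwU_\O\subset I\setminus IwI$ is finite, and finishes with the finite-dimensionality of $U_{\sigma,\O}$. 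Your route is more intrinsic — $\Phi_x$ is precisely $\{a\in R_{re,+}: x\cdot(a,0)\in\ar^-\}$ in the language of Appendix B, which is why biclosedness holds — but it front-loads a structural theorem the paper does not need. Note also that the paper's $\sigma$-inversion set is \emph{not} equal to $\Phi_x$ in general (roots $a$ with $\sigma a>0$ but $wa<0$ and $\langle a,\mv\rangle=0$ remain ``bad''); the residual bad roots are absorbed by the $I\setminus IwU_\O$ step rather than by the initial decomposition. Both routes are sound; yours buys a cleaner identification of the obstructing root subgroups at the price of a heavier combinatorial prerequisite.
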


\begin{proof}

Let us write $x = w \pi^{\mv}$ where $w \in W$ and $\mv \in X.$ Furthermore, as $\mv \in X$ we may write it as $\mv = \sigma^{-1} \lv$ with $\lambda^{\vee} \in \Lambda^{\vee}_+$ and $\sigma \in W.$

Decompose $U_{\O}$ into a semi-direct product  $U_{\O}= U_{\O}^{\sigma}U_{\sigma, \O},$ where as before \be{u:decomp} U_{\sigma, \O} = \{ u \in U_{\O} \mid \sigma u \sigma^{-1} \in U^- \} & \text{ and }  & U_{\O}^{\sigma} = \{ u \in U \mid \sigma u \sigma^{-1} \in U \}, \ee and note that $U_{\sigma, \O}$ is finite-dimensional. So we have that $I w \pi^{\mv} U_{\O} \subset I w \pi^{\mv} U^{\sigma}_{\O} U_{\sigma, \O}.$ On the other hand, we know that $\pi^{\mv} U^{\sigma}_{\O} \pi^{- \mv} \subset U^{\sigma}_{\O}$ since \be{v:1} \pi^{\mv} U^{\sigma}_{\O} \pi^{- \mv} = \sigma^{-1} \pi^{\lv} \sigma U^{\sigma}_{\O} \sigma^{-1} \pi^{- \lv} \sigma \subset  U^{\sigma}_{\O} \ee since $\pi^{\lv}$ with $\lv \in \Lv_+$ normalizes $U_{\O}.$ Hence $I w \pi^{\mv} U_{\O} \subset I w U_{\O} \pi^{\mv} U_{\sigma, \O}.$ It is easy to see that $I \setminus I w U_{\O}$ is finite, and so it follows that that $I \setminus I w U_{\O} \pi^{\mv} U_{\sigma, \O}$ is finite, by the finite-dimensionality of $U_{\sigma}.$

A similar argument shows that $ U^-_{\pi} y I / I$ is finite. \end{proof}

From the Lemma \ref{half}, there exist $u_1, \ldots, u_n \in U_{\O}$ and $u^-_1, \ldots, u^-_l \in U^-_{\pi}$ such that \be{iwprop:1.1} I x I y I \subset \bigcup_{i, j} I x u_i u^-_j y I. \ee Part (1) of Proposition \ref{prop:iw} then follows from the semi-group property of $G_+.$

\tpoint{Proof of Proposition \ref{prop:iw}, part (3)}  Given $x \in \aw_X$ and $y = w \pi^{\lv} \in \aw$ with $w \in W$ and $\lv \in \Lv,$ we have that \be{part3:1} A_\O U y I x I &=& A_\O U w \pi^{\lv} I x I = A_\O U \pi^{w \lv} w U_{\O} U^-_\pi x I \\
&=& A_{\O} U \pi^{w \lv} w U_{w, \O} U^-_{\pi} x I \\
&\subset& A_{\O} U \pi^{w \lv} I^- w x I \ee where $U_{w, \O}$ is as in (\ref{v:1}). Thus, we are reduced to showing that for any $\mv \in \Lambda^{\vee}$ and $x \in \aw_X$ the set $A_{\O} U \pi^{\mv} I^- x I $ is contained in a finite union of $(A_{\O} U, I)$-double cosets. To show this, note that since $I^- = U_{\pi} U^-_{\O} A_{\O}$ we have  \be{part3:2} A_{\O} U \pi^{\mv} I^- x I = A_{\O} U \pi^{\mv} U_{\pi} U^-_{\O} A_{\O} x I = A_{\O} U \pi^{\mv}  U^-_{\O} x I. \ee Arguing as in Lemma \ref{half} we see that $ U^-_{\O} x I/ I $ is finite, and part (3) of the Proposition follows.

\tpoint{Proof of Proposition \ref{prop:iw}, part (4)} first, we claim that we may choose $z \in \aw$ to lie in $W \subset \aw.$ Indeed, suppose that $z= w \pi^{\lv} = \pi^{w \lv} w$ with $w \in W$ and $\lv \in \Lv.$ Then \be{part4:1} I x I \cap I y A_{\O} U z = I x I \cap I y A_{\O} U \pi^{w \lv} w =  I x I \cap I y \pi^{w \lv} A_{\O} U w .\ee Replacing $y$ by $y \pi^{w \lv}$ we may assume that $z \in W.$

Next, we claim that in fact we may choose $z=1.$ Indeed, for $w \in W,$ we have that $I x I \cap I y A_{\O} U w $ consists of finitely many left $I$-cosets if and only if $I x I w^{-1} \cap I y A_{\O} U$ consists of finitely many left $I$-cosets. Since $ I w^{-1} I /I$ is finite,  an application of the semi-group property of $G_+$ shows that $I x I w^{-1} I $ is contained in finitely many $I$-double cosets. Thus, we are reduced to showing that for $x \in \aw_X$ and $y \in \aw$ that \be{part4:2} I \setminus I x I \cap I y A_{\O} U \ee is finite. Note that $I x I = I x U_{\O} U^-_{\pi},$ and by Lemma \ref{half} there exists finitely many $x_i$ for $i=1, \ldots, n$ such that $I x I \subset \bigcup_{i=1}^n I x_i U^-_{\pi}.$ Use the Iwasawa decomposition to write $I x_i$ as $I x_i' U^-$ with $x_i' \in \aw$ for each $i=1, \ldots, n.$ Noting now that $I x_i U^-_\pi \subset I x_i' u^-$ we are reduced to showing the following,

\begin{nlem} \label{part4:lem} Let $x, y \in \aw.$ Then the set $I \setminus I x U \cap I y U^-$ is finite. \end{nlem}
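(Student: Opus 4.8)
The plan is to reduce the statement to a finiteness fact we have already established, namely that the relevant Bruhat-type intersections inside $K$ are finite (equivalently, the disjointness and double-coset structure recorded in Proposition \ref{im:i}), together with the fact that conjugation by a fixed element of $\aw$ only affects finitely many real root subgroups at the integral level. First I would write $x=\pi^{\lv}w$ and $y=\pi^{\mv}v$ with $w,v\in W$ and $\lv,\mv\in\Lv$, and observe that $I x U = I\pi^{\lv}w U = I\pi^{\lv}U w = \pi^{\lv}(\pi^{-\lv}I\pi^{\lv})U w$, and similarly for $I y U^-$. The element $\pi^{\lv}$ normalizes $U$ and $U^-$, so the only obstruction to "moving $\pi^{\lv}$ out of the way" is that $\pi^{-\lv}I\pi^{\lv}$ is not $I$. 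However, $\pi^{-\lv}I\pi^{\lv}$ differs from $I$ only in finitely many of the coordinate subgroups $U_{a}[m]$ (those real roots $a$ with $\la a,\lv\ra\neq 0$ and $m$ in a bounded range determined by $\lv$), exactly as in the computations of Lemma \ref{half} and Lemma \ref{W:I}. So up to enlarging by a finite set of representatives, the problem becomes: show $I\setminus I x' U\cap I y' U^-$ is finite for $x',y'\in W$.

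Next I would exploit the Iwasawa decomposition (Theorem \ref{iw:unique}) to pin down the $\pi^{\lv}$-component. If $g\in IxU\cap IyU^-$, then $g\in KxU$ and $g\in KyU^-$, and by the uniqueness clauses in Theorem \ref{iw:unique} (using that $x,y$ now lie in $W$, so their $\pi^{(\cdot)}$-components are trivial) we get that the Iwasawa coweight of $g$ is $0$ from both decompositions; hence $g\in KxU\cap KyU^-$ forces no new constraint but tells us we may write $g=k x u = k' y u^-$ with $k,k'\in K$. Rearranging, $x u (u^-)^{-1} y^{-1}\in K$, and since $u(u^-)^{-1}\in U U^-\subset G$, the standard fact (used repeatedly above, cf. the proof of Lemma \ref{!:ixi} via \cite[Lemma 5.2.1]{bgkp}) that $U^-\cap KU = U^-_{\O}$ and $U\cap KU^- = U_{\O}$ lets me conclude that in fact $u\in U_{\O}$ and $u^-\in U^-_{\O}$ after adjusting $k,k'$. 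Therefore the intersection $IxU\cap IyU^-$ is contained in $\bigcup (I x U_{\O})\cap(I y U^-_{\O})$; but $I\setminus IxU_{\O}$ is finite (this is exactly the kind of statement proved in Lemma \ref{half}, since $w\in W$ and $U_{\O}$ contributes only finitely many cosets modulo $I$), so a fortiori $I\setminus(IxU\cap IyU^-)$ is finite.

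Assembling: the finitely many representatives coming from $\pi^{-\lv}I\pi^{\lv}\neq I$, composed with the finitely many $I$-cosets in $IxU_{\O}$, give a finite bound on $I\setminus IxU\cap IyU^-$, which is the claim. The step I expect to be the main obstacle is the clean reduction in the first paragraph — keeping track of which coordinate subgroups $U_a[m]$ are disturbed by conjugation by $\pi^{\lv}$ and $w$, and checking that only finitely many $I$-coset representatives are needed, rather than, say, an unbounded family parametrized by the infinitely many positive real roots. This is morally identical to the bookkeeping in Lemma \ref{half} and in \cite[\S 2]{im}, so I would cite those computations rather than redo them, but it is the place where one must be careful that the Kac-Moody setting (infinitely many roots) does not secretly introduce infinitude.
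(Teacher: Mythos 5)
Your proposal has a genuine gap, and it is precisely at the step you yourself singled out as the likely obstacle. The reduction in your first paragraph is not valid. You claim that $\pi^{-\lv}I\pi^{\lv}$ differs from $I$ ``only in finitely many of the coordinate subgroups $U_a[m]$'', and that therefore after enlarging by finitely many representatives one may replace $x=\pi^{\lv}w$ by $w\in W$. In the Kac--Moody setting this is false: $\pi^{-\lv}U_{a,\O}\pi^{\lv}=U_{(a,\,-\la a,\lv\ra)}$, and for any $\lv$ of nonzero level (the typical case in $\aw_X$) one has $\la a,\lv\ra\ne 0$ for \emph{infinitely} many real roots $a$ --- for instance $a=\alpha+m\delta$ with $\alpha\in R_{o,+}$ fixed and $m\to\infty$ gives $\la a,\lv\ra=\la\alpha,\lv\ra+m\la\delta,\lv\ra\to\infty$. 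Consequently $\pi^{-\lv}I\pi^{\lv}$ and $I$ are not commensurable, and there is no ``finite set of representatives'' absorbing the difference. Lemma~\ref{half} does not assert any such commensurability; it bounds $I\setminus IxU_\O$ (with $U_\O\subset K$, a very different animal from $U$), and its proof leans on dominance plus the finite-dimensionality of $U_{\sigma,\O}$, neither of which transports to the assertion you need. There is also a smaller algebraic slip, $wU\ne Uw$ for general $w\in W$, but the fatal issue is the infinitude above.

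Once the reduction fails, the rest of your argument only establishes the (much easier) special case $x,y\in W$: there $IxU\cap IyU^-\subset K$, and the $U^-\cap KU=U^-_\O$ trick from \cite{bgkp} does pin $u\in U_\O$, $u^-\in U^-_\O$. But for general $x,y\in\aw$ the element $x^{-1}k^{-1}k'y$ need not lie in $K$, so you cannot invoke those facts directly. The paper's actual proof circumvents all of this by a different device: it introduces the stabilizer set $K_{x,y}=\{k\in K : k(IxU\cap IyU^-)\subset IxU\cap IyU^-\}$, uses the finiteness of $K\setminus K(IxU\cap IyU^-)$ from \cite[Theorem 1.9(1)]{bgkp} to reduce to showing $I\setminus K_{x,y}$ is finite, and then establishes that $K_{x,y}$ meets only finitely many $I$-double cosets $Iw'I$ by passing to the residue field (Claim~\ref{part4:lem:claim}), where the Bruhat decomposition of $\bfG(\kk)$ furnishes the needed finiteness. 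That residue-field argument is the mechanism that supplies $I$-level finiteness inside each $K$-coset, and nothing playing that role appears in your proposal.
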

\begin{proof}[Proof of Lemma] For fixed $x, y \in \aw,$ consider the set \be{K:x:y} K_{x,y} = \{ k \in K \mid k (I x U \cap I y U^-) \subset ( I x U \cap I y U^-) \}. \ee Note that the set $K_{x, y}$ is invariant by left multiplication by $I.$ Moreover, if we can show that $I \setminus K_{x, y} $ has finitely many elements, the Lemma will follow since we know from \cite[Theorem 1.9(1)]{bgkp} that $K \setminus K (I x U \cap I y U^-) $ is finite.

To show that $I \setminus K_{x, y}$ is finite, it suffices to show that $K_{x,y}$ is contained in finitely many $I$-double cosets. This proof of the lemma is then concluded using the following result. \end{proof}

\begin{nclaim} \label{part4:lem:claim} Let $x \in \aw.$ There are only finitely many $w' \in W$ such that \be{part4:lem:claim:1} I w' I  \cdot I x U \cap I x U \neq \emptyset. \ee   \end{nclaim}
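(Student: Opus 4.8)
The plan is to recast the condition as a statement about the bounded group $K$ and then to identify the relevant subset of $K$ explicitly. For the first step I would check that, for $w' \in W$, one has $I w' I \cdot I x U \cap I x U \neq \emptyset$ if and only if $I w' I \cap x U x^{-1} \neq \emptyset$. Indeed, if $b = x v x^{-1} \in I w' I$ with $v \in U$, then $b x = x v \in I x U$ while also $b x \in I w' I \cdot I x U$; conversely an equality $(i_1 w' i_2)(i_3 x u_1) = i_4\, x u_2$ with $i_j \in I$ and $u_k \in U$ rearranges to $i_4^{-1} i_1 w' (i_2 i_3) = x (u_2 u_1^{-1}) x^{-1}$, whose left side lies in $I w' I$ and whose right side lies in $x U x^{-1}$. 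So it is enough to bound the set of $w' \in W$ with $I w' I \cap x U x^{-1} \neq \emptyset$.

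Second, I would observe that $x U x^{-1}$ depends only on the Weyl component of $x$. Writing $x = \pi^{\lv} w_0$ with $w_0 \in W$ and $\lv \in \Lv$ (and, by the usual abuse, $w_0$ also for its chosen lift in $N$), the element $\pi^{\lv}$ normalizes every real root subgroup $\mathbf{U}_a(\K)$, so $x U x^{-1} = w_0 U w_0^{-1}$, the subgroup generated by the $\mathbf{U}_b(\K)$ for $b \in w_0(R_{re,+})$. Since $I w' I \subseteq K$, intersecting with $K$ costs nothing, and using $U \cap K = U_{\O}$ (the positive-side analogue of \cite[Lemma 5.2.1]{bgkp}) together with $w_0 \in K$ we get $x U x^{-1} \cap K = w_0 U_{\O} w_0^{-1}$. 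The claim is thereby reduced to the assertion that $w_0 U_{\O} w_0^{-1}$ meets only finitely many $I$-double cosets of $K$.

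To establish this, split $w_0(R_{re,+}) = S_+ \sqcup S_-$ with $S_+ = w_0(R_{re,+}) \cap R_{re,+}$ and $S_- = w_0(R_{re,+}) \cap R_{re,-}$; both are closed subsets of the nilpotent set $w_0(R_{re,+})$, and $S_-$ is finite of cardinality $\ell(w_0)$. Tits' factorization of the group attached to a nilpotent set of roots then yields $w_0 U_{\O} w_0^{-1} = \mathbf{U}_{S_+}(\O) \cdot \mathbf{U}_{S_-}(\O)$, where $\mathbf{U}_{S_+}(\O) \subseteq U_{\O} \subseteq I$ and $N_0 := \mathbf{U}_{S_-}(\O)$ is a bounded finite-dimensional unipotent group. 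Hence $w_0 U_{\O} w_0^{-1} \subseteq I \cdot N_0$, and it remains to see that $N_0$ meets only finitely many $I$-double cosets. Writing $N_0$ as a product of the finitely many root groups $U_{b, \O}$, $b \in S_-$, a rank-one computation of the type (\ref{sl2}) gives $U_{b, \O} \subseteq I \cup I w_b I$ for every negative real root $b$ (namely $x_b(c) \in I$ when $c \in \pi \O$ and $x_b(c) \in I w_b I$ when $c \in \O^{\times}$). Thus $N_0$ is contained in a product of at most $\ell(w_0)$ double cosets of the form $I w_b I$, hence, by the usual multiplication rules for Iwahori double cosets in the Coxeter group $W$ (such as $I w I \cdot I s I \subseteq I w s I \cup I w I$; cf. \cite{im} and Proposition \ref{im:i}), in a finite union of $I$-double cosets. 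This proves the claim.

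The step I expect to be the main obstacle is the bookkeeping in the previous paragraph: one must verify that $S_+$ and $S_-$ are closed (so that the Tits decomposition applies and holds over $\O$), that $\mathbf{U}_{S_+}(\O)$ is absorbed into $U_{\O} \subseteq I$, and above all that $N_0 = \mathbf{U}_{S_-}(\O)$ really is a bounded finite-dimensional group — this finiteness of "the negative directions created by $w_0$" being exactly what forces only finitely many $w'$ to occur. The remaining ingredients are either formal or already available: the identity $U \cap K = U_{\O}$, the rank-one $\mathrm{SL}_2$ identity (\ref{sl2}), and the elementary combinatorics of $I$-double cosets in $K$ coming from Proposition \ref{im:i}.
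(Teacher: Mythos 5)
Your proof is correct, and it follows a genuinely different route from the paper's. Both proofs start by reducing the double‑coset condition to a statement about $w_0 U w_0^{-1}$, where $w_0$ is the Weyl component of $x$, and both exploit the same finiteness: only $\ell(w_0)$ positive real roots are sent negative by $w_0$. From that point the paper passes to the residue field via $\kappa: K \rr \bfG(\kk)$ and observes that the \emph{finite} set $\bfU_{-,w}(\kk)$ must meet $\bfB(\kk) w' \bfB(\kk)$, so disjointness of the Bruhat cells in $\bfG(\kk)$ immediately bounds the number of admissible $w'$. You instead stay over $\O$: you split $w_0 U_{\O} w_0^{-1}$ into a positive integral piece absorbed into $I$ and a bounded finite-dimensional negative piece $N_0$, show each root factor of $N_0$ lies in $I \cup I w_b I$ via the $\mathrm{SL}_2$-identity (\ref{sl2}), and conclude using the Hecke-algebra multiplication rules for $H_W = H(K,I)$ from Proposition \ref{hec_w}. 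Your approach needs slightly more machinery (the Hecke algebra of $K$), while the paper's is shorter because passing to $\kk$ turns the relevant unipotent group into a literal finite set; but both hinge on the same combinatorial input.

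One correction to the citation in your Step 3: the decomposition $w_0 U_{\O} w_0^{-1} = \mathbf{U}_{S_+}(\O)\cdot \mathbf{U}_{S_-}(\O)$ is \emph{not} an instance of Tits' factorization for nilpotent sets. The set $w_0(R_{re,+})$ is infinite and not pre-nilpotent, since no element of $W$ can send all of it into $R_{re,-}$ (there is no longest element in an affine Weyl group). The decomposition you want is instead the conjugate by $w_0$ of the standard decomposition $U_{\O} = U^{w_0}_{\O} U_{w_0,\O}$ used in the paper itself (see (\ref{u:decomp})), whose two factors become your $\mathbf{U}_{S_+}(\O)$ and $\mathbf{U}_{S_-}(\O)$ after conjugation; this is also where the finiteness of $S_-$, and hence of $N_0$, is packaged. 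The gap is cosmetic — you should simply cite (\ref{u:decomp}) rather than the nilpotent Tits construction — but as stated the reference does not justify the step. The remaining worries you flag (that $\mathbf{U}_{S_+}(\O) \subset U_{\O} \subset I$ and that $N_0$ is bounded finite-dimensional) are indeed just bookkeeping and pose no problem once the decomposition is sourced correctly.
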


\begin{proof}[Proof of Claim] For $x =w \pi^{\lv}$ with $w \in W$ and $\lv \in \Lv.$ Then $I w' I \cdot I x U  = I w' I w \pi^{\lv} U$ so that if (\ref{part4:lem:claim:1}) holds, we must have $I w' I w U \cap I w U \neq \emptyset.$ Hence, we have $I w' I w \cap I w U \neq \emptyset.$ As $I w' I w \in K$ we may assume the intersection takes place in $K$, i.e.,  $I w' I w I \cap I w I \neq \emptyset.$ Over the residue field this implies \be{int:res} \bfB(\kk) w' \bfB(\kk) w \bfB(\kk) \cap \bfB(\kk) w \bfB(\kk) \neq \emptyset. \ee Let $\bfU_{w}(\kk):= \bfU(\kk) \cap w^{-1} \bfU^-(\kk) w$ and $\bfU_{-, w}(\kk) = w \bfU_w(\kk)w^{-1}.$ Since $\bfB(\kk) w \bfB(\kk) = \bfB(\kk) w \bfU_w(\kk)$ we conclude from (\ref{int:res}) that \be{int:res-2} \bfU_{-, w}(\kk) \cap \bfB(\kk) w' \bfB(\kk) \neq \emptyset. \ee For a fixed $w \in W$, there can be at most finitely many $w'$  which satisfy this condition. Indeed, the left hand side of (\ref{int:res-2}) is a finite set and the double cosets $\bfB(\kk) w' \bfB(\kk)$ are disjoint for varying $w' \in W.$ \end{proof}

\spoint Finally we record here an important property of the action of $H(G_+, I)$ on $M(G, I),$ which follows from the results of \S \ref{sec-affworder}.

\begin{nprop} \label{faith}
Let $h_1, h_2 \in H(G_+, I).$ If $\ve_1\star h_1 = \ve_1\star h_2$ then we have $h_1 = h_2.$
\end{nprop}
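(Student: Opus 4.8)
The plan is to show that the module $M(G,I)$ is "free enough" over the base that distinct basis elements $T_x$ ($x \in \mc{W}_X$) act on the vacuum vector $\ve_1$ with distinct "leading terms," so that the map $h \mapsto \ve_1 \star h$ is injective on $H(G_+,I)$. The key tool is the partial order $\preceq$ on $\mc{W}$ from Definition \ref{aw:domord}, together with Proposition \ref{domorder}, which says that if $I x I \cap U y I \neq \emptyset$ for $x \in \mc{W}_X$ and $y \in \mc{W}$ then $x \preceq y$. Concretely, I would compute $\ve_1 \star T_x = \ve_x \star_a T_x = \sum_{y \in \mc{W}} |a_{1,x}^{-1}(y)|\, \ve_y$, where $a_{1,x}: \ve_1 \times_I T_x \to G$ is the multiplication map; here $\ve_1 = A_\O U\, I$ is the double coset of the identity, so the image of $a_{1,x}$ is $A_\O U\, I x I = A_\O U x I$ (using that $A_\O U$ absorbs $I$-translates up to relabeling, via Lemma \ref{W:AO}). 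The point is to identify the $\preceq$-maximal $y$ appearing.

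First I would establish the crucial lemma: for $x \in \mc{W}_X$, the coefficient $|a_{1,x}^{-1}(x)|$ is nonzero (indeed equal to $1$), and every other $y$ with $|a_{1,x}^{-1}(y)| \neq 0$ satisfies $y \prec x$ strictly. The bound $y \preceq x$ is exactly Proposition \ref{domorder} applied with the roles arranged so that $U y I \cap I x I \neq \emptyset$ forces $x \preceq y$ — so I need to be careful about the direction; the correct reading is that $\ve_y$ occurs in $\ve_1 \star T_x$ iff $A_\O U x I \cap A_\O U y I \neq \emptyset$, equivalently (stripping $A_\O$, which is harmless) $U x I \cap U y I \neq \emptyset$; writing $x \in I x I$ this gives $U y I \cap I x I \neq \emptyset$, and Proposition \ref{domorder} with the hypothesis "$I x I \cap U y I \neq \emptyset$" yields $x \preceq y$. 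Wait — that gives $x \preceq y$, not $y \preceq x$, so the correct statement is that $x$ is the $\preceq$-\emph{minimal} element occurring, with coefficient $1$ (coming from $x \in U x I$ itself, as $1 \in U$). For the genericity/invertibility of the leading coefficient: since we are over the field $F = \C(v)$ (or after passing to $H(G_+,I)$ which is defined integrally, over $\C$), the coefficient $1$ is a unit, which is all we need.

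With this triangularity in hand, the Proposition follows by a standard argument. Suppose $h = \sum_{x \in S} c_x T_x$ with $S \subset \mc{W}_X$ finite, $c_x \neq 0$, and $\ve_1 \star h = 0$. Choose $x_0 \in S$ that is $\preceq$-minimal among $S$ (exists since $\preceq$ is a partial order and $S$ is finite, and in fact one checks $\preceq$ is well-founded downward on $\mc{W}_X$ because the dominance order on $\Lv$ has this property on the Tits cone — or simply: any finite set has a minimal element). Then the coefficient of $\ve_{x_0}$ in $\ve_1 \star h = \sum_x c_x (\ve_1 \star T_x)$ receives a contribution $c_{x_0} \cdot 1$ from $T_{x_0}$, and from any other $T_x$ ($x \in S$, $x \neq x_0$) a contribution only if $x_0$ occurs in $\ve_1 \star T_x$, which by the lemma forces $x \preceq x_0$; but $x_0$ was minimal and $x \neq x_0$, contradiction unless that contribution vanishes. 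Hence the $\ve_{x_0}$-coefficient of $\ve_1 \star h$ is $c_{x_0} \neq 0$, contradicting $\ve_1 \star h = 0$. Applying this to $h = h_1 - h_2$ gives the claim.

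The main obstacle I anticipate is the bookkeeping in the triangularity lemma: correctly tracking how $A_\O U$ and the $I$ on the right interact (via Lemma \ref{W:AO}, the $(A_\O U, I)$-double cosets are indexed by all of $\mc{W}$, not $\mc{W}_X$, so $\ve_y$ makes sense for $y$ outside the Tits cone, and one must confirm that Proposition \ref{domorder} — stated precisely for $x \in \mc{W}_X$, $y \in \mc{W}$ — is exactly the input needed), and verifying that the leading coefficient $|a_{1,x}^{-1}(x)|$ is genuinely $1$ rather than merely nonzero. The latter amounts to showing $U x I \cap U x' I = \emptyset$ for the "same-$\lv$, same-$w$" comparison forcing $x' = x$, i.e. a faithfulness statement at the level of $(A_\O U, I)$-cosets, which is Lemma \ref{W:AO} itself (the map $\mc{W} \to A_\O U \backslash G / I$ is a bijection), so the fiber over the coset $\ve_x$ of the map $\ve_1 \times_I T_x \to A_\O U \backslash G / I$ contains exactly the class of $(A_\O U, x)$; one then checks no other class in $\ve_1 \times_I T_x$ maps there, using disjointness of the $I$-double cosets in $G_+$ (Lemma \ref{!:ixi}). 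Everything else is formal.
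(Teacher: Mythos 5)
Your proof is correct and takes essentially the same approach as the paper: compute $\ve_1 \star T_x$, invoke Proposition~\ref{domorder} to show $\ve_1 \star T_x = c_x \ve_x + \sum_{x \prec z} (\cdots)\ve_z$ with $c_x \neq 0$, and then run the standard triangularity argument using a $\preceq$-minimal element of the finite support. (Your in-paragraph course correction about the direction of $\preceq$ lands in the right place, and your extra claim that the leading coefficient is exactly $1$ is unnecessary --- the paper only uses $c_x \neq 0$ --- and somewhat delicate to verify, so you were right to note that nonvanishing suffices.)
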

\begin{proof} Let $\ve_1 \in M(G, I)$ be the coset corresponding to $A_{\O}U  I $ and $T_y$ is the characteristic function of the double coset $I y I,$ with $y \in \aw_X.$ Then from the definition of convolution we have $\ve_1 \star T_y := \sum_{z \in \aw} | m_{1, y}^{-1}(z) | \ve_z$ where $m_{1, y}: A_\O U I \times_I I y I \rr G$ is the multiplication map. If $m_{1, y}^{-1}(z) \neq \emptyset,$ then by definition we have $A_\O U I y I \cap A_{\O} U z I \neq \emptyset,$ which implies that $U z I \cap I y I \neq \emptyset.$ By Proposition \ref{domorder} this implies that $y \preceq z.$ Moreover, it is easy to see that $m_{1, y}^{-1}(y) \neq \emptyset$ so that \be{triag:1} \ve_1 \star T_y = c_y \ve_y + \sum_{y \preceq z} c_z \ve_z, \, c_y \neq 0. \ee  As $\preceq$ is a partial order, a simple triangularity arguments yields the proposition, since every $h \in H(G_+, I)$ is a (finite) linear combination of $T_y$ with $y \in \aw_X.$  \end{proof}
%
%:= \sum_{y \in \aw_X} c_y T_y \in H(G_+, I)$ where $T_y$ is the characteristic function of the double coset $I y I.$ Then we have \be{faith:1} \ve_1 \star T_y =
%\sum_{z \geq_B y} d_z \ve_z \ee

\subsection{Convolution Iwahori-Hecke algebras as DAHAs}

We would now like to state our main result identifying $H(G_+, I)$ with $\dhec_+.$

\spoint We have constructed the convolution algebra of $H(G_+, I)$ as well as the module of $(A_{\O}U, I)$ invariant functions $M(G, I)$ over this algebra in \S \ref{is-conv}. Consider the subspace $H_W \subset H(G_+, I)$ spanned by the double cosets $T_w:= I w  I$ with $w \in W \subset \aw.$ In the same manner as in \cite{im}, we can show

\begin{nprop} \label{hec_w} The subspace $H_W \subset H(G_+, I)$ is a subalgebra. Moreover, it is spanned by the elements $T_w$, $w \in W$ together with the following relations, \be{hec_w:relns} T_{w_1} T_{w_2} &=& T_{w_1 w_2} \text{ if } \ell(w_1 w_2) = \ell(w_1) + \ell(w_2) \\ (T_a +1)(T_a - q) &=& 0 \text{ for } a \in \Pi .\ee \end{nprop}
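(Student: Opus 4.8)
The plan is to reduce the whole statement to the classical structure theory of the Iwahori--Hecke algebra of a Tits system, applied over the residue field $\kk$; this is the ``same manner as in \cite{im}'' alluded to in the statement. \emph{First} I would check that $H_W$ is a subalgebra. By Proposition~\ref{im:i}(2) we have $K=\sqcup_{w\in W}IwI$, and $K$ is a group, so for $x,y\in W$ the set $IxIyI$ lies in $K$ and is $I$-bi-invariant; hence it is a union of double cosets $IzI$ with $z\in W$, and by Theorem~\ref{iw:finite} this union is finite. Thus $T_x\star T_y$ is a finite $\C$-linear combination of the $T_z$ with $z\in W$, so $H_W\star H_W\subseteq H_W$ and $H_W$ is an associative unital subalgebra of $H(G_+,I)$ with $\C$-basis $\{T_w\}_{w\in W}$.

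\emph{Next}, I would pass to the residue field. Recall from (\ref{iwa:+}) that $I=\kappa^{-1}(\bfB(\kk))$, where $\kappa\colon K\to\bfG(\kk)$ is reduction modulo $\pi$. Write $\oG:=\bfG(\kk)$, $\oB:=\bfB(\kk)$, and $\oN$ for the image over $\kk$ of the group $N$ of \S\ref{sec-bruhat}; for $w\in W$ let $\oB w\oB:=\kappa(IwI)$ be the corresponding double coset of $\oG$. Since $\ker\kappa\subseteq I$ we have $IwI=\kappa^{-1}(\oB w\oB)$, so $\kappa$ induces a bijection $I\backslash IwI\stackrel{\cong}{\rr}\oB\backslash\oB w\oB$, and similarly $h^{-1}g\in IyI\iff\kappa(h)^{-1}\kappa(g)\in\oB y\oB$. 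It follows that the convolution coefficient $c^{z}_{x,y}$ of $H(G_+,I)$ (the number of cosets $hI\subseteq IxI$ with $h^{-1}g\in IyI$, for any fixed $g\in IzI$) equals the corresponding structure constant of the convolution algebra $H(\oG,\oB)$ of finitely supported $\oB$-bi-invariant functions on $\oG$. In other words $T_w\mapsto\mathbf{1}_{\oB w\oB}$ is an algebra isomorphism $H_W\stackrel{\cong}{\rr}H(\oG,\oB)$ (associativity of the source being part of Theorem~\ref{iw:finite}).

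\emph{Finally}, I would invoke the structure of $H(\oG,\oB)$. The pair $(\oB,\oN)$ is a Tits system in $\oG$ with Weyl group $W$: the Bruhat decomposition $\oG=\sqcup_{w\in W}\oB w\oB$ is Proposition~\ref{bruhat} applied with $k=\kk$, and the remaining axioms hold by \cite{tits}. For a simple reflection $w_a$, $a\in\Pi$, one has $\oB w_a\oB=\oB w_a\,\bfU_a(\kk)$ with $\bfU_a(\kk)\cong\kk$, so $|\oB w_a\oB/\oB|=q$ and all parameters of the system equal $q$. The classical computation of the Hecke algebra of a Tits system (exactly as in \cite[\S3]{im}) then gives, for all $w\in W$,
\[
T_{w_a}\star T_w=\begin{cases} T_{w_aw}, & \ell(w_aw)=\ell(w)+1,\\[3pt] q\,T_{w_aw}+(q-1)\,T_w, & \ell(w_aw)=\ell(w)-1.\end{cases}
\]
Taking $w=w_a$ in the second case gives $T_{w_a}\star T_{w_a}=qT_1+(q-1)T_{w_a}$, i.e.\ $(T_a+1)(T_a-q)=0$ with $T_a:=T_{w_a}$; the first case together with associativity yields $T_{w_1}\star T_{w_2}=T_{w_1w_2}$ whenever $\ell(w_1w_2)=\ell(w_1)+\ell(w_2)$, by induction on $\ell(w_1)$ (write $w_1=w_aw_1'$ with $\ell(w_1)=1+\ell(w_1')$, so $\ell(w_1'w_2)=\ell(w_1w_2)-1$, and apply the first case twice). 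That these relations \emph{present} $H_W$ is inherited from $H(\oG,\oB)$, for which it is classical.

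The one genuinely substantive point is the rank-one input in the last step, namely the two displayed identities for $T_{w_a}\star T_w$. If one prefers not to quote the Tits-system formalism over $\kk$, these may be proved directly inside $G$ by a rank-one ($SL_2$-type) computation using the identity (\ref{sl2}) and the combinatorial equivalence $\ell(w_aw)>\ell(w)\Leftrightarrow w^{-1}a\in R_{re,+}$, just as in the finite-dimensional case. Everything else is routine bookkeeping.
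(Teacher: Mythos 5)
The paper offers no proof beyond the citation to \cite{im}, and your proposal is a correct and careful expansion of exactly that reference: reducing via $\kappa$ to the Hecke algebra of the BN-pair of $\bfG(\kk)$ (surjectivity of $\kappa\colon K\to\bfG(\kk)$ holding because $\bfG$ is generated by the root subgroups $\b{U}_a$ and $\b{A}$, each of which reduces surjectively), where the classical Iwahori--Matsumoto structure theory applies. Your closing remark that the two rank-one identities can also be obtained directly inside $G$ via (\ref{sl2}) is in fact the more literal reading of ``in the same manner as \cite{im}''; either route is sound, and both were presumably intended by the authors.
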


\spoint The module $M(G, I)$ has a basis $\ve_x$ with $x \in \aw.$ In this notation $\ve_1$ is the characteristic function of $A_{\O} U  I.$ We then have the following simple properties,

\begin{nprop} \label{simple-mod} Let $w \in W$ and $\lv \in \Lv_+.$ Then we have \begin{enumerate}
\item $\ve_1 \star T_w = \ve_w $ for $ w \in W$
\item $\ve_1 \star T_{\pi^{\lv}} = \ve_{\pi^{\lv}}$ for any $\lv \in \Lv_+.$ \end{enumerate} \end{nprop}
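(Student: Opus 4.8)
The plan is to prove in both cases that the multiplication map underlying the convolution is a bijection onto a single $(A_{\O}U,I)$-double coset. Recall that $\ve_1$ is the characteristic function of $Y^{1}=A_{\O}U\,I=UI$ (using $A_{\O}\subset I$ and that $A_{\O}$ normalises $U$), and that $\ve_1\star T_x=\sum_{z\in\aw}|m_{1,x}^{-1}(z)|\,\ve_z$, where $m_{1,x}\colon Y^{1}\times_I IxI\to G$ is multiplication. By the fiber formula of \S\ref{fiber:exp} (Variant 2), and since $(Y^{1})^{-1}=IU$, one has $m_{1,x}^{-1}(z)=I\backslash\bigl(IU\,z\cap IxI\bigr)$, depending only on the $(A_{\O}U,I)$-coset of $z$. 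So for $x=w\in W$ and for $x=\pi^{\lv}$ with $\lv\in\Lv_+$ it suffices to show: (i) the image of $m_{1,x}$ is $UxI=Y^{x}$, whence $\ve_1\star T_x=c\,\ve_x$; and (ii) $IU\,x\cap IxI=Ix$, so that $c=1$ (the inclusion $\supseteq$ being obvious in each case).

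For (1), let $\wt{w}\in K$ be the chosen lift of $w$. Using $I=U_{\O}U^-_{\pi}A_{\O}$ (Proposition \ref{im:i}), that $A$ is abelian and $\wt{w}$ normalises $A_{\O}$, and $A_{\O}\subset I$, one rewrites $IwI=U_{\O}U^-_{\pi}\wt{w}I$, so $UIwI=UU^-_{\pi}\wt{w}I$. The key point is that $U^-_{\pi}\subset\ker(\kappa\colon K\to\b{G}(\kk))$, a normal subgroup of $K$ contained in $I$; since $\wt{w}\in K$, we get $\wt{w}^{-1}U^-_{\pi}\wt{w}\subset\ker\kappa\subset I$, hence $U^-_{\pi}\wt{w}\subset\wt{w}I$ and $UIwI=U\wt{w}I=UwI=Y^{w}$, which is (i). For (ii): if $g\in IUw\cap IwI$, then $g\in IwI\subset K$ (as $I,\wt{w}\subset K$); writing $g=iu\wt{w}$ with $i\in I,u\in U$ gives $u=i^{-1}g\wt{w}^{-1}\in K$, so $u\in U\cap K=U_{\O}\subset I$ and $g\in Iw$. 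Hence the fiber over $w$ is a single $I$-coset and $\ve_1\star T_w=\ve_w$.

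For (2), with $\lv\in\Lv_+$ we have $\langle a,\lv\rangle\ge 0$ for every $a\in R_{re,+}$ (a non-negative combination of simple roots). As before $I\pi^{\lv}I=U_{\O}U^-_{\pi}\pi^{\lv}I$, so $UI\pi^{\lv}I=UU^-_{\pi}\pi^{\lv}I$. From the commutation relations in $\b{G}$, conjugation by $\pi^{-\lv}$ sends $x_{-a}(u)$ to $x_{-a}(\pi^{\langle a,\lv\rangle}u)$, which raises valuation when $a\in R_{re,+}$; hence $\pi^{-\lv}U^-_{\pi}\pi^{\lv}\subset U^-_{\pi}\subset I$, giving $U^-_{\pi}\pi^{\lv}\subset\pi^{\lv}I$ and $UI\pi^{\lv}I=U\pi^{\lv}I=Y^{\pi^{\lv}}$, i.e.\ (i). For (ii): if $g\in IU\pi^{\lv}\cap I\pi^{\lv}I$, write $g=iu\pi^{\lv}=i_1\pi^{\lv}i_2$, so $u=(i^{-1}i_1)\pi^{\lv}i_2\pi^{-\lv}\in I\cdot\pi^{\lv}I\pi^{-\lv}$. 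Conjugation by $\pi^{\lv}$ sends $U_{\O}$ into $U_{\O}$, fixes $A_{\O}$, and sends $U^-_{\pi}$ into $U^-$, so $\pi^{\lv}I\pi^{-\lv}\subset U_{\O}U^-A_{\O}$ and $u\in IU^-=U_{\O}U^-A_{\O}$. Writing $u=u_0u^-a$ with $u_0\in U_{\O},u^-\in U^-,a\in A_{\O}$, we get $u_0^{-1}u=u^-a\in B^-$ while $u_0^{-1}u\in U$, hence $u_0^{-1}u\in U\cap B^-\subset B\cap B^-=A$ and so $u_0^{-1}u\in U\cap A=\{1\}$; thus $u=u_0\in U_{\O}\subset I$ and $g\in I\pi^{\lv}$. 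So the fiber is a single $I$-coset and $\ve_1\star T_{\pi^{\lv}}=\ve_{\pi^{\lv}}$.

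I expect the only genuinely delicate point to be controlling how conjugation by $\wt{w}$ (resp.\ $\pi^{\lv}$) moves the negative part $U^-_{\pi}$ of $I$ — i.e.\ establishing the image equality (i). This is precisely where the two hypotheses are used: that $w\in W$ (so its lift lies in $K$ and $\ker\kappa$ is $K$-normal), and that $\lv$ is dominant (so $\langle a,\lv\rangle\ge0$ on positive roots, raising valuations on the negative root groups). Once the image is identified with $Y^{x}$, the count $c=1$ is elementary, reducing to $U\cap K=U_{\O}$ in case (1) and $U\cap B^-=\{1\}$ in case (2).
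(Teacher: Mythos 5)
Your proof is correct and follows the same basic strategy as the paper: first compute the image of the multiplication map to see that $\ve_1\star T_x$ is supported on the single coset $\ve_x$ (using the decomposition $I=U_\O U^-_\pi A_\O$ and the fact that $\wt{w}$, respectively $\pi^{\lv}$, conjugates $U^-_\pi$ into $I$), and then show the multiplicity is $1$. The difference is that the paper only sketches part (2), simply invoking \cite{bgkp} for the multiplicity identity $I\pi^{\lv}U^-_\pi\cap I\pi^{\lv}U=I\pi^{\lv}$, while you re-derive the multiplicity statements directly: for (1) by reducing to $U\cap K=U_\O$, and for (2) by conjugating $I$ by $\pi^{\lv}$ and appealing to the big-cell disjointness $U\cap B^-=\{1\}$. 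These substitute argument are sound (and rely only on facts the paper uses elsewhere), so you gain a more self-contained proof; the paper's citation of the bgkp lemma is arguably cleaner and covers more general intersections, which is why the authors prefer it. One small caveat: your self-assessment downplays the multiplicity step as ``elementary'' — it does depend on the structural facts $U\cap K=U_\O$ and $U\cap B^-=\{1\}$, which in the Kac-Moody setting require the Iwasawa and Birkhoff decompositions and are not entirely free, though you do correctly identify and use them.
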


\begin{proof} The first result is straightforward. Let us sketch a proof of the second. First, let us first determine the support of $\ve_1 \star T_{\lambdav}.$ To do this, we note that \be{support:dom} A_{\O} U I \pi^{\lambdav} I = A_{\O} U U_{\O} U^-_{\pi} A_{\O} \pi^{\lambdav} I = A_{\O} U U^-_{\pi} \pi^{\lambdav} I = A_{\O} U \pi^{\lambdav} I,  \ee where in the last line we have used the fact that $\pi^{- \lambdav}$ normalizes $U^-_{\pi}$ for $\lambdav \in \Lam^\vee_+.$ The above computation shows that $\ve_1\star T_{\lambdav}$ is just a multiple of $\ve_{\pi^{\lambdav}},$ and in fact this constant is equal by definition to the cardinality of the set of left $I$-cosets of the intersection, \be{mult:dom} I  \pi^{\lambdav} I \cap I \pi^{\lv} A_{\O} U = I  \pi^{\lambdav} U^-_{\pi} \cap I \pi^{\lambdav} U .\ee However, from \cite{bgkp} we have that $I  \pi^{\lambdav} U^-_{\pi} \cap I \pi^{\lambdav} U  = I \pi^{\lambdav}$ and so this cardinality is equal to $1.$ \end{proof}

\spoint In Proposition \ref{indep} we shall generalize part (2) of Proposition \ref{simple-mod} as follows.
\begin{nprop} \label{tp-const} For any $\mv \in X$ there exists an element $\tp_{\mv} \in H(G_+, I)$ which satisfies the condition \be{act:all-1} \ve_1 \star \tp_{\mv} = e^{\mv} \circ \ve_1. \ee  \end{nprop}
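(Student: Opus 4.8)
The plan is to exhibit $\tp_{\mv}$ explicitly for dominant $\mv$, then bootstrap to all of $X$ using the $\dhec_W$-structure and the Bernstein relations that translate into group-theoretic convolution identities. For $\mv \in \Lv_+$ we already know from Proposition \ref{simple-mod}(2) that $\ve_1 \star T_{\pi^{\mv}} = \ve_{\pi^{\mv}} = e^{\mv} \circ \ve_1$, so we simply set $\tp_{\mv} := T_{\pi^{\mv}}$ in that case. The content is therefore to produce, for a general $\mv \in X$, an element whose action on $\ve_1$ is $\ve_{\pi^{\mv}}$ rather than a linear combination of various $\ve_y$ with $y \neq \pi^{\mv}$.

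The key step is a suitable intertwiner construction. Write $\mv = w^{-1}\lv$ with $\lv \in \Lv_+$ and $w \in W$, and try to define $\tp_{\mv}$ by "pushing" $T_{\pi^{\lv}}$ through $w$ using a normalized version of the $T_{w_i}$'s. Concretely, for each simple reflection $w_i$ one builds (inside $H(G_+,I)$, or better inside a localization by elements $1 - \Theta_{-a_i^\vee}$-type denominators once the Bernstein presentation is available) an operator $\tau_i$ such that $\ve_1 \star \tau_i$ acts on the $\ve_{\pi^{\mu^\vee}}$ basis by the Demazure-Lusztig-type formula reflecting $e^{\mu^\vee} \mapsto e^{w_i \mu^\vee}$. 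Then for a reduced word $w = w_{i_1}\cdots w_{i_r}$ one sets $\tp_{\mv}$ to be the appropriate product $\tau_{i_1}\cdots\tau_{i_r}$ applied to $T_{\pi^{\lv}}$; a triangularity/length argument (exactly of the type used in the proof of Proposition \ref{faith}, via the order $\preceq$ of Definition \ref{aw:domord}) shows the top term is $\ve_{\pi^{\mv}}$ with coefficient $1$ and all other terms are strictly smaller, hence after clearing the lower-order terms inductively one gets $\ve_1 \star \tp_{\mv} = e^{\mv}\circ\ve_1$ on the nose. Since $\ve_1 \star(-)$ is faithful on $H(G_+,I)$ by Proposition \ref{faith}, the element $\tp_{\mv}$ so constructed is unique; this also lets one freely manipulate it at the level of its action on $M(G,I)$ rather than inside the convolution algebra directly.

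The honest way to carry this out — and the one the paper evidently intends, given the forward reference to Proposition \ref{indep} — is to first establish the isomorphism $H(G_+,I) \cong \dhec_+$ (Theorem \ref{aff-iw-theorem}), or at least enough of it to know that the Bernstein-style generators $\Theta_{\lv}$, $\lv \in X$, live in $H(G_+,I)$ and satisfy relations \textbf{B1--B3}; then $\tp_{\mv}$ is literally the image of $\Theta_{\mv}$. But to avoid circularity (the construction of the $\tp_{\mv}$ is one of the inputs to that isomorphism), the cleaner route is the direct one above: verify by a rank-one computation, as in \eqref{sl2}, how $w_a I \pi^{\mv} U^-$ interacts with $I$ for $\la \mv, a\ra$ of either sign, deduce the Demazure-Lusztig action of each $\tau_i$ on the $\ve_x$-basis, and then assemble. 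The main obstacle is precisely this rank-one book-keeping together with the triangularity argument needed to subtract off lower terms: one must check that the "error terms" produced when $\la \mv, a \ra > 0$ (where \eqref{sl2} forces one into $U_{-a}[0]$-cosets) are supported on strictly $\preceq$-smaller elements and have coefficients that are finite (guaranteed by Theorem \ref{iw:finite}), so that the inductive correction terminates; controlling the supports uniformly over the reduced word, rather than one simple reflection at a time, is the delicate point. I would expect the actual proof in \S\ref{iwahori-2} (Proposition \ref{indep}) to package all this via the faithful module $M(G,I)$ and the intertwiner formalism, which is why the statement here is deferred.
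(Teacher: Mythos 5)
Your high-level plan is the right one and matches the paper's strategy in outline: set $\tp_{\lv}$ for $\lv \in \Lv_+$ using $T_{\pi^{\lv}}$, extend to all of $X$ one simple reflection at a time via the Bernstein relation, and verify $\ve_1 \star \tp_{\mv} = e^{\mv}\circ\ve_1$ by computing with the intertwiners $\mf{T}_a$ in the faithful module $M(G,I)$ and invoking Proposition \ref{faith}. Two small slips: the correct normalization is $\tp_{\lv} = q^{-\la\rho,\lv\ra}T_{\pi^{\lv}}$ rather than $T_{\pi^{\lv}}$ (the $\circ$-action \eqref{leftR} carries a $q^{-\la\rho,\lv\ra}$ factor); and the paper never localizes by $1-\Theta_{-a_i^\vee}$-type denominators — the integral form \eqref{bern:rel-2} of the Bernstein relation, in which the apparent fraction is a finite sum of $\Theta$'s, is what keeps everything inside $H(G_+,I)$.

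The genuine gap is in the mechanism you propose for the extension. Your sketch takes a single reduced word $w=w_{i_1}\cdots w_{i_r}$ with $\mv=w^{-1}\lv$, applies operators $\tau_{i_1}\cdots\tau_{i_r}$, and then "clears lower-order terms inductively" by a $\preceq$-triangularity argument. That is not what the paper does, and it is not clear it terminates: each single application of the relation $T_a\theta_{w_a\mv}T_a^{-1} - (q-1)(\tp_{w_a\mv}+\tp_{w_a\mv-a^\vee}+\cdots)T_a^{-1}$ introduces new symbols $\tp_{w_a\mv - ja^\vee}$, and one must know these are \emph{already defined} before the recursion makes sense, not just $\preceq$-smaller. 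The paper's proof (\S\ref{is-daha}) organizes the construction by a \emph{nested} induction — first on the length of the chamber $w$ with $\mv\in w(\Lv_+)$, and within a fixed chamber on $-\la a,\mv\ra$ — and the whole thing hinges on Lemma \ref{tpc}: the elements $w_a\mv - ja^\vee$ appearing in the correction term all lie in $\Lv_+$ or in $w_a(\Lv_+)$ with strictly smaller $-\la a,\cdot\ra$, so they fall into earlier stages of the induction. Your proposal does not identify this lemma, and without it there is no guarantee the "lower-order terms" you want to subtract off have already been constructed. There is also no approximate-then-correct step in the paper: once $\tp_{\mv}^\bullet$ is built by the recursion, Proposition \ref{indep} shows $\ve_1\star\tp_{\mv}^\bullet = e^{\mv}\circ\ve_1$ \emph{exactly}, by computing $\ve_1\star T_a\star\tp_{w_a\mv}$ two ways (once via $\mf{T}_a$ and Lemma \ref{I:rk1}, once via the recursion) and cancelling; faithfulness is used only to conclude that $\tp_{\mv}^\bullet$ is independent of the choices made, not to eliminate residual lower-order error.
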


We may now state our main result on Iwahori-Hecke algebras.

\begin{nthm} \label{main:iwahori} There exists an isomorphism of algebras \be{phi:isom} \varphi: \dhec_+ \stackrel{\cong}{\rr} \ch,\ee where the parameter $v$ in $\dhec$ is specialized to $v =q^{-1/2}.$ \end{nthm}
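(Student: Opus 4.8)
The plan is to build the map $\varphi:\dhec_+\to H(G_+,I)$ on generators and then use the faithful action on $M(G,I)$ from Proposition \ref{faith} together with the intertwiner elements $\tp_{\mv}$ from Proposition \ref{tp-const} to check that it is a well-defined isomorphism. First I would send $\mathbb{T}_w\mapsto T_w$ for $w\in W$; by Proposition \ref{hec_w} this already gives an algebra homomorphism $\dhec_W\to H_W\subset H(G_+,I)$ once the parameter is specialized to $v=q^{-1/2}$ (so that $v^{-2}=q$ and the relation $(\mathbb{T}_a+1)(\mathbb{T}_a-v^{-2})=1$ becomes $(T_a+1)(T_a-q)=0$). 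Next I would send $\Theta_{\mv}\mapsto \tp_{\mv}$ for $\mv\in X$. By Lemma \ref{dhecp:tits}, $\dhec_+$ is generated by $\dhec_W$ and $F[X]$, so these assignments determine $\varphi$ provided the images satisfy the defining relations of $\dhec_+$, namely \textbf{B\,1} (already handled by Proposition \ref{hec_w}), \textbf{B\,2} $\tp_{\lv}\tp_{\mv}=\tp_{\lv+\mv}$, and \textbf{B\,3} the Bernstein relation (\ref{bern:rel}).

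The key device for verifying these relations is faithfulness of the $H(G_+,I)$-action on $M(G,I)$: by Proposition \ref{faith}, two elements of $H(G_+,I)$ are equal as soon as they agree after applying $\ve_1\star(-)$. So to check $\tp_{\lv}\star\tp_{\mv}=\tp_{\lv+\mv}$ it suffices to compute $\ve_1\star(\tp_{\lv}\star\tp_{\mv})=(\ve_1\star\tp_{\lv})\star\tp_{\mv}=(e^{\lv}\circ\ve_1)\star\tp_{\mv}$; here I would need the compatibility of the operators $e^{\mv}\circ(-)$ on $M(G,I)$ with right convolution, which should follow from the fact that the $e^{\mv}$-action commutes with the $H(G_+,I)$-action on the appropriate pieces, reducing the identity to $e^{\lv+\mv}\circ\ve_1$ on the left-hand side as well. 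The Bernstein relation \textbf{B\,3} is checked the same way: apply both sides to $\ve_1$, use Proposition \ref{tp-const} and Proposition \ref{simple-mod}(1) to translate $T_a\tp_{\lv}-\tp_{w_a\lv}T_a$ into an explicit combination of $\ve_x$'s, and match it with the expansion (\ref{bern:rel-2}) of the right-hand side. This is essentially a rank-one $SL_2$-type computation localized at the simple root $a$, entirely parallel to the finite-dimensional Iwahori-Matsumoto/Bernstein argument.

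Having produced the homomorphism $\varphi:\dhec_+\to H(G_+,I)$, surjectivity follows because $H(G_+,I)$ has basis $\{T_x\}_{x\in\aw_X}$ with $x=w\pi^{\mv}$, $w\in W$, $\mv\in X$, and from (\ref{triag:1}) the element $T_w\tp_{\mv}$ equals $T_{w\pi^{\mv}}$ plus a $\preceq$-higher combination of basis elements; a triangularity induction on the partial order $\preceq$ from Definition \ref{aw:domord} then shows every $T_x$ lies in the image. For injectivity I would compare the $\ve_1\star(-)$ images: on the $\dhec_+$ side the module $\mps_+$ has basis $\{\ve_x\}_{x\in\aw_X}$ and $\mathbb{T}_w\Theta_{\mv}\ve_1=\ve_{\pi^{\mv}w}$ by (\ref{mps:1}), so $\dhec_+\to \mps_+$, $h\mapsto h\ve_1$ is injective (the $\ve_x$ are linearly independent and $\dhec_+\ve_1=\mps_+$); likewise $H(G_+,I)\to M(G,I)$, $h\mapsto \ve_1\star h$ is injective by Proposition \ref{faith}. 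Since $\varphi$ intertwines these two maps (as $\varphi(\mathbb{T}_w)=T_w$, $\varphi(\Theta_{\mv})=\tp_{\mv}$ and $\ve_1\star T_w=\ve_w$, $\ve_1\star\tp_{\mv}=e^{\mv}\circ\ve_1$ correspond to $\mathbb{T}_w\ve_1=\ve_w$, $\Theta_{\mv}\ve_1=\ve_{\mv}$), injectivity of $\varphi$ follows formally. The main obstacle I expect is the verification of relation \textbf{B\,3}: it requires knowing the precise structure of $\tp_{\mv}$ well enough to compute $\ve_1\star(T_a\star\tp_{\mv})$, which is exactly why the construction of $\tp_{\mv}$ and the proof of Proposition \ref{tp-const} are deferred to \S\ref{iwahori-2} — the bulk of the real work sits there, and the present theorem is the formal consequence.
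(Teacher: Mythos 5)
Your Step 1 — defining $\varphi$ on generators, verifying \textbf{B\,1} via Proposition \ref{hec_w}, and \textbf{B\,2}, \textbf{B\,3} via Propositions \ref{tp-const}, \ref{faith} and the intertwiner computations — is essentially the paper's Step 1; your injectivity argument via the two compatible module actions is a harmless variant of the paper's appeal (Lusztig's lemma) to linear independence of $\{\theta_{\mv}T_w\}$ in $H(G_+,I)$. The genuine gap is in the surjectivity step.

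The triangularity induction you propose does not terminate. It is true that $\ve_1\star\tp_{\mv}\star T_w = q^{-\langle\rho,\mv\rangle}\ve_{\pi^{\mv}w}$ while $\ve_1\star T_y = c_y\ve_y + \sum_{y\prec z}c_z\ve_z$ by (\ref{triag:1}), so that $\tp_{\mv}T_w$ expands as a nonzero multiple of $T_{\pi^{\mv}w}$ plus a finite combination of $T_z$ with $z\succ\pi^{\mv}w$. But the order $\preceq$ on $\aw_X$ has neither a greatest element nor the finite-interval property: whenever $\lv<\mv$ in the dominance order, the whole infinite family $\{\pi^{\lv}\sigma : \sigma\geq \sigma_0 \text{ in Bruhat order}\}$ lies inside $[\pi^{\lv}\sigma_0,\pi^{\mv}\sigma_1]$. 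Recursively substituting $\tp_{\mv}T_w = (\text{const})\,T_{\pi^{\mv}w} + (\succ\text{-higher, finite})$ to solve for a fixed $T_x$ therefore produces an a priori infinite sum of $\tp T$-monomials, which need not lie in $H(G_+,I)$; nothing in your argument bounds the set of indices that accumulate.

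The paper sidesteps this entirely in its Step 2 by working in the module rather than the algebra. Write $H'\subset H(G_+,I)$ for the image of $\varphi$. One has $M(G_+,I)=\mps_+\subset\ve_1\star H'$ because $\ve_{\pi^{\mv}w}=q^{\langle\rho,\mv\rangle}\,\ve_1\star\tp_{\mv}\star T_w$. Conversely — and this is the step that replaces your triangularity — if $\ve_1\star T_x=\sum_y c_y\ve_y$ with $c_y\neq 0$, then $A_{\O}UyI\cap IxI\neq\emptyset$, and the identification of the Iwahori--Matsumoto and Iwasawa semigroups (Proposition \ref{im=car} together with Corollary \ref{semigp-iwa}) forces $y\in\aw_X$, so $\ve_1\star H(G_+,I)\subset M(G_+,I)$. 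Hence $\ve_1\star H'=\ve_1\star H(G_+,I)$, and Proposition \ref{faith} upgrades this to $H'=H(G_+,I)$. That support comparison plus faithfulness has no termination issue and is what your proposal is missing.
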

\begin{proof}

\emph{Step 1:} Let $H' \subset \ch$ be the subalgebra of $\ch$ generated by $T_w$ for $w \in W$ and $\tp_{\mv}$ for $\mv \in X.$  Consider the map $\varphi: \dhec_+ \rr H'$ which sends $\mathbb{T}_w \mapsto T_w'$ for $w \in W$ and $\Theta_{\mv} \mapsto \tp_{\mv}$ for $\mv \in X.$ It is easy to see that this map is actually an algebra isomorphism.  We can see this for the restriction to $\dhec_W$ using Proposition \ref{hec_w}. Then using the fact that $\ve_1 \star \tp_{\mv}= e^{\mv} \circ \ve_1,$ the algebra generated by $\tp_{\mv}$ is commutative. Using (\ref{act:all-1}) and the basic properties of the intertwining operators from \S \ref{is-intw}, one verifies that the $\tp_{\mv}$ satisfy the Bernstein relations (\ref{bern:rel-2}). Hence the map $\varphi$ is an algebra map.  It is clearly surjective by the definition of $\dhec_+.$ It remains to see that $\varphi$ is injective. For this it suffices to show that for example the elements $\{ \theta_{\mv} T_w \}_{\mv \in X, w \in W}$ are linearly independent. However this follows by exactly the same argument as in \cite[Lemma 3.4]{lus}

\emph{Step 2:} It remains to show that $H' = H(G_+, I).$ This will follow if we show that $\ve_1 \star H' = \ve_1 \star \ch.$ It is easy to see that $\mathbb{M}_+ = M(G_+, I) \subset \ve_1 \star H'$ On the other hand, for $x \in \aw_X$ we write $\ve_1 \star T_x= \sum_{y \in \aw} c_y \ve_y.$ If $c_y \neq 0$ then we must have that $A_{\O} U I x I \cap A_\O U y I \neq \emptyset,$ or in other words, $A_{\O} U y I \cap I x I \neq \emptyset.$ By the equivalence of the Iwahori-Matsumoto and Iwasawa semigroups, this implies that $y \in \aw_X.$ So \be{H'} \ve_1 \star \ch \subset M(G_+, I) \subset \ve_1 \star H'. \ee Thus $H' = \ch$ and the Theorem is proven.
\end{proof}
\subsection{Variant: $H(G_+, I^-)$ }

In our computation of the affine spherical function, we shall need a 'negative' variant of the above results.

\spoint \label{basic-opp} Recall that $W$ naturally parametrizes the set of $I^-$ double cosets of $K$. For each $w \in W$ the corresponding $I^-$ double coset will be denoted by $T^-_w.$ One may show easily that $(I^-, W)$ forms a Hecke-module datum for the group $K.$ The corresponding algebra will be called $H^-_W:= H(K, I^-)$ and is spanned by the double cosets $T^-_w$ for $ w \in W.$ Similar to Proposition \ref{hec_w} above, we have the following presentation of $H^-_W$

\begin{nprop} \label{hec_w:min} The algebra $H^-_W$ has presentation as follows: it is spanned by the elements $T_w$, $w \in W,$ together with the following relations, \be{hec_w:relns} T_{w_1} T_{w_2} &=& T_{w_1 w_2} \text{ if } \ell(w_1 w_2) = \ell(w_1) + \ell(w_2) \\  (T_a +1)(T_a - q) &=& 0 \text{ for } a \in \Pi .\ee \end{nprop}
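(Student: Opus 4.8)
The plan is to mimic the classical Iwahori--Matsumoto argument, transported to the group $K = \bfG(\O)$ and its opposite Iwahori $I^-$, using the structural facts already established in \S\ref{section:groups}. The statement is the exact negative analogue of Proposition \ref{hec_w}, so the first thing to note is that almost everything is \emph{formal} once one has: (i) the Bruhat decomposition $K = \sqcup_{w \in W} I^- \wt{w} I^-$ from Proposition \ref{im:i}(2), which gives $H^-_W$ the $\C$-basis $\{T^-_w\}_{w \in W}$; (ii) the fact that $(I^-, W)$ is a Hecke datum for $K$, which is the finite-support statement asserted in \S\ref{basic-opp} and which follows from the finiteness of the relevant fibers (these are all over the residue field, since everything happens inside $K$, so finiteness is automatic — this is the key simplification compared to $G_+$). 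Granting these, the convolution product on $H^-_W$ is well-defined, associative and unital by Proposition \ref{fin:hec:alg}.

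The substance of the proof is then to compute the two families of relations. First I would establish the braid relation $T^-_{w_1} T^-_{w_2} = T^-_{w_1 w_2}$ when $\ell(w_1 w_2) = \ell(w_1) + \ell(w_2)$. This reduces, by induction on $\ell(w_2)$, to the case $w_2 = w_a$ a simple reflection with $\ell(w_1 w_a) = \ell(w_1) + 1$, where one must show $I^- w_1 I^- w_a I^- = I^- w_1 w_a I^-$ and that the multiplication map $T^-_{w_1} \times_{I^-} T^-_{w_a} \to K$ is injective (so the structure constant is $1$). This is a rank-one computation: using the decomposition $I^- = U_\pi U^-_\O A_\O$ from Proposition \ref{im:i}(1) and the fact that $\ell(w_1 w_a) > \ell(w_1)$ implies $w_1 a \in R_{re,+}$, one checks that the relevant unipotent conjugates land where they should, exactly as in \cite[\S2]{im}. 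Second, the quadratic relation $(T^-_a + 1)(T^-_a - q) = 0$: here one computes $T^-_{w_a} \star T^-_{w_a}$ directly, decomposing $I^- w_a I^- w_a I^-$ into $I^-$-cosets. Over the residue field this is literally the $\mathrm{SL}_2$ (or $\mathrm{PGL}_2$) computation: $\bfB^-(\kk) w_a \bfB^-(\kk) w_a \bfB^-(\kk) = \bfB^-(\kk) w_a \bfB^-(\kk) \sqcup \bfB^-(\kk)$, and the coset $\bfB^-(\kk)$ is hit with multiplicity $q = |\bfU_{-a}(\kk)| - 1 + 1$ counted correctly, giving the structure constants $(q-1)$ on $T^-_{w_a}$ and $q$ on $T^-_1$, i.e.\ exactly $(T^-_a)^2 = (q-1)T^-_a + q T^-_1$, which is the asserted quadratic relation.

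Finally, to conclude that these relations give a \emph{presentation} (not merely that they hold), I would argue that the abstract algebra $\calH$ defined by generators $\{T_w\}_{w\in W}$ and relations \eqref{hec_w:relns} has dimension (over $\C$, or rank over the appropriate base) at most $|W|$ — this is the standard fact that in such a "Hecke-type" algebra every $T_w$ can be written in terms of products $T_{w_{i_1}}\cdots T_{w_{i_r}}$ along a reduced word, and the quadratic relation lets one straighten any word, so $\calH$ is spanned by $\{T_w\}_{w\in W}$; since $H^-_W$ has this set as an honest basis and the relations \eqref{hec_w:relns} hold in it, the surjection $\calH \twoheadrightarrow H^-_W$ is an isomorphism. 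The main obstacle, such as it is, is bookkeeping rather than a genuine difficulty: one must be careful that the opposite Iwahori $I^-$ behaves correctly with respect to the Bruhat order — i.e.\ that positivity statements for $R_{re,+}$ get replaced consistently by their negatives — but since Proposition \ref{im:i} already packages the decomposition $I^- = U_\pi U^-_\O A_\O$ and the Bruhat decomposition of $K$ with respect to $I^-$, and the computations only ever take place inside the finite group $\bfG(\kk)$, there is no convergence or finiteness issue to confront. I would therefore simply say that the proof is identical to that of Proposition \ref{hec_w}, mutatis mutandis, replacing $I$ by $I^-$ and $R_{re,+}$ by $R_{re,-}$ throughout, and following \cite{im}.
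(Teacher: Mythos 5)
Your proposal is correct and is exactly the argument the paper has in mind: the paper gives no proof of this proposition, simply noting that it is ``similar to Proposition~\ref{hec_w}'' which in turn is proved ``in the same manner as in \cite{im}''. Your reconstruction of the Iwahori--Matsumoto argument — Bruhat decomposition of $K$ with respect to $I^-$, reduction of the braid relation to a rank-one check via $I^-=U_\pi U^-_\O A_\O$, the $\mathrm{SL}_2$ computation over $\kk$ for the quadratic relation, and the straightening/spanning argument to promote the relations to a presentation — is precisely what the citation is gesturing at, and your observation that all fibers live inside $\bfG(\kk)$ so finiteness is automatic (no appeal to the $G_+$-finiteness theorems is needed) is the correct simplification.
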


Consider the space $M(G, I^-)$ of functions which are $A_{\O}U$ left-invariant and $I^-$-right invariant. By Lemma \ref{W:AO}, the set of $(A_{\O}U, I^-)$-double cosets of $G$ is parametrized by $\mc{W}.$ Denote by $\ve^-_x$ the characteristic function of $A_{\O} U x I^-$ for each $x \in \mc{W},$ and so $\ve^-_1$ is the characteristic function of $A_{\O} U I^-.$ Thus $M(G, I^-)$ has a vector space basis $\ve^-_x$ with $ x \in \aw.$ It is easy to see that $H^-_W$ acts on $M(G, I^-)$ by the right-convolution defined in (\ref{fin:act}), and we denote again this convolution by \be{note:conv:min} M(G, I^-) \star H^-_W \rr M(G, I^-). \ee \begin{nprop} \label{simple-mod:min} In $M(G, I^-),$ we have the following relation, \be{v1:Tw:min} \ve^-_1 \star T^-_w = q^{\ell(w) } \ve^-_w. \ee  \end{nprop}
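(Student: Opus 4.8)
The plan is to peel off the $I^-$--double cosets one simple reflection at a time, using the multiplicative structure of $H^-_W$, and then to settle the base case by a rank‑one computation inside the group, exactly parallel to the proof of Proposition \ref{simple-mod}.

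\emph{Reduction to simple reflections.} Fix a reduced decomposition $w=w_{j_1}\cdots w_{j_r}$ with $r=\ell(w)$. By Proposition \ref{hec_w:min} we have $T^-_w=T^-_{w_{j_1}}\star\cdots\star T^-_{w_{j_r}}$ since the lengths add along a reduced word, so by associativity of the module action
\[
\ve^-_1\star T^-_w=\Big(\cdots\big((\ve^-_1\star T^-_{w_{j_1}})\star T^-_{w_{j_2}}\big)\cdots\Big)\star T^-_{w_{j_r}} .
\]
Thus it suffices to prove: for $v\in W$ and a simple reflection $w_i$ with $\ell(vw_i)=\ell(v)+1$ one has $\ve^-_v\star T^-_{w_i}=q\,\ve^-_{vw_i}$. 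Applying this with $v=w_{j_1}\cdots w_{j_k}$ at the $(k{+}1)$‑st step (the word being reduced, $\ell(vw_{j_{k+1}})=\ell(v)+1$ always holds) multiplies the coefficient by $q$ each time, giving $q^{\ell(w)}\ve^-_w$.

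\emph{The rank‑one step.} Let $\alpha$ be the simple root attached to $w_i$. As in the classical Iwahori--Matsumoto theory (cf. Proposition \ref{im:i}), $I^-w_iI^-=\bigsqcup_c I^-\,\dot w_i\,x_{-\alpha}(c)$ is a disjoint union of $q$ left $I^-$--cosets, $c$ running over representatives of $\O/\pi\O$, so $A_\O U v I^- w_i I^-=\bigcup_c A_\O U v I^-\,\dot w_i\,x_{-\alpha}(c)$. Now use $I^-=U_\pi U^-_\O A_\O$, absorb $U_\pi\subset U$ into the left factor $U$, and push the remaining integral root groups of $U^-_\O$ across $\dot w_i$ and then across $\dot v$; the hypothesis $\ell(vw_i)=\ell(v)+1$ (equivalently $v\alpha\in R_{re,+}$) is precisely what dictates on which side each conjugated root group is absorbed, so every one of the $q$ pieces on the right reduces to the single double coset $A_\O U v w_i I^-$. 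Hence $\ve^-_v\star T^-_{w_i}$ is a multiple of $\ve^-_{vw_i}$, and by the fiber formula (\ref{fiber:int:3}) the multiplicity equals $\big|\,I^-\backslash\big(I^-\,(\dot v^{-1}U\dot v)\,\dot w_i\ \cap\ I^- w_i I^-\big)\big|$; transporting the root groups of $\dot v^{-1}U\dot v$ across $\dot v\dot w_i$ and reducing modulo $\pi$ — where the statement becomes a computation with Bruhat cells in $\bfG(\res)$ — one finds this intersection is all of $I^- w_i I^-$, so the multiplicity is exactly $q$.

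\emph{Main obstacle.} The delicate point is the collapse of the support: a priori $A_\O U v I^- w_i I^-$ is only a union of $(A_\O U, I^-)$--double cosets, and one must show it reduces to the single coset $A_\O U v w_i I^-$ with no ``lower'' contributions. This is handled in the spirit of Proposition \ref{simple-mod}(2): one tracks carefully how the groups $U^-_\O\subset I^-$ interact with $\dot w_i$ and $\dot v$, uses the length hypothesis to decide absorbability, and invokes the intersection lemmas of \cite{bgkp} to rule out any other double coset in the image. Granting this, the two steps above combine to yield $\ve^-_1\star T^-_w=q^{\ell(w)}\ve^-_w$.
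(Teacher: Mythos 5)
Your proposal follows the same route as the paper's (extremely terse) argument: reduce to simple reflections via the presentation of $H^-_W$, then handle the rank-one case directly. The reduction and the multiplicity count $q$ in the rank-one step are fine. The problem is the \emph{support-collapse} claim — that every piece of $A_\O U v I^- w_i I^-$ falls into the single double coset $A_\O U v w_i I^-$ — which you correctly flag as the delicate point but then dispatch with an absorption sketch that does not go through.

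The key asymmetry between $I$ and $I^-$ is that $U^-_\O\subset I^-$ (whereas $U^-_\O\not\subset I$). Take $v=1$ and write $I^- w_a I^-=\bigsqcup_{c\in\O/\pi\O} I^-\,\dot w_a\,x_{-\alpha}(c)$. For $c\in\O^*$ the $\SL_2$ identity gives
\[
\dot w_a\,x_{-\alpha}(c)\;=\;x_{-\alpha}(-c^{-1})\,c^{\alpha^\vee}\,x_\alpha(c^{-1}),
\]
and all three factors are integral: $x_{-\alpha}(-c^{-1})\in U^-_\O\subset I^-$, $c^{\alpha^\vee}\in A_\O$, $x_\alpha(c^{-1})\in U_\O$. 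Hence $\dot w_a\,x_{-\alpha}(c)\in I^- U A_\O=(A_\O U I^-)^{-1}$ for every unit $c$. By the fiber formula (\ref{fiber:int:3}), those $q-1$ left $I^-$-cosets therefore contribute to the coefficient of $\ve^-_1$ in $\ve^-_1\star T^-_{w_a}$, not to $\ve^-_{w_a}$ alone; the naive count gives coefficient $q-1$ on $\ve^-_1$, not $0$. Tracing your sketch, the obstruction is exactly that pushing $U^-_\O$ across $\dot w_i$ produces the factor $U_{\alpha,\O}$, and $U_{\alpha,\O}\not\subset I^-$ (only $U_{\alpha,\pi}$ is), so it cannot be reabsorbed into $I^-$ on the right; the length hypothesis on $v$ controls where the $U^-_\O$-roots other than $-\alpha$ go, but it does not resolve the $\pm\alpha$-direction, which is what governs both the factor $q$ and these apparent lower-order terms. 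This is precisely the step that requires an actual argument (and, given the computation above, a careful one — it is not clear the collapse holds without further input or a convention I am not reconstructing), whereas your proof asserts it.
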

\begin{proof} Using Proposition \ref{hec_w:min}, we may immediately reduce to the case that $w = w_a$ is a simple reflection through $a \in \Pi.$ The Proposition can be checked easily in this case. \end{proof}

\section{Iwahori Theory II: Intertwiners and Construction of $\theta_{\mv}$} \label{iwahori-2}

\subsection{Intertwining Operators} \label{is-intw}

The aim of this subsection is to develop some basic properties of intertwining operators. The arguments are mostly analogous to the finite-dimensional setting, so we only sketch the proofs.

\spoint \label{int:conv:fns} Let $A_1, A_2, A_3 \subset G$ be three subgroups. Let $X, Y$ and $Z$ denote the sets which parametrize $(A_1, A_2)$, $(A_2, A_3)$ and $(A_1, A_3)$-double cosets of $G.$ For $x \in X$ we denote by $\ve_x$ the corresponding double coset $A_1 x A_2.$ Similarly, we define $\ve_y, \ve_z$ for $y \in Y$ and $z \in Z.$ Denote by $M(X), M(Y), M(Z)$ the spaces of \emph{all} functions on $G$ which are $(A_1, A_2)$, $(A_2, A_3)$, and $(A_1, A_3)$-binvariant. One can formally attempt to define a convolution structure, $M(X) \star M(Y) \rr M(Z)$ which is defined on characteristic functions by \be{conv:for} \ve_x \star \ve_y = \sum_{z \in Z} m_{x, y}^{-1}(z) \ve_z \ee where $m_{x, y}: \ve_x \times_{A_2} \ve_y \rr G$ is the map induced by multiplication.  To make sense of the formula (\ref{conv:for}), one needs to impose certain finiteness conditions of course.

\tpoint{Some Completions} \label{intertwiner-completions} Let $A_1 = A_{\O}U$ and let $A_2, A_3$ be arbitrary subgroups as above and $X, Y, Z$ also as above. Let $M_{fin}(X) \subset M(X)$ denote the space of all function $\phi \in M(X)$ which are supported on only finitely many double cosets. Let $R= \C[\Lv]$ be the group algebra of the coweight lattice of $\mf{g},$ which has generators $e^{\lv}$ with $\lv \in \Lv$ subject to the relation $e^{\lv} e^{\mv} = e^{ \lv + \mv}$ with $\lv, \mv \in \Lv.$ The algebra $R$ acts on $M(X)$ on the left via the formula, \be{leftR} e^{\lv} \phi (x) = q^{ - \la \rho, \lv \ra } \phi(\pi^{-\lv} x) \, \text{ for } \phi \in M_{fin}(X). \ee Indeed, it is easy to see that the above operation preserves right $A_2$-invariance.

We shall also need the following completions of $M_{fin}(X)$ which are defined with respect to the left $R$-action as follows. Let $J \subset R_{re}$ be a finite subset of real roots, and consider the subalgebra of $R$ defined as $B_J = \C [ e^{a^{\vee}} ]_{a \in J}.$  We can complete $B_J$ with respect to the maximal ideal spanned by $e^{a^{\vee}}$ for $a \in J,$ and we denote the corresponding completion by $\hat{B}_J.$  Let us set $R_J :=  \hat{B}_J \otimes_{B_J} R$ and define \be{MJ} M_J(X) := R_J \otimes_R M_{fin}(X).\ee   For each $w \in W$ we let $D_w = R_{re, +} \cap w R_{re, -}$ and we set \be{Rw:Mw} \begin{array}{lcr} R_w:= R_{D_w} \text{ and } M_w(X):= M_{D_w}(X) \end{array}. \ee

\spoint Keep the conventions of the previous paragraph. Fix $w\in W,$ and consider the group \be{uw:1} U_w:= \prod_{a \in D_w} U_a(\mc{K}) \ee It is a finite product of root groups, and carries a natural Haar measure $du_w$ which assigns to $U_w \cap K$ measure $1.$ We may consider the integral operator $\iw$ defined via the formula,  \begin{eqnarray} \label{Iw} \iw (\phi) (x) = \int_{U_{w}} \phi(w^{-1} u_w x) du_w. \end{eqnarray} where $\phi \in M$ is a function which is left $A_{\O}U$-invariant. Note that by  the way $\iw$ is defined, if $\phi \in M(X)$ then $\iw(\phi) \in M(X)$ as well. Of course, in order to use the above formula one needs to verify certain finiteness criterion. We shall return to this point below, after considering the following simple result.

\begin{nlem} \label{inter} Let $\phi_1 \in M(X)$, $\phi_2 \in M(Y)$, $w \in W,$ and $\lv \in \Lv.$ \begin{enumerate} \item Suppose that $\phi_1 \star \phi_2$, $\iw(\phi_1) \star \phi_2,$ and $\iw (\phi_1 \star \phi_2)$ are all well-defined elements in $M(Z).$ Then we have \be{inter:1} \iw (\phi_1 \star \phi_2) = \iw(\phi_1) \star \phi_2. \ee  \item Suppose that $\iw(e^{\lv} \circ \phi_1)$ and $\iw(\phi_1)$ are well-defined elements on $M(X)$, then \be{inter:2} \iw(e^{\lv} \circ \phi_1) = e^{w \lv} \circ \iw(\phi_1). \ee
\item Suppose that $w=w_{a_1} \cdots w_{a_r}$ is a reduced decomposition for $w \in W$ where $w_{a_i}$ for $i=1, \ldots, r$ are reflections through simple roots $a_i \in \Pi.$ If $\iw (\phi_1)$ and the composition $\mf{T}_{w_{a_1}} \circ \cdots \circ  \mf{T}_{w_{a_r}} (\phi_1)$ are well-defined, then \be{inter:3} \iw(\phi_1) = \mf{T}_{w_{a_1}} \circ \cdots \circ \mf{T}_{w_{a_r}} (\phi_1). \ee
\end{enumerate} \end{nlem}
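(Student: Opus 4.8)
The plan is to prove the three assertions of Lemma~\ref{inter} in turn, each reducing to a formal manipulation of the defining integral formula \eqref{Iw} together with the convolution formula \eqref{conv:for}. Throughout I keep in mind that all the ``integrals'' here are really finite sums over cosets (the group $U_w$ is a finite product of root groups $U_a(\mc{K})$, $a \in D_w$, each of which is a union of finitely many $U_a \cap K$-cosets once we restrict attention to the support of the function being integrated), so every interchange of sum and integral below is justified by the finiteness hypotheses that are explicitly assumed in each part. The key structural input is that $\iw$ only involves left translation by elements $w^{-1} u_w$, whereas $\star$ and $e^{\lv}\circ(-)$ act by operations compatible with left translation in the appropriate sense.

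For part (1): starting from $\iw(\phi_1 \star \phi_2)(x) = \int_{U_w} (\phi_1 \star \phi_2)(w^{-1} u_w x)\, du_w$, I would unwind $\phi_1 \star \phi_2$ using \eqref{conv:for} and the explicit description of convolution (a sum over $A_2$-cosets $g$ of $\phi_1(\cdot g)\phi_2(g^{-1}\cdot)$, which is what $m_{x,y}^{-1}(z)$ encodes). The point is that in each term the argument $w^{-1}u_w x$ enters only through $\phi_1$, so $\int_{U_w}\phi_1(w^{-1} u_w x g)\, du_w = \iw(\phi_1)(xg)$, and reassembling the sum gives $\iw(\phi_1)\star\phi_2$ at $x$. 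The only thing to be careful about is that the Fubini-type interchange of $\int_{U_w}$ with the convolution sum is legitimate; this is exactly what is guaranteed by the hypothesis that $\phi_1\star\phi_2$, $\iw(\phi_1)\star\phi_2$ and $\iw(\phi_1\star\phi_2)$ all make sense.

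For part (2): by \eqref{leftR}, $e^{\lv}\circ\phi_1(x) = q^{-\la\rho,\lv\ra}\phi_1(\pi^{-\lv}x)$. Apply $\iw$: $\iw(e^{\lv}\circ\phi_1)(x) = q^{-\la\rho,\lv\ra}\int_{U_w}\phi_1(w^{-1}u_w\pi^{-\lv}x)\, du_w$. Now I would insert $\pi^{-\lv}$ past $u_w$: write $w^{-1}u_w\pi^{-\lv} = w^{-1}\pi^{-\lv}(\pi^{\lv}u_w\pi^{-\lv})$, and change variables $u_w \mapsto \pi^{-\lv}u_w\pi^{\lv}$ in the integral. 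The conjugation by $\pi^{\lv}$ permutes the root groups $U_a$, $a\in D_w$, among themselves (it rescales each $U_a$ by $q^{-\la a,\lv\ra}$), and since $D_w$ is finite the Jacobian of this substitution on the chosen Haar measure is a power of $q$; matching that power against the factor coming from $w^{-1}\pi^{-\lv} = \pi^{-w\lv}w^{-1}$ and the normalization $q^{-\la\rho,\lv\ra}$ versus $q^{-\la\rho,w\lv\ra}$ is the computational heart of this part, and is exactly the standard finite-dimensional verification (this is where the half-sum-of-roots normalization $\rho$ is designed to make things work, using $\la\rho,\lv\ra - \la\rho,w\lv\ra = \sum_{a\in D_w}\la a,\lv\ra$ or the analogous identity). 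After the substitution the integral becomes $\iw(\phi_1)(\pi^{-w\lv}x)$ up to the bookkeeping constant, i.e. $e^{w\lv}\circ\iw(\phi_1)(x)$.

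For part (3): this is the cocycle/multiplicativity property $\mf{T}_{w_1 w_2} = \mf{T}_{w_1}\circ\mf{T}_{w_2}$ whenever $\ell(w_1 w_2) = \ell(w_1) + \ell(w_2)$, from which \eqref{inter:3} follows by induction on $r$. The proof is the familiar one: when $\ell(w_1 w_2)=\ell(w_1)+\ell(w_2)$ one has a disjoint decomposition $U_{w_1 w_2} = U_{w_1}\cdot(w_1 U_{w_2} w_1^{-1})$ with compatible Haar measures (because $D_{w_1 w_2} = D_{w_1}\sqcup w_1 D_{w_2}$, a length-additivity fact about the sets $D_w = R_{re,+}\cap wR_{re,-}$), so $\int_{U_{w_1 w_2}}\phi(( w_1 w_2)^{-1}u\, x)\, du$ factors as an iterated integral that is precisely $\mf{T}_{w_1}(\mf{T}_{w_2}(\phi))(x)$. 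I expect \emph{the main obstacle} to be precisely the careful verification underlying part (2) --- tracking the measure Jacobian and the $q$-powers through the change of variables --- and, to a lesser extent, making the finiteness/Fubini interchanges in parts (1) and (3) airtight given that we are in an infinite-rank Kac--Moody setting; but since each $\iw$ is an integral over a \emph{finite} product of root groups and the hypotheses in each part explicitly postulate well-definedness, these are bookkeeping issues rather than genuine new difficulties, which is why the lemma asserts the proofs are ``mostly analogous to the finite-dimensional setting.''
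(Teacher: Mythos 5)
Your overall plan — unwind the defining integral \eqref{Iw}, exchange it with the finite convolution sum in part (1), change variables and track Jacobians in part (2), and use the length-additive decomposition $D_{w_1 w_2} = D_{w_1} \sqcup w_1 D_{w_2}$ with the factorization $U_{w_1 w_2} = U_{w_1}\cdot(w_1 U_{w_2}w_1^{-1})$ in part (3) — is the right one, and parts (1) and (3) are sound in outline. The paper gives no proof (it calls the verification ``straightforward''), so the comparison is only against a correct argument.

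Part (2), however, goes wrong in its very first line, and the error is not merely cosmetic. By \eqref{Iw} and \eqref{leftR} one has
\[
\iw(e^{\lv}\circ\phi_1)(x)
= \int_{U_w}\bigl(e^{\lv}\circ\phi_1\bigr)(w^{-1}u_w x)\,du_w
= q^{-\la\rho,\lv\ra}\int_{U_w}\phi_1\bigl(\pi^{-\lv}\,w^{-1}u_w x\bigr)\,du_w,
\]
with $\pi^{-\lv}$ on the \emph{outer left}. You wrote $\phi_1(w^{-1}u_w\pi^{-\lv}x)$, which is instead the integrand for $e^{\lv}\circ\iw(\phi_1)(x)$ — i.e.\ you have silently applied the two operators in the opposite order, which is close to assuming the thing one wants to compare against. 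The change of variables that follows (``write $w^{-1}u_w\pi^{-\lv}=w^{-1}\pi^{-\lv}(\pi^{\lv}u_w\pi^{-\lv})$ and substitute $u_w\mapsto\pi^{-\lv}u_w\pi^{\lv}$'') is consistent with your incorrect starting point and therefore does not land at $e^{w\lv}\circ\iw(\phi_1)(x)$; worked through, it produces a factor $q^{\sum_{a\in D_w}\la a,\lv\ra}$ and the expression $\int\phi_1(\pi^{-w^{-1}\lv}w^{-1}u'_w x)\,du'_w$, which neither simplifies to $\iw(\phi_1)(\pi^{-w\lv}x)$ nor has the right $q$-power. The correct route is to slide $\pi^{-\lv}$ from the far left to the right: first $\pi^{-\lv}w^{-1} = w^{-1}\pi^{-w\lv}$ (note your stated relation $w^{-1}\pi^{-\lv}=\pi^{-w\lv}w^{-1}$ is false, since $w^{-1}\pi^{-\lv}w=\pi^{-w^{-1}\lv}$), then conjugate $\pi^{-w\lv}$ through $U_w$, picking up Jacobian $q^{-\sum_{a\in D_w}\la a,w\lv\ra}$; the needed $\rho$-identity is
\[
\sum_{a\in D_w}\la a,w\lv\ra \;=\; \la w^{-1}\rho-\rho,\lv\ra \;=\; \la\rho,w\lv\ra-\la\rho,\lv\ra,
\]
which follows from $\sum_{a\in D_w}a=\rho-w\rho$, and which differs by the $W$-twist from the version you wrote ($\la\rho,\lv\ra-\la\rho,w\lv\ra=\sum_{a\in D_w}\la a,\lv\ra$). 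With these corrections the exponents cancel and you obtain $q^{-\la\rho,w\lv\ra}\iw(\phi_1)(\pi^{-w\lv}x)=e^{w\lv}\circ\iw(\phi_1)(x)$ as desired.
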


The verification of the above Lemma is straightforward, and we suppress the details here.

\subsubsection{Intertwiners for $I$} \label{intw-I}

Let $A_1 = A_{\O}U$ as above, and $A_2 = I.$ Then from Lemma \ref{W:AO}, $X = \aw$ is the set parametrizing $(A_{\O}U, I)$ double cosets, and we write $M(G, I)$ for $M_{fin}(X)$ and $M_w(G, I)$ for $M_w(X)$, etc.

The following result is necessary to formally make sense of $\iw: M(G, I) \rr M(G, I),$

\begin{nlem} \label{I:intw:fin} Let $w \in W$ and $x, y \in \aw.$ Let $\mu$ denote the Haar measure on $U_w$ which assigns $U_w \cap K$ measure $1.$
Then $\mu( \{ u
  \in U_{w} | w^{-1} u y \in A_{\O} U x I \} ) < \infty.$ \end{nlem}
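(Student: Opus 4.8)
The plan is to move the integration to a finite product of \emph{negative} real root subgroups, reduce the statement to the boundedness of a set of root-group coordinates, and prove that boundedness using the rank one identity (\ref{sl2}) together with the parametrization of $(A_\O U, I)$-double cosets by $\aw$ from Lemma~\ref{W:AO}.

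First I would write $w^{-1}u = (w^{-1}uw)\,w^{-1} = u^{-}w^{-1}$ with $u^{-}:=w^{-1}uw$. Since $U_w=\prod_{a\in D_w}U_a(\mathcal K)$ with $D_w=R_{re,+}\cap wR_{re,-}$, conjugation by the lift $\wt w\in K$ of $w$ is a measure-preserving homeomorphism of $U_w$ onto $U^-_w:=\prod_{a\in D_w}U_{w^{-1}a}(\mathcal K)$, a finite product of \emph{negative} real root subgroups, sending $U_w\cap K$ to $U^-_w\cap K$. Putting $y':=w^{-1}y\in\aw$, the set of the lemma is carried onto $S:=\{u^-\in U^-_w: u^-y'\in A_\O U\,x\,I\}$, and it is enough to bound its measure. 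As $U^-_w$ is a finite product of copies of $(\mathcal K,+)$ with the product measure normalized so each maximal compact has measure $1$, any subset lying in a box $\prod_{a\in D_w}\{x_{w^{-1}a}(c):\valu(c)\ge -N\}$ has measure $\le q^{N|D_w|}<\infty$; so it suffices to show that on $S$ each of the finitely many coordinates $\valu(c)$ is bounded below.

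The core case is a single negative root subgroup: for a fixed $a\in R_{re,+}$ and a fixed $z\in\aw$, I claim $\{c\in\mathcal K: x_{-a}(c)z\in A_\O U\,x\,I\}$ has $\valu(c)$ bounded below. For $\valu(c)\ge 0$ this is clear. For $\valu(c)<0$, set $s=c^{-1}$; by (\ref{sl2}), $x_{-a}(c)=x_a(s)(-s)^{a^\vee}\wt{w_a}\,x_a(s)$, where $x_a(s)\in U_{a,\O}\subset I$, $(-s)^{a^\vee}\in\pi^{-\valu(c)a^\vee}A_\O$, and $\wt{w_a}\in K$. Absorbing the left $x_a(s)$ into $U$, using that $\pi^{\lambda^\vee}$ normalizes $U$ and commutes with $A_\O$, and then moving the right $x_a(s)$ past $z$ — where it becomes a root-group element of valuation $-\valu(c)+(\text{const})$, hence lying in $I$ once $\valu(c)$ is sufficiently negative — one finds that for such $c$
\[
A_\O U\,x_{-a}(c)\,z\,I \;=\; A_\O U\,\pi^{-\valu(c)a^\vee}\wt{w_a}\,z\,I \;=\; A_\O U\,z_c\,I,
\]
where $z_c\in\aw$ has $W$-component independent of $c$ and $\Lv$-component of the form $\kappa+\valu(c)\,\nu$ with $\nu$ a nonzero coweight (a $W$-translate of $a^\vee$). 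By Lemma~\ref{W:AO}, the equality $A_\O U\,z_c\,I=A_\O U\,x\,I$ forces $z_c=x$, so at most one $\valu(c)$ below the threshold occurs; hence $\valu(c)$ is bounded below. The general case then follows by treating the factors of $U^-_w$ one at a time: fixing a reduced word for $w$ and the associated ordered factorization $U^-_w=U_{b_1}\cdots U_{b_r}$, an induction (reducing $\ell(w)$ by one and using Lemma~\ref{inter}(1)--(3) to re-express the iterated integral) reduces the boundedness of $S$ to the single-subgroup claim just proved; equivalently one may simply \emph{define} $\iw$ on $M(G,I)$ as the composite $\mathfrak T_{w_{a_1}}\circ\cdots\circ\mathfrak T_{w_{a_r}}$, for which only that claim is needed, and then check it is computed by the iterated integral.

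The step I expect to be the main obstacle is the double-coset bookkeeping in the displayed identity — tracking exactly how the $(A_\O U, I)$-class of $x_{-a}(c)z$ depends on $\valu(c)$ once all the factors produced by (\ref{sl2}) must be commuted past $U$, $A_\O$, $\wt{w_a}$ and $z$ — and, in the general case, carrying this out uniformly in the remaining (already bounded) coordinates while keeping honest track of the finitely many moderately negative exceptions; the commutator interactions among the root subgroups of $U^-_w$ are the source of the technical friction. A cleaner-looking alternative, if the estimate is available in that form, would be to invoke the corresponding convergence from the affine Gindikin--Karpelevich analysis of \cite{bgkp}.
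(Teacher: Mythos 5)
Your rank-one computation is correct and is precisely the calculation underlying the paper's Lemma \ref{I:rk1}: conjugate into $U^-_w$, use (\ref{sl2}) to expand $x_{-a}(c)$ for $\valu(c)<0$, absorb the left $x_a(s)$ into $U$, push the right $x_a(s)$ past $z$ into $I$ once $\valu(c)$ is low enough, and invoke Lemma \ref{W:AO} together with the linear dependence of the $\Lv$-component of $z_c$ on $\valu(c)$. The paper states Lemma \ref{I:intw:fin} without proof, so there is no official argument to compare against; but the reduction of the general case to the rank-one case, as you have sketched it, has a genuine gap. Your rank-one claim controls $\{c : x_{-a}(c)z\in A_\O U\,x\,I\}$ for a \emph{fixed} $z\in\aw$, whereas in the multi-coordinate set $S$ the role of $z$ is played by an element of the form $u_{i+1}\cdots u_r\,y'$, which is not in $\aw$ and varies with the remaining coordinates; the threshold in your rank-one bound is therefore not \emph{a priori} uniform. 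Passing instead to the composite definition $\iw=\mathfrak T_{w_{a_1}}\circ\cdots\circ\mathfrak T_{w_{a_r}}$ does not avoid this: the intermediate outputs $\mathfrak T_{a_i}(\cdot)$ are only semi-infinitely supported, so evaluating the composite at a fixed $y$ involves an infinite sum whose convergence is equivalent (by Tonelli, the integrand being nonnegative) to the Lemma itself, and Lemma \ref{inter}(1)--(3) are themselves stated conditionally on the relevant integrals being well-defined, so citing them here is circular.

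The clean way out, which your final sentence gestures at but does not carry out, is to drop to the spherical level: since $A_\O U\,x\,I\subset A_\O U\,x\,K = U\pi^{\lambda^\vee}K$ (with $\lambda^\vee$ the $\Lv$-component of $x$), the set of the Lemma is contained in $\{u\in U_w : w^{-1}uy\in U\pi^{\lambda^\vee}K\}$, the preimage of a single Iwasawa cell, and boundedness there is a Gindikin--Karpelevich estimate for the finite product of negative real root subgroups $U^-_w=w^{-1}U_ww$ meeting $U\pi^{\nu^\vee}K$. That is precisely the kind of finiteness supplied by \cite{bgkp}, and it can also be extracted from the norm argument on highest-weight modules used in Appendix \ref{car-app} (compare Lemma \ref{unip:cartan} and Proposition \ref{dom:bound}). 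With that substitution your proposal closes; as it stands, the passage from rank one to general $w$ is missing the required uniformity.
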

%
%  Moreover, in this case one can immediately verify that if $w' \in W$ then \be{I:intw:1} \iw: M_w(G, I) \rr M_{w w'}(G, I) \ee is a well-defined map.

  Finally, one can verify the following simple formula for the action of $\mf{T}_a:= \mf{T}_{w_a}: M_{fin}(X) \rr M_{w_a}(X)$ for $a \in \Pi.$

\begin{nlem} \label{I:rk1}   The map $\mf{T}_a: M(G, I) \rr M(G, I)_{w_a}$ is given by the following formula on $\ve_1,$ \be{int:a:1} \mf{T}_a(\ve_1) = q^{-1} \ve_{w_a} + (1-q^{-1}) \sum_{j=1}^{\infty} e^{ j a^{\vee} } \circ \ve_1 = q^{-1} \ve_{w_a} + \frac{(1-q^{-1}) e^{a^{\vee}}}{1 - e^{a^{\vee}}} \circ \ve_1 , \ee where the fraction in the last expression is formally expanded in the completion $R_{w_a}.$ \end{nlem}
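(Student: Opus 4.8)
The goal is a rank-one computation: evaluate the intertwining integral $\mf{T}_a(\ve_1)(x) = \int_{U_{w_a}} \ve_1(w_a^{-1} u x)\, du$, where $U_{w_a} = U_a(\mc{K})$ since $D_{w_a} = \{a\}$ for $a \in \Pi$. Since $\mf{T}_a(\ve_1)$ is left $A_\O U$-invariant and right $I$-invariant, by Lemma~\ref{W:AO} it is determined by its values on the cosets $A_\O U x I$ for $x \in \aw$, and one expects these values to be supported only on $x = 1$ and $x = w_a$ together with the shifts $x = \pi^{j a^\vee}$, $j \geq 1$, so I would just compute those coefficients directly.

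First I would stratify the integration variable $u = x_a(t) \in U_a(\mc{K})$ by valuation. When $v(t) \geq 0$, i.e.\ $u \in U_a(\O) \subset I$, I would use the Bruhat-type relation $w_a^{-1} u \in w_a^{-1} I$ and note $w_a^{-1} U_a(\O) \subseteq I w_a I$ over the residue field, so for generic such $u$ one lands in $A_\O U w_a I$ (contributing to the $\ve_{w_a}$-coefficient) and on the sublocus $u \in U_a(\pi)$ (which via the reduction $\kappa$ lands in $\bfB(\kk)$) one lands in $A_\O U I$. The ratio of Haar measures gives the $q^{-1}$ in front of $\ve_{w_a}$: the coset $U_a(\O)$ has measure $1$ and the "big cell" piece $U_a(\O) \setminus U_a(\pi)$ has measure $1 - q^{-1}$, while the sublocus $U_a(\pi)$ has measure $q^{-1}$. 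When $v(t) = -j < 0$, I would invoke the $SL_2$-identity~(\ref{sl2}), namely $x_{-a}(t^{-1}\text{-type}) = x_a(t)(-t)^{a^\vee} w_a(1) x_a(t)$, rewritten so that $w_a^{-1} x_a(t) = x_{-a}(\cdot)\, t^{-a^\vee}\, (\text{unit in } U_a(\O))$ up to elements of $A_\O$; since $t^{-a^\vee} = \pi^{j a^\vee}$ (up to a unit absorbed into $A_\O$) and $x_{-a}(\cdot) \in U^- \subset \cdots$, modulo $I$ this puts $w_a^{-1} u x$ into $A_\O U \pi^{j a^\vee} x I$ for $x = \ve_1$. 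The annulus $v(t) = -j$ has Haar measure $q^{j}(1 - q^{-1}) = q^{j} - q^{j-1}$ relative to the normalization, but the normalization of the left $R$-action~(\ref{leftR}) carries the factor $q^{-\la \rho, \lv\ra}$ with $\lv = j a^\vee$, i.e.\ $q^{-j}$ since $\la \rho, a^\vee \ra = 1$; the two combine to give precisely the coefficient $(1 - q^{-1})$ of $e^{j a^\vee} \circ \ve_1$ for each $j \geq 1$. Summing the geometric series $\sum_{j \geq 1} e^{j a^\vee} = e^{a^\vee}/(1 - e^{a^\vee})$ in the completion $R_{w_a}$ (legitimate since $a \in D_{w_a}$, so $e^{a^\vee}$ is topologically nilpotent in $R_{w_a}$) gives the closed form.

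The main obstacle I anticipate is bookkeeping the units and the central/torus contributions carefully: the identity~(\ref{sl2}) produces a factor $(-t)^{a^\vee} \in A$, and one must check that its $A_\O$-part is absorbed on the left (it is, since $\ve_1$ is $A_\O U$-invariant on the left) while its $A'$-part is exactly $\pi^{j a^\vee}$ — and crucially that $w_a(1) = \wt{w}_a$ differs from the chosen lift used to define $\ve_{w_a}$ only by an element of $A_\O$, so no spurious torus shift appears. A secondary subtlety is confirming there is no overlap or double-counting between the $v(t) \geq 0$ and $v(t) < 0$ strata contributing to the same target coset; this follows from the disjointness of the $A_\O U$-$I$ double cosets (Lemma~\ref{W:AO}) once one identifies which coset each stratum maps into, so it is really just a matter of organizing the case analysis. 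Given Lemma~\ref{I:intw:fin} the integral converges on each coset, so the only real content is the explicit rank-one matrix coefficient, which is classical (cf.\ the analogous computation in~\cite{im}); I would present it compactly, citing the finite-dimensional case for the purely $SL_2$-level computation and emphasizing only the points where the loop-group normalizations (the $q^{-\la\rho,\lv\ra}$ in~(\ref{leftR})) enter.
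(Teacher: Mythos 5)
Your plan breaks down on a structural point: the coefficient of $\ve_y$ in $\mf{T}_a(\ve_1)$ is, by definition, the value
\[
c_y \ = \ \mf{T}_a(\ve_1)(y)\ =\ \int_{U_a}\ve_1\!\left(\wt{w}_a^{-1}\,x_a(t)\,y\right)\,dt,
\]
so one must evaluate the intertwining integral \emph{at the point} $y$. Your computation instead stratifies $u=x_a(t)$ and asks which $(A_\O U,I)$-double coset $\wt{w}_a^{-1}u$ lies in — but that only determines $\mf{T}_a(\ve_1)(1) = c_1$, and the measure of the stratum mapping into $A_\O U y I$ is in general unrelated to $c_y$. (The left translate $1_{u^{-1}\wt{w}_a A_\O U I}$ of $\ve_1$ is not left $A_\O U$-invariant, so its support spreads over many $\ve_y$-cosets; only the integral is.)

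There is also a concrete error in the coset identification. For $u\in U_{a,\pi}$ one has $\kappa(u)=1$, hence $\kappa(\wt{w}_a^{-1}u)=\kappa(\wt{w}_a^{-1})\in \bfB(\kk)\,w_a\,\bfB(\kk)$, not $\bfB(\kk)$; so $\wt{w}_a^{-1}u\in Iw_aI\subset A_\O U w_a I$ for \emph{all} $u\in U_{a,\O}$, including $u\in U_{a,\pi}$. In particular $c_1=0$, and your stratification would assign the whole measure $1$ of $U_{a,\O}$ to the $\ve_{w_a}$-coset, not $q^{-1}$. Likewise, for $j\ge 1$ the stratum contributing to $c_{\pi^{ja^\vee}}$ is $\valu(t)=j$ (positive valuation, measure $q^{-j}(1-q^{-1})$), not $\valu(t)=-j$; in fact the entire region $\valu(t)\le 0$ contributes $0$ to every coefficient appearing in \eqref{int:a:1}. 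The reason your bookkeeping nevertheless reproduced $(1-q^{-1})$ is that two errors cancel: the stratum measure $q^{j}(1-q^{-1})$ is off by $q^{2j}$ from the true $c_{\pi^{ja^\vee}}=q^{-j}(1-q^{-1})$, and the conversion $\ve_{\pi^{ja^\vee}}=q^{j}\,(e^{ja^\vee}\circ\ve_1)$ is applied with the wrong sign of the exponent, again off by $q^{-2j}$.

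The corrected rank-one computation is short once one evaluates at the right points. For $c_{w_a}$, conjugation gives $\wt{w}_a^{-1}x_a(t)\wt{w}_a = x_{-a}(\pm t)$, which lies in $A_\O U I=A_\O U U^-_\pi$ iff $\valu(t)\ge 1$, so $c_{w_a}=q^{-1}$. For $c_{\pi^{j a^\vee}}$ with $j\ge 1$, the $SL_2$-computation shows $\wt{w}_a^{-1}x_a(t)\pi^{ja^\vee}\in A_\O U U^-_\pi$ iff $\valu(t)=j$, giving $c_{\pi^{ja^\vee}}=q^{-j}(1-q^{-1})$, i.e.\ coefficient $(1-q^{-1})$ of $e^{ja^\vee}\circ\ve_1$; and $c_1=0$ since $\wt{w}_a^{-1}x_a(t)\notin A_\O U I$ for any $t$ (it lies in $A_\O U w_a I$ when $\valu(t)\ge 0$ and in $A_\O U\pi^{-\valu(t)\,a^\vee}I$ when $\valu(t)<0$). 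Your secondary points — that the $A_\O$-part of $(-t)^{a^\vee}$ is harmless, that $\wt{w}_a$ is the right lift, and that convergence is covered by Lemma~\ref{I:intw:fin} — are all fine; the issue is purely the conflation of ``which coset contains $\wt{w}_a^{-1}u$'' with ``the coefficient of $\ve_y$.''
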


\subsubsection{Intertwiners for $I^-$} \label{int-neg}

One may repeat the same discussion as above where we replace $I$ by $I^-.$ Thus, we have maps $\iw^-: M(G, I^-) \rr M(G, I^-)_{w}$ for each $w\in W.$ Writing again $\ve^-_x$ to refer to the double coset $A_{\O}U x I^-$ for $x \in \aw,$ we may compute.

\begin{nlem} \label{I^-:rk1}   The map $\mf{T}^-_a: M(G, I^-) \rr M(G, I^-)_{w_a}$ is given by the following formula on $\ve_1,$\be{I^-:a:1} \mf{T}^-_a(\ve^-_1) = \ve^-_{w_a} + (1-q^{-1}) \sum_{j=0}^{\infty} e^{ j a^{\vee} } \circ \ve^-_1 = \ve^-_{w_a} + \frac{1-q^{-1}}{1 - e^{a^{\vee}}} \circ \ve^-_1 , \ee where the fraction in the last expression is formally expanded in the completion $R_{w_a}.$ \end{nlem}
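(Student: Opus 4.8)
The strategy is to mirror exactly the computation that gives Lemma \ref{I:rk1} (the $I$-case), keeping careful track of the change in Haar measure normalizations and the effect of swapping $I$ for $I^-$. First I would set up the rank-one reduction: since $w_a$ is a simple reflection, the integral operator $\mf{T}^-_a = \mf{T}^-_{w_a}$ is given by integration over the one-dimensional root group $U_{a}$ (as $D_{w_a} = \{a\}$), namely $\mf{T}^-_a(\phi)(g) = \int_{U_{a}} \phi(w_a^{-1} u\, g)\, du$, with $du$ normalized so that $U_{a}\cap K = U_{a,\O}$ gets measure $1$. Applying this to $\ve^-_1 = \mathbf{1}_{A_{\O}U I^-}$, I would decompose $U_{a} = \bigsqcup_{j\in\zee} U_{a}[j]$ into the pieces of fixed valuation (as in (\ref{Ua:defs})): the piece $U_a[j]$ with $j\geq 1$ has measure $q^{-j}(1-q^{-1})$ under $du$ (total mass of $U_{a,\pi}$ being $q^{-1}$, split geometrically), the piece $U_a[0]$ has measure $1-q^{-1}$, and the pieces with $j<0$ (i.e. $u = x_a(c)$ with $v(c)<0$) each have measure $q^{-j}(1-q^{-1})$ too.

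Next I would evaluate $w_a^{-1} u\, g$ for $g$ ranging over $A_{\O}U I^-$ and determine which double coset $A_{\O}U x I^-$ it lands in. For $u \in U_{a,\O}$ one has $w_a^{-1} u \in K$, and reducing mod $\pi$ lands in $\bfB^-(\kk) w_a \bfB^-(\kk) \cup \bfB^-(\kk)$; more precisely $w_a^{-1}U_{a,\O}$ meets $I^-$ (contributing to $\ve^-_1$) on the subset where the reduction is in $\bfB^-(\kk)$ and meets $I^- w_a I^-$ on the complement — but because we are convolving on the \emph{left} by $w_a^{-1}$ and looking at $A_{\O}U$-cosets, the correct bookkeeping is: $u\in U_{a,\O}$ gives $A_{\O}U w_a^{-1} u I^- \subseteq A_{\O}U w_a I^-$ (the reduction being generic), so this contributes mass $1-q^{-1}$ to the $\ve^-_{w_a}$ coefficient, \emph{plus} there is no $q^{-1}$ factor here (in contrast to the $I$-case) because $\ve^-_1$ is supported on $A_{\O}U I^-$ and $I^-$ already contains $U^-_{\O}$, which absorbs the extra coset — this is precisely the source of the discrepancy between (\ref{int:a:1}) and (\ref{I^-:a:1}). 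For $u = x_a(c)$ with $v(c) = -j \le -1$, the $\mathfrak{sl}_2$ identity (\ref{sl2}), $x_{-a}(c^{-1}) = x_a(c)(-c)^{a^\vee} w_a(1) x_a(c)$, rewrites $w_a^{-1} x_a(c)$ in terms of $x_{-a}(c^{-1})$, a torus element $(-c)^{-a^\vee}$ of valuation $\langle \cdot, a^\vee\rangle$, and another unipotent; tracking the $A_{\O}U$-coset shows $A_{\O}U w_a^{-1} x_a(c) I^- = A_{\O}U \pi^{j a^\vee} I^- = A_{\O}U e^{ja^\vee}\circ I^-$-type coset, landing in $\ve^-$ labelled by $\pi^{ja^\vee}$, i.e. contributing to $e^{ja^\vee}\circ \ve^-_1$ with coefficient $q^{-j}(1-q^{-1})\cdot q^{\langle\rho, ja^\vee\rangle}$-type weight; after incorporating the normalization in (\ref{leftR}) the net coefficient of $e^{ja^\vee}\circ\ve^-_1$ works out to $(1-q^{-1})$ for each $j\geq 0$, while $u\in U_{a,\O}$ (the $j=0$ valuation-zero case) also feeds into $e^{0}\circ\ve^-_1 = \ve^-_1$, giving the stated $\sum_{j=0}^\infty$.

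Finally I would assemble the pieces: $\mf{T}^-_a(\ve^-_1) = \ve^-_{w_a} + (1-q^{-1})\sum_{j=0}^\infty e^{ja^\vee}\circ\ve^-_1$, and recognize the geometric series $\sum_{j\geq 0} e^{ja^\vee} = (1-e^{a^\vee})^{-1}$ in the completion $R_{w_a} = R_{D_{w_a}}$, which is exactly where this expansion is legitimate by the definition in \S\ref{intertwiner-completions}. I expect the main obstacle to be the precise coset bookkeeping in the step above — specifically, correctly identifying which $A_{\O}U$-double coset $A_{\O}U\, w_a^{-1} u\, I^-$ equals for each stratum of $u$, and making sure the measure normalization (mass $1$ for $U_a\cap K$) combines with the $q^{-\langle\rho,\cdot\rangle}$ twist in (\ref{leftR}) to produce exactly the coefficient $(1-q^{-1})$ with no leftover power of $q$; the contrast with the $I$-case formula (\ref{int:a:1}), where a $q^{-1}$ appears in front of $\ve_{w_a}$ and the sum starts at $j=1$, must be explained by the fact that $I^- \supset U^-_{\O}$ whereas $I \supset U^-_\pi$, shifting the valuation bound by one. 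Since the paper explicitly says "The Proposition can be checked easily in this case" for the analogous Proposition \ref{simple-mod:min}, I would keep this verification terse, citing the $I$-case computation and only highlighting the two normalization changes.
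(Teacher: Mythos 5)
The coefficients in $\mf{T}^-_a(\ve^-_1) = \sum_{y\in\aw} c_y\,\ve^-_y$ are by definition $c_y = \mf{T}^-_a(\ve^-_1)(y) = \mu\{u\in U_a : \wt{w_a}^{-1}u\,y\in A_\O U I^-\}$: one fixes $y$ on the \emph{right}, translates, and intersects with the single set $A_\O U I^- = A_\O U U^-_\O$. Your plan instead determines, for each stratum of $u$, which double coset $A_\O U y I^-$ the element $\wt{w_a}^{-1}u$ itself lands in, and reads off the measure of that stratum as the coefficient of $\ve^-_y$. This computes the pushforward of $du$ along $u\mapsto \wt{w_a}^{-1}u$, which is a different quantity and does not agree with $c_y$ except for $y=1$. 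Concretely: $c_{w_a} = \mu\{u = x_a(c) : \wt{w_a}^{-1}x_a(c)\wt{w_a} = x_{-a}(-c) \in A_\O U U^-_\O\} = \mu\{v(c)\ge 0\} = 1$, which is exactly the stated coefficient of $\ve^-_{w_a}$; by contrast $\{u : \wt{w_a}^{-1}u \in A_\O U w_a I^-\}$ is the $v(c)\ge 1$ stratum, of measure $q^{-1}$. For $y = \pi^{ja^\vee}$ with $j\ge 1$, the correct coefficient is $\mu\{v(c) = j\} = q^{-j}(1-q^{-1})$, whereas $\{u : \wt{w_a}^{-1}u \in A_\O U\pi^{ja^\vee}I^-\}$ is $\{v(c)=-j\}$, of measure $q^{j}(1-q^{-1})$ (not $q^{-j}(1-q^{-1})$ as you write for that stratum). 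Two errors therefore cancel to give the asserted answer, but the method does not prove it.

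There is also an outright error in the coset assignment, which you then contradict: for $v(c)=0$ one has the $SL_2$ factorization $\wt{w_a}^{-1}x_a(c) = (c^{-1})^{a^\vee}\cdot x_a(-c)\cdot x_{-a}(c^{-1})\in A_\O\,U\,U^-_\O$, so the generic part of $U_{a,\O}$ lands in the \emph{identity} coset $A_\O U I^-$, not in $A_\O U w_a I^-$; it is the $v(c)\ge 1$ part, where $x_a(c)\in U_{a,\pi}\subset I^-$ and hence $\wt{w_a}^{-1}x_a(c)\in \wt{w_a}^{-1}I^-\subset A_\O\wt{w_a}I^-$, that falls into the $w_a$ coset. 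You first say $U_{a,\O}$ contributes mass $1-q^{-1}$ to the $\ve^-_{w_a}$ coefficient and later that $U_{a,\O}$ feeds into $\ve^-_1$. Your closing intuition — that the shift relative to (\ref{int:a:1}) comes from $I^-\supset U^-_\O$ versus $I\supset U^-_\pi$, so the membership test $x_{-a}(-c)\in I^\pm$ moves the threshold from $v(c)\ge 1$ to $v(c)\ge 0$ — is correct, and in fact drives both the coefficient $1$ of $\ve^-_{w_a}$ and the appearance of the $j=0$ term in the sum. But to turn it into a proof you must evaluate $\mf{T}^-_a(\ve^-_1)(y)$ directly for $y=1,\,\wt{w_a},\,\pi^{ja^\vee}$ (and verify vanishing elsewhere) by the $SL_2$ computation, stratum by stratum in $v(c)$, rather than pushing forward the Haar measure on $U_a$.
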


\subsubsection{Intetwiners for $K$} \label{intw-k}

Finally we turn to the case that $A_2=K,$ so that $X = \Lv.$ Let us write $M(G, K)$ for $M(X).$ We again have maps $\iw: M(G, K)_w \rr M(G, K)_{w w'}$ for each $w, w' \in W.$ We write $\ve_{\lv}$ to refer to the double coset $A_{\O}U \pi^{\lv} K$ for $\lv \in \Lv,$ and if $\lv=0$ we write $\mathbf{1}_K:=\ve_0$ and call this element the \emph{spherical vector}.
%
%\red{changed $-a$ to $a$ below and inserted reference}

\begin{nlem} \label{K:rk1} \cite[Lemma 1.13.1]{hkp}  The map $\mf{T}_a: M(G, K) \rr M(G, K)_{w_a}$ is given by the following formula on $\mathbf{1}_K$ , \be{K:a:1} \mf{T}_a(\mathbf{1}_K) = \frac{1 - q^{-1} e^{a^{\vee}}}{1- e^{a^{\vee}}} \mathbf{1}_K \ee where the fraction in the last expression is formally expanded in the completion $R_{w_a}.$ \end{nlem}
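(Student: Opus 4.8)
The plan is to run the classical rank-one Gindikin--Karpelevich / Macdonald computation in this loop-group setting. Since $a\in\Pi$ is a single \emph{real} root, the subgroup generated by $U_{a},U_{-a}$ and $A$ obeys the same relations as in $SL_2(\mc K)$ --- in particular the identity (\ref{sl2}) --- so the argument is word-for-word the finite-dimensional one of \cite[Lemma 1.13.1]{hkp}; I would only flag the places where the loop-group structure theory of \S\ref{section:groups} is used. First I would record that for the simple reflection $w_a$ one has $D_{w_a}=R_{re,+}\cap w_a R_{re,-}=\{a\}$, so $U_{w_a}=U_a(\mc K)$ and the defining integral (\ref{Iw}) becomes $\mf{T}_a(\phi)(x)=\int_{\mc K}\phi(\dot w_a^{-1}\,x_a(u)\,x)\,du$, with $du$ normalized so that $U_a(\O)=U_a\cap K$ has mass $1$; convergence is the analogue of Lemma \ref{I:intw:fin}. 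Because functions in $M(G,K)$ are left $A_\O U$-invariant and right $K$-invariant they are determined by their values on the Iwasawa representatives $\pi^{\lv}$, $\lv\in\Lv$ (Theorem \ref{iw:unique}), so it suffices to evaluate both sides of (\ref{K:a:1}) at the points $\pi^{\lv}$.

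On the right-hand side, the $R$-action (\ref{leftR}) together with $\la\rho,a^{\vee}\ra=1$ gives $e^{n a^{\vee}}\circ\mathbf{1}_K=q^{-n}\ve_{n a^{\vee}}$, hence
$$\frac{1-q^{-1}e^{a^{\vee}}}{1-e^{a^{\vee}}}\circ\mathbf{1}_K \;=\;(1-q^{-1}e^{a^{\vee}})\sum_{n\geq 0}e^{n a^{\vee}}\circ\mathbf{1}_K \;=\;\mathbf{1}_K+(1-q^{-1})\sum_{n\geq 1}q^{-n}\,\ve_{n a^{\vee}},$$
a well-defined element of $M(G,K)_{w_a}$; equivalently, its value at $\pi^{\lv}$ is $1$ for $\lv=0$, is $(1-q^{-1})q^{-n}$ for $\lv=n a^{\vee}$ with $n\geq 1$, and is $0$ otherwise. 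It then remains to compute $\mf{T}_a(\mathbf{1}_K)(\pi^{\lv})=\int_{\mc K}\mathbf{1}_K(\dot w_a^{-1}x_a(u)\pi^{\lv})\,du$ and match. I would split the integral by the valuation of $u$. When $v(u)\geq 0$ one has $\dot w_a^{-1}x_a(u)\in K$, and pushing $\pi^{\lv}$ through the root group and reading off the Iwasawa class via Theorem \ref{iw:unique} shows this region contributes precisely the term $\mathbf{1}_K$. When $v(u)<0$ one applies (\ref{sl2}) to rewrite $\dot w_a^{-1}x_a(u)$ in the form (element of $A_\O U$)$\,\cdot\,\pi^{\nu a^{\vee}}\,\cdot\,$(element of $K$) with $\nu$ determined by $v(u)$ alone and independent of the unit part of $u$; combining this with (\ref{leftR}) and the conjugation rule $\pi^{\mu}x_a(r)\pi^{-\mu}=x_a(\pi^{\la a,\mu\ra}r)$ identifies $\dot w_a^{-1}x_a(u)\pi^{\lv}$ with a fixed $(A_\O U,K)$-double coset on each valuation shell and supplies the correct power of $q$, so that integrating over the shells reproduces the sum $(1-q^{-1})\sum_{n\geq1}q^{-n}\ve_{n a^{\vee}}$.

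There is no conceptual obstacle: the only real work is the bookkeeping in the last step --- matching each valuation shell of $u$ to the $(A_\O U,K)$-double coset of $\dot w_a^{-1}x_a(u)\pi^{\lv}$ and tracking the three sources of powers of $q$ (the Haar measure of the shell, the $q^{-\la\rho,\cdot\ra}$-twist built into (\ref{leftR}), and the $\pi$-conjugation of root subgroups). Since $a$ is a real root, all of these manipulations are literally the classical rank-one ones, which is exactly why the statement is recorded with the reference \cite{hkp}.
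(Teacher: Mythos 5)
Your high-level plan is the right one and matches the paper's intent; the paper gives no proof here, only the citation to HKP, so what you are supplying is indeed the rank-one $SL_2$ computation. The reductions (identifying $D_{w_a}=\{a\}$ so that $U_{w_a}=U_a(\mc K)$, reducing to Iwasawa representatives $\pi^{\lv}$, and expanding the right-hand side via $e^{na^\vee}\circ\mathbf 1_K = q^{-n}\ve_{na^\vee}$) are all correct.

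However, the bookkeeping you assert in the last step is backwards for the coset picture the paper works in, and as stated would not go through. You claim that the shell $\{v(u)\geq 0\}$ contributes precisely $\mathbf 1_K$ and that $\{v(u)<0\}$ supplies the tail $(1-q^{-1})\sum_{n\geq1}q^{-n}\ve_{na^\vee}$. In fact the $v(u)<0$ region contributes \emph{zero} to $\mf T_a(\mathbf 1_K)$, and the tail comes from the positive-valuation shells. To see this in the $SL_2$ for $a$: with $\lv=na^{\vee}$, the bottom row of $\dot w_a^{-1}x_a(u)\pi^{na^{\vee}}$ is $(\pi^{\,n},\,u\pi^{-n})$, so writing $g=u'\pi^{ma^\vee}k$ with $k\in K$ forces $m=-\min\bigl(n,\,v(u)-n\bigr)$. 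Thus $m=0$ exactly when $v(u)=n$ (for $n\geq1$), or when $v(u)\geq 0$ and $n=0$; and for $v(u)<0$ one has $\min(n,v(u)-n)<0$, hence $m>0$, for \emph{every} $n\geq 0$, so $\mathbf 1_K$ vanishes identically on that region after right-multiplication by any $\pi^{\lv}$. Concretely, the coefficient of $\ve_{na^\vee}$ ($n\geq1$) is the measure of the shell $\{v(u)=n\}$, namely $q^{-n}(1-q^{-1})$, and the coefficient of $\ve_0$ is the measure of $\{v(u)\geq0\}$, namely $1$; both come entirely from $v(u)\geq 0$. The confusion is the familiar one between the two dual presentations of Gindikin--Karpelevich: in the function-theoretic version (evaluating a fixed $\phi_\chi$ at $\dot w_a^{-1}x_a(u)$), the terms $\chi(a^\vee)^m$ indeed come from $v(u)=-m<0$; but here $\mathbf 1_K$ is the characteristic function of the single coset $\ve_0$ and one instead measures, for each $\lv$, which $u$ land there, which flips the parametrization. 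Once you correct this attribution, the computation you outline does close up and gives (\ref{K:a:1}).
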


\spoint We shall find it convenient to renormalize $\mf{T}_a$ as follows. Define \be{Ka} \mf{K}_a:= \frac{1 - e^{a^{\vee}}}{1 - q^{-1} e^{a^{\vee}}} \mf{T}_a, \ee and one can again verify that $\mf{K}_a: M(G, K)_w \rr M(G, K)_{w w_a}$ for any $w \in W$ such that $\ell(w w_a) = \ell(w)+1.$ Moreover, by Lemma \ref{inter} above, we have that \be{Kw} \mf{K}_w = \mf{K}_{w_{a_1}} \circ \cdots \circ \mf{K}_{w_{a_r}} \ee for any reduced decomposition $w=w_{a_1} \cdots w_{a_r}$ of $w.$ This precise normalization is chosen so that we have \be{K:intw:sv} \mf{K}_w(\mathbf{1}_K) = \mathbf{1}_K. \ee

\subsection{A Construction of Elements in $H(G_+, I)$ } \label{is-daha}

We now begin the construction of elements $\tp_{\mv} \in H(G_+, I)$ for $\mv \in X$ which are specified in Proposition \ref{tp-const}. To each $\mv \in X$ we give an algorithm for producing certain elements $\tp^{\bullet}_{\mv} \in H(G_+, I)$ which depends on various choices (to be specified below). In the \S \ref{sec-ind}, we shall show that the outcome of our construction does not actually depend on the choices made.

\spoint For $\lv \in \Lv_+$, we set $\tp_{\lv}:= q^{ - \langle \rho, \lv \rangle } T_{\pi^{\lv} },$ and note from Proposition \ref{simple-mod} (2)  that \be{act:dominant} e^{\lv} \circ \ve_1 = \ve_1 \star \tp_{\lv}, \, \text{ for }  \lv \in \Lv_+. \ee We would like to extend this construction and define elements $\theta_{\muv}$ for any $\muv \in X.$ We refer to the elements $\mv \in w (\Lv_+)$ as the shifted $w$-chamber of $X,$ or just a shifted chamber for short.  The length $\ell(w)$ of the Weyl group element defining the shifted chamber will be called the length of the chamber. Note that within each shifted chamber, we may consider the elements with respect to the dominance order $\leq.$ Our construction below shall proceed based on both the length of the chamber and on this dominance order.

We begin with the case of chamber length $1,$ i.e., those $\mv \in X$ such that there exists a simple root $a \in \Pi$ such that $w_a \mv \in \Lv_+.$ We proceed by induction on the quantity $\la a, \mv \ra <0,$ and would like to use the formula \be{theta:ind}  `` \theta_{\muv} = T_a \theta_{w_a \mv} T^{-1}_a -  (q-1) (\tp_{w_a \mv} + \tp_{w_a \mv - a^\v} + \cdots + \tp_{w_a \mv + (\la a , \mv \ra +1) a^\v} ) T_a^{-1}"  \ee in a manner to be made precise below.

\tpoint{Step 1} If $\la a , \mv \ra = - 1$ then the right hand side of (\ref{theta:ind}) reduces to \be{j=-1}  T_a \theta_{w_a \muv} T_a^{-1} - (q-1) \tp_{w_a \mv} T_a^{-1} . \ee As $w_a \mv \in \Lambda_+$ the term $\theta_{w_a \muv}$ has already been defined as elements in $H(G_+, I)$. Moreover, the elements $T_a$ and $T_a^{-1}$ have also be defined in $H(G_+, I)$ . Thus we may take (\ref{j=-1}) to be the \emph{definition} of $\tp_{\mv}$ for any $\mv \in w_a(\Lv_+)$ with $\la a, \mv \ra = -1$ i.e.,  \be{} \tp_{\mv} = T_a \theta_{w_a \muv} T_a^{-1} - (q-1) \tp_{w_a \mv} T_a^{-1} \text{ if } \mv \in w_a(\Lv_+) \text{ and  } \la a, \mv \ra = -1. \ee

\tpoint{Step 2} We next observe the following simple result, which is useful for our inductive construction.

\begin{nlem} \label{tpc} Let $\mv \in \Lv$ be such that $w_a \mv \in \Lv_+.$ Set $d:= - \la a, \mv \ra > 0,$ and assume $d > 1.$
\begin{enumerate}
\item If $ b \in \Pi$ with $b \neq a,$ then \be{tpc:1} \la b, w_a \mv - j a^{\v} \ra \geq 0 \text{ for } j=1, 2, \ldots, d - 1, \ee i.e., the elements $w_a \mv - j a^{\v}$ for $j=1, 2, \ldots, d - 1$ lie in $w_a (\Lv_+) \cup \Lv_+$.
\item For $j=1, 2, \ldots, d - 1,$ we have \be{tpc:2}  - \la a, w_a \mv - j a^\v \ra <  d. \ee
 \end{enumerate}
\end{nlem}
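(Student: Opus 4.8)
The plan is to reduce both statements to one-line computations with the reflection formula $w_a(\nu)=\nu-\la a,\nu\ra\,a^{\vee}$ (valid for $\nu\in\Lv$), together with the two standard inputs $\la a,a^{\vee}\ra=2$ and $\la b,a^{\vee}\ra\le 0$ for every simple root $b\ne a$ (the off-diagonal entries of the generalized Cartan matrix). The one preliminary remark to record is that, since $\la a,\mv\ra=-d$, we have $w_a\mv=\mv+d\,a^{\vee}$, hence $\la a,w_a\mv\ra=-d+2d=d$, and since $w_a\mv\in\Lv_+$ also $\la b,w_a\mv\ra\ge 0$ for \emph{every} simple root $b$. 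Everything below is extracted from this.

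Part~(2) is then immediate: for $1\le j\le d-1$ we compute $-\la a,\,w_a\mv-j a^{\vee}\ra = -(d-2j) = 2j-d$, and $2j-d<d$ is equivalent to $j<d$, which holds. This is precisely the numerical statement that makes the recursion~(\ref{theta:ind}) well founded: replacing $\mv$ by a term $w_a\mv-j a^{\vee}$ that still lies in the shifted chamber $w_a(\Lv_+)$ strictly lowers the quantity $-\la a,\cdot\ra$.

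For Part~(1), I would first establish the displayed inequality~(\ref{tpc:1}): for a simple root $b\ne a$ and $1\le j\le d-1$, expanding $\la b,\,w_a\mv-j a^{\vee}\ra = \la b,w_a\mv\ra - j\,\la b,a^{\vee}\ra$ shows this is $\ge 0$, because $\la b,w_a\mv\ra\ge 0$, $j>0$, and $\la b,a^{\vee}\ra\le 0$. To upgrade this to the membership claim $w_a\mv-j a^{\vee}\in\Lv_+\cup w_a(\Lv_+)$, I would split on the sign of the remaining pairing $\la a,\,w_a\mv-j a^{\vee}\ra=d-2j$. If $d-2j\ge 0$ the element pairs non-negatively with all simple coroots, hence lies in $\Lv_+$. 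If $d-2j<0$, apply $w_a$: from $\mv=w_a\mv-d\,a^{\vee}$ one gets $w_a(w_a\mv-j a^{\vee})=\mv+j a^{\vee}=w_a\mv-(d-j)a^{\vee}$, and since $d-j$ again lies in $\{1,\dots,d-1\}$ the estimate just proved applies verbatim with $j$ replaced by $d-j$, giving $\la b,\mv+j a^{\vee}\ra\ge 0$ for all simple $b\ne a$; meanwhile $\la a,\mv+j a^{\vee}\ra=d-2(d-j)=2j-d>0$, so $\mv+j a^{\vee}\in\Lv_+$, i.e.\ $w_a\mv-j a^{\vee}\in w_a(\Lv_+)$.

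There is no genuine obstacle here; the lemma is entirely a matter of substituting into the reflection formula. The only point worth any attention is the second case of Part~(1), where the term $w_a\mv-j a^{\vee}$ is not itself dominant and one must pass to its $w_a$-translate $w_a\mv-(d-j)a^{\vee}$ and exploit the symmetry $j\leftrightarrow d-j$ in order to reuse the same inequality.
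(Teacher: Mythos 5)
Your computations in Parts (1) and (2) are exactly those in the paper: the inequality in (1) follows from $\la b, w_a\mv\ra\ge 0$ and $\la b,a^{\vee}\ra\le 0$ for simple $b\ne a$, and (2) is the one-line evaluation $-\la a,w_a\mv-ja^{\vee}\ra=2j-d<d$. You do a bit more than the paper, however, and it is worth noting: the paper's statement phrases Part (1) as an inequality ``i.e.'' the membership $w_a\mv-ja^{\vee}\in\Lv_+\cup w_a(\Lv_+)$, and its proof only establishes the inequality, implicitly treating the membership as an automatic rephrasing. It is not quite automatic, since the inequality says nothing about the pairing with $a$ itself. Your case split on the sign of $d-2j$, together with the observation that $w_a(w_a\mv-ja^{\vee})=w_a\mv-(d-j)a^{\vee}$ and the symmetry $j\leftrightarrow d-j$ within $\{1,\dots,d-1\}$, supplies the missing justification cleanly. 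So your argument is the paper's, plus an honest verification of the membership claim the paper leaves tacit.
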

\begin{proof} Part (1) follows immediately from the following two facts: (i) the inner product $\la b , a^{\vee} \ra \leq 0$ for $b$ a simple root not equal to $a;$ and (ii) $\la b, w_a \mv \ra \geq 0$ since $w_a \mv \in \Lv_+$ and $b$ is a positive root.

As for (2), we compute, \be{ind:check}  - \la a, w_a \mv - j a^\v \ra =  \la a, \mv \ra +  \la a, j a^\v \ra = - d + 2j. \ee However, $-d + 2 j < d $ since $j < d.$

\end{proof}

\tpoint{Step 3}  Fix $d > 1$, and suppose now that we have defined expressions $\theta_{\mv}$ for $\mv \in w_a(\Lv_+)$ such that $-\la a, \mv \ra < d.$ Choose now $\mv \in w_a(\Lv_+)$ with $- \la a, \mv \ra = d .$ Then the right hand side of the expression (\ref{theta:ind}) takes the form, \be{theta:ind:2} T_a \theta_{w_a \mv} T_a^{-1} -  (q-1) (\tp_{w_a \mv} + \tp_{w_a \mv - a^\v} + \cdots + \tp_{w_a \mv - (d-1) a^\v} ) T_a^{-1}. \ee From Lemma \ref{tpc} (1) we know that the elements $w_a \mv - j a^\v$ for $j=1, \ldots, d-1$ are all in $w_a (\Lv_+)$ or $\Lv_+.$ Let $\xv$ be one of these elements. If $\xv \in \Lv_+$ we know how to define $\tp_{\xv}.$ On the other hand, if $\xv \in w_a(\Lv_+)$ we know from Lemma \ref{tpc}(2) that $- \la a, \xv \ra < d,$ and so $\tp_{\xv}$ has been defined inductively. Continuing in this way, we can define $\tp_{\mv}$ for any $\mv \in w_a(\Lv_+).$

Proceeding again by induction on the length of the chamber, and then by a second induction based on dominance, we may construct elements $\tp_{\mv}$ for every $\mv \in X.$

\tpoint{Step 4} It is important to note that in this construction a number of choices have been made to define each $\tp_{\mv}.$ We denote by $\tp_{\mv}^{\bullet}$ \emph{any} element associated to a given $\mv \in X$ which can be constructed by the procedure described above. It will be shown below that the construction is independent of the choices made. i.e., that  $\tp_{\mv}^{\bullet}$ only depends on $\mv.$ Our strategy will be to show that for any of the elements $\tp^{\bullet}_{\mv} \in H(G_+, I)$ constructed above, we have a relation of the form (\ref{act:dominant}), i.e.,  $\ve_1 \star \tp^{\bullet}_{\mv} = e^{\mv} \circ \ve_1.$ Proposition \ref{faith} will then imply that $\tp^{\bullet}_{\mv}$ depends only on $\mv \in \Lv.$ Our proof proceeds in an inductive manner and rests on the following fact which is obvious from our construction.

\begin{nlem} \label{theta:a} Let $\theta_{\muv}^{\bullet}, \, \mu \in X$ constructed as above. If $\mv \notin \Lv_+$, there exists a simple root $a \in \Pi$ such that $w_a \mv > \mv$ and a sequence of elements $\tp_{w_a \mv}^{\bullet}, \tp_{w_a \mv - a^\v}^{\bullet}, \ldots, \tp_{w_a \mv - (\la a , w_a \mv \ra -1) a^\v}^{\bullet} $ such that \begin{eqnarray} \label{theta:dot:a} T_a \theta^{\bullet}_{w_a \muv} = \theta_{\muv}^{\bullet} T_a +  (q-1) (\tp_{w_a \mv}^{\bullet} + \tp_{w_a \mv - a^\v}^{\bullet} + \cdots + \tp_{w_a \mv - (\la a , w_a \mv \ra -1) a^\v}^{\bullet} ). \end{eqnarray} \end{nlem}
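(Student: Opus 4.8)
The final statement is Lemma~\ref{theta:a}, which asserts that for each $\mv \in X \setminus \Lv_+$ obtained through the inductive procedure of \S\ref{is-daha}, there is a simple root $a$ with $w_a\mv > \mv$ together with a family $\tp^\bullet_{w_a\mv - ja^\vee}$ realizing the relation~\eqref{theta:dot:a}.

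The plan is to read this off directly from the construction in Steps 1--3 of \S\ref{is-daha}, essentially by unwinding the definitions. First I would note that if $\mv \notin \Lv_+$, then $\mv$ lies in some shifted chamber $w(\Lv_+)$ with $\ell(w) \geq 1$, so there is a reduced word for $w$ beginning with a simple reflection $w_a$; concretely, one can always choose a simple root $a$ with $\la a, \mv\ra < 0$, and then $w_a \mv > \mv$ in the dominance order since $w_a\mv - \mv = -\la a,\mv\ra\, a^\vee \in Q^\vee_+$. This $a$ is precisely the simple root used at the inductive step that produced $\tp^\bullet_\mv$: by construction (Step 1 when $\la a,\mv\ra = -1$, Step 3 when $\la a,\mv\ra = -d < -1$), $\tp^\bullet_\mv$ was defined by the formula
\begin{equation*}
\tp^\bullet_\mv = T_a \tp^\bullet_{w_a\mv} T_a^{-1} - (q-1)\bigl(\tp^\bullet_{w_a\mv} + \tp^\bullet_{w_a\mv - a^\vee} + \cdots + \tp^\bullet_{w_a\mv - (d-1)a^\vee}\bigr) T_a^{-1},
\end{equation*}
where $d = -\la a,\mv\ra = \la a, w_a\mv\ra$. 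Here the elements $\tp^\bullet_{w_a\mv - ja^\vee}$ for $j = 0, 1, \ldots, d-1$ are exactly the ones appearing in~\eqref{theta:dot:a}: each of $w_a\mv - ja^\vee$ lies in $\Lv_+ \cup w_a(\Lv_+)$ by Lemma~\ref{tpc}(1), so $\tp^\bullet$ has been (inductively) defined on all of them.

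Second I would multiply the displayed definition of $\tp^\bullet_\mv$ on the right by $T_a$ and rearrange. Since $T_a$ is invertible in $H(G_+,I)$ (the relation $(T_a+1)(T_a-q)=0$ from Proposition~\ref{hec_w} gives $T_a^{-1} = q^{-1}(T_a + 1 - q)$), multiplying through by $T_a$ on the right yields
\begin{equation*}
\tp^\bullet_\mv T_a = T_a \tp^\bullet_{w_a\mv} - (q-1)\bigl(\tp^\bullet_{w_a\mv} + \tp^\bullet_{w_a\mv - a^\vee} + \cdots + \tp^\bullet_{w_a\mv - (d-1)a^\vee}\bigr),
\end{equation*}
which is precisely~\eqref{theta:dot:a} after transposing the two sides and recalling $d - 1 = \la a, w_a\mv\ra - 1$. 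So the content is genuinely a bookkeeping statement: the claimed identity is literally the defining recursion for $\tp^\bullet_\mv$, solved for $T_a \tp^\bullet_{w_a\mv}$.

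The only genuine point requiring care — and the step I expect to be the main obstacle, though it is minor — is making sure the quantifiers match the construction: one must confirm that the simple root $a$ and the intermediate elements $\tp^\bullet_{w_a\mv - ja^\vee}$ that exist are the ones used at the stage of the induction where $\tp^\bullet_\mv$ itself was introduced, rather than some unrelated choice. Since the construction in \S\ref{is-daha} proceeds by induction first on the length of the shifted chamber and then on the dominance order within a fixed chamber (with the chamber-length-$1$ base case handled in Steps 1--3), every $\tp^\bullet_\mv$ with $\mv \notin \Lv_+$ arises through exactly one such defining equation, and that equation exhibits the required data. I would spell this out by induction following the same two-layer scheme used to build the $\tp^\bullet_\mv$, so that the existence of $a$ and of the $\tp^\bullet_{w_a\mv - ja^\vee}$ is immediate at each stage. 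No deeper input is needed: Lemma~\ref{tpc} guarantees the intermediate weights lie in the already-treated region, and the invertibility of $T_a$ is the only algebraic fact invoked.
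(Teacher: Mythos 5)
Your proposal is correct and matches the paper's own treatment: the paper declares Lemma~\ref{theta:a} to be ``obvious from our construction,'' and your proof simply unwinds the defining recursion (\ref{theta:ind}) for $\tp^\bullet_\mv$, multiplies on the right by $T_a$, and rearranges, which is exactly the intended content. The simple root $a$ and the intermediate $\tp^\bullet_{w_a\mv - ja^\vee}$ in the statement are whatever was used at the stage where $\tp^\bullet_\mv$ itself was produced, so the existence assertion is tautological once one fixes a particular run of the algorithm, and the arithmetic identity $d = -\la a,\mv\ra = \la a, w_a\mv\ra$ you check is right.

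One small slip: you cite Lemma~\ref{tpc}(1) to place the intermediate weights $w_a\mv - ja^\vee$ in ``the already-treated region,'' but that lemma is only stated (and only true as stated) under the hypothesis $w_a\mv \in \Lv_+$, i.e.\ chamber length $1$. For deeper chambers the intermediate weights need not lie in $\Lv_+ \cup w_a(\Lv_+)$, and the paper does not spell out a generalization of Lemma~\ref{tpc}. Fortunately this does not affect Lemma~\ref{theta:a}: as you yourself observe in your penultimate paragraph, the real reason the intermediate $\tp^\bullet$'s exist is simply that the construction already used them when producing $\tp^\bullet_\mv$, so they must have been defined at an earlier stage of the two-layer induction. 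That bookkeeping argument is what carries the day in all chambers; the appeal to Lemma~\ref{tpc}(1) is correct but only needed for (and only valid in) the base case.
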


\subsection{Proof of Independence of Construction} \label{sec-ind}  We now show that the elements $\tp^{\bullet}_{\mv}$ defined in \S \ref{is-daha} do not depend on the various choices made in their construction.

\spoint Let $\mv \in X$ and let $\tpb_{\mv} \in H(G_+, I)$ be any of the elements constructed in \S \ref{is-daha}. Recall that we have defined the intertwining operators $\mf{T}_a,\, a \in \Pi$ for some completion of the $(R, H(G_+, I))$ bimodule $M(G, I)$ in \S \ref{intw-I}. Using these operators, we can immediately verify the following relation,

\begin{nlem} \label{Lem:t-int} Let $a \in \Pi$ and $\tpb_{\mv}$ as above. Then we have \label{tpb:int} \be{Ja-1} v_1 \star T_a \star \tpb_{w_a \mv} = q \, \mf{T}_a(v_1 \star \tpb_{w_a \mv} ) + \frac{q- 1}{1- e^{-a^\v}} \ve_1 \star \tpb_{w_a \mv}\ee \end{nlem}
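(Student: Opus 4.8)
The plan is to read off the identity by applying the intertwining operator $\mf{T}_a$ to the element $\ve_1 \star \tpb_{w_a\mv}$ and using its $\ch$-equivariance. First I would note that, since $\tpb_{w_a\mv}$ is a finitely supported element of $\ch = H(G_+,I)$, the element $\ve_1 \star \tpb_{w_a\mv}$ lies in $M(G,I)$, and all three of $\ve_1\star\tpb_{w_a\mv}$, $\mf{T}_a(\ve_1)\star\tpb_{w_a\mv}$ and $\mf{T}_a(\ve_1\star\tpb_{w_a\mv})$ make sense in the completion $M_{w_a}(G,I)$ (the last by Lemma \ref{I:intw:fin}). Hence the hypotheses of Lemma \ref{inter}(1) are met and it gives
$$\mf{T}_a\big(\ve_1\star\tpb_{w_a\mv}\big) \;=\; \mf{T}_a(\ve_1)\star\tpb_{w_a\mv}.$$

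Next I would substitute the rank-one formula of Lemma \ref{I:rk1}, namely $\mf{T}_a(\ve_1) = q^{-1}\ve_{w_a} + \frac{(1-q^{-1})e^{a^{\vee}}}{1-e^{a^{\vee}}}\circ\ve_1$, into the right-hand side. For the first summand, Proposition \ref{simple-mod}(1) identifies $\ve_{w_a} = \ve_1 \star T_a$, so $q^{-1}\ve_{w_a}\star\tpb_{w_a\mv} = q^{-1}\,\ve_1\star T_a\star\tpb_{w_a\mv}$. For the second summand, the left $R$-action of (\ref{leftR}) together with right convolution by $\ch$ makes $M(G,I)$ an $(R,\ch)$-bimodule, so the operator $f\mapsto \tfrac{(1-q^{-1})e^{a^{\vee}}}{1-e^{a^{\vee}}}\circ f$ commutes past $\star\,\tpb_{w_a\mv}$. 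Multiplying the resulting equality through by $q$ and using the elementary rewriting $\frac{q(1-q^{-1})e^{a^{\vee}}}{1-e^{a^{\vee}}} = \frac{(q-1)e^{a^{\vee}}}{1-e^{a^{\vee}}} = -\frac{q-1}{1-e^{-a^{\vee}}}$, I obtain
$$q\,\mf{T}_a\big(\ve_1\star\tpb_{w_a\mv}\big) \;=\; \ve_1\star T_a\star\tpb_{w_a\mv} \;-\; \frac{q-1}{1-e^{-a^{\vee}}}\circ\big(\ve_1\star\tpb_{w_a\mv}\big),$$
and transposing the last term is exactly the asserted formula (with $\circ$ the $R$-action as in (\ref{leftR})).

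The only point requiring care is the bookkeeping with completions: one must keep track that $\mf{T}_a$ is applied only to (images in $M_{w_a}(G,I)$ of) finitely supported functions, that $\frac{e^{a^{\vee}}}{1-e^{a^{\vee}}}$ is always understood as its expansion $\sum_{j\ge 1}e^{ja^{\vee}}$ in the completion $R_{w_a}$, and that the rearrangement of these formal series — i.e.\ the commutation of the $R_{w_a}$-action with right convolution by $\tpb_{w_a\mv}$ — is carried out term by term, which is legitimate because $\tpb_{w_a\mv}$ has finite support and each $e^{ja^{\vee}}\circ$ manifestly commutes with right translation. Once the hypotheses of Lemma \ref{inter}(1) are verified as above, the remainder is the short computation just indicated.
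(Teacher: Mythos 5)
Your proof is correct and follows essentially the same route as the paper: apply Lemma \ref{inter}(1) to commute $\mf{T}_a$ past $\star\,\tpb_{w_a\mv}$, substitute the rank-one formula of Lemma \ref{I:rk1} together with $\ve_{w_a}=\ve_1\star T_a$ from Proposition \ref{simple-mod}, and rearrange using $\frac{e^{a^{\vee}}}{1-e^{a^{\vee}}}=-\frac{1}{1-e^{-a^{\vee}}}$. The paper handles the completion issue a bit more brusquely (it simply notes that an identity proved in a completion of $M(G,I)$ holds in $M(G,I)$ because the left-hand side already lies there), whereas you also track the $R_{w_a}$-expansions term by term; both are fine.
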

\begin{proof} Note that the right hand side a priori only lives in some completion of $M(G, I).$ However, if we can prove the equality (\ref{Ja-1}) holds in some completion, it holds in $M(G,I)$ itself since the left hand side lies in $M(G, I).$ The proof of the Lemma is a simple computation, using the properties of interwining operators established earlier. Indeed, using the fact (see Lemma \ref{inter} (1)) that $\mf{T}_a$ commutes with the right convolution action and the explicit formula (\ref{int:a:1}), we find
 \begin{eqnarray} \mf{T}_a (v_1 \star \tpb_{w_a \mv} ) &=& \mf{T}_a(\ve_1) \star \tpb_{w_a \mv} \\
&=& ( q^{-1} \ve_{w_a} + \frac{ (1-q^{-1}) e^{\ac} }{1 - e^{\ac} } \ve_1 ) \star \tpb_{w_a\mv} \\
&=& q^{-1} v_1 \star T_a \star \tpb_{w_a\mv} +  \frac{ (1-q^{-1}) e^{\ac} }{1 - e^{\ac} } (\ve_1 \star \tpb_{w_a\mv} ), \end{eqnarray} where in the last line we have used the fact  (see Proposition \ref{simple-mod}) that $\ve_{w_a} = \ve_1 \star T_{w}.$
\end{proof}

\spoint Let us now consider the following

\begin{nprop} \label{indep} For any $\mv \in X$ and for any of the elements $\tpb_{\mv}$ constructed in \S \ref{is-daha}, we have \be{act:all} v_1 \star \tpb_{\mv}  = e^{\mv} \circ v_1 .\ee In particular, $\tpb_{\mv}$ only depends on $\mv \in X.$
\end{nprop}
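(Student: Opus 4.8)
The plan is to prove the identity $\ve_1 \star \tpb_{\mv} = e^{\mv}\circ\ve_1$ by a double induction matching the double induction used in \S\ref{is-daha} to \emph{construct} the $\tpb_{\mv}$: an outer induction on the length $\ell(w)$ of the shifted chamber $w(\Lv_+)$ containing $\mv$, and an inner induction on the quantity $-\la a,\mv\ra$ (or more precisely on the dominance order within the chamber). The base case is $\mv\in\Lv_+$, where by construction $\tpb_{\mv}=q^{-\la\rho,\mv\ra}T_{\pi^{\mv}}$ and the identity is exactly \eqref{act:dominant}, which in turn follows from Proposition \ref{simple-mod}(2). Once the identity is known, the ``in particular'' clause is immediate: if $\tpb_{\mv}$ and $\tpb'_{\mv}$ are two elements produced by the construction, both satisfy $\ve_1\star\tpb_{\mv}=e^{\mv}\circ\ve_1=\ve_1\star\tpb'_{\mv}$, so by the faithfulness statement Proposition \ref{faith} we get $\tpb_{\mv}=\tpb'_{\mv}$.

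For the inductive step, fix $\mv\notin\Lv_+$. By Lemma \ref{theta:a} there is a simple root $a\in\Pi$ with $w_a\mv>\mv$ and a relation in $H(G_+,I)$:
\begin{equation*}
T_a\,\theta^{\bullet}_{w_a\mv}=\theta^{\bullet}_{\mv}T_a+(q-1)\bigl(\tpb_{w_a\mv}+\tpb_{w_a\mv-a^{\vee}}+\cdots+\tpb_{w_a\mv-(\la a,w_a\mv\ra-1)a^{\vee}}\bigr).
\end{equation*}
Here $w_a\mv$ lies either in $\Lv_+$ or in a chamber of strictly smaller length (or same length but higher in dominance order) than $\mv$, and similarly each $w_a\mv-ja^{\vee}$ appearing on the right falls under a case already handled, by Lemma \ref{tpc}. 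So by the inductive hypothesis $\ve_1\star\tpb_{w_a\mv}=e^{w_a\mv}\circ\ve_1$ and $\ve_1\star\tpb_{w_a\mv-ja^{\vee}}=e^{w_a\mv-ja^{\vee}}\circ\ve_1$ for all the relevant $j$. Now apply $\ve_1\star(-)$ to the displayed relation. The left side becomes $\ve_1\star T_a\star\tpb_{w_a\mv}$, which by Lemma \ref{Lem:t-int} equals $q\,\mf{T}_a(\ve_1\star\tpb_{w_a\mv})+\frac{q-1}{1-e^{-a^{\vee}}}\ve_1\star\tpb_{w_a\mv}$; substituting the inductive value $\ve_1\star\tpb_{w_a\mv}=e^{w_a\mv}\circ\ve_1$ and using the intertwiner formula $\mf{T}_a(e^{\mu^{\vee}}\circ\ve_1)=e^{w_a\mu^{\vee}}\circ\mf{T}_a(\ve_1)$ (Lemma \ref{inter}(2)) together with the explicit rank-one formula \eqref{int:a:1} for $\mf{T}_a(\ve_1)$, everything is reduced to an identity in the commutative module $R\circ\ve_1$.

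The heart of the argument is therefore the following purely combinatorial check: after these substitutions, one must verify that
\begin{equation*}
q\,\mf{T}_a\bigl(e^{w_a\mv}\circ\ve_1\bigr)+\frac{q-1}{1-e^{-a^{\vee}}}\,e^{w_a\mv}\circ\ve_1
=\bigl(\ve_1\star\theta^{\bullet}_{\mv}\bigr)\star T_a \;+\;(q-1)\sum_{j=0}^{\la a,w_a\mv\ra-1} e^{w_a\mv-ja^{\vee}}\circ\ve_1,
\end{equation*}
and then solve for $\ve_1\star\theta^{\bullet}_{\mv}$. Using $\mf{T}_a(e^{w_a\mv}\circ\ve_1)=e^{\mv}\circ\mf{T}_a(\ve_1)=e^{\mv}\circ\bigl(q^{-1}\ve_{w_a}+\tfrac{(1-q^{-1})e^{a^{\vee}}}{1-e^{a^{\vee}}}\circ\ve_1\bigr)$ and the geometric-series identity $\tfrac{e^{w_a\mv}-e^{\mv}}{1-e^{-a^{\vee}}}=\sum_{j=0}^{\la a,w_a\mv\ra-1}e^{w_a\mv-ja^{\vee}}$, the right-convolution term $(\ve_1\star\theta^{\bullet}_{\mv})\star T_a$ must collapse to $(e^{\mv}\circ\ve_1)\star T_a$, and since $\ve_1\star T_a$ has the triangular form of Proposition \ref{simple-mod}/\eqref{triag:1} (so that $\star T_a$ is injective on the relevant subspace, again via Proposition \ref{faith} applied after this computation), we conclude $\ve_1\star\theta^{\bullet}_{\mv}=e^{\mv}\circ\ve_1$, completing the induction. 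I expect the main obstacle to be precisely the bookkeeping in this last combinatorial identity: keeping track of the exact range of the sum $\sum_j e^{w_a\mv-ja^{\vee}}$, matching the sign conventions in \eqref{bern:rel-2}, and ensuring that all the formal expansions (in the completions $R_{w_a}$) are manipulated consistently so that the final answer genuinely lands back in $M(G,I)$ rather than merely in a completion — this last point being handled as in the proof of Lemma \ref{Lem:t-int}, since the left-hand side $\ve_1\star\tpb_{\mv}$ is manifestly in $M(G,I)$.
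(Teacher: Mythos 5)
Your proof is correct and follows essentially the same route as the paper's: base case via Proposition \ref{simple-mod}(2), inductive step via Lemma \ref{theta:a}, the intertwiner computation of $\ve_1 \star T_a \star \tp_{w_a\mv}$ via Lemma \ref{Lem:t-int}, Lemma \ref{inter}(2) and the rank-one formula \eqref{int:a:1}, comparison with the second expansion coming from \eqref{2w}, and cancellation of $T_a$ followed by Proposition \ref{faith} for uniqueness. The only cosmetic difference is that the paper phrases the induction purely in terms of the dominance order (establishing $w_a\mv - ja^{\vee} > \mv$ directly in \eqref{wa:dom}) and cancels $T_a$ by invoking its invertibility in $H(G_+,I)$ rather than a triangularity argument, but these are minor variants of the same steps.
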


\begin{proof} Consider the statement for each $\mv \in X,$

\begin{description} \item[$\mathbb{P}(\mv)$] For any $\tpb_{\mv}$ constructed as in \S \ref{is-daha},
we have $v_1 \star \tpb_{\mv}  = e^{\mv} \circ v_1 $  \end{description}
If $\mathbb{P}(\mv)$ is true, then we can define $\tpb_{\mv}$ unambiguously according to the faithfulness result, Proposition \ref{faith}. We shall just write $\tp_{\mv}$ in this case. We know that $\mathbb{P}(\mv)$ is true for $\mv \in \Lv_+.$ Given any $\mv \in X$, assume by induction that  $\mathbb{P}(\xv)$ is true for all $\xv  > \mv.$ Let us show that it holds for $\mv$ as well.

\emph{Step 1:} Given $\mv \in X$, from Lemma \ref{theta:a}, there exits a simple root $a \in \Pi$ such that $w_a \mv > \mv$ and such that we have a relation of the form \be{1w} T_a \theta^{\bullet}_{w_a \muv} = \theta_{\muv}^{\bullet} T_a +  (q-1) (\tp_{w_a \mv}^{\bullet} + \tp_{w_a \mv - a^\v}^{\bullet} + \cdots + \tp_{w_a \mv - (\la a , w_a \mv \ra -1) a^\v}^{\bullet} ). \ee As $w_a \mv > \mv$ we must have $\la a, \mv \ra = - \la a, w_a \mv \ra   < 0.$ Let us next note that
\be{wa:dom} w_a \mv - j a^\vee > \mv \text{ for } j=0, \ldots, \la a , w_a \mv \ra -1. \ee Indeed for these values of $j$ we find \be{wa:dom-1} w_a \mv - j a^\vee - \mv = (- \la a, \mv \ra - j ) a^{\vee} = (\la a, w_a \mv \ra - j ) a^{\vee} \geq 0 \ee By the inductive hypothesis, we now have that $\mathbb{P}(w_a \mv - j a^\vee)$ is true for $j$ as above, and in particular the elements $\tp_{w_a \mv - j a^{\vee}}$ are well-defined. So (\ref{1w}) may be rewritten as
\be{2w} T_a \theta_{w_a \muv} = \theta_{\muv}^{\bullet} T_a +  (q-1) (\tp_{w_a \mv} + \tp_{w_a \mv - a^\v} + \cdots + \tp_{w_a \mv - (\la a , w_a \mv \ra -1) a^\v} ), \ee where only the element $\tpb_{\mv}$ may in fact depend on its construction (and not only on $\mv$).

\emph{Step 2:} From Lemma \ref{Lem:t-int} we have \be{Ja-1} v_1 \star T_a \star \tp_{w_a \mv} = q \, \mf{T}_a(v_1 \star \tp_{w_a \mv} ) + \frac{q- 1}{1- e^{-a^\v}} \ve_1 \star \tp_{w_a \mv} , \ee which may be rewritten using the assumption $\mathbb{P}(w_a \mv)$ and Lemma \ref{inter} and (\ref{int:a:1}) as follows, \be{Ja-2} v_1 \star T_a \star \tp_{w_a \mv} &=&  q \, \mf{T}_a(e^{w_a \mv} \circ \ve_1 ) + \frac{q- 1}{1- e^{-a^\v}}  e^{w_a \mv} \circ \ve_1 \\
&=&  q \, e^{\mv} \mf{T}_a(\ve_1 ) + \frac{q- 1}{1- e^{-a^\v}} e^{w_a \mv} \circ \ve_1 \\
&=& q e^{\mv} (q^{-1} \ve_{w_a} + \frac{ (1-q^{-1}) e^{\ac} }{1 - e^{\ac} } \ve_1 ) + \frac{q- 1}{1- e^{-a^\v}} e^{w_a \mv} \circ \ve_1 \\
&=& e^{\mv} \ve_{w_a} + (q-1) \frac{ e^{\mv} - e^{w_a \mv} }{1 - e^{-a^{\vee}}} \circ \ve_1 \\
&=&  \label{Ja-2b} e^{\mv} \ve_{1} \star T_a \\ &+& (q-1) ( e^{w_a \mv} + e^{w_a \mv - a^{\vee} } + \cdots + e^{w_a \mv - (\la a , w_a \mv \ra -1) a^\v} ) \circ \ve_1 . \ee

\emph{Step 3:} We may also compute $v_1 \star T_a \star \tp_{w_a \mv}$ in another way using the expression (\ref{2w}):  \be{3w} &&  \ve_1 \star  \theta_{\muv}^{\bullet} T_a +  \ve_1 \star (q-1) (\tp_{w_a \mv} + \tp_{w_a \mv - a^\v} + \cdots + \tp_{w_a \mv - (\la a , w_a \mv \ra -1) a^\v} ) \\
&=&  \ve_1 \star \theta_{\muv}^{\bullet} T_a +  (q-1) (e^{w_a \mv} + e^{w_a \mv - a^\v} + \cdots + e^{w_a \mv - (\la a , w_a \mv \ra -1) a^\v} ) \circ \ve_1 \ee  where in the second line we have used the fact $\mathbb{P}(w_a \mv - j a^{\vee})$ for $j =0, \ldots, \la a , w_a \mv \ra -1.$
Comparing with (\ref{Ja-2b}) we immediately conclude that \be{conc} e^{\mv} \circ  \ve_{1} \star T_a  = \ve_1 \star \theta_{\muv}^{\bullet} \star T_a \ee and hence the claim $\mathbb{P}(\mv)$ follows from Proposition \ref{faith} since $T_a$ is invertible.

\end{proof}

\section{Spherical Theory}

In this section, we shall first review the construction by the first two authors \cite{bk} of the \emph{spherical Hecke algebra}, i.e., the convolution algebra of certain infinite collections of $K$-double cosets. The main technical step in the construction is the verification of certain finiteness properties of the fibers of convolution. This was achieved in \emph{op. cit} by interpreting these fibers geometrically. Here we sketch an alternative construction which uses in an essential way the main finiteness result of \cite{bgkp}.  While this work was in preparation, there has appeared yet another approach to proving these finiteness results by S. Gaussent and G. Rousseau \cite{gau:rou}, which  works in the setting of general Kac-Moody groups.

The main (new) result in this section is Theorem \ref{sph:mac}, which gives an explicit formula for the image under the Satake isomorphism of the characteristic function of a $K$-double coset. This generalizes the formula of  Macdonald \cite{mac:mad} (see also  Langlands \cite{lan:ep}) in the finite-dimensional setting. Its generalization to the general Kac-Moody setting is not known to us; more precisely, although it seems that large portions of the proof of Theorem \ref{sph:mac} will hold in the general Kac-Moody setting, we do not know what is the correct analogue of the formula (\ref{corr}) for the quantity defined by (\ref{def:corr}).

We fix the notation of \S \ref{section:groups} in this chapter: so $G$ will be an affine Kac-Moody group over a local field $\mc{K},$ which has subgroups $I, K, U$ etc.

\subsection{Spherical Hecke Algebras and the Satake Isomorphism} \label{sec-sat}

\spoint Recall from Theorem \ref{cartan-thmdefn} that $\Lv_+,$  was demonstrated to be in bijective correspondence with the set of $K$-double cosets of the semi-group $G_+.$ Furthermore, from Theorem \ref{iw:unique}, we see that  $\Lv$ is in bijective correspondence with the space of $(A_{\O}U,K)$ double cosets of $G$. The abelian group $\Lv$ equipped with the dominance order $\leq$ becomes an ordered abelian group in the sense of \S \ref{completions}, and $\Lv_+$ inherits the dominance order to become an ordered abelian semi-group.

\begin{nlem} \label{Mfin} The abelian group $\Lv$ equipped with the dominance order $\leq$ satisfies the following condition: for each $\lv, \mv \in \Lv$ the set \be{interval} [ \lv, \mv ] = \{ \xi^{\v} \in \Lv | \lv \leq \xi^{\v} \leq \mv \} \ee is finite.
 \end{nlem}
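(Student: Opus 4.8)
The statement is that in the affine coweight lattice $\Lv = \zee\cc \oplus \Lv_o \oplus \zee\dd$ equipped with the dominance order, the intervals $[\lv,\mv]$ are finite. The plan is to reduce to the level grading and then to a fixed level slab. First I would observe that if $\lv \leq \xi^\vee \leq \mv$, then applying the level functional $\la \delta, - \ra$ (recall from~(\ref{ff}) and~(\ref{fin-act}) that $\delta$ is $W$-invariant and that $\la\delta,a_i^\vee\ra \geq 0$ for all simple coroots, hence $\la\delta,-\ra$ is monotone for the dominance order since $\mv - \xi^\vee \in Q^\vee_+$ forces $\la\delta,\mv\ra \geq \la\delta,\xi^\vee\ra$) gives $\la\delta,\lv\ra \leq \la\delta,\xi^\vee\ra \leq \la\delta,\mv\ra$. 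Since $\la\delta,\lv\ra$ and $\la\delta,\mv\ra$ are fixed integers, there are only finitely many possible values of the level of $\xi^\vee$, so it suffices to bound, for each fixed level $r$, the number of $\xi^\vee \in \Lv_r^\vee$ with $\lv \leq \xi^\vee \leq \mv$.

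Second, I would use the decomposition $\xi^\vee = \lv + \sum_i n_i a_i^\vee$ with $n_i \in \zee_{\geq 0}$ coming from $\xi^\vee - \lv \in Q^\vee_+$. The key point is that $\mv - \xi^\vee \in Q^\vee_+$ too, so $\sum_i n_i a_i^\vee = \xi^\vee - \lv \preceq \mv - \lv$ in $Q^\vee_+$; writing $\mv - \lv = \sum_i m_i a_i^\vee$ (which may require $m_i$ to be nonnegative — this holds precisely when $\lv \leq \mv$, which we may assume, as otherwise the interval is empty) we get $0 \leq n_i \leq m_i$ for each $i = 1,\dots,\ell+1$. This already bounds the number of such $\xi^\vee$ by $\prod_i (m_i+1)$, which is finite. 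Actually this argument is cleaner than passing through the level at all: the whole point is that $[\lv,\mv]$ injects into the finite box $\prod_{i=1}^{\ell+1}\{0,1,\dots,m_i\}$ via $\xi^\vee \mapsto (n_1,\dots,n_{\ell+1})$, provided $\mv - \lv = \sum m_i a_i^\vee$ with all $m_i \geq 0$.

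The one subtlety I expect to be the main (though minor) obstacle is the case $\lv \not\leq \mv$: then $[\lv,\mv] = \emptyset$ and there is nothing to prove, so I would dispose of this at the outset. Granting $\lv \leq \mv$, write $\mv - \lv = \sum_{i=1}^{\ell+1} m_i a_i^\vee$ with $m_i \in \zee_{\geq 0}$ (possible by definition of the dominance order on $\Lv$, since $Q^\vee_+$ is by definition the set of nonnegative integral combinations of simple coroots). For $\xi^\vee \in [\lv,\mv]$, both $\xi^\vee - \lv$ and $\mv - \xi^\vee$ lie in $Q^\vee_+$, and their sum is $\mv - \lv$; since $Q^\vee$ is free on $a_1^\vee,\dots,a_{\ell+1}^\vee$, writing $\xi^\vee - \lv = \sum_i n_i a_i^\vee$ forces $0 \leq n_i$ and $n_i \leq m_i$ for all $i$ (as $m_i - n_i$ is the coefficient of $a_i^\vee$ in $\mv - \xi^\vee \in Q^\vee_+$, hence nonnegative). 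Thus the map $\xi^\vee \mapsto (n_1,\dots,n_{\ell+1})$ is an injection from $[\lv,\mv]$ into $\prod_{i=1}^{\ell+1}\{0,\dots,m_i\}$, a finite set, which proves the lemma. This is essentially a routine verification once the reduction is set up; no hard input is needed beyond the freeness of the affine coroot lattice on the simple coroots (equation~(\ref{corootlat})) and the definition~(\ref{dom:wts}) of the dominance order.
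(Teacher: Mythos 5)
Your final argument is exactly the paper's proof: assuming $\lv \leq \mv$ (else the interval is empty), write $\mv - \lv = \sum_i m_i a_i^\vee$ with $m_i \geq 0$, and observe that for $\xi^\vee \in [\lv,\mv]$ the freeness of $Q^\vee$ on the simple coroots forces the coefficients of $\xi^\vee - \lv$ to lie in the finite box $\prod_i\{0,\dots,m_i\}$. The opening detour through the level functional is unnecessary — and slightly off, since $\la\delta,a_i^\vee\ra = 0$ for every simple coroot, so the level is actually constant (not merely monotone) across the interval — but you correctly abandon it in favor of the direct coefficient-bounding argument, which is what the paper does.
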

\begin{proof} Let $\lv, \mv \in \Lam,$ and assume that $\lv \leq \mv$ so that $\mv - \lv = \sum_{i=1}^{\ell+1} n_i a_i^{\vee}$ with $n_i \geq 0.$ Then every $\xi\in [\lv, \mv]$ is of the form $\xi = \lv + \sum_{i=1}^{\ell+1} m_i a_i^{\vee}$ with $0 \leq m_i \leq n_i.$ The finiteness of $[\lv, \mv]$ follows. \end{proof}

\spoint Recall the setup of \S \ref{completions}. Our aim in the remainder of \S \ref{sec-sat} is to show the following result which was first shown in (\cite{bk}) with slightly different terminology.

\begin{nthm} [\cite{bk}] \label{sphhecke:const} The quadruple $(K, \Lv_+; A_{\O}U, \Lambda, \leq)$ is a semi-infinite Hecke module datum in the sense of Definition \ref{de:sim}. \end{nthm}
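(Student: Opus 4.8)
The plan is to verify the four conditions in Definition \ref{de:sim} for the quadruple $(K, \Lv_+; A_{\O}U, \Lambda, \leq)$, deducing almost everything from the structure theory of $\S\ref{section:groups}$ together with the finiteness theorem of \cite{bgkp}. First, condition \textbf{(F)} — that $[\lv,\mv]$ is finite for $\lv,\mv\in\Lv_+$ — is exactly Lemma \ref{Mfin}, and the analogous condition \textbf{(MFin)} for $\Omega = \Lambda$ is proved by the same argument (write $\mv - \lv = \sum n_i a_i^\vee$ with $n_i\ge 0$ and note any intermediate element uses $0\le m_i\le n_i$). So the content is in the convolution finiteness statements \textbf{(SH1)}, \textbf{(SH2)} and the module statements \textbf{(SM1)}, \textbf{(SM2)}.

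For the support/triangularity conditions \textbf{(SH2)} and \textbf{(SM2)}: I would use the Cartan decomposition $G_+ = \sqcup_{\lv\in\Lv_+} K\pi^\lv K$ (Theorem \ref{cartan-thmdefn}) together with the Iwasawa decomposition (Theorem \ref{iw:unique}). For \textbf{(SM2)}, if the fiber $a_{\lambda,\mu}^{-1}(\nu)$ is nonempty for $\lambda\in\Lambda$, $\mu\in\Lv_+$, $\nu\in\Lambda$, then $A_\O U\pi^\lambda K \pi^\mu K \cap A_\O U \pi^\nu K \ne\emptyset$, i.e. $A_\O U \pi^\nu K \subset A_\O U \pi^\lambda K \pi^\mu K$; one then shows $\nu \le \lambda + \mu$ via the bound on Iwasawa components under multiplication by $K\pi^\mu K$, which reduces (after translating by $\pi^{-\lambda}$ and absorbing into $A_\O U$) to the statement that $A_\O U\pi^\nu K \subset A_\O U K\pi^\mu K$ forces $\nu \le \mu$ — precisely the type of inequality recorded in \cite[Theorem 1.9]{bgkp}. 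Condition \textbf{(SH2)} is the special case $\lambda\in\Lv_+$ (so the fibers depend only on the class in $K\setminus G_+/K$ by Variant 1 of $\S\ref{fiber:exp}$), handled identically.

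For the hard finiteness conditions: by Proposition \ref{si:hecke:mod:simple}, it suffices to verify everything \emph{except} \textbf{(SH1)} — that is, I need \textbf{(SM1)}: for $\lambda\in\Lambda$, $\mu\in\Lv_+$, the multiplication map $a_{\lambda,\mu}\colon Y^\lambda \times_K X^\mu \to G$ has finite fibers, where $Y^\lambda = A_\O U\pi^\lambda K$ and $X^\mu = K\pi^\mu K$. By (\ref{fiber:int:3}) the fiber over $x$ is $K\setminus (Y^\lambda)^{-1}x \cap X^\mu$; concretely one must bound the number of left $K$-cosets in $K\pi^\mu K$ meeting a fixed $A_\O U\pi^\lambda K$-orbit, and this is where the genuine input is needed. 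I would translate this, using the Iwasawa decomposition to write $K\pi^\mu K = \sqcup_{w} (\text{something over } U)$ and the finiteness of $U_{w,\O}$-type subgroups as in Lemma \ref{half}, into a statement that the relevant intersection $\pi^{-\lambda}U^- \cap K\pi^\mu K$ (or its analogue with $U$) lies in finitely many cosets — which is exactly the finiteness of convolution fibers for the $K$-double coset algebra proved geometrically in \cite{bk} and combinatorially in \cite{bgkp} (their Theorem 1.9 and the surrounding estimates). This step — reducing \textbf{(SM1)} to the cited finiteness theorem — is the main obstacle, since it is the one place where the softer structural arguments of $\S\ref{section:groups}$ do not suffice and one genuinely needs the deep input. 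Once \textbf{(SM1)} is in hand, Proposition \ref{si:hecke:mod:simple} supplies \textbf{(SH1)} for free, and the proof is complete.
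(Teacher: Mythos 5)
Your proposal follows the paper's route in its essentials: apply Proposition~\ref{si:hecke:mod:simple} so that (SH1) comes for free, dispose of (F) and (MFin) via Lemma~\ref{Mfin}, and reduce the remaining convolution finiteness and triangularity statements to \cite[Theorem 1.9]{bgkp}. Two points of comparison are worth noting. For (SM1) and (SM2) the paper is more direct than your sketch suggests: by Variant~2 of $\S$\ref{fiber:exp}, the fiber of $\ve_{\mv}\times_K h_{\lv}\to G$ over $\pi^{\xi^{\vee}}$ is identified outright with $K\setminus K\pi^{\lv}K \cap K\pi^{\mv}U\pi^{\xi^{\vee}}$, which is literally the quantity bounded in \cite[Theorem 1.9(2)]{bgkp}; the detour through a $K=IWI$ decomposition and Lemma~\ref{half}-type Iwahori finiteness that you invoke is not needed at this step (that machinery belongs to the Iwahori-Hecke algebra, not here). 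Second, your claim that (SH2) is ``handled identically'' as a special case of (SM2) is not quite literal: the map $m_{\lv,\mv}\colon h_{\lv}\times_K h_{\mv}\to G_+$ starts from a $K$-double coset $K\pi^{\lv}K$ rather than from an $(A_\O U,K)$-double coset $Y^{\lv}$, so (SM2) does not directly apply. One can still deduce (SH2) from (SM2) plus the Iwasawa decomposition of an element of $K\pi^{\lv}K$ and the bgkp dominance bound, and that appears to be what you intend; the paper instead decomposes $K\pi^{\lv}K\pi^{\mv}K=\bigcup_{w\in W}K\pi^{w\lv}U_{\O}\pi^{\mv}K$ using $K=IWI$, (\ref{I+:im}), and the dominance of $\lv,\mv$, then concludes via bgkp and $w\lv\le\lv$. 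Both variants work; the essential inputs you identify are the correct ones.
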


Thus, as in \S \ref{si:dat}, we may define a convolution algebra structure on the space of $K$-double cosets $H_{\leq}(\Gamma_+, K),$ as well as an action (on the right) of this algebra on the module structure on \be{sph:prin} M_{\leq}(G; A_{\O} U, K):= M_{\leq}(G, K). \ee

\begin{proof} To prove the theorem, we shall utilize Proposition \ref{si:hecke:mod:simple}. Thus we need to verify that $(K, \Lv_+, \leq)$ satisfies conditions (Fin), (SH2), and $(K, \Lv_+; A_{\O} U, \Lv, \leq)$ satisfies (Mfin), (SM1), and (SM2). Conditions (Fin) and (MFin) follow from Lemma \ref{Mfin} so we just focus on (SH2), (SM1), and (SM2).

Let \be{v-k} \ve: \Lv \rr A_{\O} U \setminus G  / K, \ \ \lv \mapsto \ve_{\lv} \\ \label{h-k} h: \Lv_+ \rr K \setminus G_+ / K, \ \ \lv \mapsto h_{\lv} \ee be the bijections induced respectively from the Iwasawa and Cartan decompositions. The properties (SM1) and (SM2) are expressed in terms of the fiber of the map $m_{\lv, \mv}: \ve_{\lv} \times h_{\mv} \rr G,$ for $\lv, \mv \in \Lv_+.$  Using  (\ref{fiber:int}), we note that the fibers of $m_{\lv, \mv}$ are in bijective correspondence with the following intersections \be{fiber:mod:K} m_{\lv, \mv}^{-1}(\xi^{\vee}) = K \setminus K \pi^{\lv} K \cap K \pi^{\mu} U \pi^{\xi^\v} \text{ where } \lv \in \Lv_+ \text{ and }  \mv, \xi^\v \in \Lv. \ee  Now the results (SM1) and (SM2) follow respectively from the next two facts: let $\lv \in \Lambda_+$ and $\mv \in \Lambda,$ then we have the following results from \cite[Theorem 1.9(2)]{bgkp}  \be{sm1:sm2}
 | K \setminus K \pi^{\lv} K \cap K \pi^{\mv} U  | &< &\infty; \text{ and } \\
\text{ if } K \pi^{\lv} K  \cap K \pi^{\mv} U &\neq& \emptyset \text{ then } \mv \leq \lv. \ee

Finally, we address the property (SH2), which involves the fibers of the map $m_{\lv, \mv}: h_{\lv} \times h_{\mv} \rr G_+$ where $\lv, \mv \in \Lv_+.$ If $m_{\lv, \mv}^{-1}(\xv) \neq \emptyset,$ then it follows from the definitions that \be{sh2:0} K \pi^{\lv} K \pi^{\mv} K \cap K \pi^{\xv} K \neq \emptyset. \ee Using the decomposition (\ref{bruhat:K}) we may write $K=I W I,$ for the Iwahori $I$ and the Weyl group $W.$ Thus \be{sh2:1} K \pi^{\lv} K \pi^{\mv} K = \bigcup_{w \in W}  K \pi^{\lv} I w I \pi^{\lv} K = \bigcup_{w \in W} K \pi^{w \lv} U_{\O} \pi^{\mv} K \ee where we have used (\ref{I+:im}) to write $I= U_\O U^-_\pi A_\O$ as well as the dominance condition on $\lv, \mv$ which implies that $\pi^{\lv} U_{\O} \pi^{-\lv} \subset K$ and $\pi^{-\mv} U^-_{\pi} \pi^{\mv} \subset K.$ So if (\ref{sh2:0}) is satisfied, one obtains from (\ref{sh2:1}) that for some $w \in W,$ \be{sh2:2} K \pi^{w \lv} U_{\O} \pi^{\mv} K \cap K \pi^{\xv} K \neq \emptyset. \ee Trivially we thus obtain that $K \pi^{w \lv+ \mv} U \cap K \pi^{\xv} K \neq \emptyset.$ From (\ref{sm1:sm2}), part (2), we thus obtain that $\xv \leq w \lv + \mv \leq \lv + \mv$ where we have $w \lv \leq \lv$ since $\lv$ was assumed dominant. \end{proof}

\spoint Describing the structure of $\chs$ along the lines of Satake (\cite{sat}) is the goal of the remainder of this paper. Recall the construction of Looijenga's coweight algebra $\C_{\leq}[\Lv]$ from \S \ref{sec-aff:inv} \footnote{We may also regard $\C_{\leq}[\Lv]$ as the (semi-infinite)Hecke algebra associated to the datum $(A_{\K}, A_{\O}, \Lv, \leq).$}. We can define an action of the elements $e^{\lv} \in \C_{\leq}[\Lv]$ on $\ve_{\mu} \in \mhs$ via the formula: \be{loo:act} e^{\lv} \circ \ve_{\mv} = q^{ - \langle \rho, \lv \rangle} \ve_{\mv + \lv}. \ee This action extends to give an action of the completion $\C_{\leq}[\Lv]$ on $\mhs,$ and we can easily verify the following result using the Iwasawa decomposition and the definition of the completions involved.

\begin{nlem} \label{loo:conv}  As a $\C_{\leq}[\Lv]$-module, $\mhs$ is free of rank one with generator the \emph{spherical vector} $\mathbf{1}_K := \ve_0,$ i.e., the characteristic function of the subset $A_{\O} U K.$ Furthermore, the action (\ref{loo:act}) is a right $\chs$-module map, i.e., \be{loo:conv:1} e^{\lv} \circ (\ve_{\mv} \star h) = ( e^{\lv} \circ \ve_{\mv} ) \star h, \ee where $h \in \chs$ and $\star$ denotes the convolution action of $\chs$ on $\mhs$ as in (\ref{semiinf:act}). \end{nlem}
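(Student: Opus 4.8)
\textbf{Proof plan for Lemma \ref{loo:conv}.}

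The plan is to verify the three assertions in turn, all of them by reducing to the Iwasawa decomposition (Theorem \ref{iw:unique}) and the bookkeeping that went into defining the completion $\C_{\leq}[\Lv]$ and the module $\mhs$. First I would treat freeness of rank one. Since $\mhs = M_{\leq}(G; A_{\O}U, K)$ and the set of $(A_{\O}U, K)$-double cosets of $G$ is in bijection with $\Lv$ via the Iwasawa decomposition (sending $\lv$ to $\ve_{\lv} := A_{\O}U\pi^{\lv}K$), the element $\ve_{\lv}$ equals, up to the scalar $q^{-\la\rho,\lv\ra}$, the result of applying $e^{\lv}$ to $\mathbf{1}_K = \ve_0$; this is exactly formula (\ref{loo:act}). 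Thus on the level of underlying vector spaces the map $e^{\lv}\mapsto e^{\lv}\circ\mathbf{1}_K$ is a bijection between the monomial basis of $\C_{\leq}[\Lv]$ and the distinguished basis $\{\ve_{\lv}\}$ of $\mhs$. One then has to check that this identification is compatible with the completions on both sides: an element $\sum_{\lv} c_{\lv} e^{\lv}$ of $\C_{\leq}[\Lv]$ has support contained in a finite union of sets $\mf{c}(\lv_i)$ with $\lv_i \in \Lv_+$, and $\mhs$ is defined with precisely the same semi-infinite support condition (with respect to the dominance order and the Iwasawa parametrization). So the support conditions match on the nose, and the $\C_{\leq}[\Lv]$-module structure is well defined and makes $\mhs$ free of rank one on $\mathbf{1}_K$. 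The only genuinely new small point here is that the $\C_{\leq}[\Lv]$-action, a priori only defined on finitely supported elements, extends continuously to the completion — but this is immediate from the matching of support conditions.

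Next I would verify that (\ref{loo:act}) does define an action of the ring $\C_{\leq}[\Lv]$, i.e. that $e^{\lv}\circ(e^{\mv}\circ\ve_{\nu}) = e^{\lv+\mv}\circ\ve_{\nu}$ and that the scalars compose correctly. This is the trivial identity $q^{-\la\rho,\lv\ra}q^{-\la\rho,\mv\ra} = q^{-\la\rho,\lv+\mv\ra}$ together with $\ve_{(\mv+\nu)+\lv} = \ve_{\lv+\mv+\nu}$, which is just additivity in $\Lv$.

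Finally, and this is the heart of the matter, I would prove the $\chs$-linearity (\ref{loo:conv:1}): $e^{\lv}\circ(\ve_{\mv}\star h) = (e^{\lv}\circ\ve_{\mv})\star h$. By linearity in $h$ it suffices to take $h = h_{\xv}$ the characteristic function of a single $K$-double coset, $\xv\in\Lv_+$. Unwinding the definition of the convolution $\star$ from (\ref{semiinf:act}), both sides are determined by the cardinalities of the fibers of the multiplication map $m_{\mv,\xv}: \ve_{\mv}\times_K h_{\xv}\rr G$, which by (\ref{fiber:int}) are of the form $K\setminus(K\pi^{\mv}U)^{-1}\cdot(\,\cdot\,)\cap K\pi^{\xv}K$, and these were shown in the proof of Theorem \ref{sphhecke:const} to be finite and supported appropriately. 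The key point is that left translation by $\pi^{\lv}$ (composed with the normalization scalar $q^{-\la\rho,\lv\ra}$) carries the double coset $A_{\O}U\pi^{\mv}K$ to $A_{\O}U\pi^{\mv+\lv}K$ and sets up a measure-preserving bijection between the relevant fibers: indeed $\pi^{\lv}$ is central-free only up to $U$, but since $\pi^{\lv}$ normalizes both $U$ and $A_{\O}$ up to the subgroups involved, the map $g\mapsto\pi^{\lv}g$ descends to a bijection $m_{\mv,\xv}^{-1}(\eta^{\vee})\rr m_{\mv+\lv,\xv}^{-1}(\eta^{\vee}+\lv)$. Hence the structure constants of $\ve_{\mv}\star h_{\xv}$ and of $\ve_{\mv+\lv}\star h_{\xv}$ agree after the shift $\eta^{\vee}\mapsto\eta^{\vee}+\lv$, and accounting for the scalar $q^{-\la\rho,\lv\ra}$ on both sides gives exactly (\ref{loo:conv:1}).

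\textbf{Main obstacle.} The routine parts — freeness, the action axioms, matching of support conditions — are bookkeeping. The one step deserving care is the $\chs$-linearity: one must check that left multiplication by $\pi^{\lv}$ genuinely induces a bijection of convolution fibers, which requires knowing that $\pi^{\lv}$ normalizes the left acting group $A_{\O}U$ in the appropriate sense and that the Iwasawa-coordinate of a product shifts additively under this translation. This is where the argument really uses the structure of $G$ (via the Iwasawa decomposition and the normalization properties of $\pi^{\lv}$) rather than formal nonsense, though it remains a short verification once those ingredients are in hand.
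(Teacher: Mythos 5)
The paper itself supplies no proof for this Lemma beyond the remark that it follows from the Iwasawa decomposition and the definition of the completions; your argument spells out exactly that suppressed verification, and it is correct in substance. In particular, the crucial mechanism in the $\chs$-linearity step is indeed that $\pi^{\lv}$ normalizes $U$ (because $\pi^{\lv}x_a(u)\pi^{-\lv}=x_a(\pi^{\la a,\lv\ra}u)$ and $u$ ranges over all of $\mc{K}$) and commutes with $A_{\O}$, so left translation $(y,z)\mapsto(\pi^{\lv}y,z)$ carries $Y^{\mv}\times_K X^{\xi^{\vee}}$ to $Y^{\mv+\lv}\times_K X^{\xi^{\vee}}$, is $K$-equivariant, and is multiplicative, hence induces the required bijection of fibers $a_{\mv,\xi^{\vee}}^{-1}(\pi^{\eta^{\vee}})\cong a_{\mv+\lv,\xi^{\vee}}^{-1}(\pi^{\eta^{\vee}+\lv})$; matching the normalization $q^{-\la\rho,\lv\ra}$ on both sides then gives (\ref{loo:conv:1}). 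Your phrase ``$\pi^{\lv}$ is central-free only up to $U$'' is garbled and should simply read that $\pi^{\lv}$ normalizes $U$ over $\mc{K}$ and commutes with $A_{\O}$; that is the only precise fact needed, and you do then invoke it correctly. One further word of caution: when you assert that the support conditions of $\C_{\leq}[\Lv]$ and $\mhs$ ``match on the nose,'' note that the paper's definition of $\C_{\leq}[\Lv]$ explicitly restricts support to the Tits cone $X$ with the bounding elements taken in $\Lv_+$, whereas the generic semi-infinite condition of \S 4.2 on $M_{\leq}(G,K)$ only requires the support to lie below finitely many elements of $\Lv$; for the freeness assertion to be literally true one must read $\mhs$ with the same (dominant) support convention, which is evidently what the paper intends. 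This is an imprecision inherited from the paper rather than a defect in your argument, but it is worth noting rather than asserting the identity of the two conditions without comment.
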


\spoint Using Lemma \ref{loo:conv}, we obtain a map, \be{sat:defn} S: H_{\leq}(G_+, K) \rr \C_{\leq}[\Lv] , \, h \mapsto S(h) \ee defined by the expression \be{sat:defn:2} \mathbf{1}_K \star h = S(h) \circ \mathbf{1}_K. \ee Explicitly, if $h_{\lv}$ is as in (\ref{h-k}), then \be{sat:defn:3} S ( h_{\lv}) = \sum_{\mv \in \Lv} | K \setminus K \pi^{\mv} U \cap K \pi^{\lv} K | e^{\mv} q^{ \la \rho,  \mv \ra }. \ee  The above map is called \emph{the Satake homorphism:} it is a homomorphism of algebras since for $h_1, h_2 \in H_{\leq}(G_+, K),$ we have \be{sat:hom} \mathbf{1}_K \star ( h_1 \star h_2 ) &=& (S(h_1) \circ \mathbf{1}_K) \star h_2  =  S(h_2) \circ (S(h_1) \circ \mathbf{1}_K) \\ &=& (S(h_2) \, S(h_1)) \circ \mathbf{1}_K,  \ee  In fact, we have the following analogue of the classical Satake isomorphism,

\begin{nthm} \label{sat:isom} The map $S$ is an isomorphism of algebras, \be{sat:isom:2} S: H_{\leq}(G_+,K) \stackrel{\cong}{\rr} \C_{\leq}[\Lv]^W \ee where $\C_{\leq}[\Lv]^W$ is the ring of $W$-invariant elements of $\C_{\leq}[\Lv].$ \end{nthm}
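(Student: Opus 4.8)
The plan is to establish that $S$ is an injective algebra homomorphism onto $\C_{\leq}[\Lv]^W$ by combining three ingredients: the Iwahori-Hecke side (to produce the generators $\tp_{\mv}$ and relate them to $S$), the $I^-$-side computation of intertwiners, and the $W$-invariance coming from the symmetry between $I$ and $I^-$. We already know from \eqref{sat:hom} that $S$ is an algebra map, so the content is: (i) $S$ is injective; (ii) its image lands inside $\C_{\leq}[\Lv]^W$; (iii) its image is all of $\C_{\leq}[\Lv]^W$.

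For injectivity, I would argue as in Lemma \ref{loo:conv} that $\mhs$ is free of rank one over $\C_{\leq}[\Lv]$ with generator $\mathbf{1}_K$, so if $S(h) = 0$ then $\mathbf{1}_K \star h = 0$; then using the triangularity of convolution with respect to the dominance order (condition (SH2) in Theorem \ref{sphhecke:const}: $m_{\lv,\mv}^{-1}(\xv) \neq 0 \Rightarrow \xv \leq \lv + \mv$, together with the fact that the leading coefficient $m_{\lv,\mv}^{-1}(\lv+\mv)$ is nonzero), one deduces by a standard leading-term argument that $h = 0$. The key point here is that $S(h_{\lv}) = q^{\la\rho,\lv\ra} e^{\lv} + (\text{lower terms in dominance order})$, which follows from \eqref{sat:defn:3} by checking $|K\setminus K\pi^{\lv}U \cap K\pi^{\lv}K| = 1$ and that all other $\mv$ appearing satisfy $\mv < \lv$ (this is exactly \cite[Theorem 1.9(2)]{bgkp}, already cited in the proof of Theorem \ref{sphhecke:const}).

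For $W$-invariance of the image, this is the step I expect to be the main obstacle, and I would handle it via the intertwiner $\mf{K}_w$ from \S\ref{intw-k}, which is normalized so that $\mf{K}_w(\mathbf{1}_K) = \mathbf{1}_K$ by \eqref{K:intw:sv}. The idea: compute $\mathbf{1}_K \star h_{\lv}$ in two ways using the $I^-$-theory. One first passes from $M(G,K)$ to $M(G,I^-)$ via the natural averaging map, uses that the convolution action is compatible with the intertwiners $\mf{T}^-_a$ (Lemma \ref{inter}(1)), and the rank-one formula Lemma \ref{I^-:rk1}; since $\mf{K}_w$ fixes $\mathbf{1}_K$ and intertwines the $R$-action via $e^{\lv} \mapsto e^{w\lv}$ (Lemma \ref{inter}(2)), one obtains $S(h_{\lv})^w = S(h_{\lv})$ for every simple reflection $w = w_a$, hence for all $w \in W$. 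Concretely: $\mf{K}_a(S(h) \circ \mathbf{1}_K) = S(h)^{w_a} \circ \mf{K}_a(\mathbf{1}_K) = S(h)^{w_a} \circ \mathbf{1}_K$ on one hand, and $\mf{K}_a(\mathbf{1}_K \star h) = \mf{K}_a(\mathbf{1}_K) \star h = \mathbf{1}_K \star h = S(h)\circ \mathbf{1}_K$ on the other (using that $\mf{K}_a$ commutes with right convolution by $\chs$), so $S(h)^{w_a} = S(h)$ by freeness of $\mhs$ over $\C_{\leq}[\Lv]$. The technical subtlety is that these identities a priori hold only in a completion $M(G,K)_{w_a}$, so one must check both sides genuinely lie in $M(G,K)$ and that no convergence issue obstructs the cancellation — but this is parallel to the argument already used in the proof of Lemma \ref{Lem:t-int}.

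For surjectivity onto $\C_{\leq}[\Lv]^W$, I would use the leading-term structure again: given any $f \in \C_{\leq}[\Lv]^W$ with support in $\cup_i \mf{c}(\lv_i)$, I peel off the top-dimensional terms. The $W$-invariant $f$ has a well-defined set of maximal elements of its support, all dominant (since any element of $X$ is $\leq$ its dominant $W$-translate, and $W$-invariance forces the dominant representatives to occur); if $\lv$ is such a maximal dominant weight with coefficient $c$, then $f - c\, q^{-\la\rho,\lv\ra} S(h_{\lv})$ is again $W$-invariant (both $f$ and $S(h_{\lv})$ are) and has strictly smaller "top" in a suitable well-founded sense, so an induction — valid because each $\mf{c}(\lv_i)$ meets only finitely many dominant weights above any fixed one, by Lemma \ref{Mfin} — shows $f \in \mathrm{Im}(S)$. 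The only thing to be careful about is that this induction must be organized so that it terminates: one orders the finitely many maximal dominant weights and does a descending induction on the multiset of coefficients at the "frontier" $\cup_i\{\mu : \mu \leq \lv_i, \mu \not< \lv_j \text{ for any } j\}$, which is finite by Lemma \ref{Mfin}. Combining (i), (ii), (iii) gives that $S$ is an isomorphism onto $\C_{\leq}[\Lv]^W$.
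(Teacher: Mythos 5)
Your proposal follows the same three-part structure as the paper's proof, and the arguments match on the first two parts. For $W$-invariance you use precisely the normalized intertwiners $\mf{K}_w$ and their three properties (commutation with right convolution by $\chs$, twisting of the $e^{\lv}$-action by $w$, and fixing of $\mathbf{1}_K$), which is exactly Proposition \ref{Ktable} and the chain of equalities in (\ref{sat:W:1}). For injectivity you invoke the same triangularity $S(h_{\lv}) = q^{\la \rho, \lv \ra} e^{\lv} + (\text{lower order in dominance})$; the paper isolates this as (\ref{sat:inj:1}), derived from Lemma \ref{easy:conv} and \cite[Theorem 1.9(2)]{bgkp}.

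The surjectivity step is where your write-up would fail as stated. Your ``frontier'' $\cup_i\{\mu : \mu \leq \lv_i, \mu \not< \lv_j\ \text{for any } j\}$ literally reduces to $\{\lv_1,\ldots,\lv_r\}$ (taking $j=i$), so the finiteness claim is trivial but not what is needed; and the peeling process does \emph{not} terminate. Once you subtract the top-layer dominant contributions, a new finite set of maximal dominant weights appears strictly below, and this repeats indefinitely. There is no well-founded descending ``multiset of coefficients'' on which to induct. What is required is a \emph{convergent} infinite construction, not a terminating one: the paper builds $h_n \in \chs$ so that $\Supp(h_n)$ is dominated by $\Xi$ and $\Supp(x - S(h_n))$ is dominated by the shrinking sets $\Upsilon(n)$ (the elements of depth $\geq n$ below $\Xi$), then sets $h_\infty = \sum_{n \geq 0} a_n$. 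Each increment $a_n$ has support dominated by $\Upsilon(n)\setminus\Upsilon(n+1)$, so the sum is a well-defined element of the completion $\chs$ by construction of the semi-infinite topology, and $\cap_n \Upsilon(n) = \emptyset$ forces $x = S(h_\infty)$. Your outline has the right germ --- peel from the top using (\ref{sat:inj:1}) and $W$-invariance --- but you must replace the claimed terminating induction with this limit argument and verify that the partial sums converge in $\chs$; without that, the argument doesn't close.
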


The proof will be divided into three parts

\tpoint{Proof of Theorem \ref{sat:isom}, Part 1: $W$-invariance of the image of $S$.} The fact that the image of the Satake map lies in $\C_{\leq}[\Lv]^W$ follows as in the classical case from the properties of certain intertwining maps.  Recall that have described the basic properties of spherical intertwiners in \S \ref{intw-k}, and we freely use now same notation introduced there . In particular, we have defined the rings $R_w$ and the spaces $M_w:= R_w \otimes_R M(G, K)$ in (\ref{Rw:Mw}) as well as the normalized intertwiners $\mf{K}_w: M(G, K) \rr M(G, K)_w$ for each $w \in W$ in (\ref{Ka}-\ref{Kw}). We can extend these definitions easily to define the spaces $M_{\leq}(G, K)_w:= R_w \otimes_R M_{\leq}(G, K)$ and corresponding maps $\mf{K}_w: M_{\leq}(G, K) \rr M_{\leq}(G, K)_w.$
Moreover, these maps $\mf{K}_w$ are compatible with the right $H_{\leq}(G_+, K)$ action as follows,

\begin{nprop} \label{Ktable} In the notation above, we have
\begin{enumerate}
\item[(a)] $\mf{K}_w$ is a right $H(G_+, K)$-module maps for any $w \in W,$ i.e., \be{Kw:right} \mf{K}_w (\phi \star h) = \mf{K}_w(\phi) \star h \ee for any $\phi \in \mhs_w$ and $h \in \chs.$
\item[(b)] With respect to the $\circ$ action of $\C_{\leq}[\Lv]$ on $M_{\leq}(G, K)_w$ defined as in (\ref{loo:act}), the maps $\mf{K}_w$ satisfy  \be{K_w:w} \mf{K}_w \circ e^{\lv} = e^{w \lv} \circ \mf{K}_w \text{ for } w \in W, \,  \lv \in \Lv. \ee
\item[(c)] The maps $\mf{K}_w$ fix the spherical vector, i.e., $\mf{K}_w (\mathbf{1}_K) = \mathbf{1}_K$ for any $ w \in W.$ \end{enumerate} \end{nprop}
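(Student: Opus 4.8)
The plan is to prove the three compatibility statements (a), (b), (c) essentially by reducing each to the rank-one case of a single simple reflection, since by \eqref{Kw} the operator $\mf{K}_w$ is a composition $\mf{K}_{w_{a_1}} \circ \cdots \circ \mf{K}_{w_{a_r}}$ for a reduced decomposition $w = w_{a_1} \cdots w_{a_r}$, and each individual property is visibly preserved under composition. So the heart of the matter is to verify (a), (b), (c) for $\mf{K}_a$ with $a \in \Pi$.

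For part (a), the key point is that $\mf{K}_a$ is by definition the integral operator $\mf{T}_a$ rescaled by an element of $\C_{\leq}[\Lv]$, namely $\mf{K}_a = \frac{1-e^{a^{\vee}}}{1-q^{-1}e^{a^{\vee}}}\mf{T}_a$ (see \eqref{Ka}). The operator $\mf{T}_a$ already commutes with right convolution by $H_\leq(G_+,K)$ — this is the content of Lemma \ref{inter}(1), whose hypotheses hold in the present setting once one checks the relevant finiteness, and the rescaling factor lies in $\C_{\leq}[\Lv]$ which acts on the left and hence commutes with right convolution by Lemma \ref{loo:conv}. Composing these two observations gives \eqref{Kw:right} for $\mf{K}_a$, and then for general $w$ by \eqref{Kw}. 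For part (b), we invoke Lemma \ref{inter}(2), which says $\mf{T}_a(e^{\lv}\circ\phi) = e^{w_a\lv}\circ\mf{T}_a(\phi)$; again the prefactor $\frac{1-e^{a^{\vee}}}{1-q^{-1}e^{a^{\vee}}}$ commutes with the left $\C_{\leq}[\Lv]$-action, and the rescaled identity reads $\mf{K}_a \circ e^{\lv} = e^{w_a\lv}\circ\mf{K}_a$ — here one must be slightly careful that the prefactor is $W$-invariant in the appropriate completed sense, or rather that conjugating the $e^{a^{\vee}}$ by $w_a$ produces $e^{-a^{\vee}}$ and the bookkeeping works out; then iterate over a reduced word to get \eqref{K_w:w}.

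For part (c), the statement $\mf{K}_a(\mathbf{1}_K) = \mathbf{1}_K$ is exactly the reason the normalization \eqref{Ka} was chosen: by Lemma \ref{K:rk1} we have $\mf{T}_a(\mathbf{1}_K) = \frac{1-q^{-1}e^{a^{\vee}}}{1-e^{a^{\vee}}}\mathbf{1}_K$, and multiplying by the reciprocal factor $\frac{1-e^{a^{\vee}}}{1-q^{-1}e^{a^{\vee}}}$ gives $\mathbf{1}_K$ on the nose. The general case $\mf{K}_w(\mathbf{1}_K) = \mathbf{1}_K$ follows from \eqref{Kw} and \eqref{K:intw:sv}, which records precisely this. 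One should note that the displayed equations of \S\ref{intw-k} already assert \eqref{Kw} and \eqref{K:intw:sv}, so in fact (c) is more or less immediate from material already set up.

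The main obstacle, I expect, is not any one of the three algebraic identities — each is a one-line consequence of its rank-one analogue — but rather the \emph{bookkeeping with completions}: one must check that all the intertwining operators $\mf{K}_w$ genuinely extend from $M(G,K)$ to $M_{\leq}(G,K)$, i.e., that applying $\iw$ to a semi-infinitely supported function again yields a semi-infinitely supported function (with support controlled by the dominance order), and that the rescaling factors $\frac{1-e^{a^{\vee}}}{1-q^{-1}e^{a^{\vee}}}$, which are honest elements of the completed ring $R_{w_a}$ rather than of $\C_{\leq}[\Lv]$, interact correctly with the support conditions so that the compositions in \eqref{Kw} are well-defined. This is where the finiteness input of Lemma \ref{I:intw:fin} (or its $K$-analogue) and the order estimate in \eqref{sm1:sm2}(2) enter, and it is the only genuinely technical part; the three module-theoretic identities (a)–(c) then drop out formally.
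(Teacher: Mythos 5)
Your proof is correct and, as far as one can tell, is exactly what the paper intends: the paper gives no argument for Proposition \ref{Ktable} and simply invokes it, with the understanding that (a), (b), (c) follow from Lemma \ref{inter} and the rank-one computation of Lemma \ref{K:rk1}, reducing to a single simple reflection via the composition formula \eqref{Kw}. One small remark on your hedging in part (b): there is nothing to worry about regarding $W$-invariance of the prefactor $f_a := \frac{1-e^{a^{\vee}}}{1-q^{-1}e^{a^{\vee}}}$. You have
\[
\mf{K}_a(e^{\lv}\circ\phi) \;=\; f_a\cdot\mf{T}_a(e^{\lv}\circ\phi) \;=\; f_a\cdot\bigl(e^{w_a\lv}\circ\mf{T}_a(\phi)\bigr) \;=\; e^{w_a\lv}\circ\bigl(f_a\cdot\mf{T}_a(\phi)\bigr) \;=\; e^{w_a\lv}\circ\mf{K}_a(\phi),
\]
and the only fact used in the third equality is that the left action of (the completion of) $R=\C[\Lv]$ is commutative; $f_a$ is never conjugated by $w_a$, it simply slides past $e^{w_a\lv}$. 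No invariance of $f_a$ is needed. Your identification of the completion bookkeeping as the only genuinely technical point — verifying that $\iw$ preserves semi-infinite support with the dominance-order control from \eqref{sm1:sm2}, and that the rescaling by $R_{w_a}$ interacts well with the supports appearing at each step of a reduced word — is accurate.
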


From the previous Proposition \ref{Ktable}, we find that for any $w \in W,$ \be{sat:W:1}  S(h_{\lv}) \circ \mathbf{1}_K &=& \mathbf{1}_K \star h_{\lv}  =  \mf{K}_w(\mathbf{1}_K) \star h_{\lv}  = \mf{K}_w (\mathbf{1}_K \star h_{\lv}  ) \\ &=& \mf{K}_w ( S(h_{\lv}) \circ \mathbf{1}_K ) = S(h_{\lv})^w \circ \mathbf{1}_K. \ee It follows that $S(h_{\lv})^w= S(h_{\lv})$ for any $w \in W.$

\tpoint{Proof of Theorem \ref{sat:isom} Part 2: Injectivity of $S$} To show that $S$ is injective, we shall verify that \be{sat:inj:1} S( h_{\lv} ) = q^{ \la \rho, \lv \ra  } e^{\lv} + \sum_{\mv \leq \lv} c_{\mv} e^{\mv}, \ee where $\mv < \lv$ means that $\mv$ is strictly less than $\lv$ in the dominance order, and $c_{\mv} \in \zee_{\geq 0}.$ Indeed, this follows from \ref{sm1:sm2}(2) and the following,

\begin{nlem} \label{easy:conv} Let $\lv \in \Lv_+.$ Then \be{easy:conv:1} K \pi^{\lv} U \cap K \pi^{\lv} K = K \pi^{\lv} \ee \end{nlem}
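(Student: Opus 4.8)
The inclusion $K\pi^{\lv}\subseteq K\pi^{\lv}U\cap K\pi^{\lv}K$ is immediate since $1\in U\cap K$, so the content is the reverse inclusion; fix $g\in K\pi^{\lv}U\cap K\pi^{\lv}K$ and let us show $g\in K\pi^{\lv}$. By uniqueness of the Iwasawa decomposition (Theorem \ref{iw:unique}), the fact that $g\in K\pi^{\lv}U$ says precisely that $\lv$ is the Iwasawa coweight of $g$ with respect to $U$; let $\nu\in\Lv$ be the Iwasawa coweight of $g$ with respect to $U^{-}$, so that $g\in K\pi^{\nu}U^{-}$. The plan is first to prove $\nu=\lv$, and then to deduce $g\in K\pi^{\lv}$ from the two decompositions $g\in K\pi^{\lv}U$ and $g\in K\pi^{\lv}U^{-}$.

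For $\nu=\lv$: on one hand $g\in K\pi^{\nu}U^{-}\cap K\pi^{\lv}U\neq\emptyset$, so $\nu\geq\lv$ by \cite[Theorem 1.9]{bgkp} --- this is exactly the inequality used to establish uniqueness of the opposite Iwasawa decomposition in Theorem \ref{iw:unique}. On the other hand $g\in K\pi^{\lv}K\cap K\pi^{\nu}U^{-}\neq\emptyset$ with $\lv$ dominant, so $\nu\leq\lv$ by the $U^{-}$-analogue of the inequality recorded in (\ref{sm1:sm2})(2) (again \cite[Theorem 1.9]{bgkp}). Hence $\nu=\lv$, and we may write $g=k_{1}\pi^{\lv}v=k_{2}\pi^{\lv}v^{-}$ with $k_{1},k_{2}\in K$, $v\in U$, and $v^{-}\in U^{-}$. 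Equating these and isolating the torus factor gives $v(v^{-})^{-1}=\pi^{-\lv}(k_{1}^{-1}k_{2})\pi^{\lv}\in\pi^{-\lv}K\pi^{\lv}$; since $\pi^{\lv}$ normalizes each real root group, and hence both $U$ and $U^{-}$, conjugation by $\pi^{\lv}$ yields $\tilde{v}\,\tilde{v}^{-}\in K$, where $\tilde{v}:=\pi^{\lv}v\pi^{-\lv}\in U$ and $\tilde{v}^{-}:=\pi^{\lv}(v^{-})^{-1}\pi^{-\lv}\in U^{-}$. Therefore $\tilde{v}\in K(\tilde{v}^{-})^{-1}\subseteq KU^{-}$, and by the Birkhoff-type fact $U\cap KU^{-}=U\cap K=U_{\O}$ (\cite{bgkp}; this is the $U^{-}$-mirror of the inclusion $U^{-}\cap UK\subseteq U^{-}\cap K$ used in the proof of Proposition \ref{domorder}) we obtain $\tilde{v}\in U_{\O}$. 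Finally $g=k_{1}\pi^{\lv}v=k_{1}(\pi^{\lv}v\pi^{-\lv})\pi^{\lv}=k_{1}\tilde{v}\,\pi^{\lv}\in KU_{\O}\pi^{\lv}=K\pi^{\lv}$, as required.

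The only step above that is not a routine manipulation of cosets is the double inequality $\lv\leq\nu\leq\lv$ for the two Iwasawa coweights of $g$: the first inequality is literally \cite[Theorem 1.9]{bgkp} as already invoked for Theorem \ref{iw:unique}, while the second is its $U^{-}$-variant, namely that every element of $K\pi^{\lv}K$ with $\lv$ dominant has $U^{-}$-Iwasawa coweight $\preceq\lv$; this holds for the same reason its $U$-counterpart (\ref{sm1:sm2})(2) does, and it is the only place where the hypothesis $\lv\in\Lv_{+}$ is genuinely used. I expect this reduction to the inequalities of \cite{bgkp} to be the main (and essentially only) nontrivial point; everything else is the big-cell bookkeeping carried out above.
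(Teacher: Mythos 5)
Your argument is correct, but it is genuinely different from the paper's, and it rests on a lemma the paper never uses. The paper avoids ever needing to compare a $U^-$-Iwasawa coweight directly with a Cartan coweight. Instead it observes (using $K=\bigsqcup_w IwI$, $I=U^-_\pi U_\O A_\O$, and the dominance of $\lv$) that
\[
K\pi^{\lv}K=\bigcup_{w\in W}K\pi^{w\lv}U^-_\pi U_\O,
\]
so any element of $K\pi^{\lv}K\cap K\pi^{\lv}U$ already lies in some $K\pi^{w\lv}U^-$; intersecting with $K\pi^{\lv}U$ and invoking the Iwasawa-vs.-opposite-Iwasawa inequality ($K\pi^{\mv}U^-\cap K\pi^{\lv}U\neq\emptyset\Rightarrow \mv\geq\lv$), together with $w\lv\leq\lv$ for dominant $\lv$, forces $w\lv=\lv$, and then the same Birkhoff-type fact ($U\cap KU^-=U_\O$) that you use finishes the proof. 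In other words the paper bounds the $U^-$-Iwasawa coweight of elements of $K\pi^{\lv}K\cap K\pi^{\lv}U$ indirectly, by exhibiting them inside a $W$-orbit.

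You short-circuit this by introducing $\nu$, the $U^-$-Iwasawa coweight of $g$, and sandwiching it: $\nu\geq\lv$ from Iwasawa-vs.-opposite-Iwasawa, and $\nu\leq\lv$ from what you call the ``$U^-$-variant'' of (\ref{sm1:sm2})(2) (i.e.\ $K\pi^{\lv}K\cap K\pi^{\nu}U^-\neq\emptyset$, $\lv$ dominant $\Rightarrow\nu\leq\lv$). That second inequality is true --- a highest-weight-norm argument parallel to the $U$-case gives $\la\omega,\nu\ra\leq\la\omega,\lv\ra$ for every dominant $\omega$, and the level equality $\la\delta,\lv\ra=\la\delta,\nu\ra$ puts $\lv-\nu$ in $Q^\vee$, whence $\lv-\nu\in Q^\vee_+$ --- and you correctly flag it as the one nontrivial point. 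But it is a statement about $U^-$ that the present paper never formulates or quotes (the paper only invokes the $U$-side of \cite[Thm.\ 1.9]{bgkp}, and it designs this very proof so as not to need the $U^-$-side). So you are relying on an ingredient that you should either locate precisely in \cite{bgkp} or prove; as written, ``it holds for the same reason'' is an IOU, not a citation. Once that IOU is discharged, your proof is clean and arguably more transparent than the paper's: it avoids the auxiliary decomposition $K\pi^{\lv}K=\bigcup_w K\pi^{w\lv}U^-_\pi U_\O$ entirely. The big-cell bookkeeping in your final paragraph ($\tilde v\,\tilde v^-\in K$, $\tilde v\in U\cap KU^-=U_\O$, $g=k_1\tilde v\,\pi^{\lv}\in K\pi^{\lv}$) matches the paper's last step exactly.
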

\begin{proof}[Proof of Lemma] Indeed, from decompositions (\ref{bruhat:K}) and (\ref{I+:im}), we obtain \be{easy:conv:2} K \pi^{\lv} K = \bigcup_{w \in W} K \pi^{w \lv} U^-_{\pi} U_{\O}. \ee Assume $K \pi^{\lv} K \cap K \pi^{\lv} U \neq \emptyset.$ Then from (\ref{easy:conv:2}) there exist $w \in W$ such that  $K \pi^{w \lv} U^- \cap K \pi^{\lv} U \neq \emptyset.$ By (\ref{sm1:sm2}) we must have $\lv \leq w \lv.$ But since $\lv \in \Lv_+,$ it follows that $w\lv = \lv$ for any such $w.$  Hence, we conclude that \be{easy:conv:3} K \pi^{\lv} K \cap K \pi^{\lv} U \subset K \pi^{\lv} I  \cap K \pi^{\lv} U \subset K \pi^{\lv} U^-_{\pi} \cap K \pi^{\lv} U  . \ee Let us now show that this latter intersection is $K \pi^{\lv}.$ Indeed, $\pi^{\lv} u \in K \pi^{\lv} U^-$ implies that $\pi^{\lv} u \pi^{ - \lv} \in K U^{-}.$  So by \cite[Proposition 3.3]{bgkp} we have that $\pi^{\lv} u \pi^{ - \lv} \in U_{\O}.$ \end{proof}

\tpoint{Proof of Theorem \ref{sat:isom} Part 3: Surjectivity of $S$} Finally we need to show that the map $S$ is surjective. The proof (in a slightly different context) is essentially contained in \cite[p.25]{loo}.  Let \be{x} x= \sum_{\xv \in \Lv} c_{\xv} e^{\xv} \in \C_{\leq}[\Lv]^W \ee  and let $\Xi \subset \Lv_+$ be a finite set of elements such that if $\lv \in \Supp(x)$ then $\lv \leq \mv$ for some $\mv \in \Xi.$ For any integer $n$ we let $\Upsilon(n) \subset \Lv_+$ be the set of elements of the form $\mv - n_1 a_1^\v - \cdots  - n_{\ell+1} a_{\ell+1}^{\vee}$ where $\mv \in \Xi,$  $n_i \geq 0$ for $i=1, \ldots, \ell+1,$ and $n_1 + \cdots + n_{\ell+1} \geq n.$ A subset $\Sigma \subset \Lv$ is said to be dominated by a subset $\Sigma' \subset \Lv_+$ if for every $\tv \in \Sigma$ there exists $\lv \in \Sigma'$ such that $\tv \leq \lv.$ We now construct a family of elements $h_n \in \chs$ for $n \in \zee_{\geq 0}$ such that: (a) $\Supp(h_n)$ is dominated by $\Xi;$ and (b) $\Supp( x - S(h_n) )$ is dominated by $ \Upsilon(n).$ Indeed, let $h_0=0;$ suppose that \be{h:n} h_n= \sum_{\xv \in \Lv} c_{n, \xv} h_{\xv}, \ee we then define \be{h:n+1} h_{n+1} = h_n + a_n; \text{ where } a_n =  \sum_{\mv \in \Upsilon(n) \setminus  \Upsilon(n+1) }  (c_{\mv} - q^{ - \la \rho, \xv \ra } c_{n, \xv})h_{\xv}. \ee Condition (a) is immediately verified since $ \Upsilon(n)$ is dominated by $\Xi$;  and condition (b) follows from (\ref{sat:inj:1}). Note that we have $h_n = a_1 + \cdots + a_{n-1}.$ Thus, setting $ h_{\infty} := \sum_{n \geq 0} a_n,$ we see from the definition of the completion that $h_{\infty} \in \chs.$ Furthermore, from (b), we see that $ x- S(h_{\infty})$ is supported on $\cap_{n=0}^{\infty}  \Upsilon(n),$ which is the empty set.

\subsection{Explicit Formula for the Satake Isomorphism} \label{sph:section}

We now present an explicit formula (\ref{sph:aff}) in $\C_{\leq}[W]^W$ for the image of $h_{\lv}$ with $\lv \in \Lv_+$ under the Satake isomorphism $S$ of Theorem \ref{sat:isom}.

\spoint Let $v$ be a formal variable, and consider the ring of Taylor series $\C_{v}:= \C[[v]].$ Let $\C_{\leq, v}[\Lv]$ be the ring of collections as in (\ref{f}) where the coefficients are now taken to lie in $\C_{v}$.  Recall that $Q^\v_-$ was the coroot lattice of $\mf{g},$ which has a $\zee$-basis $\{ a_1^\v, \ldots, a_{\ell+1}^\v \}.$ Denote by \be{qn} \mc{Q}_{v}= \C_{v}[[ Q_{-}^{\vee} ]], \ee the completion of the group algebra of $Q_{-}^{\vee},$ i.e., the ring of formal power series in the variables $e^{-a_i^{\vee}}$ with coefficients in $\C_{v},$ where as usual \be{gp:exp} e^{-a_i^{\vee}} e^{-a_j^{\vee}} = e^{-a_i^{\vee}-a_j^{\vee}} \text{ for } i, j=1, \ldots, \ell+1.\ee We have that $\mc{Q}_{v} \subset \C_{\leq, v}[\Lv],$ since for each $x \in \mc{Q}_{v}$ we have that $\Supp(x) \subset \mf{c}(0),$ where $0 \in \Lv_+$ denotes the zero coweight and the notation is as in (\ref{supp}) and (\ref{c:lam}).

\begin{de} \label{v-fin} The elements $\C_{\leq}[v^{-1}][\Lv] \subset \C_{v, \leq}[\Lv]$ are said to be $v$-finite. Similarly, an element of $\mc{Q}_{v}$ is said to be $v$-finite if it is $v$-finite as an element of $\C_{v, \leq}[\Lv],$ i.e., it lies in $\C[v^{-1}][[Q_{-}^{\vee}]].$ \end{de}

\emph{Remark:} Of course not all $f \in \C_{v, \leq}[\Lv]$ may be ''specialized'' at $v^2=q^{-1}$ but this is certainly possible for $v$-finite elements.

\spoint For each $a \in R$, we set \be{c:a} c(a^{\vee}) = \left ( \frac{1 - v^{2} e^{-a^{\vee}}}{1-e^{-a^{\vee}} } \right )  ,\ee  which we regard as an element of $\mc{Q}_{v}$ by formally expanding the rational function as a series in $e^{-a^{\vee}}.$ Letting $m(a^{\vee})$ denote the multiplicity of the coroot $a^{\vee},$ we now set \be{delta} \Delta := \prod_{a \in R_{+}} c(a^{\vee})^{m(a^\vee)} =  \prod_{a \in R_{re,+}} c(a^{\vee}) \prod_{n \in \zee, n \neq 0 } c(n \cc )^{\ell},
\ee which is again regarded as an element of $\mc{Q}_v$ by formally expanding the above series. In fact, it is easy to see (see \cite{mac:aff}) that the element $\Delta$ is invertible in $\mc{Q}_v.$

For $a \in R$ and $w \in W$, we shall write $c(w a^{\v})$ to denote the expansion of (\ref{c:a}) in $\qn.$ For example, if $a$ is a real root and $w =w_a$ then \be{c:-a} c(w_a a ) = c(-a) =  \frac{1 - v^{2} e^{a^{\vee}}}{1-e^{a^{\vee}} }= \frac{e^{-a^\vee} - v^{2} }{e^{-a^\vee}-1} = 1 + (v^2 - 1) e^{-a^\vee} + (1-v^2) e^{-2 a^\vee} + \cdots . \ee Now, for each $w \in W$, we may consider the element \be{Del:w} \Delta^w:= \prod_{a \in R_+} c(w a^{\vee})^{m(a^\vee)}  \in \mc{Q}_v. \ee The same argument as in \cite[p.199]{mac:formal} shows that the sum $\sum_{w \in W} \Delta^w$ is a well-defined element in $\mc{Q}_{v}$ which again is invertible.

For any subset $\Sigma \subset W$ we write its Poincare polynomial as \be{poin:poly} \Sigma(v) = \sum_{w \in \Sigma} v^{\ell(w)}. \ee Consider now the following expression, which takes its values in $\qn,$ \be{def:corr}  H_0:= \frac{\sum_{w \in W} \Delta^w}{W(v^2)}.\ee In fact, it is not hard to see (\cite[(3.8)]{mac:formal}) that $H_0 \in \C_{v}[[e^{-\cc}]]$ where $\cc$ is the minimal imaginary coroot of $\mf{g}.$ One may give a formula for $H_0$ as an infinite product of expressions in the variables $v^2$ and $e^{\cc}$ (see \cite[Theorem 1.7]{bfk}). In the case when $\mf{g}$ is simply-laced, the formula takes the following form, which is the result of \cite[(3.8)]{mac:formal} and the work of Cherednik \cite{cher:ct} on Macdonald's Constant Term Conjecture.

\begin{nthm} \label{mac:cher:fla}
Let $\mf{g}$ be a simply-laced untwisted affine Kac-Moody algebra. Then we have that  \begin{eqnarray} \label{corr} H_0 = \prod_{j=1}^\ell \prod_{i=1}^{\infty} \frac{1 - v^{2 m_j} e^{-i \cc} }{1 - v^{2(m_j+1)}e^{-i \cc}}, \end{eqnarray} where the integers $m_j$ for $j=1, \ldots, \ell$ are the exponents of $\mf{g}_o$ defined by the relation that \be{poin:fin} W_o(v^2) = \prod_{i=1}^{\ell} \frac{1 - v^{2(m_i+1)}}{1-v^2} \ee where $W_o \subset W$ is the finite Weyl group.  In particular, we see that $H_0$ is $v$-finite (and so can be evaluated at $v^2 = q^{-1}$).
\end{nthm}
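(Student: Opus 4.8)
The plan is to reduce the statement to the Macdonald constant-term identity in the form proved by Cherednik, reinterpreting the affine object $H_0$ as a suitably normalized constant term. First I would recall from \eqref{def:corr} that
\[
H_0 = \frac{1}{W(v^2)}\sum_{w\in W}\Delta^w,\qquad
\Delta^w = \prod_{a\in R_+} c(wa^\vee)^{m(a^\vee)},
\]
and observe, following \cite[(3.8)]{mac:formal}, that since every $\Delta^w$ lies in $\mc{Q}_v$ and the terms with $w$ outside the finite Weyl group $W_o$ contribute only higher powers of $e^{-\cc}$, the sum $\sum_{w\in W}\Delta^w$ is a well-defined invertible element of $\C_v[[e^{-\cc}]]$; hence $H_0\in\C_v[[e^{-\cc}]]$ depends only on the variable $e^{-\cc}$ and $v^2$. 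The key identification is that $H_0$ is (up to the normalization by $W(v^2)$, which organizes the $W_o$-orbit sums) exactly the constant term, in the sense of the affine/Cherednik theory, of the inverse of the Macdonald density $\Delta$ for the affine root system of $\mf{g}_o$; this is precisely the setting of Macdonald's constant-term conjecture for affine root systems, which was proved by Cherednik \cite{cher:ct}.

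The main steps, in order: (1) Rewrite $\sum_{w\in W}\Delta^w$ by splitting $W = W_o \ltimes Q_o^\vee$ (cf.~\eqref{W:aff}) and separating the real-root factors $\prod_{a\in R_{re,+}} c(wa^\vee)$ from the imaginary-root factors $\prod_{n\neq 0} c(n\cc)^\ell$, noting that the latter are $W$-invariant and so factor out. (2) Recognize the resulting sum over $W_o$-orbits of the real-root part as the left-hand side of the affine Macdonald constant-term identity, using that the coroot multiplicities $m(a^\vee)$ all equal $1$ in the simply-laced case and the multiplicity of each imaginary coroot is $\ell$. (3) Invoke Cherednik's theorem \cite{cher:ct} — equivalently, the evaluation recorded in \cite[(3.8)]{mac:formal} — to obtain the closed infinite product. (4) Match the product: the exponents $m_1,\dots,m_\ell$ enter precisely because the relevant ``norm'' of the trivial isotypic component is governed by the degrees of the basic invariants of $W_o$, whose generating function is $W_o(v^2)=\prod_{i=1}^\ell \frac{1-v^{2(m_i+1)}}{1-v^2}$ as in \eqref{poin:fin}; tracking the telescoping over the $Q_o^\vee$-translations converts the $W_o$-side evaluation into the double product $\prod_{j=1}^\ell\prod_{i=1}^\infty \frac{1-v^{2m_j}e^{-i\cc}}{1-v^{2(m_j+1)}e^{-i\cc}}$. (5) Finally, read off $v$-finiteness: each factor $\frac{1-v^{2m_j}e^{-i\cc}}{1-v^{2(m_j+1)}e^{-i\cc}}$, expanded in $e^{-\cc}$, has coefficients that are \emph{polynomials} in $v^2$ (the denominator expands as a geometric series with polynomial-in-$v^2$ coefficients, and for fixed power of $e^{-\cc}$ only finitely many factors contribute), so $H_0\in\C[v^{-1}][[Q_-^\vee]]$ in the sense of Definition~\ref{v-fin}, and the specialization $v^2=q^{-1}$ is legitimate.

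I expect the main obstacle to be Step~(4): cleanly bookkeeping the passage from the orbit sum over the affine Weyl group $W=W_o\ltimes Q_o^\vee$ to the infinite product indexed by exponents. This is exactly the content that Macdonald and Cherednik had to establish, so strictly speaking I would not reprove it but cite \cite{cher:ct} and \cite{mac:formal}; the genuine work on our side is verifying that the element $\Delta^w$ appearing in \emph{this} paper's conventions (expansion of the rational functions $c(a^\vee)$ in negative powers of the coroots, the multiplicity conventions, the normalization by $W(v^2)$ rather than $W_o(v^2)$) coincides with the object whose constant term Cherednik computes. Everything else — the $v$-finiteness and the final sentence about evaluation at $v^2=q^{-1}$ — is then an immediate consequence of the explicit product formula.
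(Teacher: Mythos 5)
Your proposal reduces Theorem~\ref{mac:cher:fla} to Macdonald's formal identity \cite[(3.8)]{mac:formal} and Cherednik's constant-term theorem \cite{cher:ct}, which is exactly what the paper does: the theorem is not reproved in the paper but simply recorded as a consequence of those two references. Your informal outline of the reduction and the observation that the explicit infinite product immediately yields $v$-finiteness are consistent with that, so the approach matches the paper's.
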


\tpoint{Statement of Main Formula} We would now like to state precisely the formula for $S(h_{\lv})$ with $\lv \in \Lv_+.$ For any such $\lv \in \Lv_+,$ define \be{stab} W_{\lv}= \{ w \in W | w \lv = \lv \}  \subset W. \ee

\begin{nthm} \label{sph:mac} Let $\lambda \in \Lambda_+^{\vee}.$ The ratio $\frac{H_{\lv}}{H_0} \in \C_{v, \leq}[\Lv]$ is $v$-finite, and its value at at $v^2=q^{-1}$ is equal to $S(h_{\lv}).$ \end{nthm}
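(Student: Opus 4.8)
The strategy is to compute $S(h_{\lv})$ via the Iwahori-Hecke algebra and the intertwining operators, exactly as in the finite-dimensional proof of Macdonald's formula, using the machinery set up in \S\ref{is-conv}--\S\ref{iwahori-2} and \S\ref{sec-sat}. The bridge between the Iwahori and spherical pictures is the map $p_K\colon M(G,I)\to M_{\leq}(G,K)$ induced by summing over $W$ (equivalently by averaging $\ve_x\mapsto \ve_{\lv}$ for $x=\pi^{\lv}w$), which intertwines the right actions of $H(G_+,I)$ and $H_{\leq}(G_+,K)$ compatibly with the $\C_{\leq}[\Lv]$-actions; I would first record this compatibility carefully. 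Then, writing $h_{\lv}$ in $H_{\leq}(G_+,K)$ as a suitable (possibly infinite but dominance-bounded) sum, one wants to express $\mathbf{1}_K\star h_{\lv}$ in terms of the elements $\tp_{\mv}$ and $T_w$ from Theorem~\ref{main:iwahori}. The key identity to prove first is the Iwahori-level counterpart: $\sum_{w\in W} q^{-\ell(w)}\, T_{\pi^{w\lv}}\cdot(\text{something})$ maps under $p_K$ to a multiple of $h_{\lv}$; more precisely, I expect $h_{\lv}$ (up to the factor $q^{\langle\rho,\lv\rangle}/W_{\lv}(q^{-1})$) to be the image of a ``symmetrized'' element $\sum_{w\in W} \tp_{w\lv}$-type expression, and then applying $\ve_1\star(-)$ and $\mathbf{1}_K\star(-)$ will produce the two sides of the claimed formula.

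**Main steps, in order.** (1) Establish the projection $p_K\colon M(G,I)\to M_{\leq}(G,K)$, $\ve_{\pi^{\lv}w}\mapsto q^{\ell(w)}\ve_{\lv}$ (or the appropriate normalization), and show it is a morphism of $(\C_{\leq}[\Lv], H)$-bimodules relating the two Hecke algebras via the standard embedding; this uses the decomposition $K=\sqcup_{w}IwI$ from (\ref{bruhat:K}) and Proposition~\ref{simple-mod}. (2) Using Proposition~\ref{indep} ($\ve_1\star\tp_{\mv}=e^{\mv}\circ\ve_1$), compute $\ve_1\star\big(\sum_{w\in W}\mf{K}_w$-renormalized $T_w\tp_{w\lv}\big)$ and recognize the answer as the ``$\Delta^w e^{w\lv}$'' sum; this is where the rank-one intertwiner formulas of Lemmas~\ref{I:rk1}, \ref{K:rk1} and the renormalization $\mf{K}_a$ of (\ref{Ka}) enter, producing the factors $\prod_{a>0}\frac{1-v^2 e^{-wa^\vee}}{1-e^{-wa^\vee}}$ with the correct multiplicities $m(a^\vee)$ (the imaginary coroots contributing the extra $\ell$-th powers, as in (\ref{delta})). (3) Project via $p_K$ and use that $\mf{K}_w(\mathbf{1}_K)=\mathbf{1}_K$ (Proposition~\ref{Ktable}(c)) together with the Satake defining relation (\ref{sat:defn:2}) to get $S(h_{\lv})\cdot\text{(const)}=\frac{1}{W(v^2)}\sum_w \Delta^w e^{w\lv}$ after keeping track of the stabilizer $W_{\lv}$ and the power $v^{-2\langle\rho,\lv\rangle}$; dividing by $H_0=\frac{1}{W(v^2)}\sum_w\Delta^w$ isolates $H_{\lv}/H_0$. (4) Finally, verify $v$-finiteness of $H_{\lv}/H_0$: since $S(h_{\lv})\in\C_{\leq}[\Lv]^W$ lies in the un-deformed ring, and $H_{\lv}/H_0$ specializes to it at $v^2=q^{-1}$ for \emph{every} prime power $q$, a Zariski-density/interpolation argument over infinitely many values of $v^2$ forces $H_{\lv}/H_0\in\C[v^2]\otimes\C_{\leq}[\Lv]$; alternatively one argues termwise using Theorem~\ref{mac:cher:fla} that $H_0$ is $v$-finite and invertible with $v$-finite inverse, and that each graded piece of $H_{\lv}$ is a polynomial in $v^2$ of bounded degree.

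**Expected main obstacle.** The hard part is Step (2)--(3): making the symmetrization argument rigorous in the completed setting. In the finite-dimensional case one manipulates finite sums and the identity $\sum_{w}\mf{K}_w(\mathbf{1}_K)e^{w\lv}$ is a finite combinatorial identity; here $W$ is infinite, the sum $\sum_{w}\Delta^w$ only converges in $\qn$, and one must ensure all the intertwiner compositions and the passage $\mf{T}_w=\mf{T}_{a_1}\circ\cdots\circ\mf{T}_{a_r}$ (Lemma~\ref{inter}(3)) are applied inside an appropriate completion $M_{\leq}(G,K)_w$ where the geometric-series expansions of $\frac{1}{1-e^{a^\vee}}$ live, and that the bimodule map $p_K$ actually extends to these completions without collapsing the infinitely many contributions incorrectly. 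Concretely, the delicate point is that while $\sum_w\tp_{w\lv}$ is \emph{not} an element of $H(G_+,I)$ (infinitely many double cosets), its image under $\ve_1\star(-)$ followed by $p_K$ must still be shown to land in $M_{\leq}(G,K)$, which requires the dominance bound from (SH2)/(SM2) and the structure of the Tits cone (\ref{ti:exp}); reconciling the ``Iwahori side is finitely supported but $W$ is infinite'' tension with the ``spherical side is genuinely completed'' is the crux. Once this convergence bookkeeping is in place, the identity is forced by Proposition~\ref{faith} (faithfulness of the $M(G,I)$-action), and the rest is the $v$-finiteness argument of Step (4), which I expect to be routine given Theorem~\ref{mac:cher:fla}.
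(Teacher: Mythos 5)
Your high-level architecture --- pass to the Iwahori picture, use intertwining operators to gain combinatorial control on the spherical function, match against a DAHA-style expression, and handle $v$-finiteness by interpolation over prime powers $q$ --- is the right circle of ideas, and you have correctly identified the hard point (making sense of the infinite sum over $W$). But there are two substantive gaps. The central one is in your Step~(2): the claim that composing the rank-one intertwiners along a reduced word ``produces the factors $\prod_{a>0}\frac{1-v^2e^{-wa^\vee}}{1-e^{-wa^\vee}}$ with the correct multiplicities $m(a^\vee)$ (the imaginary coroots contributing the extra $\ell$-th powers)'' is not correct as stated. Iterating the rank-one formulas over a reduced word for $w$ yields a Gindikin--Karpelevich product over the \emph{finite} set $D_w = R_{re,+}\cap wR_{re,-}$ of \emph{real} roots only; the operators never see the imaginary coroots. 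The element $\Delta^w$ contains a $w$-\emph{independent} factor $\prod_{n\neq 0}c(n\cc)^{\ell}$ coming from the imaginary coroots, and passing from the sum over $W$ of Gindikin--Karpelevich products to $\sum_{w\in W}\Delta^w e^{w\lv}$ up to a $W$-invariant proportionality factor is exactly the content of Cherednik's identity $\mc{P}_{v} = \mf{m}\sum_{w\in W}\Delta^w[w]$ (Proposition~\ref{proportional}). This is a genuine algebraic theorem, not something that falls out of the convolution bookkeeping, and your outline never invokes it.

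The second gap concerns the disassembly. The paper does not use a symmetrizing projection $p_K: M(G,I)\to M_{\leq}(G,K)$; it passes instead to the \emph{opposite} Iwahori $I^-$ and proves, via an explicit set-theoretic bijection between fibers $F^{I^-}_{w,\lv}(\mv)$ and $F^{K}_{\lv}(\mv)$ (the injectivity is Claim~\ref{inj:stab}), that $S(h_{\lv}) = \sum_{w\in W^{\lv}}J_w(\lv)$ with each piece $J_w(\lv)$ a \emph{finite} element of $\C[\Lv]$. That finiteness is essential: it is what allows the recursion of Proposition~\ref{recursion} (proved via the $\mf{T}^-_a$ acting on $M(G,I^-)$) to be matched term-by-term against the DAHA polynomial representation $\T_w(e^{\lv})$ in Proposition~\ref{alg:padic}, and it is what makes your interpolation-over-$q$ argument for $v$-finiteness run at the level of each $J_w(\lv)$ separately. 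Your $p_K$ is a reasonable object, but, as you yourself note, the element $\sum_{w\in W}\tp_{w\lv}$ is not in $H(G_+,I)$; to make the idea precise you would still have to produce the finite-piece decomposition of $S(h_{\lv})$ by hand, at which point you are essentially redoing the $I^-$ disassembly.
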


Informally, we shall write the above theorem as follows: \be{sph:aff} S( h_{\lv}) = \frac{1}{H_0} \frac{q^{-\langle \rho, \lv \rangle }}{W_{\lv}(q^{-1})}  \sum_{w \in W} \Delta^w e^{w \lv}, \ee where $\Delta$ is as in (\ref{delta}) and $H_0$ is defined in (\ref{def:corr}). The proof will occupy \S \ref{sph:section-proof} and will be broken up into four parts.

\begin{description}

\item[Step 1: Disassembly] We first break up the computation of $S(h_{\lv})$ into pieces indexed by a set of minimal length representatives in $W/W_{\lv}.$ This is done via a passage to the Iwahori subgroup $I^-$.

\item[Step 2: Recursion] We show a certain recursion relation between the pieces of the previous step corresponding to $w$ and $w'$ where $w$ and $w'$ differ by a simple reflection.

\item[Step 3: Algebraic Identities] We recall some purely formal algerbaic identities from \cite{cher:ma} involving the affine symmetrizers and polynomial representation of Cherednik.

\item[Step 4: Rephrasal and Reassembly] Reinterpreting the recursion of Step (2) using the polynomial representation of Step (3), we can rewrite the disassembly from Step (1) using the affine symmetrizers of Cherednik. The argument is then concluded by applying the an algebraic  proportionality principle from Step (3).

\end{description}

%: sph, dis

\subsection{Proof of Theorem \ref{sph:mac}} \label{sph:section-proof}

\subsection*{Step 1: Disassembly} \label{disass}

\spoint Recall that we have defined the stabilizer $W_{\lv}$ for each $\lv \in \Lv$ in (\ref{stab}). If $\lv = 0$, then $W_{\lv} = W.$ In the opposite extreme, if $W_{\lv}=1$ we say that $\lv$ is \emph{regular.} By a set of \emph{minimal length representatives} for the set $W/ W_{\lv}$ we shall mean a set of coset representatives $W^{\lv} \subset W$  for $W/W_{\lv}$ such that \be{stab-cond} \ell(w w_{\lv}) = \ell(w) + \ell(w_{\lv}) \text{ for } w \in W^{\lv}, w_{\lv} \in W_{\lv}. \ee

Recall that $I^-$ was the opposite Iwahori subgroup, which is equipped with a decomposition $I^- = U^-_{\O} U_{\pi} A_{\O}$ by (\ref{I^-:im}). Let us define for $w \in W$ and $\lv, \mv \in \Lv$ with $\lv \in \Lv_+$ the following two sets, \begin{eqnarray} \ifib &:=& \{ (a, c) \in U I^- w I^-  \times_{I^-} I^- \pi^{\lv} K | a c = \pi^{\mv} \} \\
\kfib &:=& \{ (x, y) \in U K \times_K K \pi^{\lv} K | x y = \pi^{\mv} \} \end{eqnarray} Equivalently if we consider the multiplication maps \be{mmaps} m_{w, \lv} &:& U\im w \im \times_{\im} \im \pi^{\lv} K \rr G \\ m_{\lv} &:& U K \times_K K \pi^{\lv} K \rr G ,\ee then we have that $\ifib=m_{w, \lv}^{-1}(\pi^{\mv})$ and $\kfib=m_{\lv}^{-1}(\pi^{\mv}).$

Consider the map $\varphi_w: F^{I^-}_{w, \lv}(\mv) \rr F^K_{\lv, \mv}$ which sends $(a, c) \mapsto (a, c)$ where $a \in U \im w \im$ and $c \in \im \pi^{\lv} K.$ It clearly induces a well-defined map: i.e., if $ j \in \im$ then $(a, c)$ and $(aj, j^{-1}c)$ are equivalent in $F^K_{\lv}(\mv).$ Let us now set, \be{f-lv} F^{I^-}_{\lv}(\mv) :=  \sqcup_{w \in W^{\lv}} F^{I^-}_{w, \lv}( \mv), \ee and define $\varphi$ to be the map from $F^{I^-}_{\lv}(\mv) \rr F^K_{\lv}(\mv)$ which restricts to $\varphi_w$ on $F^{I^-}_{w, \lv}( \mv).$

\begin{nlem}In the notation above, the map $\varphi: F^{I^-}_{\lv}(\mv) \rr F^K_{\lv}(\mv)$ is bijective.  \end{nlem}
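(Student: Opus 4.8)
The plan is to prove $\varphi$ bijective by handling surjectivity and injectivity separately; the inputs are the Bruhat decomposition $K=\bigsqcup_{v\in W}I^-\wt vI^-$ of Proposition \ref{im:i}(2), the hypothesis $\lv\in\Lv_+$, and the elementary identity $\wt{w_\lv}\,\pi^\lv\,\wt{w_\lv}^{-1}=\pi^{w_\lv\lv}=\pi^\lv$ for $w_\lv\in W_\lv$ (with $\wt{w_\lv}\in N_\O\subseteq K$). It is convenient first to rewrite both sides via the fiber formula (\ref{fiber:int}): $F^K_\lv(\mv)$ is identified with $K\backslash\!\left(K\pi^\mv U\cap K\pi^\lv K\right)$ and $F^{I^-}_{w,\lv}(\mv)$ with $I^-\backslash\!\left(I^-\wt w^{-1}I^-\pi^\mv U\cap I^-\pi^\lv K\right)$, the map $\varphi$ being the one induced by the inclusions $I^-\wt w^{-1}I^-\subseteq K$, $I^-\pi^\lv K\subseteq K\pi^\lv K$, $I^-\subseteq K$. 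Thus the claim amounts to the assertion that a left-$K$ transversal of $K\pi^\mv U\cap K\pi^\lv K$ is obtained by assembling, over $w\in W^\lv$, a left-$I^-$ transversal of $I^-\wt w^{-1}I^-\pi^\mv U\cap I^-\pi^\lv K$.

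For surjectivity I would take $g\in K\pi^\mv U\cap K\pi^\lv K$ and reduce it, by a sequence of left-$K$ translations alternating between the two conditions, to a representative lying in $I^-\wt w^{-1}I^-\pi^\mv U\cap I^-\pi^\lv K$ with $w\in W^\lv$. Concretely: translating so that $g\in\pi^\lv K$ automatically puts $g\in I^-\pi^\lv K$; writing the leftmost $K$-factor of $K\pi^\mv U$ through the $I^-$-Bruhat decomposition puts $g\in I^-\wt vI^-\pi^\mv U$ for some $v\in W$; and then one uses the minimal-length factorization of $v$ relative to $W_\lv$ together with the facts that $\wt{w_\lv}$ commutes with $\pi^\lv$ and lies in $K$ to replace $v$ by its minimal-length representative while preserving membership in $I^-\pi^\lv K$ (iterating if the Bruhat datum changes along the way). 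This yields surjectivity of $\varphi$.

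For injectivity, suppose $(a,c)\in F^{I^-}_{w,\lv}(\mv)$ and $(a',c')\in F^{I^-}_{w',\lv}(\mv)$ map to the same element of $F^K_\lv(\mv)$, i.e.\ $a'=ak$ and $c'=k^{-1}c$ for some $k\in K$. Writing $a=u\kappa$, $a'=u'\kappa'$ with $u,u'\in U$, $\kappa\in I^-\wt wI^-$, $\kappa'\in I^-\wt{w'}I^-$, the relation $\kappa k(\kappa')^{-1}=u^{-1}u'\in U\cap K=U_\O$ forces $k=\kappa^{-1}u_0\kappa'$ with $u_0=u^{-1}u'\in U_\O$. Feeding in that $c=a^{-1}\pi^\mv$ and $c'=(a')^{-1}\pi^\mv$ both lie in $I^-\pi^\lv K$, and expanding $I^-=U^-_\O U_\pi A_\O$ (\ref{I^-:im}) while using the dominance of $\lv$ to control the conjugation action of $\pi^{\pm\lv}$ on the relevant root subgroups, one deduces first that $k\in I^-$, hence $(a,c)\sim_{I^-}(a',c')$, and then --- $\kappa$ and $\kappa'$ now lying in a common left $I^-$-coset --- that the disjointness of the cells $\{I^-\wt vI^-\}_{v\in W}$ gives $w=w'$.

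The real obstacle is the surjectivity reduction: each step is elementary, but the two membership conditions, ``$\in I^-\wt w^{-1}I^-\pi^\mv U$'' and ``$\in I^-\pi^\lv K$'', are not simultaneously rigid, so descending the Bruhat parameter $v$ to its minimal-length representative $w\in W^\lv$ without disturbing the second condition requires a careful, a priori iterative, bookkeeping with the $I^-$-Bruhat cells --- essentially the combinatorics underlying the finite-dimensional computation of Macdonald's formula (cf.\ \cite{hkp}, \cite{mac:mad}). The affine setting adds nothing new to this step, since $\lv$ being dominant already sidesteps the Tits-cone constraints.
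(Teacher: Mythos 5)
Your surjectivity plan tracks the paper's (choose a $K$-translate with $y\in\pi^\lv K$, run $I^-$-Bruhat on the remaining $K$-factor, then push the $W_\lv$-part of the Bruhat datum into the $I^-\pi^\lv K$ side using $\wt{w_\lv}\pi^\lv=\pi^\lv\wt{w_\lv}$ and dominance), and you are right that the reduction of the Bruhat parameter to a minimal-length representative is the delicate point; the paper is actually terser than you at this step. But the injectivity argument as you have written it has a genuine gap. You claim that from $k=\kappa^{-1}u_0\kappa'$ with $u_0\in U_\O$, the membership $c,c'\in I^-\pi^\lv K$, and dominance, one ``deduces first that $k\in I^-$.'' This is not derivable by expanding $I^-=U^-_\O U_\pi A_\O$ and conjugating by $\pi^{\pm\lv}$ alone: the obstruction is precisely the $W_\lv$-part, and detecting that it is trivial requires the minimal-length hypothesis $w,w'\in W^\lv$, which you never feed into the computation. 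The paper handles this in two stages (Claim \ref{inj:stab}): it first shows that $c$ and $c'$ differ by an element of $I^-\sigma I^-$ with $\sigma\in W_\lv$ — and this uses an image-comparison argument (the multiplication map $UI^-\times_{I^-}I^-\pi^\lv K\to G$ has image in $U\pi^\lv K$, while $UI^-\times_{I^-}I^-\sigma\pi^\lv K\to G$ hits $\pi^{\sigma\lv}$), and only then invokes minimality of $W^\lv$ to kill $\sigma$. Your proposal skips this intermediate Claim, and I do not see how ``dominance controls conjugation'' alone recovers it.

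Even granting $k\in I^-$, the subsequent step is also unjustified: from $a'=ak$ you get $\kappa'=u_0^{-1}\kappa k$ with $u_0\in U_\O$ and $k\in I^-$, so $\kappa'\in U_\O\,\kappa\, I^-$, and since $U_\O\cap I^-=U_\pi\subsetneq U_\O$, the factor $u_0$ cannot be absorbed into $I^-$. Consequently $\kappa$ and $\kappa'$ need not lie in a common $I^-$-double coset, and you cannot directly invoke disjointness of the cells $I^-\wt vI^-$ to conclude $w=w'$ (indeed the sets $UI^-wI^-$ genuinely overlap as $w$ varies — already over the residue field $U(\kk)B^-(\kk)$ meets several Bruhat cells). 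What rescues the argument is again the structure encoded in the paper's Claim plus the constraint from $c,c'\in I^-\pi^\lv K$, which lets one localize the Bruhat datum. So the plan is sound, the fiber-intersection reformulation is fine, but the injectivity proof needs the paper's intermediate Claim (or an equivalent) rather than the direct deduction $k\in I^-$ you propose.
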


\begin{proof}  Let us first see that $\varphi$ is surjective: a given $(x, y) \in F^K_{\lv, \mv}$ has representative of the form $x \in UK$ and $y \in \pi^{\lv}K.$ Write $x = n k$ with $n \in U$ and $k\in K,$ and further decompose $k=i_1 w i_2$ with $i_{1} , i_2 \in \im.$ Then $x= n i_1 w i_2,$ and surjectivity clearly follows.

To show injectivity, we begin with the following result.

\begin{nclaim} \label{inj:stab} Let $(a, c), (a',c') \in \sqcup_{w \in W^{\lv}} \ifib,$ and suppose that $\varphi(a, c) = \varphi(a', c').$ Then $v= v c'$ with $v \in \im w \im$ and $w \in W_{\lv}.$ \end{nclaim}

\begin{proof}[Proof of Claim] Given $(a, c)$ and $(a', c')$ as above such that $\varphi(a, c) = \varphi(a', c'),$ there must exist by definition an element $v \in K$ such that  $c = v c'.$ Suppose $c = i \pi^{\lv}k$ and $c'= i' \pi^{\lv} k'.$ Then we have a relation of the form \be{inj:stab:1} i \pi^{\lv} k = v i' \pi^{\lv} k. \ee We may suppose that $v \in \im w \im$ with $w \in W.$ Thus, we have found an element in the intersection of  \be{inj:stab:2} \im \pi^{\lv} K \cap I^- w I^- \pi^{\lv} K. \ee On the other hand, since $\lv$ is dominant, we have that \be{inj:stab:3} I^- w I^- \pi^{\lv} K  = I^- w \pi^{\lv} K .\ee However, this is only possible if $w  \in W_{\lv}:$ indeed, if the intersection was non-empty, then we would actually have that \be{inj:stab:4} I^- w \pi^{\lv} K = I^- \pi^{\lv} K. \ee Consider the map multiplication maps \begin{eqnarray} m: UI^- \times_{I^-} I^- \pi^{\lv} K &\rr& G \\
m: UI^- \times_{I^-} I^- w \pi^{\lv} K &\rr& G. \end{eqnarray} The image of the former is supported on $U \pi^{\lv} K$ whereas the image of the latter contains the element $\pi^{w \lv}.$ This is a contradiction unless $w \in W_{\lv}.$  \end{proof}

We return to the injectivity of $\varphi:$ we need to show two things, \begin{description} \item[(i)] the images of $\varphi_{w'}$ and $\varphi_{w'}$ are disjoint for distinct element $w, w' \in W^{\lv};$ and \item[(ii)] restricted to each $w \in W^{\lv}$ the map $\varphi_w$ is injective. \end{description}

First, we turn to (i): from Claim \ref{inj:stab}, if $\varphi(a, c) = \varphi(a', c')$  with $(a, c)$ and $(a', c')$ chosen to be elements in $F^{I^-}_{\lv}(\mv),$ we have $c= v c'$ with $v \in \im w_{\lv} \im$ where $w_{\lv} \in W_{\lv}.$ Since $\varphi(a, c) = \varphi(a', c')$ we must also have \be{ab:v} a v^{-1} = a', \ \ v \in \im W_{\lv} \im. \ee Assume now that $a \in U w_1 \im$ and $a' \in \im w_2 \im,$ with $w_1, w_2 \in W^{\lv}.$  The relation (\ref{ab:v}) will then lead to a contradiction of the minimality of the coset representatives $w_1, w_2$  unless $w_{\lv}=1.$ So in fact $ v \in \im$ and the two elements $(a, c)$ and $(a', c')$ were in fact both in $F_{w, \mv}(\lv)$ for some fixed $w \in W^{\lv}.$

Next, we turn to (ii): if $(a, c), (a', c') \in \ifib$ have the same image by $\varphi_w$, then by the analysis of the previous paragraph, we must have that $c' \in \im c.$ Hence $(a, c)$ and $(a', c')$ are actually equivalent in $F_{w, \mv}(\lv)$ if they have the same image under $\varphi_w.$

\end{proof}

\spoint To summarize, we have shown that if we define, for each $w \in W^{\lv}$ the sum \be{Jw} J_w(\lv) := \sum_{\mv \in \Lv} | \ifib | q^{ \langle \rho, \mv \rangle} e^{\mv}, \ee then we have \be{S:w} S(h_{\lv}) = \sum_{w \in W^{\lv}} J_w(\lv), \ee which is the required disassembly of $S(h_{\lv}).$

%: sph, recur
\subsection*{Step 2: Recursion}

\spoint Now we verify a recursion relation for $J_w(\lv)$ introduced above in (\ref{Jw}). The result is an analogue of the recursion relations obtained by Macdonald in \cite[Theorem 4.4.5]{mac:mad}.
\newcommand{\jb}{J^{\flat}}

We begin by reinterpreting the elements $J_{w}(\lv)$ as follows. Recall first the construction of the algebra $H^-_W$ and the module $M(G, I^-)$ from \S \ref{basic-opp}. In particular, we have defined elements $T^-_w$ for $w \in W$ and $\ve^-_x$ for $x \in \aw$ there.  Next, note that the set of $(I^-, K)$-double cosets of $G$ are parametrized by $\Lv.$ Let $M(I^-, K)$ denote the set of functions on $G$ which are left $I^-$-invariant and right $K$-invariant, and which have finite support. Letting $\theta_{\lv, K}$ be the characteristic function of $I^- \pi^{\lv} K,$ we know that $M(I^-, K)$ has a basis $\{ \theta_{\lv, K} \}$ as $\lv$ ranges over  $\Lv.$ Using the map \be{conv:min} m_{x, \lv}: \ve^-_x \times_I \theta_{\lv, K} \rr G, \ee we define convolution as \be{conv:min:2} \ve^-_x \star \theta_{\lv, K} = \sum_{\mv \in \Lv}| m_{x, \lv}^{-1} (\pi^{\mv}) | \theta_{\mv, K}. \ee In the case that $x \in W \subset \mc{W}$ the same argument as in the proof of parts (1) and (2) of the Proposition below shows that the right hand side of (\ref{conv:min:2}) actually lies in $M(G, K),$ the space of finitely supported functions on $A_{\O}U \setminus G / K.$ Now, by the definition of the various convolutions involved, we can immediately see that the following relations hold in $M(G, K):$ for $w \in W^{\lv}$ we have, \be{Jw:conv} J_w(\lv) \circ \mathbf{1}_K  &=& \ve^-_w \star \theta_{\lambda, K} \\
&=& q^{ - \ell(w)} \ve^-_1 \star T^-_w \star \theta_{\lv, K} . \ee Let us define now for \emph{any} $w \in W$ the elements $\jb_w(\lv)$ by the relation \be{Jbis} J^{\flat}_w(\lv) \circ \mathbf{1}_K =  \ve_1 \star T^-_w \star \theta_{\lv, K}. \ee For $w \in W^{\lv},$ we then have \be{Jbis:J} J_w(\lv) = q^{ - \ell(w) } \jb_w(\lv). \ee We shall use the right hand side of (\ref{Jbis:J}) as the definition of $J_w(\lv)$ for any $w \in W$ (not just $w \in W^{\lv}$ as in (\ref{Jw})). Having thus defined $J_w(\lv)$ for each $w \in W$ and $\lv \in \Lv$ we can now state,

\begin{nprop} \label{recursion}  Let $\lv \in \Lv_+, w \in W.$
\begin{enumerate}
\item The expression $J_w(\lv)$ actually lies in $ \C[\Lv].$ In other words, \be{Jw:sum} J_w(\lv) = \sum_{\mv \in \Lv} \Phi_{w, \mv} e^{\mv} \ee with $\Phi_{w, \mv} \in \mathbb{N}$ non-zero for only finitely many $\mv \in \Lv.$ (Note: $\Phi_{w, \mv}$ depend on $q,$ and so this finite set may in fact depend on $q$).

\item If $w \in W_{\lv}$, then we have \be{J:fix} J_w(\lv) = q^{ \la \rho, \lv \ra } q^{-\ell(w) } e^{\lv}= q^{ -\ell(w)} J_1(\lv). \ee

\item Let $w \in W$ and choose $a \in \Pi$ such that $w'=w_a  w$ satisfies $\ell(w') < \ell(w).$ Then we may write
 \be{J:w:w'}  J_w(\lv) = \frac{1-q^{-1}e^{a^\v}}{1- e^{a^\v}} \cdot J_{w'}(\lv)^{w_a} + \frac{q^{-1}-1}{1-e^{a^\v} } \cdot J_{w'}(\lv) , \ee where by $J_{w'}(\lv)^{w_a}$ we mean the termwise application of $w_a$ to the expression $J_{w'}(\lv),$ and the rational functions which appear on the right hand side are expanded in the ring $R_{w_a}$ of (\ref{Rw:Mw}).

\item There exist polynomials \footnote{In Proposition \ref{alg:padic} we connect the polynomials $\Phi_{w, \mv}(v^2)$ with the polynomial representation of the DAHA. }  $\Phi_{w, \mv}(v^2) \in \C[v^2]$ such that \be{j-univ-1} J_w(\lv) = \sum_{\mv} \Phi_{w, \mv}(q^{-1}) e^{\mv} \ee for any power of a prime $q.$

\end{enumerate}
 \end{nprop}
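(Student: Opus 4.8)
The four claims of Proposition \ref{recursion} are intertwined and should be proved together by induction on $\ell(w)$, with the base case $w=1$ handled directly and the inductive step furnished by the recursion (3). My plan is as follows.

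\emph{Base case.} For $w=1$ we must compute $J_1(\lv)$, which by (\ref{Jbis:J}) and (\ref{Jbis}) is determined by $\ve^-_1 \star \theta_{\lv, K}$. By Lemma \ref{W:AO} and the decomposition $I^- = U^-_{\O}U_{\pi}A_{\O}$, together with the dominance of $\lv$ (so that $\pi^{-\lv}U_{\pi}\pi^{\lv}\subset I^-$ and $\pi^{\lv}U^-_{\O}\pi^{-\lv}\subset K$, mirroring the argument of Lemma \ref{easy:conv}), one checks that $A_{\O}U I^- \pi^{\lv}K = A_{\O}U \pi^{\lv}K$ and the relevant fiber $I^-\setminus I^-\pi^{\lv}K \cap I^-\pi^{\lv}U$ is a single coset. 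This gives $J_1(\lv) = q^{\la\rho,\lv\ra}e^{\lv}$, which lies in $\C[\Lv]$ and establishes (1), (2) (for $w=1$), and (4) (with $\Phi_{1,\lv}(v^2)=v^{-2\la\rho,\lv\ra}$, $\Phi_{1,\mv}=0$ otherwise) in the base case. Here I would use the identification $q = v^{-2}$ from Theorem \ref{main:iwahori} to phrase the powers of $q$ as powers of $v^2$.

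\emph{Recursion (3).} This is the heart of the matter and the step I expect to be the main obstacle. Given $w$ and $a\in\Pi$ with $\ell(w_a w) < \ell(w)$, write $w = w_a w'$ with $\ell(w) = \ell(w')+1$, so $T^-_w = T^-_a T^-_{w'}$ in $H^-_W$ by Proposition \ref{hec_w:min}. Then by (\ref{Jbis}),
\[
J_w(\lv)^{\flat}\circ \mathbf{1}_K = \ve^-_1 \star T^-_a \star T^-_{w'} \star \theta_{\lv, K},
\]
wait --- more carefully, $J_w(\lv) = q^{-\ell(w)}\jb_w(\lv)$, and $\jb_w(\lv)\circ\mathbf{1}_K = \ve^-_1\star T^-_a\star(T^-_{w'}\star\theta_{\lv,K})$. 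The plan is to move $T^-_a$ across using the intertwiner $\mf{T}^-_a$: by Lemma \ref{I^-:rk1} one has $\mf{T}^-_a(\ve^-_1) = \ve^-_{w_a} + \frac{1-q^{-1}}{1-e^{a^\vee}}\circ\ve^-_1$, and $\ve^-_{w_a} = q^{-\ell(w_a)}\ve^-_1\star T^-_{w_a}$ is not quite right either; I would use Proposition \ref{simple-mod:min}, $\ve^-_1\star T^-_{w_a} = q\,\ve^-_{w_a}$, to express $\ve^-_1\star T^-_a$ in terms of $\mf{T}^-_a(\ve^-_1)$ and $\ve^-_1$. Combining this with the fact (Lemma \ref{inter}(1)) that $\mf{T}^-_a$ commutes with right convolution, and with the intertwining identity $\mf{T}^-_a\circ e^{w'\lv} = e^{w_a w'\lv}\circ\mf{T}^-_a$-type relations (Lemma \ref{inter}(2), in the $I^-$ version), one extracts the relation
\[
J_w(\lv) = \frac{1-q^{-1}e^{a^\vee}}{1-e^{a^\vee}}\,J_{w'}(\lv)^{w_a} + \frac{q^{-1}-1}{1-e^{a^\vee}}\,J_{w'}(\lv),
\]
where the bookkeeping of the length factors $q^{-\ell(w)} = q^{-1}q^{-\ell(w')}$ must be tracked carefully and the $w_a$-twist accounts for the $\mf{T}^-_a$-conjugation. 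The main subtlety is ensuring that all expressions, which a priori live in the completion $R_{w_a}$, collapse back to the polynomial ring $\C[\Lv]$; this is exactly what (1) asserts.

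\emph{Propagating (1), (2), (4).} Given (3) and the base case, claims (1) and (4) follow by induction: if $J_{w'}(\lv) = \sum_\mv \Phi_{w',\mv}(q^{-1})e^\mv$ is a finite sum of exponentials with coefficients that are polynomials in $v^2 = q^{-1}$, then the right side of (\ref{J:w:w'}) is, a priori, an element of $R_{w_a}$; but one shows the apparent poles along $1-e^{a^\vee}$ cancel because $J_{w'}(\lv)$ and $J_{w'}(\lv)^{w_a}$ agree modulo $(1-e^{a^\vee})$ --- indeed $\frac{f - f^{w_a}}{1 - e^{a^\vee}}\in\C[\Lv]$ for any $f\in\C[\Lv]$, the same Demazure-type identity underlying the Bernstein relation (\ref{bern:rel-2}). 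Rewriting $\frac{1-q^{-1}e^{a^\vee}}{1-e^{a^\vee}}f^{w_a} + \frac{q^{-1}-1}{1-e^{a^\vee}}f = f^{w_a} + (1-q^{-1})\frac{f - f^{w_a}}{1-e^{a^\vee}}$ (or a variant thereof) exhibits $J_w(\lv)$ as a polynomial expression with coefficients in $\C[v^2]$, giving (1) and (4) simultaneously; finiteness of the support is preserved since $w_a$ permutes $\Lv$ and the Demazure operator does not enlarge supports beyond a controlled range. For (2): if $w\in W_{\lv}$ has $\ell(w)>0$, pick $a$ with $\ell(w_a w)<\ell(w)$; then $w_a w\in W_\lv$ as well (since $W_\lv$ is a parabolic subgroup and $w$ has a reduced word entirely within $W_\lv$ --- this uses that $W^\lv$ consists of minimal-length representatives, as set up in Step 1), and one applies (3) together with the inductive hypothesis $J_{w_a w}(\lv) = q^{-\ell(w_a w)}q^{\la\rho,\lv\ra}e^\lv$; since $w_a\lv = \lv$, the two terms on the right of (\ref{J:w:w'}) combine via $\frac{1-q^{-1}e^{a^\vee}}{1-e^{a^\vee}} + \frac{q^{-1}-1}{1-e^{a^\vee}} = 1$ to give $q^{-1}J_{w_a w}(\lv) = q^{-\ell(w)}q^{\la\rho,\lv\ra}e^\lv$, as required. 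The hardest and most delicate point, as noted, is establishing the recursion (3) rigorously --- in particular the precise placement of the length normalizations and verifying that the intertwiner manipulations, carried out in the completed module $M(G,I^-)_{w_a}$, descend to honest identities in $M(G,K)$; once (3) is in hand the rest is a structured induction.
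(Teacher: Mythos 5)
Your proposal is essentially correct and for parts (3) and (4) follows the same route as the paper: derive the recursion by computing the action of the intertwiner $\mf{T}^-_a$ on $\jb_{w'}(\lv)\circ\mathbf{1}_K$ in two ways (once via $\mf{T}^-_a(\ve^-_1)=\ve^-_{w_a}+\frac{1-q^{-1}}{1-e^{a^\vee}}\circ\ve^-_1$ and right-$H^-_W$-linearity, once via $\mf{T}^-_a\circ e^{\lv}=e^{w_a\lv}\circ\mf{T}^-_a$ against $\mathbf{1}_K$), then propagate (4) by induction.

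Where you genuinely diverge from the paper is in parts (1) and (2). The paper proves those \emph{directly}, by analyzing the support of $J_w(\lv)$: using $I^-=U^-_\O U_\pi A_\O$ and dominance of $\lv$ one reduces the condition $UI^-wI^-\pi^{\lv}K\cap U\pi^{\mv}K\neq\emptyset$ to a condition involving only the finite-dimensional unipotent group $U_{-w,\O}=\{u\in U^-_\O:w^{-1}uw\in U\}$, from which finiteness (and, when $w\in W_\lv$, that the support is a single coweight) is immediate. You instead obtain (1) and (2) as downstream consequences of the recursion (3), with (1) via a Demazure-type pole-cancellation argument. This works and is arguably cleaner once (3) is available, but it makes (1) logically dependent on the intertwiner machinery in the completion $R_{w_a}$, whereas the paper's direct argument is self-contained at that stage. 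Your pole-cancellation reasoning is sound in substance — the numerator $(1-q^{-1}e^{a^\vee})f^{w_a}+(q^{-1}-1)f$ does vanish at $e^{a^\vee}=1$ because $f^{w_a}$ and $f$ agree there — but the explicit rewriting you give, $f^{w_a}+(1-q^{-1})\frac{f-f^{w_a}}{1-e^{a^\vee}}$, is not an identity (the correct remainder is $(1-q^{-1})\frac{e^{a^\vee}f^{w_a}-f}{1-e^{a^\vee}}$, or equivalently $-\frac{f-f^{w_a}}{1-e^{a^\vee}}+q^{-1}\frac{f-e^{a^\vee}f^{w_a}}{1-e^{a^\vee}}$). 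Similarly, in your proof of (2) you state $\frac{1-q^{-1}e^{a^\vee}}{1-e^{a^\vee}}+\frac{q^{-1}-1}{1-e^{a^\vee}}=1$; the sum is actually $q^{-1}$, which is in fact what makes the conclusion $J_w(\lv)=q^{-1}J_{w_aw}(\lv)$ come out right, so the final formula is correct despite the arithmetic slip. Neither error affects the viability of the approach.
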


\tpoint{Remarks on Part (3)} \label{example-remark} From the way in which statement (3) is written, it is not clear that the expression $J_w(\lv)$ is actually a finite sum. However, there is a cancellation which occurs when one expands the rational functions appearing in (\ref{J:w:w'}). Let us illustrate this with a simple example: suppose $\lv \in \Lv_+$ and $a \in \Pi$ is such that $w_a \notin W_{\lv}.$ Then we may write $w_a \lv = \lv - k a^\vee$ for $k:= \la \lv, a \ra > 0.$ From (2) we find that $J_1(\lv) = q^{ \la \rho, \lv \ra} e^{\lv}.$ Hence, from (3) we find that \be{example-1} J_{w_a}(\lv) &=& \frac{1-q^{-1}e^{a^\v}}{1- e^{a^\v}} \cdot q^{\la \rho, \lv \ra} e^{w_a \lv}  + \frac{q^{-1}-1}{1-e^{a^\v} } \cdot q^{\la \rho, \lv \ra} e^{\lv}\\
&=& q^{ \la \rho, \lv \ra } \{ ( 1 + (1-q^{-1}) e^{\av}+ (1- q^{-1})e^{2 \av} + \cdots ) e^{ \lv - k \av} \\ &+& (  (q^{-1} -1) + (q^{-1} - 1)e^{\av} + \cdots ) e^{\lv} \}  \\
&=& q^{ \la \rho, \lv \ra } ( e^{w_a \lv} + (1-q^{-1}) e^{w_a \lv + \av} + \cdots + (1-q^{-1}) e^{\lv - \av} ) .\ee Hence, one can see directly in this this example that $J_{w_a}(\lv)$ actually lies in $\C[\Lv].$ Moreover, note that the sum $J_1(\lv) + J_{w_a}(\lv)$ is easily seen to agree with the usual spherical function for $SL(2).$

\tpoint{Proof of Proposition \ref{recursion}}  The proofs of (1) and (2) are quite straightforward, and we just sketch the argument here.
From the definition of $J_w(\lv)$ the support (i.e., the set of $\mv \in \Lv$ such that $F_{w, \lv}(\mv) \neq \emptyset$) consists of those $\mv$ such that \be{supp:Jw} U I^- w I^- \pi^{\lv} K \cap U \pi^{\mv} K \neq \emptyset. \ee
 However, using the decomposition $I^- = U^-_{\O} U_\pi A_{\O}$ together with the dominance of $\lv \in \Lv_+$ we can immediately deduce that \be{supp:Jw-2} U I^- w I^- \pi^{\lv} K \cap U \pi^{\mv} K  \neq \emptyset \iff U_{-w, \O} w \pi^{\lv} K \cap U \pi^{\mv} K \neq \emptyset \ee where $U_{-w, \O}:= \{ u \in U^-_{\O} \mid w^{-1} u w \in U \}$ is a finite-dimensional group. From the finite-dimensionality of $U_{-w, \O}$ we may deduce that only finitely many $\mv$ can occur within the support of $J_w(\lv).$ Moreover, from  (\ref{supp:Jw-2}), we see that if $w \in W_{\lv}$ the support of $J_w(\lv)$ consists of exactly one element. We leave the rest of (2) to the reader,

As for (3), it suffices in lieu of (\ref{Jbis:J}) to show the following recursion for the elements $\jb(\lv)$ introduced in (\ref{Jbis}): for every $w = w_a w'$ with $\ell(w) = 1 + \ell(w')$ and $a \in \Pi$ we have,\be{Jbis:w:w'}  \jb_w(\lv) = \frac{q-e^{a^\v}}{1- e^{a^\v}} \cdot \jb_{w'}(\lv)^{w_a} + \frac{1-q}{1-e^{a^\v} } \cdot \jb_{w'}(\lv). \ee To demonstrate this, we shall use the properties of the intertwining operators $\mf{T}_{w}^-$ introduced in \S \ref{int-neg}. For $a \in \Pi$ we have defined the operators $\mf{T}_a^-: M(G, I^-) \rr M(G, I^-)_{w_a},$ and from Lemma (\ref{I^-:rk1}) we have that \be{recur:1} \mf{T}^-_a(\ve_1) = \ve^-_{w_a} + \frac{1 - q^{-1}}{1- e^{\av} } \circ \ve_1. \ee We may also extend $\mf{T}_a^-$ to functions which are right $K$-invariant and left $A_{\O}U$-invariant using the same formula (\ref{Iw}), and we shall continue to denote this map by $\mf{T}_a^-.$ One can easily verify the following properties, \be{Imin:prop} \mf{T}_{a}^-(\mathbf{1}_K) &=& \mf{T}_a(\mathbf{1}_K) = \frac{1 - q^{-1} e^{\av}}{1- e^{\av}} \circ \mathbf{1}_K \\ \label{imin:K}
\mf{T}_a^-(\ve_{w} \star \theta_{\lv, K}) &=& \mf{T}_a^-(\ve_w) \star \theta_{\lv, K} \text{ for } w \in W, \lv \in \Lv \ee where recall that $\theta_{\lv, K}$ is the characteristic function of $I^- \pi^{\lv} K.$

We now compute the action of $\mf{T}_a^-(\jb_w(\lv) \circ \mathbf{1}_K )$ in two different ways.  On the one hand, \begin{eqnarray} \label{mac:recur1} \mf{T}^-_{a} (\jb_{w'}(\lv) \circ \mathbf{1}_K ) = \mf{T}^-_{a} (\ve^-_{1} \star T^-_{w'} \star \theta_{\lambda, K}) &=& \mf{T}^-_{a} (\ve^-_1) \star T^-_{w'} \star \theta_{\lambda, K} \ee using the fact that $\mf{T}_a^-$ is a right $H^-_W$-map. Using (\ref{recur:1}) we find that the previous expression is equal to
\be{mac:recur:2}  (\ve^-_{w_a} + \frac{1-q^{-1}}{1 - e^{a^{\vee}}} \circ \ve^-_1)  \star T^-_{w'} \star \theta_{\lambda, K}
&=& (q^{-1}  \ve^-_{1} \star T^-_{w_a} + \frac{1-q^{-1}}{1 - e^{a^{\vee}}} \circ \ve^-_1)  \star T^-_{w'} \star \theta_{\lambda, K} \\
&=& q^{-1} \jb_{w}(\lv) \circ \mathbf{1}_K + \frac{ (1- q^{-1})}{1- e^{a^\v}} \jb_{w'}(\lv) \circ \mathbf{1}_K. \end{eqnarray}

On the other hand, from (\ref{inter:2}) we have \be{mac:recur:2} \mf{T}^-_{a} (\jb_{w'}(\lv) \circ \mathbf{1}_K ) = (\jb_{w'})^{w_a} \mf{T}_a^-( \mathbf{1}_K) = \frac{1-q^{-1} e^{a^{\v}}}{1- e^{a^{\v}}} \jb_{w'}(\lv)^{w_a} \circ \mathbf{1}_K, \ee where we have used (\ref{imin:K}) to compute the action of $\mf{T}_a^-$ on $\mathbf{1}_K.$ Part (3) of the Proposition follows.

As for Part (4): if $w=w_a$ this follows from the remarks \S \ref{example-remark}. The general case follows by induction on the length of $w$ using the recursion relation \ref{J:w:w'}.

%: sph, reph
\renewcommand{\c}{\mathbf{c}}
\renewcommand{\b}{\mathbf{b}}

\subsection*{Step 3: Algebraic Identities}

\tpoint{The Polynomial Representation of Cherednik} In this part, we shall establish a purely algebraic identity (\ref{macdonald:operator}) which will be used in the subsequent part to establish the formula (\ref{sph:aff}). Let $v$ and $X$ formal variables, and set \be{c:b:X} \begin{array}{lcr} \c(X)= \frac{vX-v^{-1}}{X-1} \text{ and } \b(X) = \frac{v -v^{-1}}{1- X}. \end{array} \ee The following identities are easy to verify, see \cite[p. 58, 4.2.3(i) -(iv)]{mac:aff} \be{c:b:iden} \c(X) &=& v - \b(X) = v^{-1} + \b(X^{-1}) \\ \label{c:b:2} \c(X) + \c(X^{-1}) &=& v + v^{-1} \\ \label{c:b:3} \c(X) \c(X^{-1}) &=&1 + \b(X) \b(X^{-1}) . \ee

Recall the ring $\mc{Q}_{v}$ from (\ref{qn}) anove. For each $a \in R$ we may consider the elements on $\qn,$ $c(a):= \c(e^{\av})$ and $b(a):= \b(e^{\av})$ obtained by substituting $e^{\av}$ for $X$ in (\ref{c:b:X}) and then formally expanding the fraction as an element in $\mc{Q}_v$.
\newcommand{\T}{\wt{T}}

Denote by $\qn[W]$ the group algebra of $W$ over the ring $\qn,$ where a typical element $f \in \qn[W]$ is written as $f = \sum_{i=1}^n c_i [\sigma_i]$ where $c_i \in \qn$ and $\sigma_i \in W.$ For each $a \in \Pi$, we consider elements \be{Ta} \T_a = v [w_a] + b(a)(1-[w_a]) = c(a) [w_a] + b(a) [1] \in \qn[W], \ee where $[1]$ denotes the element corresponding to the identity of $W.$ One checks immediately that \be{vTa} v\T_a = \frac{1 - v^2 e^{\av}}{1- e^{\av}} [w_a] + \frac{v^2 -1}{1-e^{\av}} [1]  \ee For $w \in W$ with a reduced decomposition $w= w_{a_1} \cdots w_{a_n}$ where $a_1, \ldots, a_n \in \Pi$ we set \be{Tw} \T_w:= \T_{a_1} \cdots \T_{a_n}. \ee We view $\T_w \in \qn[W]$ in the natural way: first substitute for each $\T_{a_i}$ the expression $c(a_i)[w_{a_i}] + b(a_i)[1]$ from (\ref{Ta}), move all the rational functions to the left so that we may write \be{tw:rat} \T_w = \sum_{\sigma \in W} A_{\sigma}(w)[\sigma], \ee where $A_\sigma(w)$ is some sum of products of rational functions in $c(\cdot)$ and $b(\cdot).$ It is easy to verify that $A_\sigma(w) = 0$ unless $\sigma \leq w$ in the Bruhat order. After now expanding the rational functions involved in $A_\sigma(w)$ in $\qn,$ we may view $\T_w \in \qn[W].$ One may further verify that the above definition of $\T_w$ does not depend on the reduced decomposition of $w.$ Moreover, one can check the following Hecke identity using (\ref{c:b:iden}, \ref{c:b:2}, \ref{c:b:3}), \begin{eqnarray} \label{hecke}  (\T_a + v^{-1})(\T_a - v) &=& 0 \text{ for } a \in \Pi. \ee We may summarize the above by writing, \be{hecke:2}  \T_w \T_a &=& \T_{w w_a} \text{ if } \ell(ww_a) > \ell(w) \\ \label{hecke:3} \T_w \T_{w_a} &=& \T_{w w_a} + (v-v^{-1}) \T_w \text{ if } \ell(w w_a) < \ell(w). \end{eqnarray}

\tpoint{The expression $\mc{P}_{v}$}

 In order to consider certain infinite sums of elements from $\qn[W]$ we introduce the formal dual $\qn[W]^\v$ as the set of all possibly formal infinite sums $F=\sum_{w \in W} f_w [w]$ where $f_w \in \qn.$

Consider now the formal expression \be{P:nu} \mc{P}_{v} := \sum_{w \in W} v^{\ell(w)} \T_w, \ee and let us now argue that it lies in $\qn[W]^\v.$ Using (\ref{tw:rat}), we write \be{Pv:r} \mc{P}_v = \sum_{w \in W} v^{\ell(w)} \T_w = \sum_{w \in W} \sum_{\sigma \leq w}  A_\sigma(w) [\sigma] = \sum_{\tau \in W} C_{\tau} [\tau] ,\ee where $C_\tau:= \sum_{w \in W} A_\tau(w).$ We may then attempt to expand each coefficient $C_{\tau}$ (which is an infinite sum) in the domain $\qn.$ It is \emph{not} the case that, for a fixed $\tau \in W$ only finitely many $A_\tau(w)$ will be non-zero. However, the following Lemma of Cherednik ensures that $C_{\tau}$ is not just well-defined as an element on $\qn$ but that is a $v$-finite quantity.

\begin{nlem} \cite[Lemma 2.19(e)]{cher:ma} \label{P:defn} The element $\mc{P}_{v}$ lies in $\qn[W]^\v$ and has $v$-finite coefficients.  In other words, for each $\tau \in W$, the expansion of the sum $C_\tau:= \sum_{w \in W} v^{\ell(w)} A_\tau(w)$ in is a well-defined, $v$-finite element in $\qn.$ \end{nlem}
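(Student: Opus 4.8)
The plan is to follow Cherednik's argument \cite[Lemma 2.19]{cher:ma} adapted to the affine Kac–Moody Weyl group $W$, exploiting the explicit triangular form of the $\widetilde T_w$ with respect to the Bruhat order. First I would fix $\tau \in W$ and analyze which $w \in W$ contribute a nonzero term $A_\tau(w)$ to $C_\tau = \sum_{w \in W} v^{\ell(w)} A_\tau(w)$. Recall from (\ref{tw:rat}) that $A_\sigma(w) = 0$ unless $\sigma \leq w$ in the Bruhat order; so only $w \geq \tau$ contribute. For such $w$, write $w = \tau' w_\tau$-type decompositions along a reduced word, so that $A_\tau(w)$ is a sum of products of the rational functions $c(a_i) = \mathbf{c}(e^{a_i^\vee})$ and $b(a_i) = \mathbf{b}(e^{a_i^\vee})$, one factor for each letter of a reduced decomposition of $w$. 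The key structural point is that each $\mathbf{c}(X) = v + (\text{terms of positive } e\text{-degree})$ has constant term $v$ in its expansion in $\mathcal{Q}_v$, while $\mathbf{b}(X) = (v - v^{-1})(1 + X + X^2 + \cdots)$ has lowest $e$-degree $0$ but is divisible by $(v - v^{-1})$; more precisely, $\mathbf{b}(X)$ contributes a factor of $(v-v^{-1})$ and shifts the $e$-degree up by at least the degree of the corresponding coroot, which is a positive element of $Q_-^\vee$.

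The second step is the $e$-degree bookkeeping: I would show that for fixed $\tau$, the coefficient in $C_\tau$ of a given monomial $e^{-\beta}$ (with $\beta \in Q_+^\vee$) receives contributions only from those $w$ for which the total $e$-degree accumulated by the chosen $\mathbf{b}$-factors equals $-\beta$. Since each real positive coroot $a^\vee$ appearing has a fixed positive length, and since passing from $\tau$ to a longer $w \geq \tau$ forces at least $\ell(w) - \ell(\tau)$ additional $\mathbf{b}$-factors in any term of $A_\tau(w)$ that does not identically vanish — here I would use that $\widetilde T_a = c(a)[w_a] + b(a)[1]$, so to land back on $[\tau]$ after multiplying by extra generators one must use the $b(a)[1]$ (identity) branch each extra time — the $e$-degree grows at least linearly in $\ell(w) - \ell(\tau)$. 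Hence only finitely many $w$ contribute to each fixed power $e^{-\beta}$, so $C_\tau$ is a well-defined element of $\mathcal{Q}_v = \mathbb{C}_v[[Q_-^\vee]]$. This is the part that establishes $\mathcal{P}_v \in \mathcal{Q}_v[W]^\vee$.

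The third and main step is the $v$-finiteness: I must show each $C_\tau \in \mathbb{C}[v^{-1}][[Q_-^\vee]]$, i.e. that the coefficient of each $e^{-\beta}$ is a Laurent polynomial in $v$ of bounded negative degree. The point is that each $\mathbf{b}$-factor carries $(v - v^{-1})$ (contributing at worst $v^{-1}$), while each $v^{\ell(w)}$ carries $v^{+\ell(w)}$, and each $\mathbf{c}$-factor contributes nonnegative powers of $v$ in its expansion (indeed $\mathbf{c}(X) = v + (v - v^{-1})(\text{something with positive } e\text{-degree})$, wait — I must be careful: $\mathbf{c}(X) = \frac{vX - v^{-1}}{X - 1}$, whose expansion is $v^{-1} + (v - v^{-1})(X + X^2 + \cdots)$ in positive powers, so even $\mathbf{c}$ carries a bare $v^{-1}$). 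So a term of $A_\tau(w)$ using $k$ factors of type $\mathbf{b}$ and $\ell(w) - k$ of type $\mathbf{c}$ (at the relevant branch) has $v$-valuation at least $-\ell(w)$ before multiplying by $v^{\ell(w)}$; after multiplying, at least $0$. But contributions to a fixed $e^{-\beta}$ come from only finitely many $w$ (Step 2), hence the coefficient of $e^{-\beta}$ in $C_\tau$ is a finite sum of elements of $\mathbb{C}[v, v^{-1}]$, hence itself in $\mathbb{C}[v, v^{-1}]$; the uniform lower bound on $v$-valuation then upgrades this to $v$-finiteness. The main obstacle I anticipate is Step 2 — controlling precisely how many $\mathbf{b}$-branches are forced and bounding the accumulated $e$-degree from below by (a positive constant times) $\ell(w) - \ell(\tau)$ — because this requires a careful induction on reduced words together with the fact that in the affine setting the real coroots $a^\vee + m\mathbf{c}$ still have bounded-below ``finite part'' so that each $\mathbf{b}(e^{a^\vee})$ genuinely increases the $Q_-^\vee$-degree; the imaginary directions (multiples of $\mathbf{c}$) require the separate observation from \cite[(3.8)]{mac:formal} / Lemma \ref{L:0}-type reasoning that level-zero growth is still coercive. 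Modulo this, the rest is the bookkeeping indicated above, and I would simply cite \cite[Lemma 2.19(e)]{cher:ma} for the combinatorial core and explain that the affine root-system input needed is exactly (\ref{rr}), (\ref{r+}), and the structure of $X$ from \ref{titscone}.
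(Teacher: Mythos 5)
Both you and the paper ultimately cite Cherednik~\cite{cher:ma} for the actual proof --- the paper gives only a one-sentence remark on the mechanism --- so at that level your plan and the paper's coincide. However, the expanded bookkeeping you propose in Steps~1--2 contains a genuine error that would prevent it from working as a stand-alone argument. You claim that to contribute to $A_\tau(w)$ with $w$ longer than $\tau$ one is ``forced'' to take the identity branch $b(a)[1]$ at least $\ell(w)-\ell(\tau)$ times, so that the accumulated $e$-degree grows linearly in $\ell(w)-\ell(\tau)$. The inequality goes the other way. If $\epsilon\in\{0,1\}^{\ell(w)}$ marks the positions where the $c(a_i)[w_{a_i}]$ branch is taken, and if $\sigma(\epsilon):=\prod_{\epsilon_i=1}w_{a_i}=\tau$, then $\sigma(\epsilon)$ is a product of $k:=|\{i:\epsilon_i=1\}|$ simple reflections, hence $\ell(\tau)\le k$; the number of $b$-branches is therefore $\ell(w)-k\le\ell(w)-\ell(\tau)$. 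So \emph{fewer}, not more, $b$-factors are guaranteed as $\ell(w)$ grows, and the deficit is made up by $c$-branches along possibly non-reduced subwords.

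A second issue is that counting $b$-factors would not control the $e$-degree even if the inequality were reversed. After moving the rational functions to the left, the factors appearing are $c(\beta)$ or $b(\beta)$ for \emph{conjugated} roots $\beta = w_{a_{i_1}}\cdots w_{a_{i_{j-1}}}(a_i)$, which need not be positive. For $\beta<0$, the $\mathcal{Q}_v$-expansion of $\mathbf{b}(e^{\beta^\vee})$ has nonzero constant term $(v-v^{-1})$ and shifts the $e$-degree by nothing, while $\mathbf{c}(e^{\beta^\vee})$ has constant term $v^{-1}$; only for $\beta>0$ does $b(\beta)$ begin at $e^{-\beta^\vee}$. (Your Step~1 already equivocates on this point, giving for $\mathbf{c}$ both ``constant term $v$'' and ``constant term $v^{-1}$'' in two different places; the correct constant term depends on the sign of the root, since the expansion is always taken in $\mathcal{Q}_v$.) The mechanism that actually gives convergence --- and which the paper's remark after the lemma isolates correctly --- is that as $\ell(w)\to\infty$ the conjugated roots entering any nonvanishing term of $A_\tau(w)$ accumulate positive $\cc$-components without bound, so every surviving monomial carries $e^{-n\cc}$ with $n\to\infty$. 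That is the affine-specific input you would need to extract from the Tits-cone description (\ref{ti:exp}) and the real-root structure (\ref{r+}). Since you do flag Step~2 as the obstacle and ultimately defer to \cite[Lemma~2.19(e)]{cher:ma}, your overall strategy converges with the paper's; but the $b$-factor-counting heuristic you propose should be replaced by (or derived from) the $\cc$-growth argument rather than relied on as written.
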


We refer the reader to \cite{cher:ma} for the proof, but just remark here that the essence of the argument is to show the following: for any fixed $\tau \in W,$  the contributions $A_\tau(w)$ will all arise with a factor $e^{n \cc}$ with $n \rr \infty$ as $\ell(w) \rr \infty.$ Hence, if one fixes a value $e^{b^{\vee}}$ with $b^{\vee} \in Q^{\vee}$ and $\tau \in W,$ there are only finitely many $w \in W$ such that $e^{b^{\vee}}$ will occur in $A_\tau(w).$

\tpoint{Cherednik's Identity} For each $w \in W,$ we have the element $\Delta^w \in \qn$ as defined in (\ref{Del:w}). Thus we have the element $\sum_{w \in W} \Delta^w [w] \in \qn[W]^\v.$ We then have the following algebraic identity due to Cherednik in $\qn[W]^\v$ (see also (\cite[\S5.5]{mac:aff} for a finite-dimensional analogue).

\begin{nprop}[\cite{cher:ma}] \label{proportional} As elements of $\qn[W]^\v$ we have an equality, \begin{eqnarray} \label{macdonald:operator} \mc{P}_v = \sum_{w \in W} v^{\ell(w)} \T_w = \mf{m} \sum_{w \in W} \Delta^w [w], \end{eqnarray} where $\mf{m} \in \qn$ is some $W$-invariant factor.
\end{nprop}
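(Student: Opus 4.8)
The plan is to prove the identity (\ref{macdonald:operator}) in $\qn[W]^\vee$ by establishing the weaker \emph{proportionality} statement — that $\mc{P}_v$ and $\sum_{w\in W}\Delta^w[w]$ differ by left multiplication by a single scalar $\mf{m}\in\qn$ — and then, only if needed, pinning down $\mf{m}$. The key observation is that both sides of (\ref{macdonald:operator}) are \emph{eigenvectors} for the right action of the operators $\T_a$, $a\in\Pi$, with the same eigenvalue $v$. For $\mc{P}_v$ this follows formally from the Hecke relations (\ref{hecke:2}), (\ref{hecke:3}): computing $\mc{P}_v\,\T_a$ termwise, the terms with $\ell(ww_a)>\ell(w)$ pair up with those with $\ell(ww_a)<\ell(w)$ under $w\leftrightarrow ww_a$, and a short rearrangement using $v^{\ell(ww_a)}=v^{\ell(w)\pm1}$ together with $\T_w\T_{w_a}=\T_{ww_a}+(v-v^{-1})\T_w$ gives $\mc{P}_v\T_a=v\,\mc{P}_v$. (One must check this manipulation is legitimate in $\qn[W]^\vee$; this is exactly where Lemma \ref{P:defn} is used — the coefficients $C_\tau$ are well-defined and $v$-finite, so the termwise regrouping does not create divergent sums.) For $\sum_{w}\Delta^w[w]$ one verifies $\left(\sum_w\Delta^w[w]\right)\T_a=v\sum_w\Delta^w[w]$ directly from the definition (\ref{Del:w}) of $\Delta^w$ and (\ref{Ta}): the content is the rank-one identity relating $\Delta^w$, $\Delta^{ww_a}$ and the factor $c(\pm wa^\vee)$, which reduces via (\ref{c:a}) and the cocycle property $\Delta^{ww_a}=\Delta^w\cdot(\text{single factor})$ to the scalar identities (\ref{c:b:iden})--(\ref{c:b:3}).

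Next I would show that the space of right-$\T_a$-eigenvectors in $\qn[W]^\vee$ with eigenvalue $v$, for all $a\in\Pi$ simultaneously, is a rank-one $\qn$-module once we restrict to the appropriate subspace of $v$-finite (or suitably supported) elements. The point is that such an eigenvector $F=\sum_w f_w[w]$ is determined by its coefficient $f_1$ of the identity: the eigenvalue equations $F\T_a=vF$ express $f_{w w_a}$ in terms of $f_w$ and $f_{w'}$ for shorter $w'$ (this is the same recursion as in (\ref{J:w:w'}) / (\ref{Jbis:w:w'})), so by induction on $\ell(w)$ every $f_w$ is a $\qn$-multiple of $f_1$. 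Hence if $F,F'$ are two such eigenvectors with $F$ having invertible leading coefficient, then $F'=\mf{m}F$ where $\mf{m}=f'_1/f_1\in\qn$. Applying this with $F=\mc{P}_v$ (whose $[1]$-coefficient $C_1=\sum_w v^{\ell(w)}A_1(w)$ is a unit in $\qn$, being $1+\{\text{higher order in }e^{-a_i^\vee}\}$) and $F'=\sum_w\Delta^w[w]$ yields the desired equality with $\mf{m}=\Delta^1/C_1$, up to checking invertibility, which is clear since $\Delta$ is invertible in $\qn$ by (\ref{delta}) and the remark following it.

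Finally I would identify $\mf{m}$ as $W$-invariant. Here I would use the \emph{left} symmetry: both $\mc{P}_v$ and $\sum_w\Delta^w[w]$ are also (left) invariant in a suitable sense under $W$ — more precisely, left multiplication by $\T_b$ acts on $\mc{P}_v$ by the same eigenvalue $v$, and on $\sum_w\Delta^w[w]$ left multiplication relates the coefficient at $w$ to that at $w_bw$, so that the ratio $\mf{m}$ of $[1]$-coefficients equals the ratio of $[w_b]$-coefficients, forcing $w_b\mf{m}=\mf{m}$ for all simple $b$. Alternatively, and perhaps more cleanly, one computes $\mf{m}$ explicitly: from Macdonald's formula the $[1]$-coefficient of $\sum_w\Delta^w[w]$ beyond leading order and $C_1$ both have product expansions, and $\mf{m}=\Delta^1/C_1$ is visibly a power series in $e^{-\cc}$ alone with coefficients in $\C_v$, hence $W$-fixed since $W$ fixes $\cc$. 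I expect the main obstacle to be the convergence/well-definedness bookkeeping in $\qn[W]^\vee$: the sums defining $\mc{P}_v$ and the eigenvalue computations are genuinely infinite, and one must invoke Cherednik's Lemma \ref{P:defn} carefully to justify every termwise rearrangement and to ensure the inductive solution of the recursion stays inside $\qn$ (and is $v$-finite). The purely algebraic eigenvalue identities are routine given (\ref{c:b:iden})--(\ref{c:b:3}) and (\ref{hecke:2})--(\ref{hecke:3}); the real work is the functional-analytic hygiene, which is why I would lean on \cite{cher:ma} for the key estimate and only sketch the rest.
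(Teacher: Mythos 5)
Your argument is correct and lands in the same place as the paper's, but the two proofs are organized differently, and the comparison is instructive. The paper's proof never verifies directly that $\sum_{w}\Delta^w[w]$ is a right eigenvector of $\T_a$. Instead it first uses the \emph{left} eigenvalue relation $\T_a\mc{P}_v=v\mc{P}_v$ to prove that $\mc{P}_v$ has the $W$-equivariant form $\mc{P}_v=\sum_\tau \Gamma^\tau[\tau]$ with $\Gamma=C_1$ (i.e.\ $C_{w_a\tau}=C_\tau^{w_a}$), and only \emph{then} uses the right eigenvalue relation $\mc{P}_v\T_a=v\mc{P}_v$ to extract the functional equation $\Gamma^{w_a}c(-a)=\Gamma c(a)$, which combined with $\Delta^{w_a}/\Delta=c(a)/c(-a)$ forces $\Gamma\Delta^{-1}$ to be $W$-invariant. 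You instead show both $\mc{P}_v$ and $\sum_w\Delta^w[w]$ are right $\T_a$-eigenvectors with eigenvalue $v$, argue that the right-eigenspace is (at most) rank one over $\qn$ because the coefficient recursion $f_{ww_a}c(-wa)=c(wa)f_w$ expresses each $f_w$ as a fixed unit multiple of $f_1$, and deduce proportionality; then you obtain the $W$-invariance of $\mf{m}$ from the left relation. The computations at the core are the same identities; your route is a bit more conceptual (an eigenspace argument rather than an explicit unwinding of the two relations), at the cost of the extra verification that $\sum_w\Delta^w[w]$ is an eigenvector. Two small remarks: (i) your rank-one step implicitly needs the recursion ratios $c(wa)/c(-wa)$ to be independent of the chosen reduced word for $w$; for a given eigenvector this is automatic since the $f_w$ already exist, and the existence of $\mc{P}_v$ with invertible $C_1$ settles it — it would be worth saying this explicitly. (ii) You have the ratio inverted: with $F'=\sum\Delta^w[w]$ and $F=\mc{P}_v$, the relation the Proposition wants is $\mc{P}_v=\mf{m}\sum\Delta^w[w]$, so $\mf{m}=C_1/\Delta$, not $\Delta/C_1$; this is harmless since $\Delta$ is a unit. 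Your caution about the convergence bookkeeping is well placed and is exactly the role played by Cherednik's Lemma \ref{P:defn} in the paper as well.
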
 \begin{proof} \emph{Step 1:} First we would like to explain how to make sense, for any $a \in \Pi$, of the expressions $\T_a \, \mc{P}_{v}$ and $\mc{P}_{v} \, \T_a$ as elements in $\qn[W]^\v.$ To make sense of $\T_a \, \mc{P}_{v},$ we may proceed in two equivalent ways: (a) we compute $\T_a \mc{P}_v$ using the Hecke relations (\ref{hecke:2}, \ref{hecke:3}); (b) we can write $\T_a = c(a)[w_a] + b(a)[1]$ as in (\ref{Ta}), and $\mc{P}_{v} = \sum_{\tau \in W} C_{\tau} [\tau]$ as in (\ref{Pv:r}), with the $C_{\tau}$ some (infinite) sum of rational functions. Then $\T_a \, \mc{P}_{v}$ is defined to be the expansion in $\qn[W]^\v$ of \be{TaP} \sum_{\tau \in W} c(a) C_{\tau}^{w_a} [ w_a \tau] + b(a) C_{\tau}[\tau], \ee where $C_\tau^{w_a}$ is the application of $w_a$ to $C_{\tau}.$ It is easy to see that the procedure (a) gives the following relation (see \cite[5.5.9, p. 113]{mac:aff}),  \be{P:leftdiv} \T_a \, \mc{P}_v&=&v \mc{P}_{v}, \ee which shows that the expansion of (\ref{TaP}) is well-defined (one could also proceed as in the proof of Lemma \ref{P:defn} to show that this expansion was well-defined).

To define $\mc{P}_{v} \, \T_a$ we proceed similarly. Either we can using the Hecke relations (\ref{hecke:2} \ref{hecke:3}), or we may expand $\T_a$ as in (\ref{Ta}) and $\mc{P}_{v}$ as in (\ref{Pv:r}). Then we define $\mc{P}_{v} \, \T_a$ as the expansion in $\qn[W]^\vee$ of the expression (which is well-defined using Lemma \ref{P:defn}) \be{PTa} \sum_{\tau \in W} C_{\tau} \, c(\tau a) [ \tau w_a] + C_{\tau} \,  b(\tau a) [\tau]. \ee One may check, using the same argument as in \emph{op. cit} that \be{P:rightdiv} \mc{P}_{v} \, \T_a &=&  v\mc{P}_v. \ee The remainder of the proof is drawing conclusions between, on the one hand (\ref{P:rightdiv}) and (\ref{PTa}), and on the other (\ref{P:leftdiv}) and (\ref{TaP}).
%
%Having made sense of the expressions $\mc{P}_{\tau} \, \T_a$ and $\T_a \, \mc{P}_v$ one has the following two equalities in $\qn[W]^\v$ , \be{}   \label{P:leftdiv} \T_a \, \mc{P}_v&=&v \mc{P}_{v} \\ \label{P:rightdiv} \mc{P}_{v} \, \T_a &=&  v\mc{P}_v  . \ee  The proof of these two facts is a computation using (\ref{c:b:iden} - \ref{c:b:3})  so we do not repeat it (see \cite[5.5.9, p. 113]{mac:aff}).

\emph{Step 2:} First, let us say that an element of $\qn[W]$ is $W$-invariant if it is of the form
  $$
  \sum f^w [w]
  $$
  for some $f\in \qn$. We want to show that $\mc{P}_{v}$ is $W$-invariant. Indeed, combining (\ref{P:leftdiv}) and (\ref{TaP}) we have the following equality in $\qn[W]^\vee$, \be{Ta:P} \sum_{\tau \in W} c(a) C_{\tau}^{w_a} [ w_a \tau] + \sum_{\tau \in W} b(a) C_{\tau}[\tau] = \sum_{\tau \in W} v C_{\tau}[\tau]. \ee From (\ref{c:b:iden}), we have that $c(a) = v - b(a),$ and hence we may conclude that \be{Ta:P-1}  \sum_{\tau \in W} c(a) C_{\tau}^{w_a} [ w_a \tau] = \sum_{\tau \in W} c(a) C_{\tau}[\tau], \ee and hence \be{Ta:P-2}  \sum_{\tau \in W} C_{\tau}^{w_a} [ w_a \tau] = \sum_{\tau \in W} C_{\tau}[\tau]. \ee Letting $\Gamma = C_1$ (regarded as an infinite sum of rational functions) we conclude that $C_{\tau} = \Gamma^\tau$ for any $\tau \in W.$ We write this formally as \be{P:Gam} \mc{P}_{v} = \sum_{w \in W} [w] \, \Gamma, \ee bearing in mind as usual the above is to be interpreted as $ \sum_{w \in W} \Gamma^w [w] $ where each $\Gamma^w$ is expanded as an expression in $\qn[W]^\v.$

\emph{Step 3:} Now we further apply the conditions (\ref{P:rightdiv}) via (\ref{PTa}).  From the definition of $\T_a$ we have an equality (in $\qn[W]$) $v\T_a + 1 = ([w_a]+1 ) v c(-a).$ Hence, using (\ref{P:Gam}) we may write \be{rightdiv:2}
\mc{P}_{v} \, (v\T_a +1) &=& (\sum_w [w] \, \Gamma ) \, (v\T_a +1) =  (\sum_{w\in W} [w] \, \Gamma ) \, ([w_a]+1 ) \, v c(-a) \\
 &=& (\sum_{w\in W} [w] \, \Gamma) \, v c(-a) + (\sum_{w \in W} [w] \, \Gamma) \, [w_a] \, v c(-a) \\
 &=& \label{rightdiv:2b} (\sum_{w \in W} [w] \, \Gamma) \, v c(-a) + (\sum_w [w] [w_a] \, \Gamma^{w_a})  \, v c(-a) \ee On the other hand, \be{rightdiv:3} \mc{P}_{v}(v^2+1) &=& (\sum_{w \in W} [w] \, \Gamma )  (v^2+ 1 ) \\
 &=& (\sum_{w \in W} [w] \, \Gamma)  \, v (c(-a) + c(a) ) \\
 &=&  (\sum_{w \in W}  [w] \, \Gamma) \, v c(-a) + (\sum_{w \in W} [w] \, \Gamma) \, v c(a) \\
 &=& \label{rightdiv:3b}( \sum_{w \in W} [w] \, \Gamma ) \, v c(-a) + (\sum_{w \in W} [w] [w_a] \, \Gamma ) \, v c(a). \ee From (\ref{P:rightdiv}), we see that (\ref{rightdiv:2b}) is equal to (\ref{rightdiv:3b}), and so we conclude that \be{rightdiv:3} \Gamma^{w_a}  c(-a) = \Gamma c(a) \in \qn. \ee On the other hand, from the definition (\ref{delta}), we have that \be{rightdiv:4} \frac{\Delta^{w_a}}{\Delta} = \frac{c(a)} {c(-a)}.\ee And so we obtain that \be{rightdiv:5} \frac{\Gamma^{w_a}}{\Gamma} = \frac{\Delta^{w_a}}{\Delta}. \ee An easy induction then gives that for any $w \in W$ we have \be{rightdiv:6} \frac{\Gamma^{w}}{\Gamma} = \frac{\Delta^{w}}{\Delta}, \ee or in other words the element $\Gamma \Delta^{-1} \in \qn$ is $W$-invariant. Now, we may write \be{rightdiv:7} \mc{P}_{v} \,  \Delta^{-1} = ( \sum_{w \in W} [w] \, \Gamma ) \,  \Delta^{-1} = \sum_{w \in W} w \, (\Gamma \Delta^{-1}). \ee As $\Gamma  \, \Delta^{-1}$ is $W$-invariant, we obtain that \be{rightdiv:7} \mc{P}_{v} \,  \Delta^{-1} = \mf{m} \, \sum_w [w] \ee where $\mf{m} \in \qn$ is some $W$-invariant factor. \end{proof}

\tpoint{Factorization of $\mc{P}_{v}$} \label{P:fact}  For each $\lv \in \Lv_+$ we have defined $W_{\lv} \subset W$ to be the stabilizer of the $\lv.$ It is well-known that $W_{\lv}$ is generated by the set of simple reflections which it contains. Hence, by the classification of parabolic subgroups of an affine Lie algebra (see \cite{kac}), there are two possibilities for $W_{\lv}:$ either it is finite, or it is equal to all of $W.$

For $\lv \in \Lv_+$, let $W^{\lv} \subset W$ be a set of minimal coset representatives as in  \S \ref{disass}. Consider now, in analogy to the element $\mc{P}_{v},$ the following two elements, \be{p:two} \begin{array}{lcr}  \mc{P}_{v, \lv} = \sum_{w \in W_{\lv}} v^{\ell(w)} \T_w & \text{ and } &
\mc{P}_{v}^{\lv} = \sum_{w \in W^{\lv}} v^{\ell(w)} \T_w. \end{array} \ee

If $W_{\lv}$ is infinite, then in fact it is equal to $W$ and so $\mc{P}_{v, \lv}= \mc{P}_{v}, $ and $\mc{P}^{\lv}_{v}=1$ in this case. On the other hand, if $W_{\lv}$ is finite, then clearly $\mc{P}_{v, \lv} \in \qn[W],$ and $\mc{P}^{\lv}_{v},$ which is an infinite sum, can be easily see to be a $v$-finite element of $\qn[W]^\v$ using Lemma \ref{P:defn}. Using the defining property of $W^{\lv}$ and (\ref{hecke:2})  we obtain factorizations in $\qn[W]^{\vee}$ \be{p:plam} \mc{P}_{v}  =  \mc{P}_{v, \lv} \, \mc{P}_{v}^{\lv} \ee where if $W_{\lv}$ is infinite the above equality is a tautology. In the case that $W_{\lv}$ is finite, one may verify that $\mc{P}_{v, \lv}$ is an invertible element of $\qn[W]$ , and so we may rewrite (\ref{p:plam}) \be{p:plam:2} \mc{P}_{v} \, \mc{P}_{v, \lv}^{-1}    =  \mc{P}_{v}^{\lv}. \ee In the case that $W_{\lv} = W,$ we loosely interpret the above statement as the tautology $1=1.$

\subsection*{Step 4: Rephrasal and Reassambly}

\tpoint{$p$-adic connection} Using the natural action of $W$ on $\Lv$ we may use formula (\ref{Ta}) to define an action of $\T_a$ , $a \in \Pi$ (and hence also $\T_w$ for $w \in W$ by induction) on $\C_{v}[\Lv].$ For example, when $a \in \Pi$ we have that \be{Ta:lv} \T_a(e^{\lv}) = c(a) e^{w \lv} + b(a) e^{\lv}. \ee Using the above, we see easily that if $w \in W_{\lv}$ then\be{Tw:elam} \T_w(e^{\lv}) = v^{\ell(w)} e^{\lv}. \ee  The following result is the key to linking the algebraic and $p$-adic treatments of the spherical function.

\begin{nprop} \label{alg:padic} Let $\lv \in \Lv_+.$ For $w \in W$ the specialization of the elements $v^{-2 \la \rho, \lv \ra } v^{\ell(w)} \T_w(e^{\lv})$ at $v^2=q^{-1}$ is equal to $J_w(\lv)$ (as defined in \ref{Jw}, \ref{Jbis:J}), i.e., \be{alg:padic:q} q^{\la \rho, \lv \ra } q^{-\ell(w)/2} \T_w(e^{\lv}) = J_w(\lv). \ee \end{nprop}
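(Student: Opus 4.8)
The plan is to prove the identity \eqref{alg:padic:q} by induction on $\ell(w)$, using the recursion for $J_w(\lv)$ from Proposition \ref{recursion}(3) on the $p$-adic side and the Hecke-type multiplication rule \eqref{hecke:3} for the operators $\T_w$ on the algebraic side. The two recursions have manifestly the same shape once one substitutes $v^2 = q^{-1}$, so the matching is essentially formal; the only content is to set up the base case correctly and to check that the coefficients line up after the substitution.

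For the base case $w = 1$ (and more generally $w \in W_{\lv}$), note that by Proposition \ref{recursion}(2) we have $J_w(\lv) = q^{\la \rho, \lv\ra} q^{-\ell(w)} e^{\lv}$, while on the algebraic side \eqref{Tw:elam} gives $\T_w(e^{\lv}) = v^{\ell(w)} e^{\lv}$, so $q^{\la\rho,\lv\ra} q^{-\ell(w)/2}\T_w(e^{\lv}) = q^{\la\rho,\lv\ra} q^{-\ell(w)/2} v^{\ell(w)} e^{\lv}$, which at $v^2 = q^{-1}$ (i.e. $v = q^{-1/2}$) equals $q^{\la\rho,\lv\ra} q^{-\ell(w)} e^{\lv} = J_w(\lv)$, as desired. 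This handles in particular $w=1$ and also makes the induction behave correctly when passing through stabilizer elements.

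For the inductive step, take $w\in W$ of positive length and choose $a \in \Pi$ with $w' := w_a w$ and $\ell(w') < \ell(w)$. One must translate between the ``left'' form $w = w_a w'$ used in \eqref{hecke:3} and the normalization in Proposition \ref{recursion}(3). The plan is: first rewrite \eqref{Ta:lv} and \eqref{hecke:3} to express $\T_w(e^{\lv})$ in terms of $\T_{w'}(e^{\lv})$ and its image under $w_a$, obtaining an identity of the form $\T_w(e^{\lv}) = c(a)\,\big(\T_{w'}(e^{\lv})\big)^{w_a} + b(a)\,\T_{w'}(e^{\lv})$ — here I should be careful about whether the extra $(v - v^{-1})$ term in \eqref{hecke:3} appears, which depends on whether $\ell(w_a w') > \ell(w')$; since we chose $\ell(w') < \ell(w) = \ell(w_a w')$ the length goes \emph{up}, so \eqref{hecke:2} applies and $\T_w = \T_{w_a}\T_{w'}$ with no correction term, and the action formula is just $\T_w(e^\mu) = \T_{w_a}(\T_{w'}(e^\mu))$ with \eqref{Ta:lv} applied to each monomial. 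Then multiply through by $q^{\la\rho,\lv\ra} q^{-\ell(w)/2}$, use $\ell(w) = \ell(w')+1$, and substitute the inductive hypothesis $q^{\la\rho,\lv\ra} q^{-\ell(w')/2}\T_{w'}(e^{\lv}) = J_{w'}(\lv)$. Finally, specialize $v^2 = q^{-1}$ in the expressions $c(a) = \frac{1-v^2 e^{a^\vee}}{1-e^{a^\vee}}$ and $b(a) = \frac{v^2-1}{1-e^{a^\vee}}$ (expanded in $R_{w_a}$), which gives exactly the coefficients $\frac{1-q^{-1}e^{a^\vee}}{1-e^{a^\vee}}$ and $\frac{q^{-1}-1}{1-e^{a^\vee}}$ appearing in the $p$-adic recursion \eqref{J:w:w'}, together with the bookkeeping factor $q^{-1/2}$ needed to reconcile $q^{-\ell(w)/2}$ with $q^{-\ell(w')/2}$; one checks these match on the nose.

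The main obstacle I anticipate is purely notational rather than conceptual: keeping track of left versus right multiplication conventions and of which side the Weyl-group twist $(\cdot)^{w_a}$ acts on, so that the algebraic recursion derived from \eqref{Ta:lv}–\eqref{hecke:3} is \emph{literally} the same as \eqref{J:w:w'} after setting $v^2 = q^{-1}$, with no stray factor of $q^{\pm 1/2}$ or misplaced $w_a$. A secondary point requiring a line of justification is that both sides genuinely lie in $\C_v[\Lv]$ (not merely in a completion), so that specialization at $v^2 = q^{-1}$ is legitimate: on the $p$-adic side this is Proposition \ref{recursion}(1) and (4), and on the algebraic side it follows because $\T_w$ applied to a single $e^{\lv}$ produces a finite sum (the $A_\sigma(w)$ being products of the rational functions $c(\cdot), b(\cdot)$, but acting on the single monomial $e^{\lv}$ yields, after the cancellation illustrated in \S\ref{example-remark}, an honest element of $\C_v[\Lv]$). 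Once these compatibility checks are in place, the induction closes immediately.
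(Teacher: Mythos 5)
Your proposal is correct and follows essentially the same route as the paper, which states the base case and then explicitly "leaves the induction to the reader"; you are supplying exactly the details the authors omit. The only cosmetic differences are that you take $w=1$ (resp.\ $w\in W_{\lv}$, via \eqref{Tw:elam} and Proposition~\ref{recursion}(2)) as the base case rather than the paper's $\ell(w)=1$, and you spell out the bookkeeping $q^{-\ell(w)/2}=q^{-1/2}q^{-\ell(w')/2}$ needed to absorb the factor $v$ that turns $c(a),b(a)$ into the coefficients $\frac{1-q^{-1}e^{a^{\vee}}}{1-e^{a^{\vee}}}$ and $\frac{q^{-1}-1}{1-e^{a^{\vee}}}$ of \eqref{J:w:w'}. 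Your note that $\ell(w_aw')>\ell(w')$ puts you in the regime of \eqref{hecke:2}, so that $\T_w=\T_{a}\T_{w'}$ with no $(v-v^{-1})$ correction, is the right observation, and your closing remark on $v$-finiteness (needed to justify the specialization $v^2=q^{-1}$) correctly invokes Proposition~\ref{recursion}(1),(4) and the cancellation illustrated in \S\ref{example-remark}.
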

\begin{proof} The proof will be done by induction on $\ell(w).$ If $\ell(w)=1$ so that $w=w_a$ for $a \in \Pi,$ we are aiming to show that $q^{ -\la \rho, \lv \ra } J_a(\lv)$ is the specialization of $v \T_a(e^{\lv})$ at $v = q^{-1/2}.$ Noting that $J_1(\lv) = q^{ \la \rho, \lv \ra } e^{\lv},$ we find from (\ref{J:w:w'}) that
 \be{Ja} q^{ - \la \rho, \lv \ra }  J_a(\lv) = \frac{1 - q^{-1}e^{a^\v} }{1- e^{a^\v}} e^{w_a \lv} + \frac{q^{-1} -1}{1- e^{a^\v} } e^{\lv}  \ee which is exactly the specialization of (\ref{vTa}) at $v = q^{-1/2}.$ The induction, which follows by comparing (\ref{J:w:w'}) with the similar recursion relations for $\T_w,$ is left to the reader. \end{proof}

\spoint Consider the formal application of the element $\mc{P}^{\lv}_v$ to $e^{\lv},$
\be{P:app}
\mc{P}_{v}^{\lv}(e^{\lv}) := \sum_{w \in W^{\lv} } v^{\ell(w) } \T_w (e^{\lv}).
\ee We would first like to argue that the resulting expression is a $v$-finite expression in $\C_{v, \leq}[\Lv].$ In particular, the coefficient of each $e^{\mv}$ with $\mv \in \Lv$ is a polynomial in $v^2.$ If $W_{\lv}=W$ this is obvious. So suppose that $W_{\lv}$ is finite. We have already argued that we may write $\mc{P}_v^{\lv} = \sum_{\sigma \in W} C_{\sigma} [\sigma]$ with $C_{\sigma} \in \mc{Q}_v$ a $v$-finite element. Applying this to $e^{\lv}$ we obtain \be{P:app-2} \mc{P}_v^{\lv}(e^{\lv}) = \sum_{\sigma \in W } C_{\sigma} e^{\sigma \lv}. \ee For any fixed $\mv,$  since $\lv$ is dominant and $W_{\lv}$ is finite, if $\ell(\sigma)$ is sufficiently large we have $\sigma \lv < \mv.$ Since each $C_{\sigma}$ is an expansion in negative powers of the coroots, it follows that only finitely many terms in the above sum can contribute to each $e^{\mv},$ and the $v$-finiteness of (\ref{P:app}) follows from that of $C_{\sigma}.$

Let us rephrase this observation slightly. Writing \be{v:T} v^{ - 2 \la \rho, \lv \ra} v^{\ell(w)} \T_w(e^{\lv}) = \sum_{\mv} b_{w, \mv}(v^2) e^{\mv}, \ee it is easy to see that $b_{w, \mv}(v^2) \in \C[v^2].$ For fixed $\mv,$ set \be{v:t-3a} b_{\mv}(v^2) := \sum_{w \in W^{\lv} } b_{w, \mv}(v^2). \ee The $v$-finiteness of (\ref{P:app-2}) show that $b_{\mv}(v^2) \in \C[v^2][[Q_-^{\vee}]].$ In fact (see the comments after Lemma \ref{P:defn}) for any fixed $\mv$, only finitely many $b_{w, \mv}(v^2) \neq 0.$ In sum, we write \be{P:1} v^{ - 2 \la \rho, \lv \ra} \mc{P}_v^{\lv}(e^{\lv})  = \sum_{w \in W^{\lv}}  v^{ - 2 \la \rho, \lv \ra} v^{\ell(w)} \T_w(e^{\lv}) = \sum_{\mv \in \Lv} b_{\mv}(v^2) e^{\mv}. \ee

\spoint \label{fact:P:2} Now we distinguish two cases:
\begin{description}
\item[Case (a)] $W_{\lv}$ is finite;
\item[Case (b)] if $W_{\lv}$ is infinite, or in other words $W_{\lv}=W.$
\end{description}
	
	In case (a), we may apply (\ref{p:plam:2}) to conclude that as elements of $\C_{v, \leq}[\Lv]$ we have that \be{case1} \mc{P}_{v}^{\lv}(e^{\lv}) = \mc{P}_{v} \, \mc{P}_{v, \lv}^{-1} (e^{\lv}). \ee However, from (\ref{Tw:elam}) we have  \be{plam:poin} \mc{P}_{v, \lv}(e^{\lv}) = \sum_{w \in W_{\lv}} v^{2 \ell(w)}  \, e^{\lv} = W_{\lv}(v^2) e^{\lv}. \ee Thus \be{P:elam} \mc{P}_{v}^{\lv}(e^{\lv}) = \frac{1}{W_{\lv}(v^2)} \mc{P}_{v}(e^{\lv}). \ee In case (b), we note that $W^{\lv}$ is the identity element, and we again have the equality (quite tautologically now) \be{case:b} \mc{P}^{\lv}_{v} (e^{\lv}) = \frac{1}{W_{\lv}(v^2)} \mc{P}_{v}(e^{\lv}).\ee
		
\tpoint{Relating $\mc{P}_v^{\lv}(e^{\lv}) $ and $S(h_{\lv})$}	 Recall the $b_{\mv}(v^2)$ defined in (\ref{v:t-3a}) and (\ref{P:1}). We would like to argue that $b_{\mv}(q^{-1})$ is finite and that it equals the $e^{\mv}$-coefficient of $S(h_{\lv}).$

We begin by revisiting the polynomials $\Phi_{w, \mv}(\cdot)$ of Proposition \ref{recursion} (4).  From Proposition \ref{alg:padic}, we know that \be{v:t-1} b_{w, \mv}(q^{-1}) = \Phi_{w, \mv} \ee for any $q,$ where the right hand side is defined in Proposition \ref{recursion}(1). Hence from Proposition \ref{recursion} (4), we may conclude that \be{v:t-2} b_{w, \mv}(q^{-1}) = \Phi_{w, \mv}(q^{-1}) \ee holds for every $q.$ Thus as polynomials, we have \be{v:t-3} b_{w, \mv}(v^2) = \Phi_{w, \mv}(v^2) ,\ee and so as elements in $\C[[v^2]]$ we must have \be{b} b_{\mv}(v^2) =  \sum_{w \in W^{\lv} } \Phi_{w, \mv}(v^2). \ee But in fact, the above is an equality in $\C[v^2]$ since the left hand side lies in this smaller ring. For later use, let us set \be{phi} \Phi_{\mv}(v^2) = \sum_{w \in W^{\lv} } \Phi_{w, \mv}(v^2) \in \C[v^2].\ee

From Step 2 (see (\ref{S:w}) )  we have written $S(h_{\lv}) = \sum_{w \in W^{\lv}} J_w(\lv)$ as elements in $\C_{\leq}[\Lv],$ which can be further written using Proposition \ref{recursion} (4) as  \be{S:1} S(h_{\lv})  = \sum_{w \in W^{\lv}} \sum_{\mv \in \Lv} \Phi_{w, \mv}(q^{-1}) e^{\mv}. \ee Since each $\Phi_{w, \mv}(q^{-1}) \in \mathbb{N},$ we may conclude the following from the fact that the coefficient of each $e^{\mv}$ in $S(h_{\lv})$ is finite:  for any fixed $\mv$ there are only finitely many $w \in W^{\lv}$ such that $\Phi_{w, \mv}(q^{-1}) \neq 0$. Note that we could also conclude this fact from the fact that only finitely many $b_{w, \mv}(v^2) \neq 0.$  In any case, the sum \be{S:3} \Phi_{\mv}(q^{-1}) = \sum_{w \in W^{\lv}} \Phi_{w, \mv}(q^{-1}) \ee is finite for any $\mv,$ and we may write \be{S:4} S(h_{\lv}) = \sum_{\mv \in \Lv} \Phi_{\mv}(q^{-1}) e^{\mv}. \ee On the other hand, using (\ref{b}), we may also write \be{S5} S(h_{\lv}) = \sum_{\mv \in \Lv} \Phi_{\mv}(q^{-1}) e^{\mv} = \sum_{\mv \in \Lv} b_{\mv}(q^{-1}) e^{\mv} .\ee  In other words $S(h_{\lv})$ is the specialization of $v^{-2 \la \rho, \lv \ra } \mc{P}^{\lv}_v(e^{\lv})$ at $v^2=q^{-1}.$ Hence, using the results of \S \ref{fact:P:2}, $S(h_{\lv})$ is also then equal to the specialization of \be{prop:1} v^{-2 \la \rho, \lv \ra } \frac{1}{W_{\lv}(v^2)} \mc{P}_{v}(e^{\lv}) = v^{-2 \la \rho, \lv \ra } \frac{\mf{m}}{W_{\lv}(v^2)} \sum_{w \in W} \Delta^w e^{w \lv} \ee where the equality in the above line follows from Cherednik's identity Proposition \ref{proportional}.  We shall write this as \be{prop:2} S(h_{\lv}) = q^{ \la \rho, \lv \ra } \frac{\mf{m}}{W_{\lv}(q^{-1})} \sum_{w \in W} \Delta^w e^{w \lv}, \ee where the right hand side is the specialization of (\ref{prop:1}) at $v=q^{-1/2.}$
	
	Finally it remains to determine $\mf{m}$ (which is independent of $\lv$). This is achieved by evaluating both sides of (\ref{prop:2}) at $\lv=0$ (and $v^2=q^{-1}$.)  By definition of convolution, we must have $S(h_0)=1,$ and so (\ref{prop:2}) reduces to \be{rhs:ct} 1= \frac{\mf{m}}{W(q^{-1})} \sum_{w \in W} \Delta^w.\ee We then have $\mf{m} = H_0^{-1}$ and using (\ref{def:corr}), and the proof of the theorem is concluded.

\begin{appendices}

\section{The Cartan Semigroup} \label{car-app}

This appendix is devoted to the study of the semigroup $G_+$ of \S \ref{intro-Gplus}.  In addition to supplying a proof of Theorem \ref{cartan-thmdefn} and Corollary \ref{semigp-iwa}, we shall give another characterization of $G_+$ in terms of bounded elements with respect to the norms introduced in \S \ref{sec-repth}. We begin by defining the following subset of $G$, which we aim to show is equal to $G_+,$  \be{Gplus} G'_+:= \cup_{\lambda \in \Lambda_+^{\vee}} K \pi^{\lv} K. \ee In order to work effectively with this set one needs to verify that $G'_+$ is in fact a semi-group. In the process of showing this, we shall also see that $G'_+$ is in fact equal to $G_+.$ The techniques which we employ here are based on Garland \cite{gar:car}, with an simplification stemming from the work in \cite[Lemma 3.3]{bgkp} which gives an effective criterion for detecting when certain unipotent elements are integral.

\subsection{The Semigroup of Bounded Elements} Fix the notation as in \S\ref{sec-repth}, and consider the subset of $G$ defined as follows \begin{eqnarray} \label{Gb} G_b:= \{ g \in G \mid  \; \max_{v \in V^{\omega}_{\O} } || g v || < \infty \text{ for every highest weight representation }  V^{\omega} , \omega \in \Lambda_+ \}. \end{eqnarray} Note that if $g \in G_b$ and $v_{\omega}$ is a primitive highest weight vector of $V^{\omega}$ then we also have that \be{max:k} \max_{k \in K } || g k v_{\omega} || < \infty \ee since $k v_{\omega} \in V^\omega_\O.$ This shows that $G_b$ is right $K$-invariant. In fact, $G_b$ is $K$ bi-invariant, as the left $K$-invariance follows from the way we have defined the norms $|| \cdot ||$ on $V^\omega.$ We call $G_b$ the set of \emph{bounded} elements of $G,$ and observe the following simple result.

\begin{nlem} \label{bounded:semigp} The set $G_b$ is a semigroup.  \end{nlem}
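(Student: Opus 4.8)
The plan is to show directly that if $g_1, g_2 \in G_b$, then $g_1 g_2 \in G_b$; the identity element obviously lies in $G_b$, so this suffices. Fix a dominant integral weight $\omega \in \Lambda_+$ and work inside the highest weight module $V^\omega$ with its lattice $V^\omega_\O$ and the norm $\|\cdot\|$ from \S\ref{sec-repth}. The quantity we must bound is $\max_{v \in V^\omega_\O} \|g_1 g_2 v\|$. The obvious move is to interpose: since $g_2 \in G_b$, there is a constant $C_2 = C_2(\omega, g_2) < \infty$ with $\|g_2 v\| \le C_2$ for all $v \in V^\omega_\O$, i.e. $g_2 V^\omega_\O \subset q^{N_2} V^\omega_\O$ where $q^{N_2} = C_2$. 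Then $g_1 g_2 V^\omega_\O \subset q^{N_2} g_1 V^\omega_\O$, and applying the bound for $g_1$ gives $\|g_1 g_2 v\| \le q^{N_2} \max_{w \in V^\omega_\O}\|g_1 w\| < \infty$. So the core of the argument is just the multiplicativity of these operator-norm bounds together with the fact that scaling by $\pi^{N_2}$ (which is what the factor $q^{N_2}$ encodes) commutes past $g_1$ — this is immediate because $\pi$ is central in $\mathcal{K}$, so $\pi^{N_2}$ acts as a scalar on all of $V^\omega$.

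The one point that needs a word of care is that $G_b$ is defined by a condition ranging over \emph{all} highest weight representations $V^\omega$, $\omega \in \Lambda_+$, simultaneously, so the closure argument above must be run uniformly in $\omega$: for each $\omega$ separately we get a finite bound for $g_1 g_2$, which is exactly what the definition of $G_b$ demands (it does not ask for a bound uniform in $\omega$). Hence no uniformity issue actually arises. I would also remark, as the excerpt already notes, that $G_b$ is $K$-bi-invariant: right $K$-invariance because $k V^\omega_\O = V^\omega_\O$, and left $K$-invariance because $K$ preserves $\|\cdot\|$ on each $V^\omega$; this is not strictly needed for the semigroup statement but fits naturally here and is used later.

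I do not anticipate a genuine obstacle here — the lemma is a soft consequence of the definitions. The only thing to be slightly careful about is making sure the "max over $v \in V^\omega_\O$" is genuinely finite for the relevant elements (a priori $V^\omega_\O$ is an infinite-rank lattice), but this is precisely what membership in $G_b$ guarantees for $g_1$ and $g_2$, and the argument above propagates it to $g_1 g_2$. So the write-up is essentially the two-line interposition estimate, preceded by unwinding $\|g_2 v\| \le C_2$ into the inclusion $g_2 V^\omega_\O \subseteq \pi^{-N_2} V^\omega_\O$ and using centrality of $\pi$ to pull the scalar through $g_1$.
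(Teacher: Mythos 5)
Your argument is correct and is essentially the paper's own proof: you interpose $g_2$, convert the bound for $g_2$ into the lattice inclusion $g_2 V^\omega_\O \subset \pi^{-N_2}V^\omega_\O$, pull the central scalar $\pi^{-N_2}$ past $g_1$, and invoke the bound for $g_1$, separately for each $\omega$. Your remark that no uniformity in $\omega$ is needed, and the discussion of $K$-bi-invariance, match the paper's surrounding text as well.
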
 \begin{proof} Indeed, let $g_1, g_2 \in G_b.$ Fix a highest weight representation $V^{\omega}$ and suppose that there exists an integer $M$ such that $|| g_2 v || \leq q^M$ for every $v \in V^\omega_\O.$  In other words, $g_2 V^\omega_\O \subset \pi^{-M} V^\omega_\O,$ and hence $|| g_1 g_2 v || \leq q^M || g_1 v ||$ for any $v \in V^\omega_\O.$ The lemma is proven. \end{proof}

\subsection{Relation of $G_b$ to the Tits Cone}

Let us next record the relation between $G_b$ and the Tits cone.

\begin{nprop} \label{dom:bound} Let $\lambda^{\vee} \in \Lambda^\vee.$ We have that $\pi^{\lambda^{\vee}} \in G_b$ if and only if $\lambda^{\vee} \in - X.$ \end{nprop}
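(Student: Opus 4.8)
The plan is to reduce the assertion to a computation of operator norms on integrable highest‑weight modules and then to the combinatorics of the Tits cone. Fix $\lv\in\Lv$.

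\emph{Step 1: the norm of $\pi^{\lv}$ on $V^{\omega}$.} For $\omega\in\Lambda_+$ recall from \S\ref{sec-repth} that $\pi^{\lv}\in A$ acts on the weight space $V^{\omega}(\mu)$ by the scalar $\pi^{\langle\mu,\lv\rangle}$, so $\|\pi^{\lv}v\|=q^{-\langle\mu,\lv\rangle}\|v\|$ for $v\in V^{\omega}(\mu)$. Since the Chevalley lattice is weight graded, $V^{\omega}_{\O}=\bigoplus_{\mu}V^{\omega}_{\O}(\mu)$, so $\|v\|=\max_{\mu}\|v_{\mu}\|$ for $v=\sum_{\mu}v_{\mu}$, and hence
\begin{equation*}
\max_{v\in V^{\omega}_{\O}}\|\pi^{\lv}v\|=\sup_{\mu\in P(V^{\omega})}q^{-\langle\mu,\lv\rangle}.
\end{equation*}
Thus $\pi^{\lv}\in G_b$ if and only if, for every $\omega\in\Lambda_+$, the integer‑valued function $\mu\mapsto\langle\mu,\lv\rangle$ is bounded below on $P(V^{\omega})$.

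\emph{Step 2: reduction by $W$.} A lift $\dot w\in N_{\O}\subset K$ of $w\in W$ conjugates $\pi^{\lv}$ into $\pi^{w\lv}A_{\O}$ by the defining relations of the Tits functor; since $G_b$ is $K$‑biinvariant this gives $\pi^{\lv}\in G_b\iff\pi^{w\lv}\in G_b$ for all $w\in W$ (consistently with $W$‑invariance of $P(V^{\omega})$). Using $X=\bigcup_{w}w\Lv_+$ from \eqref{tc}, the condition $\lv\in-X$ is equivalent to the existence of $w\in W$ with $w\lv$ antidominant, i.e. $\langle a_i,w\lv\rangle\le0$ for $i=1,\dots,\ell+1$. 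For the "if" direction, assume $\lv\in-X$ and pick such $w$, setting $\mv:=w\lv$. Every $\nu\in P(V^{\omega})$ is of the form $\nu=\omega-\beta$ with $\beta\in Q_+$, so $\langle\nu,\mv\rangle=\langle\omega,\mv\rangle-\langle\beta,\mv\rangle\ge\langle\omega,\mv\rangle$, which is bounded below; hence $\pi^{\mv}\in G_b$ and therefore $\pi^{\lv}\in G_b$ by Step 2.

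\emph{Step 3: the "only if" direction.} Suppose $-\lv\notin X$. By the description $X=\{\xv:\langle\delta,\xv\rangle>0\}\sqcup\zee\cc$ of \eqref{ti:exp} there are two cases: (A) $\langle\delta,\lv\rangle>0$; or (B) $\langle\delta,\lv\rangle=0$ and $\lv\notin\zee\cc$. Take $\omega$ a fundamental affine weight, so $V^{\omega}$ is integrable of positive level $\langle\omega,\cc\rangle>0$. In case (A) one has $\omega-n\delta\in P(V^{\omega})$ for all $n\ge0$ (a standard fact for positive‑level integrable modules, the relevant string function having positive coefficients), and $\langle\omega-n\delta,\lv\rangle=\langle\omega,\lv\rangle-n\langle\delta,\lv\rangle\to-\infty$. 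In case (B) write $\lv=m\cc+\lv_o$ via $\Lv=\zee\cc\oplus\Lv_o\oplus\zee\dd$; then $\lv_o\ne0$, and for $\nu\in P(V^{\omega})$ with finite part $\overline{\nu}$ one gets $\langle\nu,\lv\rangle=m\langle\omega,\cc\rangle+\langle\overline{\nu},\lv_o\rangle$. Since $P(V^{\omega})$ is $W$‑invariant and an affine translation $t_{\gamma^{\vee}}$ ($\gamma^{\vee}\in Q_o^{\vee}$) shifts the finite part of a level‑$\langle\omega,\cc\rangle$ weight by $\langle\omega,\cc\rangle\,\gamma^{\vee}$ (modulo a multiple of $\delta$), the values $\langle\overline{\nu},\lv_o\rangle$ are unbounded below by nondegeneracy of the form together with $\lv_o\ne0$. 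In both cases $\mu\mapsto\langle\mu,\lv\rangle$ is unbounded below on some $P(V^{\omega})$, so $\pi^{\lv}\notin G_b$; equivalently $\pi^{\lv}\in G_b$ forces $-\lv\in X$, i.e. $\lv\in-X$.

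The main obstacle is Step 3(B): one must control precisely how the affine Weyl translations move the weights of $V^{\omega}$ — concretely the standard fact that, for $\langle\omega,\cc\rangle>0$, the finite parts of $P(V^{\omega})$ exhaust a full coset of the finite root lattice $Q_o$ (so that pairing with a nonzero finite coweight is unbounded below). Steps 1, 2 and 3(A) are routine bookkeeping with the norm $\|\cdot\|$ and with the descriptions \eqref{tc}, \eqref{ti:exp} of the Tits cone.
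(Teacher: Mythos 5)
Your proof is correct and follows essentially the same approach as the paper: both reduce $\pi^{\lv}\in G_b$ to the boundedness below of $\mu\mapsto\langle\mu,\lv\rangle$ on $P(V^{\omega})$ and use $P(V^{\omega})\subset\omega-Q_+$ for the ``if'' direction. For the ``only if'' direction, where the paper writes only ``from this one can conclude that $\lv$ must be in $-X$,'' your explicit two-case analysis via (\ref{ti:exp}) --- imaginary weights $\omega-n\delta$ when $\langle\delta,\lv\rangle>0$, and affine translations of the finite part when the level is zero but $\lv\notin\zee\cc$ --- correctly supplies the details the paper leaves to the reader.
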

\begin{proof} Suppose first that $\lambda^{\vee}= - \lambda^{\vee}_+$ for $\lambda^{\vee}_+ \in \Lambda^{\vee}_+.$ Fix a highest weight representation $V^{\omega}$ with weight lattice $P_\omega.$ Then for any $v \in V^\omega_\O$ belonging to the weight space $\mu \in P_\omega$ we have \be{bd:1} || \pi^{\lv} v || \leq q^{- \langle \mu, \lv \rangle }. \ee On the other hand, every weight $\mu \in P_{\omega}$ is of the form $\mu = \omega - \beta$ where $\beta \in Q_+.$ Hence, \be{} \langle \mu, \lv \rangle = - \langle \omega, \lambda^{\vee}_+ \rangle + \langle \beta, \lambda^{\vee}_+  \rangle. \ee Thus $\langle \mu, \lambda^{\vee} \rangle$ is bounded below and so $\pi^{-\lambda^{\vee}_+} \in G_b.$ By $K$-binvariance, we also have that $\pi^{-w \lambda^{\vee}_+} \in G_b$ for any $w \in W.$ Conversely, if $\pi^{\lv} \in G_b$ then the same argument as above shows that $- \langle \beta, \lv \rangle$ is bounded below as $\beta \in Q_+$ varies over the same set as above. From this one can conclude that $\lv$ must be in $-X.$ \end{proof}

From the fact that $G_b$ is $K$-binvariant, we obtain

\begin{ncor} Let $\lv \in \Lv.$ Then the coset $K \pi^{\lv} K \in G_b$ if and only if $\lv \in - X.$ \end{ncor}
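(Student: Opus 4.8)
The plan is to deduce this immediately from Proposition~\ref{dom:bound} together with the $K$-biinvariance of $G_b$. Recall that right after the definition \eqref{Gb} it was observed that $G_b$ is stable under left and right multiplication by $K$; hence $G_b$ is a union of $K$-double cosets of $G$. In particular, for any $\lv\in\Lv$ the double coset $K\pi^{\lv}K$ is contained in $G_b$ if and only if it meets $G_b$, i.e.\ if and only if $\pi^{\lv}\in G_b$, since $\pi^{\lv}\in K\pi^{\lv}K$ and conversely every element of $K\pi^{\lv}K$ is $K\pi^{\lv}K$-equivalent to $\pi^{\lv}$.

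First I would record this reduction explicitly: $K\pi^{\lv}K\subseteq G_b \iff \pi^{\lv}\in G_b$. Then I would invoke Proposition~\ref{dom:bound}, which asserts precisely that $\pi^{\lv}\in G_b$ if and only if $\lv\in -X$. Chaining the two equivalences yields the corollary. As a consistency check, both conditions are manifestly invariant under replacing $\lv$ by $w\lv$ for $w\in W$: on the group side because $\wt{w}\in K$ (see \eqref{N_O}), so that $K\pi^{\lv}K=K\pi^{w\lv}K$; and on the combinatorial side because the Tits cone $X$, and hence $-X$, is $W$-stable by its definition \eqref{tc}.

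There is no genuine obstacle here. All of the substantive work — relating boundedness of the torus element $\pi^{\lv}$ to membership of $\lv$ in $-X$ via the weight estimate \eqref{bd:1} and the description of the weights of a highest-weight module as $\omega-\beta$ with $\beta\in Q_+$ — has already been carried out in the proof of Proposition~\ref{dom:bound}. The corollary is a purely formal consequence of that proposition and the elementary observation that $G_b$ is a union of $K$-double cosets.
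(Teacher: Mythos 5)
Your argument is correct and is exactly the paper's: the authors state the corollary immediately after Proposition~\ref{dom:bound} as a consequence of the $K$-biinvariance of $G_b$. Nothing more to add.
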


\subsection{Relation between $G_b$ and $G'_+$}

Our next goal is to relate $G_b$ and the set $G'_+$ defined in (\ref{Gplus}). To do so, it is convenient to also define \be{reform:semi:2} G'_-:= (G'_+)^{-1}:=  \cup_{\lv \in \Lv} K \pi^{-\lv} K . \ee

\begin{nprop} \label{cartan:reform}  We have an equality of sets (and hence semigroups)  $G'_- = G_b,$ and hence also $G'_+=G_b^{-1}.$  \end{nprop}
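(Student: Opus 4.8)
The statement divides into two inclusions, of which one is immediate and one carries all the weight. For the easy inclusion $G'_-\subseteq G_b$: if $\lv\in\Lv_+$ then $-\lv\in -X$ (since $\Lv_+\subseteq X$), so by the Corollary following Proposition~\ref{dom:bound} the double coset $K\pi^{-\lv}K$ lies in $G_b$, and $G'_-$ is the union of these cosets. Taking inverses gives $G'_+=(G'_-)^{-1}\subseteq G_b^{-1}$. Thus the whole proposition reduces to the reverse inclusion $G_b\subseteq G'_-$; once it is proved, the equalities $G'_-=G_b$ and $G'_+=G_b^{-1}$ follow, and since $G_b$ is a semigroup by Lemma~\ref{bounded:semigp}, so are $G'_-$ and $G'_+$ (this is the meaning of the parenthetical ``hence semigroups'').

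So let $g\in G_b$. By the Iwasawa decomposition of Theorem~\ref{iw:unique} write $g=k\pi^{\mv}u$ with $k\in K$, $u\in U$, $\mv\in\Lv$; as $G_b$ is $K$-biinvariant it suffices to treat $h:=\pi^{\mv}u\in G_b$. The first goal is to show $\mv\in -X$. Fix a highest weight module $V^{\omega}$, and for a weight $\nu_0$ of $V^{\omega}$ pick a primitive weight vector $v\in V^{\omega}_{\O}$ of weight $\nu_0$. Since the positive root groups raise weights, the component of $uv$ in weight $\nu_0$ is $v$ itself, so the weight-$\nu_0$ component of $hv=\pi^{\mv}uv$ is $\pi^{\langle\nu_0,\mv\rangle}v$; boundedness of $h$ therefore forces $\langle\nu_0,\mv\rangle$ to be bounded below as $\nu_0$ ranges over the weights of $V^{\omega}$, for every $\omega$. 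Applying this with $\nu_0=w\omega$ and using the explicit description (\ref{ff}), (\ref{fin-act}) of the $W$-action on $\Lv=\ZZ\cc\oplus\Lambda_o^{\vee}\oplus\ZZ\dd$ — concretely, pairing against the fundamental weight $\Lambda_{\ell+1}$, so that the $\cc$-coefficient of $w\mv$ equals $m+(\mv_o,H)-r(H,H)/2$ for $w=\tilde w t_H$ and becomes unbounded below as $|H|\to\infty$ whenever the level $r=\langle\delta,\mv\rangle$ is positive, or when $r=0$ but the $\Lambda_o^{\vee}$-part $\mv_o$ is nonzero — one concludes that $\mv$ has non-positive level and, if the level is zero, lies in $\ZZ\cc$. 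By (\ref{ti:exp}) this says exactly $\mv\in -X$.

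The second goal is to rearrange $h$ into a single coset $K\pi^{-\lv}K$ with $\lv\in\Lv_+$, and this is where the effective integrality criterion of \cite[Lemma~3.3]{bgkp} enters. Writing $u$ in normal form $\prod_{a}x_a(t_a)$ over a finite nilpotent set of real roots, a refinement of the previous estimate — using that divided powers $e_a^{(k)}$ raise the weight of a suitable primitive vector along arbitrarily long $a$-strings, so that the weight-$(\nu_0+ka)$ component of $hv$ has $\O$-order at least $\langle\nu_0,\mv\rangle+k(\langle a,\mv\rangle+\valu(t_a))$ — shows that boundedness of $h$ forces $\valu(t_a)\ge-\langle a,\mv\rangle$ for every $a$. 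Since $\mv\in -X$, the set $\Psi_+:=\{a\in R_{re,+}:\langle a,\mv\rangle>0\}$ is finite by (\ref{tc:2}); for $a\notin\Psi_+$ the estimate gives $t_a\in\O$, while for $a\in\Psi_+$ it merely bounds the pole order of $t_a$. Split $u=u_{\mathrm{good}}u_{\mathrm{bad}}$ with $u_{\mathrm{bad}}$ supported on $\Psi_+$ and $u_{\mathrm{good}}$ on its complement. Then $\pi^{\mv}u_{\mathrm{good}}\pi^{-\mv}\in U_{\O}\subseteq K$, because conjugation multiplies each $t_a$ by $\pi^{\langle a,\mv\rangle}$ and $\langle a,\mv\rangle+\valu(t_a)\ge 0$, so $h\in K\,\pi^{\mv}u_{\mathrm{bad}}$. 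Next choose $w_1\in W$ with $-w_1^{-1}\mv=:\lv\in\Lv_+$; a short computation with the dominance of $\lv$ shows $w_1^{-1}\Psi_+\subseteq R_{re,-}$, so $u^-_1:=\dot{w_1}^{-1}u_{\mathrm{bad}}\dot{w_1}\in U^-$ and $\dot{w_1}^{-1}(\pi^{\mv}u_{\mathrm{bad}})\dot{w_1}=\pi^{-\lv}u^-_1$. Finally, root by root the pole bound becomes $\valu(s)\ge-\langle c,\lv\rangle$ for each factor $x_{-c}(s)$ of $u^-_1$, whence $\pi^{-\lv}u^-_1\pi^{\lv}\in U^-_{\O}\subseteq K$ and therefore $\pi^{-\lv}u^-_1\in K\pi^{-\lv}\subseteq G'_-$. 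Unwinding the conjugations, $h\in K(\pi^{-\lv}u^-_1)K\subseteq G'_-$, hence $g\in G'_-$.

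The main obstacle is the quantitative bookkeeping in the two norm estimates: the lower bound on $\langle\nu_0,\mv\rangle$, and then the bound $\valu(t_a)\ge-\langle a,\mv\rangle$, must be made uniform over all weights $\nu_0$, all highest weight modules $V^{\omega}$, and all roots $a$, and one must check that the weight components singled out in the estimate are not cancelled by the other factors of the product defining $u$ — this is precisely what Garland's method \cite{gar:car}, together with the criterion of \cite[Lemma~3.3]{bgkp}, is designed to handle. By contrast the combinatorial manipulations in the last step — the factorization of $u$, the choice of $w_1$, and the product orderings inside $U$ and $U^-$ — are routine given the standard structure theory of these groups and of nilpotent sets of real roots.
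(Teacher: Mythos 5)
Your easy inclusion $G'_-\subseteq G_b$ agrees with the paper, and your first step toward the converse --- pairing against extremal weights $w\omega$ to force the Iwasawa coordinate $\mv$ into $-X$ --- is a legitimate alternative to how the paper obtains this (there it falls out afterwards, once $\pi^{\mv}$ is isolated). The trouble is the second step, where you integralize the unipotent part, and the gap there is real.

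The claimed estimate $\valu(t_a)\ge -\langle a,\mv\rangle$ for \emph{the} Iwasawa unipotent of $h=\pi^{\mv}u$ is simply false. Take $\lv\in\Lv_+$ with $\langle a,\lv\rangle=n\ge 2$ for some simple $a$, and set $h=\pi^{-\lv}x_a(\pi)$. Then $h\in\pi^{-\lv}K\subset G_b$, and $h=1\cdot\pi^{-\lv}\cdot x_a(\pi)$ is an Iwasawa decomposition with $\mv=-\lv$ and $u=x_a(\pi)$, so $\valu(t_a)=1<n=-\langle a,\mv\rangle$. Consequently the move you make next --- conjugating $u_{\mathrm{good}}$ across $\pi^{\mv}$ into $K$ --- breaks down here: $\pi^{\mv}u_{\mathrm{good}}\pi^{-\mv}=x_a(\pi^{1-n})\notin U_{\O}$, and your intermediate claim $h\in K\pi^{\mv}u_{\mathrm{bad}}=K\pi^{-\lv}$ is wrong (here $h\in\pi^{-\lv}K$ but $h\notin K\pi^{-\lv}$ when $n\ge 2$). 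Even the weaker version of your bound ($t_a\in\O$ when $a\notin\Psi_+$) fails: $h=\pi^{-\lv}x_a(\pi^{-1})$ is still in $G_b$ when $n\ge 2$, with $\valu(t_a)=-1$. The underlying issue is that Theorem~\ref{iw:unique} makes only $\mv$ canonical --- $k$ and $u$ are not --- so boundedness of $g$ places no lower bound on the valuations appearing in an \emph{arbitrary} choice of $u$; the same element has another Iwasawa expression $h=x_a(\pi^{n-1})\cdot\pi^{-\lv}\cdot 1$ with $u=1$. Any correct argument must single out a distinguished decomposition.

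This is precisely what the paper's Lemma~\ref{unip:cartan} does, and it is the content of Garland's method: one first chooses $k\in K$ to \emph{maximize} $\|gk\,v_{\omega}\|$ (the values are powers of $q$ bounded above by $g\in G_b$, so a maximizer exists), and then shows that for the corresponding Iwasawa decomposition $gk=k_1au$ the unipotent part $u$ automatically lies in $K$ --- the proof pits the maximality of $k$ against the factorization $u=u^-\pi^{\xi^{\vee}}k_2$ and the sign of $\xi^{\vee}$ furnished by \cite[Lemma~5.2.1, Proposition~3.3.1]{bgkp}. Once $u\in K$ one has $g\in K\pi^{\mv}K$ directly, and $\mv\in -X$ follows from $\pi^{\mv}\in G_b$ and Proposition~\ref{dom:bound}. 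Without some such selection of $k$ your valuation bound cannot be salvaged, so the middle of your argument has to be replaced rather than repaired.
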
 \begin{proof} The second statement follows immediately from the first. Note that as both $G'_-$ and $G_b$ are $K$-invariant sets, the fact that $G'_- \subset G_b$ follows immediately from Lemma \ref{dom:bound}. It remains to show that $G_b \subset G'_-.$ To do this, it suffices to show that every element of $g \in G_b$ has an expression $g = k_1 \pi^{\mv} k_2.$ Indeed, if this were the case, then by the Lemma \ref{dom:bound} above (and using $K$-binvariance), we necessarily have that $\mv \in -X.$ So the proof of the Proposition can be concluded from the following result, an alternate proof of which can be found in \cite[(2.8)]{gar:car}.

\begin{nlem} \label{unip:cartan} Let $g \in G_b.$ Choose $k \in K$ which maximizes the norm $|| g k v_\omega ||$ for some fixed representation $V_\omega$ with primitive highest weight vector $v_\omega.$  Then if we write $gk= k_1 a u$ in terms of its Iwasawa coordinates, we must have $u \in K.$ \end{nlem}

\begin{proof}[Proof of Lemma \ref{unip:cartan}]  Recall from \cite[Lemma 5.2.1]{bgkp} that we had a decomposition of $U^-$ into \emph{disjoint} subsets $U^-_w$ where $U^-_1 = U^-_{\O}$ and in general, if $u^- \in U^-_w$ then it has an expression \[\begin{array}{lcr} u^- = k \pi^{\xi^{\vee}} U, & \xi^{\vee} \geq 0, & | \xi ^{\vee} | \geq l(w)/2. \end{array}, \] where $| \xi^{\vee} | = \la \rho, \xi^{\vee} \ra.$  From this it follows that \be{b:1} \begin{array}{lcr} U^- \cap K U \subset K \cap U^- & \text{ and } &  U \cap K U^- \subset K \cap U. \end{array} \ee  Now given $g$ and $k$ as in the lemma, assume that $gk = k_1 a u$ with $ u \notin K.$ Then write an opposite Iwasawa decomposition (in terms of $G= U^- A K$) for $u:$ i.e., \be{} u = u^- \pi^{\xi^{\vee}} k_2. \ee By the above, it follows that $\xi^{\vee} \neq 0.$ In fact $\xi^{\vee} < 0$ actually: we also know from \cite[Proposition 3.3.1]{bgkp} that $$K U^- \cap K \pi^{\lambda^{\vee}} U \neq \emptyset$$  implies that $\lambda \leq 0$, just take inverses in this expression. So we have \begin{eqnarray}
 || g k k_2^{-1} v_{\omega} || &=& || k_1 a u k_2^{-1} v_{\omega} || \\
 &=& || a u^{-} \pi^{\xi^{\vee}} v_{\omega} || \\
 & \geq &  q^{- \la \omega, \xi^{\vee} \ra} || a v_{\omega} || = q^{- \la \omega, \xi^{\vee} \ra} || g k v_{\omega} || \end{eqnarray} contradicting the original choice of $k,$ since $\la \omega, \xv \ra < 0.$  Hence $u \in K.$
\end{proof}
The proof of the Proposition is thus also completed. \end{proof}

\subsection{Relating $G'_+$ and $G_+$ (Proof of Theorem \ref{cartan-thmdefn} )}

In the introduction \S \ref{intro-Gplus}, we defined a semi-group $G_+.$ Recall again that we have defined a map $| \eta|: G \rr \zee$ in (\ref{eta:Kpts}). Recall that $\eta$ was defined with respect to the description of $G$ as a semi-direct product $G= \mc{K}^* \rtimes G'$ by projection onto the $\mc{K}^*$ factor and then composing with the valuation map $\mc{K}^* \rr \zee.$ Writing an element $g \in G$ with respect to the Iwasawa decomposition $g = u \pi^{\lv} k$ where $u \in U, \lv \in \Lv, k \in K$ one can easily verify that \be{eta:iwa} |\eta(g)|= | \eta(\pi^{\lv}) | = \la \delta, \lv \ra,  \ee where $\delta$ was the minimal positive imaginary root.

\begin{nprop} \label{intro-semigp} The semigroup $G'_+$ is equal to $G_+,$ the semigroup defined in (\ref{intro-Gplus}).  \end{nprop}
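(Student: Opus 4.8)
The plan is to prove the inclusion $G'_+ \subseteq G_+$ and the reverse inclusion $G_+ \subseteq G'_+$ separately, using the characterizations of $G'_+$ already established in this appendix together with the definition of $G_+$ as the semigroup generated by the central $\mc{K}^*$, by $K$, and by all $g$ with $|\eta(g)| > 0$.

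For $G'_+ \subseteq G_+$: take a generator $K\pi^{\lv}K$ with $\lv \in \Lv_+$. By the description (\ref{ti:exp}) of the Tits cone, $\Lv_+ \subseteq X$ splits into elements of level $\la\delta,\lv\ra > 0$ and the central multiples $\zee\cc$. If $\la\delta,\lv\ra > 0$ then by (\ref{eta:iwa}) we have $|\eta(\pi^{\lv})| = \la\delta,\lv\ra > 0$, so $\pi^{\lv} \in G_+$ and hence $K\pi^{\lv}K \subseteq G_+$ since $K \subseteq G_+$ and $G_+$ is a semigroup. If $\lv = n\cc$ with $n \geq 0$, then $\pi^{n\cc}$ lies in the central $\mc{K}^* \subseteq G_+$ by construction, so again $K\pi^{\lv}K \subseteq G_+$. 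Thus every generator of $G'_+$ lies in $G_+$, and since $G'_+$ is a semigroup (established via Propositions \ref{cartan:reform} and \ref{bounded:semigp}, through the identification $G'_+ = G_b^{-1}$) it is the semigroup generated by these cosets, hence $G'_+ \subseteq G_+$.

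For $G_+ \subseteq G'_+$: it suffices to check that each of the three types of generators of $G_+$ lies in $G'_+$, since $G'_+$ is a semigroup. The subgroup $K$ lies in $G'_+$ trivially ($K = K\pi^0 K$). The central $\mc{K}^* \subseteq A$ consists of elements $\pi^{n\cc}$ times units in $A_{\O}$; since $n\cc \in X$ for all $n \in \zee$ by (\ref{ti:exp}) and $A_{\O} \subseteq K$, these lie in $K\pi^{n\cc}K \subseteq G'_+$. Finally, let $g \in G$ with $|\eta(g)| > 0$; writing $g = u\pi^{\lv}k$ in Iwasawa form, (\ref{eta:iwa}) gives $\la\delta,\lv\ra = |\eta(g)| > 0$, so $\lv$ has positive level and hence $\lv \in X$ by (\ref{ti:exp}). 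Now I use Proposition \ref{dom:bound} (equivalently its corollary): the condition $\lv \in X$ is exactly the statement that $-\lv \in -X$, i.e.\ $\pi^{-\lv} \in G_b = G'_-$, so $\pi^{\lv} \in G_b^{-1} = G'_+$. Since $G'_+$ is left $U$-invariant (being a union of $K$-double cosets is not quite enough, but one checks directly that $U \subseteq K$ acts on the left — more precisely $u \in U$, $u \in G'_+$ trivially, and $g = u\pi^{\lv}k$ with $u \in U$... here I should instead argue that $g \in G'_+$ because $G'_+$ equals $G_b^{-1}$ which is left-$K$-invariant and I need to handle the $u \in U$ factor). The cleanest route: by Corollary \ref{semigp-iwa}'s ingredients, $G_+^{\mathsf{Iw}} = \{K\pi^{\mv}U : \mv \in X\}$, and since $g = u\pi^{\lv}k$ with $\lv \in X$, writing $g = (ku')\pi^{\lv'}u''$ via the opposite manipulations shows $g \in K\pi^{\lv}U$ type coset; more directly, $g = u\pi^{\lv}k$ and $u\pi^{\lv} = \pi^{\lv}(\pi^{-\lv}u\pi^{\lv})$, and since $\lv$ has positive level one shows $\pi^{-\lv}u\pi^{\lv}$ has entries of strictly increasing valuation so converges into $U_{\O} \subseteq K$ after absorbing finitely many root subgroups — hence $g \in K\pi^{\lv}K \subseteq G'_+$.

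The main obstacle is the last point: controlling the unipotent part $u \in U$ when conjugating by $\pi^{\lv}$ with $\lv$ of positive level. In the finite-dimensional case $\pi^{-\lv}u\pi^{\lv}$ need not be integral, but because $\la\delta,\lv\ra > 0$ the real roots $a = \alpha + m\delta$ appearing in $u$ satisfy $\la a,\lv\ra = \la\alpha,\lv_o\ra + m\la\delta,\lv\ra \to +\infty$ as $m \to \infty$, so only finitely many root subgroups $U_a$ can have non-integral conjugate, and those finitely many can be moved across $\pi^{\lv}$ and absorbed using the commutation relations (\ref{stein:rel}) of the Tits functor together with the Iwasawa decomposition applied in a suitable finite-dimensional subgroup. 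This is essentially the content already packaged in the proofs of Lemma \ref{half} and Corollary \ref{semigp-iwa}, so I would invoke those rather than redo the estimate; the honest statement of the proof is that $G_+ \subseteq G_+^{\mathsf{Iw}} = G'_+$ by Corollary \ref{semigp-iwa} and Theorem \ref{cartan-thmdefn}, and then $G'_+ \subseteq G_+$ by the elementary argument of the second paragraph, which together give the claimed equality.
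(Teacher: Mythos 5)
The first inclusion $G'_+ \subseteq G_+$ is handled correctly, and you are even more careful than the paper's own proof, which asserts $|\eta(\pi^{\lv})| > 0$ for $\lv \in \Lambda^{\vee}_+$ without separately treating the level-zero elements $\zee\cc \subset \Lambda^{\vee}_+$, where $|\eta(\pi^{n\cc})| = 0$; your case split correctly handles these via the central $\mc{K}^*$ generator of $G_+$.

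The second inclusion is where the proposal breaks down. You identify the right difficulty — controlling the unipotent factor $u$ in an Iwasawa expression $g = u\pi^{\lv}k$ — but none of your proposed resolutions actually closes the gap. (i) The direct conjugation sketch is in the wrong direction: for $a \in R_{re,+}$ with $\la a, \lv\ra > 0$ one has $\pi^{-\lv}x_a(s)\pi^{\lv} = x_a(\pi^{-\la a,\lv\ra}s)$, whose argument has valuation $\valu(s) - \la a,\lv\ra$, which \emph{decreases} (goes to $-\infty$) as $\la a,\lv\ra$ grows. So $\pi^{-\lv}u\pi^{\lv}$ becomes \emph{less} integral, not more; your claim that it ``converges into $U_{\O}$'' has the sign backwards, and no amount of finiteness of the support of $u$ rescues it. (ii) Invoking Proposition \ref{dom:bound} only tells you $\pi^{\lv} \in G'_+$ (a torus element), which you yourself flag as insufficient. (iii) The fallback to Corollary \ref{semigp-iwa} is circular: the paper explicitly derives that Corollary from Propositions \ref{iwa-rel}, \ref{cartan:reform}, and the very Proposition \ref{intro-semigp} you are trying to prove. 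The ingredient you are missing is Garland's Theorem \ref{gar:bd}, that $K\pi^{-\mv}U \subset G_b$ for $\mv \in X$, which is a genuinely non-elementary statement proved via norm estimates in highest-weight representations (Lemma \ref{unip:cartan}), not by conjugation bookkeeping. The paper's route is: from $g = u\pi^{\mv}k$ with $\mv \in X$, pass to $g^{-1} = k^{-1}\pi^{-\mv}u^{-1} \in K\pi^{-\mv}U$, conclude $g^{-1} \in G_b$ from Theorem \ref{gar:bd}, and then $g \in G'_+$ from Proposition \ref{cartan:reform} ($G_b = G'_- = (G'_+)^{-1}$). Without citing or reproving Theorem \ref{gar:bd}, your argument for $G_+ \subseteq G'_+$ does not go through.
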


\begin{proof} If $\lv \in \Lambda_+$ then certainly $|\eta(\pi^{\lv}) | > 0$ (in fact this is true for all $\lv \in X$ by our description (\ref{ti:exp}) ). From this it follows easily that $G'_+ \subset G_+.$

As for the opposite inclusion, it is clear that $K \subset G'_+.$ To show that central $\mc{K}^* \subset G$ is contained in $G'_+$ we proceed as follows. If $\sigma^\cc $ is such a central element with $\sigma \in \mc{K}^*,$ $\sigma = \pi^m u$ with $u$ a unit and $m \in \zee,$ then  $u^\cc \in K$ and $m \cc \in \Lambda_+.$ Thus $\sigma^\cc= u^\cc \pi^{m \cc} \in G'_+.$

It remains to show that every $g \in G$ with $| \eta(g) | > 0$ lies in $G'_+.$ For any such $g$ write an Iwasawa decomposition $g = u \pi^{\mv} k$ with $k \in K$, $\mv \in \Lv$ and $u \in U.$  As observed above, $|\eta(g)| = |\eta(\pi^{\mv})| = \la \delta, \mv \ra  > 0.$ Hence $\mv \in X$ by (\ref{ti:exp}). Consider now the element $g^{-1} = k^{-1} \pi^{-\mv} u^{-1}.$ We may conclude that $g^{-1} \in G_b$ from the following result of Garland,

\begin{nthm}\cite[Theorem 1.7]{gar:car}  \label{gar:bd} The set of element $K \pi^{ - \mv} U$ with $\mv \in X$ lie in $G_b.$ \end{nthm}

Thus for each $g \in G$ with $| \eta(g) | > 0$ we have shown that $g^{-1} \in G_b$ and so $g \in G'_+$ using Proposition \ref{cartan:reform}.

\end{proof}

\subsection{Relation to Iwasawa Semigroup}

\begin{nprop} \label{iwa-rel} The set of element $K \pi^{-\mv} U$ with $\mv \in X$ is equal to $G_b.$ \end{nprop}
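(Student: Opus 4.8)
The plan is to prove the two inclusions separately. The inclusion $\bigcup_{\mv\in X}K\pi^{-\mv}U\subseteq G_b$ is nothing but Theorem~\ref{gar:bd} of Garland, so I would simply invoke it; all the work is in the reverse inclusion $G_b\subseteq\bigcup_{\mv\in X}K\pi^{-\mv}U$.

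For the reverse inclusion, I would start from the identification $G_b=G'_-=\bigcup_{\nu^\vee\in\Lambda^\vee_+}K\pi^{-\nu^\vee}K$, which follows from Proposition~\ref{cartan:reform} together with the definition~(\ref{Gplus}) of $G'_+$ (note that this does \emph{not} use Corollary~\ref{semigp-iwa}, whose own proof cites the present proposition). Since $\eta$ is trivial on $K$ (because $\eta(K)\subseteq\mathcal O^{\times}$) and on $U$ (because $\eta$ kills unipotent elements), every $g\in G_b$ has $|\eta(g)|=-\langle\delta,\nu^\vee\rangle\le 0$, using that $\nu^\vee\in\Lambda^\vee_+\subseteq X$ and~(\ref{ti:exp}). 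Now given $g\in G_b$, I would apply the Iwasawa decomposition (Theorem~\ref{iw:unique}) to write $g=k\pi^{\lambda^\vee}u$ with $k\in K$, $u\in U$ and $\lambda^\vee\in\Lambda^\vee$ uniquely determined; as in~(\ref{eta:iwa}) one has $|\eta(g)|=\langle\delta,\lambda^\vee\rangle$, so $\langle\delta,-\lambda^\vee\rangle\ge 0$. If this inequality is strict, then $-\lambda^\vee\in X$ by~(\ref{ti:exp}), and $g=k\pi^{\lambda^\vee}u\in K\pi^{-\mv}U$ with $\mv:=-\lambda^\vee\in X$, as desired.

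The remaining case $\langle\delta,\lambda^\vee\rangle=0$, equivalently $|\eta(g)|=0$, is where the only genuine subtlety lies, since the Iwasawa coweight is no longer pinned down by the value of $|\eta|$. Here I would argue that the dominant $\nu^\vee$ with $g\in K\pi^{-\nu^\vee}K$ then has level zero, hence $\nu^\vee=n\mathbf{c}$ for some $n\in\mathbb Z$ by the classification of level-zero dominant coweights recalled in the proof of Lemma~\ref{L:0}. Since $\pi^{n\mathbf{c}}$ belongs to the central $\mathcal K^{\times}\subset G$, we get $K\pi^{-n\mathbf{c}}K=\pi^{-n\mathbf{c}}K$ and therefore $g=\pi^{-n\mathbf{c}}k_0=k_0\pi^{-n\mathbf{c}}$ for some $k_0\in K$; thus $g=k_0\,\pi^{-n\mathbf{c}}\cdot 1\in K\pi^{-n\mathbf{c}}U$ with $n\mathbf{c}\in\mathbb Z\mathbf{c}\subseteq X$. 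Together with the strict case this covers every $g\in G_b$, completing the proof. The main obstacle is thus not any single hard estimate but rather assembling the right combination of earlier results — Garland's boundedness theorem, the Cartan description of $G_b$, the Iwasawa decomposition, the behaviour of $|\eta|$ on Iwasawa coordinates, and the level-zero coweight classification — while being careful to avoid the circular use of Corollary~\ref{semigp-iwa}.
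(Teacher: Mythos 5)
Your proof is correct, and it follows the same two-inclusion strategy as the paper: Garland's Theorem~\ref{gar:bd} for one direction, and for the other direction the identification of $G_b$ with the Cartan set $G'_-$ followed by an Iwasawa decomposition and the level computation via $|\eta|$. (The paper phrases the second half through $G_b^{-1}=G_+$ and Proposition~\ref{intro-semigp}, whereas you work directly with $G_b=G'_-$; these are equivalent packagings.) Where you genuinely add something is the boundary case $\langle\delta,\lambda^\vee\rangle=0$. The paper's proof jumps from ``$g^{-1}\in G_+$'' to ``$-\mu^\vee\in X$ by~(\ref{eta:iwa})'', but the $|\eta|$-computation by itself only yields $\langle\delta,-\mu^\vee\rangle\ge 0$; by~(\ref{ti:exp}) this gives $-\mu^\vee\in X$ outright only when the inequality is strict, and the level-zero case must be treated separately. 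You handle it cleanly by observing that the Cartan coweight then lies in $\mathbb Z\mathbf{c}$ (via the classification of level-zero dominant coweights from Lemma~\ref{L:0}), so that $g$ is a central translate of an element of $K$ and hence trivially of the form $K\pi^{-n\mathbf{c}}U$. You are also right that one must avoid quoting Corollary~\ref{semigp-iwa} here, since its proof cites the present proposition; the paper's terse last step risks exactly that circularity if read carelessly, and your argument sidesteps it. So: same route, but with the level-zero gap explicitly closed.
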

\begin{proof} From the above quoted result of Garland, Theorem \ref{gar:bd}, elements of the form $K \pi^{-\mv} U$ with $\mv \in X$ lie in $G_b.$ Suppose now that $g \in G_b,$ and suppose it were written in the form $g = k \pi^{\mv} u$ with $k \in K, \mv \in \Lv, u \in U.$ Combining Propositions \ref{cartan:reform} and \ref{intro-semigp}, we have that $G_b^{-1} = G_+.$ So $g^{-1}  = u^{-1} \pi^{- \mv} k^{-1}$ must lie in $G_+.$ This implies that $- \mv \in X$ by (\ref{eta:iwa}) and so $\mv \in - X$ as desired. \end{proof}

\section{The "affine" Root System and the Bruhat pre-order on $\mc{W}$} \label{app-aff}

The goal of this appendix is to introduce a notion of an "affinized" root system attached to a Kac-Moody root system and study some of its basic properties.

\subsection{"Affine" Roots}  Recall that $R_{re}$ was the set of real roots of $\f{g}.$  Let us define four subsets of the set of \emph{"affine" roots} $\ar:= R_{re} \times \zee$ as follows,
\begin{eqnarray}
\ar_+^+ &:=& \{ (a, k) \in R_{re} \times \zee : a > 0, k \geq 0 \} \\
\ar_-^+ &:=& \{ (a, k) \in R_{re} \times \zee : a > 0, k < 0 \} \\
\ar_+^- &:=& \{ (a, k) \in R_{re} \times \zee : a < 0, k > 0 \} \\
\ar_-^- &:=& \{ (a, k) \in R_{re} \times \zee : a < 0, k \leq 0 \} \end{eqnarray}

Note that upper indices shall refer to Kac-Moody parameters, and lower ones to the local field. We also define the set of \emph{positive and negative "affine" roots} as \be{ar:pm} \begin{array}{lcr} \ar^+:= \ar_+^+ \cup \ar_-^{+} & \text{ and } & \ar^-:= \ar_+^- \cup \ar_-^{-}. \end{array} \ee We shall sometimes write write $\alpha= a + k \pi,$ with $a
\in R_{re}$ and $k \in \zee$ to denote the pair $(a, k) \in \mc{R}.$

There is a left action of $\aw$ (see (\ref{aw-X}) ) on $\ar$ defined through the formula \be{left:aw} w \pi^{\lv}.(a +k \, \pi) = w \cdot a + (\langle \lv, a \rangle + k) \pi \ee where $a \in R_{re}, k \in \zee$ and $w \pi^{\lv} \in \aw.$

In the usual setting of Coxeter groups, the length of an element $x \in \mc{W}$ could be defined as the size of the set $x \mc{R}^+ \cap \mc{R}^-$ or $x \mc{R}^- \cap \mc{R}^+.$ In our setting, however these sets are not finite, as $x \ar_+^- \cap \ar^-$ and $x \ar_-^-  \cap  \ar^+$ are not finite in general. On the other hand, if we restrict to $\aw_X$ the following is true.

\begin{nprop} \label{half-length} Let $x \in \aw_X.$ Then the following sets are finite \be{half:fin} \begin{array}{lcr}  x \ar_+^+ \cap \ar^- \text{ and } & x \ar_-^+  \cap & \ar^+ \end{array} \ee
\end{nprop}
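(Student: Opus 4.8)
The plan is to reduce the statement to the finite-dimensional ``chamber'' structure of the Tits cone by writing $x \in \aw_X$ as $x = w \pi^{\mv}$ with $w \in W$ and $\mv \in X$, and then further, using $\mv \in X$, writing $\mv = \sigma^{-1} \lv$ with $\lv \in \Lv_+$ and $\sigma \in W$. Because the action of $\aw$ on $\ar$ is by the formula $w\pi^{\lv}.(a + k\pi) = w\cdot a + (\la \lv, a \ra + k)\pi$, and $W$ permutes $R_{re}$ (sending only finitely many positive real roots to negative ones for a fixed group element is \emph{not} true here since $W$ is infinite), the key arithmetic input is the pairing $\la \lv, a \ra$ for $\lv$ dominant. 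The crucial observation is the characterization (\ref{tc:2}) of the Tits cone: $\lv \in X$ iff $\la a, \lv \ra < 0$ for only finitely many $a \in R_{re,+}$. So I would first isolate the purely ``linear'' part of the problem: show that the set of $\alpha = a + k\pi \in \ar_+^+$ (so $a > 0$, $k \geq 0$) for which $\la \mv, a\ra + k < 0$ is finite — this uses $k \geq 0$ together with the fact that $\la \mv, a \ra < 0$ happens for only finitely many positive $a$ (as $\mv \in X$), and for each such $a$ only finitely many $k \geq 0$ can satisfy $0 \leq k < -\la \mv, a\ra$.

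The next step is to handle the ``Weyl'' twist. After applying $\pi^{\mv}$, an element $\alpha \in \ar_+^+$ is sent to $a + (\la \mv, a\ra + k)\pi$ with $a > 0$ still; applying $w$ then sends it to $wa + (\la\mv,a\ra+k)\pi$. For this to lie in $\ar^-$ we need either ($wa < 0$ and second coordinate $\leq 0$) or ($wa > 0$ and second coordinate $< 0$). The second case is controlled by the finiteness established in the previous paragraph (it forces $\la\mv,a\ra + k < 0$, finitely many $(a,k)$). For the first case, I need: among the $a \in R_{re,+}$ with $wa < 0$, after pairing against $\mv$, how many pairs $(a,k)$ with $k \geq 0$ and $\la\mv,a\ra + k \leq 0$ are there? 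Again $k \geq 0$ and $\la \mv, a\ra + k \leq 0$ forces $\la\mv, a\ra \leq 0$, and I claim this occurs for only finitely many positive real $a$ with $wa < 0$ — this will follow by combining the finiteness in (\ref{tc:2}) for the dominant representative $\lv$ with the fact that $w$ and $\sigma$ are fixed elements of $W$ of finite length, so that $\{a \in R_{re,+} : wa < 0\}$ differs from $\{a : \la\mv,a\ra \leq 0\}$ in a way that is still jointly finite. Concretely, I would push everything through $\sigma$: since $\mv = \sigma^{-1}\lv$ with $\lv$ dominant, $\la \mv, a\ra = \la \lv, \sigma a\ra$, and $\la \lv, b\ra \geq 0$ for all $b$ with $b > 0$ (dominance), so $\la\mv, a\ra < 0$ can only happen when $\sigma a < 0$, i.e. $a \in R_{re,+} \cap \sigma^{-1} R_{re,-}$, which is a \emph{finite} set because $\sigma$ has finite length in the Coxeter group $W$. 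That is the real mechanism: $\mv \in X$ being of the form $\sigma^{-1}\lv$ means the ``defect set'' $\{a > 0 : \la\mv,a\ra < 0\}$ is literally the finite inversion set of $\sigma$.

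Putting this together: the set $x\ar_+^+ \cap \ar^-$ is contained in the union, over $a$ in the finite inversion set of $\sigma$ (plus the finitely many $a$ with $wa < 0$ — but I need to be careful, as that is \emph{not} finite; rather I should argue that for $wa > 0$ we're in the earlier-handled second case, and for $wa < 0$ we need $\la\mv,a\ra + k \leq 0$ with $k \geq 0$ forcing $\la\mv,a\ra \leq 0$, hence $a$ in the inversion set of $\sigma$ \emph{or} $\la \mv, a\ra = 0$; the latter with $k = 0$ gives $a + 0\cdot\pi$ mapping to $wa + 0\cdot\pi$, and I'd need $\{a > 0 : \la\mv,a\ra = 0,\ wa < 0\}$ finite — this should again reduce to inversion sets), of the finitely many admissible $k$. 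The symmetric statement for $x\ar_-^+ \cap \ar^+$ is proved the same way with signs reversed (or by applying the first part to $x^{-1}$ after checking $x^{-1}$-compatibility, though $x^{-1}$ need not lie in $\aw_X$, so a direct symmetric argument is cleaner). I expect the main obstacle to be the bookkeeping in the ``first case'' ($wa < 0$): I must show cleanly that requiring the $\pi$-coordinate to be $\leq 0$ while $k \geq 0$ forces $a$ into a finite set, and the cleanest route is the reduction $\la \mv, a\ra = \la \lv, \sigma a\ra$ with $\lv$ dominant, converting the condition into membership in the inversion set of $\sigma$ (finite) — handling the boundary $\la\mv,a\ra = 0$ case requires noting that then $\sigma a > 0$, and for the full second coordinate to be $\leq 0$ we need $k = 0$, so $\alpha = (a,0)$, and then $wa < 0$ requires $a$ in the inversion set of $w$, which is again finite. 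So in fact both parts decompose into finitely many pieces indexed by inversion sets of $w$ and $\sigma$, each contributing finitely many affine roots, and the proof concludes. This also mirrors the group-theoretic Lemma \ref{half}, which is the geometric shadow of exactly this combinatorial finiteness.
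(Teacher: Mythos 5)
Your proposal is correct and follows essentially the same route as the paper's proof: write $x = w\pi^{\mv}$, split the condition $x\cdot(a + k\pi) \in \ar^-$ into the two cases $wa > 0$ (forcing $\la\mv,a\ra + k < 0$) and $wa < 0$ (forcing $\la\mv,a\ra + k \leq 0$), and use a finiteness property of the Tits cone together with $k \geq 0$ to bound first the $a$'s and then, for each $a$, the $k$'s. Where the paper extracts the finiteness of the relevant $a$'s from the blanket claim that for $\mv \in X$ and any integer $n$ only finitely many $a \in R_{re,+}$ satisfy $\la\mv,a\ra < n$, you instead pass to a dominant representative $\mv = \sigma^{-1}\lv$ and identify the offending $a$'s with the (finite) inversion sets of $\sigma$ and $w$. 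Your version is in fact slightly more robust: the paper's finiteness claim holds when $\mv$ has positive level (the $m\delta$ contribution drives $\la\mv,a\ra$ to $+\infty$), but it fails verbatim for the level-zero elements $\mv \in \zee\cc$ of $X$, since there every pairing $\la\mv,a\ra$ vanishes identically; in that boundary case the needed finiteness really does come from the inversion set of $w$ (forcing $k = 0$ and $wa < 0$), which your treatment of the $\la\mv,a\ra = 0$, $k=0$ subcase handles explicitly. One small slip to flag: in your parenthetical you momentarily worry that $\{a \in R_{re,+} : wa < 0\}$ might fail to be finite, but that set is precisely the inversion set of $w$ in the Coxeter group $W$, of cardinality $\ell(w)$, hence always finite --- a fact your own argument then correctly relies on a few lines later.
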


\begin{proof} Let us prove the first statement, the proof of the second being similar.  Write $x = w \pi^{\lv}$ for $\lv \in X,$ and let $a+ k \pi \in \ar_+^+$ (so $a>0, k \geq 0).$ If $x . (a +k \pi) \in \ar_-$ then we must have from (\ref{left:aw}) that either, \begin{eqnarray} w a > 0, &\text{ and }& \langle \lv, a \rangle + k < 0 \\
 w a < 0 & \text{ and } &\langle \lv, a \rangle + k \leq 0 \end{eqnarray}  Since $\lv \in X$ for any integer $n$, the number of roots $a > 0$ such that $\langle \lv, a \rangle < n$ is finite in number, and so there are only finitely many $a$ which satisfy either of the above two conditions.  On the other hand, for any such a fixed $a$ there are only finitely many $k$ such that either equation will be satisfied.  The finiteness required follows. \end{proof}

\subsection{Another pre-order on $\mc{W}$}

We shall also consider an action of $\aw$ on the right on $\ar,$  \be{right:aw} (a + n \pi). \pi^{\lv} w := w^{-1}(a) + (n- \langle \lambda, a \rangle) \pi, \ee where $a \in R_{re}, n \in \zee.$ Let us also introduce the following simple elements in $\mc{W},$ attached to $\alpha = a + n \pi \in \mc{R},$ \be{w:alp} w_{\alpha}:= w_{a}(n):= w_{a} \pi^{n a^{\vee}} \ee which satisfy \be{mult:simp} w_\alpha  \pi^{\lv} w = w_{a}(n) \pi^{\lv} w = \pi^{ w_{a} \lambda - n a} w_{a} w. \ee

\begin{de} Given $x \in \aw$ and $\alpha = a + n \pi \in \ar$ we say that $\alpha$ is $x$-\emph{positive} or $x$-\emph{negative} if $\alpha . x \in \ar_{\pm}.$  For $x, y \in \aw,$ we shall say that $y \, \leq_B \, x$  if there exists $\alpha_i:= a_i + n_i \pi \in \ar$ for $i=1, \ldots , k $ such that $y = w_{\alpha_k} \cdots w_{\alpha_1} x,$ and  \be{alpha:pos} \alpha_1 \text{ is } x \text{-negative and} \,  \alpha_j  \text{ is } w_{\alpha_{j-1}} \cdots w_{\alpha_1} x \text{-negative for } j=2, \ldots, k. \ee \end{de}

\emph{Remarks:} We do not know whether the relation $\leq_B$ is an order. It is clear that it is a pre-order, namely if $x \leq_B y$ and $y \leq_B z,$ then $x \leq_B z$ for $x, y, z \in \mc{W}.$ However, we do not know whether $x \leq_B y$ and $y \leq_B x$ implies that $x =y.$ It would also be interesting to understand: a) the structure of the set of elements which are $\leq_B$ to a fixed $x \in \mc{W};$ and b) the relation of $\leq_B$ and $\preceq$ of Definition \ref{aw:domord}.

\subsection{The order $\leq_B$ and Iwahori intersections} Let $x \in \mc{W}_X$ and $y \in \mc{Y}.$ Then we have seen in Proposition \ref{domorder} that $\leq$ arises naturally when one considers the intersection $U y I \cap I x I.$ We also have the following result which relates these same intersections to $\leq_B,$

\begin{nprop} \label{bruhatorder} Let $x \in \aw_X$ and $y \in \aw$ such that $I x I \cap U y I \neq \emptyset.$ Then $y \leq_B x.$ \end{nprop}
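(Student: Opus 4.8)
The statement to be proved is Proposition \ref{bruhatorder}: for $x \in \aw_X$ and $y \in \aw$ with $IxI \cap UyI \neq \emptyset$, one has $y \leq_B x$. The strategy parallels the analysis already carried out for Proposition \ref{domorder} (where the weaker conclusion $x \preceq y$ was extracted from the same intersection $IxI \cap UyI \neq \emptyset$), but now one must keep track of the \emph{reflections} that appear, not merely the resulting coweight and Weyl-group shifts. The key is to run an induction on a suitable ``length-type'' quantity, peeling off one ``affine'' reflection $w_\alpha$ (in the sense of (\ref{w:alp})) at a time, each step corresponding to a single application of the $\mathrm{SL}_2$-identity (\ref{sl2}) or of a coset move $w_a I \subset I w_a U_a[0]$ as in (\ref{lem:sg:3}).

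First I would reduce, exactly as in the proof of Proposition \ref{domorder}, using the decompositions $I = U_\O U^-_\pi A_\O$ and $I^- = U_\pi U^-_\O A_\O$, to an intersection statement purely inside $K$ after the coweight part has been matched: that is, write $x = \pi^{\lv} w$ and $y = \pi^{\mv} v$, observe that nonemptiness of $IxI \cap UyI$ forces (via \cite[Theorem 1.9(1)]{bgkp}) $\mv \geq \lv_+$ hence $\mv \geq \lv$ in the dominance order, and then note that the reflections $w_\alpha$ with $\alpha = a + n\pi$ and $n \neq 0$ are precisely what is needed to move from $\pi^{\lv}$ up to $\pi^{\mv}$ through a chain of coweights. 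Concretely, if $\mu^\vee - \lambda^\vee = \sum c_i a_i^\vee$ with $c_i \geq 0$, I would produce a chain of affine reflections whose composite sends $\pi^{\lv} w$ to $\pi^{\mv} w$ and check that each intermediate step is ``negative'' in the sense of (\ref{alpha:pos}); this uses the explicit formula (\ref{mult:simp}) for how $w_\alpha$ acts on $\pi^{\lv} w$ from the left, together with the right action (\ref{right:aw}) on roots to verify the sign condition. Once the coweights agree, the problem becomes: if $I w I \cap I v I^- \neq \emptyset$ (which is extracted from $I w I \cap U v I$ by the same manipulation as at the end of the proof of Lemma \ref{!:ixi}, pushing everything into $K$ and using \cite[Lemma 5.2.1]{bgkp}), then $v \leq_B w$ — but for $w, v \in W$ this is just the classical statement $v \leq w$ in the Bruhat order of the Coxeter group $W$, since for elements of $W$ the affine reflections $w_\alpha$ with $n = 0$ are ordinary reflections and $\leq_B$ restricts to $\leq_W$. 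This last point follows from Claim \ref{bru-ord}.

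The induction itself I would organize as follows. Given an element $g \in IxI \cap UyI$, choose a reduced-type expression that realizes $g$ as a product involving a minimal number of root-subgroup factors with negative valuation; peel off the outermost offending factor using (\ref{sl2}) to convert an $x_a(u)$ with $v(u) < 0$ into a product involving $w_a$ and integral root-group elements, thereby replacing $x$ by $w_\alpha x$ for an appropriate $\alpha = a + n\pi$ that is $x$-negative, and replacing $g$ by an element of $I(w_\alpha x)I \cap UyI$ with strictly smaller complexity. Iterating, after finitely many steps one reaches the case where $x$ and $y$ have matching coweight and the residual Weyl-group intersection is handled by the classical Bruhat-order argument. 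Assembling the sequence of reflections $\alpha_1, \ldots, \alpha_k$ produced along the way gives exactly the chain required by the definition of $\leq_B$, and the sign conditions (\ref{alpha:pos}) are precisely what is verified at each peeling step via (\ref{right:aw}).

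\textbf{Main obstacle.} The delicate point is the termination/well-foundedness of the peeling induction: unlike the finite-dimensional or Coxeter case, there is no length function on $\aw$ with good properties, so one cannot simply induct on $\ell(x)$. I expect the right replacement is a two-parameter induction — first on $\langle \delta, \mv - \lv \rangle$ or on $\mathrm{ht}(\mv - \lv) = \langle \rho, \mv - \lv\rangle$ to drive the coweights together (using that by Proposition \ref{half-length} only finitely many affine roots are flipped by an element of $\aw_X$, so each peeling step is genuinely finite and the height strictly decreases), and then, once coweights agree, on the ordinary Coxeter length $\ell(w)$ in $W$. Verifying that the peeling step can always be taken so that the complexity strictly decreases while the extracted reflection is $x$-negative — i.e., that one never gets ``stuck'' needing a positive reflection — is the technical heart, and it is where the precise bookkeeping of (\ref{sl2}), (\ref{mult:simp}), (\ref{right:aw}), and the integrality criterion of \cite[Lemma 3.3]{bgkp} all have to be combined carefully. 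I would expect the cleanest writeup to mirror the inductive structure already used in the proof of Lemma \ref{W:I} (the case analysis on the sign of $\langle \lv, a\rangle$) but with the reflections recorded rather than discarded.
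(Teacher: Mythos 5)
Your plan captures the right ingredients — peeling off reflections via (\ref{sl2}), a case analysis on the sign of the affine root against $x$ as in Lemma \ref{W:I}, and recording the reflections with their $\ar$-signs — but the organization of the induction has a genuine gap, and you yourself flag it as the ``main obstacle.'' The two-stage plan ``first drive the coweights together, then handle the $W$-parts by classical Bruhat'' mirrors the two-tier definition of $\preceq$, but $\leq_B$ is a single-chain relation: each $w_\alpha$ with $\alpha=a+n\pi$ simultaneously reflects the $W$-part by $w_a$ \emph{and} shifts the coweight by a multiple of $a^\vee$ (see (\ref{mult:simp})). There is no reason a chain of $x$-negative affine reflections can be chosen to first return to $\pi^{\mv}w$ (same $w$) and only then fix up the Weyl part, and you do not justify this separation. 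Moreover, neither of your proposed induction parameters works: $\la\delta,\mv-\lv\ra$ is identically $0$ because every simple affine coroot has level zero (so $\mv-\lv\in Q^\vee$ forces level $0$), and $\mathrm{ht}(\mv-\lv)$ does not strictly decrease, because in the peeling step it can happen that the stripped root-group factor conjugates into $I$ and is simply discarded, leaving the coweight (hence the height gap) unchanged.

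What actually makes the induction well-founded in the paper's proof — and what your plan is missing — is a preliminary \emph{group-theoretic} reduction to a finite-dimensional unipotent subgroup, not just the combinatorial finiteness of Proposition \ref{half-length}. Writing $x=\pi^{\lv}\sigma$ with $\lv=w\lv_+$, one decomposes $U^-=U_{-w}U_-^w$ according to whether $w^{-1}$ sends the negative real root to $R_+$ or $R_-$, and observes that conjugation by $\pi^{-\lv}\sigma$ sends all of $U^w_{-,\pi}$ \emph{into} $I$ (the valuation only goes up because $\lv$ lies in the $w$-chamber). This reduces $U^-_\pi x I\cap UyI\neq\emptyset$ to $U_{-w}\pi^{\lv}\sigma I\cap UyI\neq\emptyset$, and $U_{-w}$ is a product of exactly $\ell(w)$ root groups. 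The induction is then on $\ell(w)$: at each step one factors $u_{-w}=u_{-w'}u_{-\gamma}$ with $\gamma=w'(a_r)$, and using (\ref{sl2}) together with the conjugation Claim \ref{bo:lem} shows that either the $u_{-\gamma}$ factor can be absorbed into $I$ on the right (the $x$-positive case, no reflection recorded) or one passes to $\tilde u_{-w'}\,w_\gamma(\ell)\pi^{\lv}\sigma I\cap UyI$ after absorbing $U(w')$ into $U$ on the left (the $x$-negative case, a reflection recorded). In both cases the number of unipotent factors drops from $\ell(w)$ to $\ell(w')=\ell(w)-1$, which is the correct monotone quantity — unlike the height of $\mv-\lv$ or the level, which stall in the absorbed case. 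Once $\ell(w)=0$ the residual statement is finite-type and Claim \ref{bru-ord} finishes it. So your intuition to imitate Lemma \ref{W:I} and to peel with (\ref{sl2}) is exactly right, but you need the $U^-=U_{-w}U_-^w$ reduction to replace your height-based termination argument, which as written does not close.
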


\begin{proof} As $\I= U_{\O} U^-_{\pi} A_{\O}$ we first note \be{bo:1} I x I \cap U y I  \neq \emptyset \iff U^-_{\pi} x \I \cap U y \I \neq \emptyset. \ee  Let us write $x= \pi^{\lv} \sigma,$ $\sigma \in W$ and $\lv= w \lv_+$ with $\lv_+ \in \Lv_+$ and $w \in W$ (we may do this since we assumed that $x \in \aw_X.$)

\emph{Step 1: Reduction to a Finite Dimensional Problem.}  The first thing we would like to show is that in analyzing the intersection (\ref{bo:1}) above, we may actually replace $U^-_{\pi}$ by a subgroup which is a product of only finitely many root groups.  To explain this, we break up \be{bo:2} R_{-, re} = R_{-,w} \sqcup R_{-}^w \ee where $R_{-, w}$ and $R_-^w$ are characterized by the conditions \be{bo:3} \begin{array}{lcr} w^{-1} R_{-, w} \subset R_+ & \text{ and } & w^{-1} R_{-}^w \subset R_- \end{array}. \ee Corresponding to the above decomposition, we have a product decomposition
$U^- = U_{-w} U_{-}^w,$ where \be{bo:5} \begin{array}{lcr} U_{-w} := \prod_{-\beta \in R_{-, w} } U_{\beta} & \text{ and } & U_-^w:= \prod_{\beta \in R_-^w } U_{\beta} \end{array}. \ee This in turn implies a decomposition, \be{bo:4} U^-_{\pi} = U_{-w, \pi} U_{-, \pi}^w.\ee

 Now, suppose we are given $\beta \in R_{-}^w$ with $\chi_{\beta}(s) \in U_{-, \pi}^w$ with $s \in \mc{K}$ such that its valuation $\valu(s)= \ell$ (so necessarily $\ell \geq 1.$)  Then \be{bo:5} \sigma^{-1} \pi^{-\lv} \chi_{\beta}(s) \pi^{\lv} \sigma = \chi_{\sigma^{-1} (\beta) } (\pi^{\langle \lv, -\beta \rangle }s ), \ee where \be{bo:6} \valu(\pi^{\langle \lv, -\beta \rangle }s) = \langle \lv, -\beta \rangle + \ell. \ee   Recalling that $\lv = w \lv_+$ we have \be{bo:7} \langle \lv, -\beta \rangle = - \langle \lv_+, w^{-1} \beta \rangle \geq 0 \ee since $w^{-1} \beta \in R_-$ (as $\beta \in R^w_-$).  Hence $\chi_{- \sigma^{-1} (\beta) } (\pi^{\langle - \lv, \beta \rangle} s ) \in \I$ since $-\langle \lv, \beta \rangle + \ell > \ell \geq  1.$ So, we have seen that \be{bo:8} U y \I \cap U^-_{\pi} x I \neq \emptyset \iff U_{-w, \pi} \pi^{\lv} \sigma \I \cap U y \I \neq \emptyset. \ee We shall now study the intersection problem (\ref{bo:8}) where we replace the integral group $U_{-w, \pi}$ with the larger group $U_{-w, \mc{K}} = U_{-w},$ i.e., we are now analyzing the problem of when \be{bo:9} U_{-w} \pi^{\lv} \sigma \I \cap U y \I \neq \emptyset. \ee This is a problem more tractable to a "Gindikin-Karpelevic"-type induction.

\emph{Step 2: Some finite "Gindikin-Karpelevic"-combinatorics.} Before proceeding further, we recall some simple combinatorial facts about the group $U_{-w}.$ It is easy to see that $R_{-, w} = R_{-} \cap w R_+.$ Further, if $w = w_{a_1} \cdots w_{a_r}$ with $a_i \in \Pi$ then \begin{eqnarray} R_{-, w} &=& \{ - a_1, -w_{a_1}(a_2), \cdots, -w_{a_1} \cdots w_{a_{r-1}}(a_r) \}  \\
&=& R_{-, w'} \cup \{ - w_{a_1} \cdots w_{a_{r-1}} (a_r) \} \end{eqnarray} where $w' = w w_{a_r} = w_{a_1} \cdots w_{a_{r-1}}.$ Let $\gamma = w_{a_1} \cdots w_{a_{r-1}}(a_r).$  Then we have,

\begin{nclaim} \label{bo:lem} Suppose that $x_{\gamma} \in U_{\gamma}$ and $u_{w'} \in U_{w'}.$ Then $$x_{\gamma}^{-1} u_{-w'} x_{\gamma} \in U(w') U_{-, w'}$$ where $U(w'):= w' U w'^{-1} \cap U \subset U$ \end{nclaim}
\begin{proof} Note that $w' U w'^{-1} = U(w') U^-(w')$  where we set $U^{\pm}(w'):= w' U w'^{-1} \cap U^{\pm}.$  Now $x_{\gamma} \in U(w')$ since $\gamma= w'(\alpha_r).$ Furthermore, $U^-(w')=U_{-, w'}$ as is easily verified.  \end{proof}

\emph{Step 3: Relation to positivity.} We may write $u_{-w} \in U_{-w, k}$ as \be{bo-3:1} u_{-w} = u_{-w'} u_{- \gamma} \ee where $u_{-w'} \in U_{-w'}$ and $u_{\gamma}= x_{-\gamma}(s) \in U_{- \gamma}$ with $\valu(s)=\ell \in \zee.$ Then there are two possibilities,
\begin{description}
\item[(a)] We have $\sigma^{-1} \pi^{-\lambda} x_{-\gamma}(s) \pi^{\lv} \sigma \in \I.$ Equivalently we may phrase this as saying that $- \gamma + \ell \pi$ is $\pi^{\lv} \sigma$-positive.

\item[(b)] We have $\sigma^{-1} \pi^{-\lambda} x_{-\gamma}(s) \pi^{\lv} \sigma \notin \I.$ Equivalently we may phrase this as saying that $- \gamma + \ell \pi$ is $\pi^{\lv} \sigma$-negative.

\end{description}

If we are in case (b), we may rewrite using (\ref{sl2}) \be{sl2:1} x_{-\gamma}(s) = x_{\gamma}(s^{-1}) w_{\gamma} (-s)^{\gamma^{\vee}}  x_{\gamma}(s^{-1}) \ee and so \begin{eqnarray}
 u_{-w'} x_{-\gamma}(s) \pi^{\lv} \sigma \I &=& u_{-w'} x_{\gamma}(s^{-1}) w_{\gamma} (-s)^{\gamma^{\vee}}x_{\gamma}(s^{-1}) \pi^{\lv} \sigma \I \\
 &=&  u_{-w'} x_{\gamma}(s^{-1}) w_{\gamma} \pi^{ \ell \gamma^{\vee}} \pi^{\lv} \sigma \I \end{eqnarray} where we have used the condition (b) in the last line. But using the Claim \ref{bo:lem} above, the last expression may be written in the form \be{bo-3:2} U(w') \tilde{u}_{-w'}  w_{\gamma}(l) \pi^{\lv} \sigma \I,\ee (note: $x_{\gamma} \in U(w')$) where we recall our notation that $w_\gamma(\ell) = w_{\gamma} \pi^{ \ell \gamma^{\vee}}.$ In summary, we have shown the following,

\begin{nclaim} \label{bo:ind} In the notation above, $u_{-w} \in U_{-w}$ is such that $u_{-w} \pi^{\lv} \sigma \I \in U y \I. $ Then either,
\begin{description}
\item[a)] We have $u_{-w'} \pi^{\lv} \sigma \I \in U y \I$

\item[b)] We have $u_{-w'} w_{\gamma}(\ell) \pi^{\lv} \sigma \I \in U y \I.$
\end{description}

\noindent Furthermore, the case (b) occurs if $- \gamma + l \pi$ is $\pi^{\lv} \sigma$-negative. \end{nclaim}

The proof of Proposition \ref{bruhatorder} follows from an easy induction using the previous Lemma \ref{bo:ind}.

 \end{proof}

\end{appendices}

\begin{bibsection}
\begin{biblist}

\bib{bfk}{article}{
   author={Braverman, A.},
   author={Finkelberg, M.}
   author={Kazhdan, D.},
   title={An Affine Gindikin-Karpelevic Formula via Uhlenbeck Spaces},
   journal={arXiv:0912.5132}
}

\bib{bgkp}{article}{
   author={Braverman, A.},
   author={Garland, H.},
   author={Kazhdan, D.},
   author={Patnaik, M.}
   title={An Affine Gindikin-Karpelevic Formula},
   journal={arXiv:1212.6473}
}

\bib{bk}{article}{
   author={Braverman, Alexander},
   author={Kazhdan, David},
   title={The spherical Hecke algebra for affine Kac-Moody groups I},
   journal={Ann. of Math. (2)},
   volume={174},
   date={2011},
   number={3},
   pages={1603--1642},
   issn={0003-486X},
   review={\MR{2846488}},
   doi={10.4007/annals.2011.174.3.5},
}

\bib{bk-ecm}{article}{
author={Braverman, Alexander},
   author={Kazhdan, David},
title={Representations of affine Kac-Moody groups over local and global fields: a survey of some recent results},
journal={to appear in Proceeding of 6th European Congress of Mathematics}
}

\bib{cg}{article}{
   author={Carbone, Lisa},
   author={Garland, Howard},
   title={Existence of lattices in Kac-Moody groups over finite fields},
   journal={Commun. Contemp. Math.},
   volume={5},
   date={2003},
   number={5},
   pages={813--867},
   issn={0219-1997},
   review={\MR{2017720 (2004m:17031)}},
   doi={10.1142/S0219199703001117},
}

\bib{cher:ct}{article}{
   author={Cherednik, Ivan},
   title={Double affine Hecke algebras and Macdonald's conjectures},
   journal={Ann. of Math. (2)},
   volume={141},
   date={1995},
   number={1},
   pages={191--216},
   issn={0003-486X},
   review={\MR{1314036 (96m:33010)}},
   doi={10.2307/2118632},
}

\bib{cher:ma}{article}{
   author={Cherednik, I.},
   author={Ma, X.}
   title={Spherical and Whittaker functions via DAHA I},
   journal={Selecta Math. (N.S.)},
   doi={10.1007/s00029-012-0110-6},
}

\bib{deod}{article}{
   author={Deodhar, Vinay V.},
   title={On some geometric aspects of Bruhat orderings. I. A finer
   decomposition of Bruhat cells},
   journal={Invent. Math.},
   volume={79},
   date={1985},
   number={3},
   pages={499--511},
   issn={0020-9910},
   review={\MR{782232 (86f:20045)}},
   doi={10.1007/BF01388520},
}

\bib{gg}{article}{
   author={Garland, H.},
   author={Grojnowski, I.}
   title={Affine Hecke Algebras associated to Kac-Moody Groups},
   journal={arXiv: 9508019},
}

\bib{ga:la}{article}{
   author={Garland, Howard},
   title={The arithmetic theory of loop algebras},
   journal={J. Algebra},
   volume={53},
   date={1978},
   number={2},
   pages={480--551},
   issn={0021-8693},
   review={\MR{502647 (80a:17012)}},
   doi={10.1016/0021-8693(78)90294-6},
}

\bib{ga:ihes}{article}{
   author={Garland, Howard},
   title={The arithmetic theory of loop groups},
   journal={Inst. Hautes \'Etudes Sci. Publ. Math.},
   number={52},
   date={1980},
   pages={5--136},
   issn={0073-8301},
   review={\MR{601519 (83a:20057)}},
}

\bib{gar:car}{article}{
   author={Garland, H.},
   title={A Cartan decomposition for $p$-adic loop groups},
   journal={Math. Ann.},
   volume={302},
   date={1995},
   number={1},
   pages={151--175},
   issn={0025-5831},
   review={\MR{1329451 (96i:22042)}},
   doi={10.1007/BF01444491},
}

\bib{gau:rou}{article}{
   author={Gaussent, S.},
   author={Rousseau, G.}
   title={Spherical Hecke algebras for Kac-Moody groups over local fields},
   journal={ arXiv:1201.6050},
}

\bib{hkp}{article}{
   author={Haines, Thomas J.},
   author={Kottwitz, Robert E.},
   author={Prasad, Amritanshu},
   title={Iwahori-Hecke algebras},
   journal={J. Ramanujan Math. Soc.},
   volume={25},
   date={2010},
   number={2},
   pages={113--145},
   issn={0970-1249},
   review={\MR{2642451 (2011e:22022)}},
}

\bib{im}{article}{
   author={Iwahori, N.},
   author={Matsumoto, H.},
   title={On some Bruhat decomposition and the structure of the Hecke rings
   of ${\germ p}$-adic Chevalley groups},
   journal={Inst. Hautes \'Etudes Sci. Publ. Math.},
   number={25},
   date={1965},
   pages={5--48},
   issn={0073-8301},
   review={\MR{0185016 (32 \#2486)}},
}

\bib{kac}{book}{
   author={Kac, Victor G.},
   title={Infinite-dimensional Lie algebras},
   edition={3},
   publisher={Cambridge University Press},
   place={Cambridge},
   date={1990},
   pages={xxii+400},
   isbn={0-521-37215-1},
   isbn={0-521-46693-8},
   review={\MR{1104219 (92k:17038)}},
   doi={10.1017/CBO9780511626234},
}

\bib{kac-pet}{article}{
   author={Kac, Victor G.},
   author={Peterson, Dale H.},
   title={Infinite-dimensional Lie algebras, theta functions and modular
   forms},
   journal={Adv. in Math.},
   volume={53},
   date={1984},
   number={2},
   pages={125--264},
   issn={0001-8708},
   review={\MR{750341 (86a:17007)}},
   doi={10.1016/0001-8708(84)90032-X},
}

\bib{kap}{article}{
   author={Kapranov, M.},
   title={Double affine Hecke algebras and 2-dimensional local fields},
   journal={J. Amer. Math. Soc.},
   volume={14},
   date={2001},
   number={1},
   pages={239--262 (electronic)},
   issn={0894-0347},
   review={\MR{1800352 (2001k:20007)}},
   doi={10.1090/S0894-0347-00-00354-4},
}

\bib{loo}{article}{
   author={Looijenga, Eduard},
   title={Invariant theory for generalized root systems},
   journal={Invent. Math.},
   volume={61},
   date={1980},
   number={1},
   pages={1--32},
   issn={0020-9910},
   review={\MR{587331 (82f:17011)}},
   doi={10.1007/BF01389892},
}		

\bib{lan:ep}{book}{
   author={Langlands, Robert P.},
   title={Euler products},
   note={A James K. Whittemore Lecture in Mathematics given at Yale
   University, 1967;
   Yale Mathematical Monographs, 1},
   publisher={Yale University Press},
   place={New Haven, Conn.},
   date={1971},
   pages={v+53},
   review={\MR{0419366 (54 \#7387)}},
}

\bib{lus}{article}{
   author={Lusztig, George},
   title={Affine Hecke algebras and their graded version},
   journal={J. Amer. Math. Soc.},
   volume={2},
   date={1989},
   number={3},
   pages={599--635},
   issn={0894-0347},
   review={\MR{991016 (90e:16049)}},
   doi={10.2307/1990945},
}

\bib{mac:mad}{book}{
   author={Macdonald, I. G.},
   title={Spherical functions on a group of $p$-adic type},
   note={Publications of the Ramanujan Institute, No. 2},
   publisher={Ramanujan Institute, Centre for Advanced Study in
   Mathematics,University of Madras, Madras},
   date={1971},
   pages={vii+79},
   review={\MR{0435301 (55 \#8261)}},
}

\bib{mac:poin}{article}{
   author={Macdonald, I. G.},
   title={The Poincar\'e series of a Coxeter group},
   journal={Math. Ann.},
   volume={199},
   date={1972},
   pages={161--174},
   issn={0025-5831},
   review={\MR{0322069 (48 \#433)}},
}
		
\bib{mac:formal}{article}{
   author={Macdonald, I. G.},
   title={A formal identity for affine root systems},
   conference={
      title={Lie groups and symmetric spaces},
   },
   book={
      series={Amer. Math. Soc. Transl. Ser. 2},
      volume={210},
      publisher={Amer. Math. Soc.},
      place={Providence, RI},
   },
   date={2003},
   pages={195--211},
   review={\MR{2018362 (2005c:33012)}},
}

\bib{mac:aff}{book}{
   author={Macdonald, I. G.},
   title={Affine Hecke algebras and orthogonal polynomials},
   series={Cambridge Tracts in Mathematics},
   volume={157},
   publisher={Cambridge University Press},
   place={Cambridge},
   date={2003},
   pages={x+175},
   isbn={0-521-82472-9},
   review={\MR{1976581 (2005b:33021)}},
   doi={10.1017/CBO9780511542824},
}		

\bib{sat}{article}{
   author={Satake, Ichir{\^o}},
   title={Theory of spherical functions on reductive algebraic groups over
   ${\germ p}$-adic fields},
   journal={Inst. Hautes \'Etudes Sci. Publ. Math.},
   number={18},
   date={1963},
   pages={5--69},
   issn={0073-8301},
   review={\MR{0195863 (33 \#4059)}},
}

\bib{tits}{article}{
   author={Tits, Jacques},
   title={Uniqueness and presentation of Kac-Moody groups over fields},
   journal={J. Algebra},
   volume={105},
   date={1987},
   number={2},
   pages={542--573},
   issn={0021-8693},
   review={\MR{873684 (89b:17020)}},
   doi={10.1016/0021-8693(87)90214-6},
}

\bib{visw}{article}{
   author={Viswanath, Sankaran},
   title={Kostka-Foulkes polynomials for symmetrizable Kac-Moody algebras},
   journal={S\'em. Lothar. Combin.},
   volume={58},
   date={2007/08},
   pages={Art. B58f, 20},
   issn={1286-4889},
   review={\MR{2461998 (2010i:17038)}},
}

\end{biblist}
\end{bibsection}

%
%\bibliographystyle{alpha}
%\bibliography{hecke_refs}

\end{document}